        \title[Vanishing of Nil-terms and negative $K$-theory]{Vanishing of Nil-terms and negative $K$-theory for additive categories}
       \author{Bartels, A.}
       \address{Westf\"alische Wilhelms-Universit\"at M\"unster\\
               Mathematicians Institut\\
               Einsteinium.~62,
               D-48149 M\"unster, Germany}
        \email{bartelsa@math.uni-muenster.de}
        \urladdr{http://www.math.uni-muenster.de/u/bartelsa}
        \author{L\"uck, W.}
        \address{Mathematicians Institut der Universit\"at Bonn\\
                Endenicher Allee 60\\
                53115 Bonn, Germany}
         \email{wolfgang.lueck@him.uni-bonn.de}
          \urladdr{http://www.him.uni-bonn.de/lueck}
         \date{August 2022}
     \keywords{additive categories, $K$-theory, regularity properties}
\subjclass[2020]{18E05, 19D35}
  \DeclareMathAlphabet{\matheurm}{U}{eur}{m}{n}
\DeclareMathAlphabet{\matheurm}{U}{eur}{m}{n}
\newcommand{\addcat}{\matheurm{Add\text{-}Cat}}
\newcommand{\addcatic}{\matheurm{Add\text{-}Cat}_{ic}}
\newcommand{\Chcat}{\matheurm{Ch}}
\newcommand{\Chcathf}{\matheurm{Ch}^{\operatorname{hf}}}
\newcommand{\MODcatl}[1]{#1\text{-}\matheurm{MOD}}
\newcommand{\MODcatr}[1]{\matheurm{MOD}\text{-}#1}
\newcommand{\OrG}[1]{\matheurm{Or}(#1)}
\newcommand{\Spectra}{\matheurm{Spectra}}
\newcommand{\Waldho}{\matheurm{Wald}^{\operatorname{ho}}}
\DeclareMathOperator{\aut}{aut}
\DeclareMathOperator{\cok}{cok}
\DeclareMathOperator{\colim}{colim}
\DeclareMathOperator{\cone}{cone}
\DeclareMathOperator{\cyl}{cyl}
\DeclareMathOperator{\F}{F}
\DeclareMathOperator{\FL}{FL}
\DeclareMathOperator{\FP}{FP}
\DeclareMathOperator{\fgf}{fgf}
\DeclareMathOperator{\fgp}{fgp}
\DeclareMathOperator{\HNil}{HNil}
\DeclareMathOperator{\hocofib}{hocofib}
\DeclareMathOperator{\hocolim}{hocolim}
\DeclareMathOperator{\id}{id}
\DeclareMathOperator{\im}{im}
\DeclareMathOperator{\Idem}{Idem}
\DeclareMathOperator{\mor}{mor}
\DeclareMathOperator{\Nil}{Nil}
\DeclareMathOperator{\ob}{ob}
\DeclareMathOperator{\pdim}{pdim}
\DeclareMathOperator{\proaut}{pro-aut}
\DeclareMathOperator{\pt}{pt}
\DeclareMathOperator{\pr}{pr}
\DeclareMathOperator{\res}{res}
\newcommand{\VCYC}{{{\mathcal V}\mathrm{cyc}}}
  \newcommand{\IN}{\mathbb{N}}
  \newcommand{\IZ}{\mathbb{Z}}
  \newcommand{\cala}{\mathcal{A}}
  \newcommand{\calb}{\mathcal{B}}
  \newcommand{\calc}{\mathcal{C}}
  \newcommand{\cali}{\mathcal{I}}
    \newcommand{\call}{\mathcal{L}}
  \newcommand{\cals}{\mathcal{S}}
  \newcommand{\calt}{\mathcal{T}}
    \newcommand{\calu}{\mathcal{U}}
  \newcommand{\calw}{\mathcal{W}}
  \newcommand{\calx}{\mathcal{X}}
  \newcommand{\caly}{\mathcal{Y}}
  \newcommand{\bfa}{\mathbf{a}}
  \newcommand{\bfb}{\mathbf{b}}
  \newcommand{\bfd}{\mathbf{d}}
  \newcommand{\bfE}{\mathbf{E}}
  \newcommand{\bff}{\mathbf{f}}
  \newcommand{\bfH}{\mathbf{H}}
  \newcommand{\bfh}{\mathbf{h}}
  \newcommand{\bfHP}{\mathbf{HP}}
  \newcommand{\bfi}{\mathbf{i}}
  \newcommand{\bfK}{\mathbf{K}}
  \newcommand{\bfKinftyW}{{\mathbf K}^{\infty,\operatorname{W}}}
  \newcommand{\bfl}{\mathbf{l}}
  \newcommand{\bfN}{\mathbf{N}}
  \newcommand{\bfr}{\mathbf{r}}
  \newcommand{\bfs}{\mathbf{s}}
  \newcommand{\bfT}{\mathbf{T}}
  \newcommand{\bft}{\mathbf{t}}
 \newcommand{\bfEinfty}{\mathbf{E}^{\infty}}
 \newcommand{\bfKinfty}{\mathbf{K}^{\infty}}
 \newcommand{\bfKNilinfty}{\mathbf{K}_{\Nil}^{\infty}}
 \newcommand{\bfNK}{\mathbf{N}\mathbf{K}}
 \newcommand{\bfNKinfty}{\mathbf{N}\mathbf{K}^{\infty}}
\newcommand{\EGF}[2]{E_{#2}(#1)}
\newcommand{\squarematrix}[4]
{\left( \begin{array}{cc} #1 & #2 \\ #3 &
#4
\end{array} \right)
}
\newcommand{\trun}{[\,]}
\newcounter{commentcounter}
\theoremstyle{plain}
\newtheorem{theorem}{Theorem}[section]
\newtheorem{lemma}[theorem]{Lemma}
\newtheorem{corollary}[theorem]{Corollary}
\newtheorem*{theorem*}{Theorem}
\newtheorem*{theoremA*}{Theorem A}
\newtheorem*{theoremB*}{Theorem B}
\theoremstyle{definition}
\newtheorem{definition}[theorem]{Definition}
\newtheorem{example}[theorem]{Example}
\newtheorem{remark}[theorem]{Remark}
\newtheorem{notation}[theorem]{Notation}
\newtheorem*{definition*}{Definition}
\theoremstyle{remark}
\let\c@equation=\c@theorem\makeatother
\theoremstyle{definition}
\newcounter{othercommentcounter}
\newcommand{\version}[1]              
{\begin{center} last edited on #1\\
last compiled on \today\\
name of tex-file: \jobname
\end{center}}
\begin{document}

\begin{abstract}
  We extend the notion of regular coherence from rings to additive categories and show
  that well-known consequences of regular coherence for rings also apply to additive
  categories.  For instance the negative $K$-groups and all twisted Nil-groups vanish for
  an additive category, if it is regular coherent. This will be applied to nested
  sequences of additive categories, motivated by our ongoing project to determine the
  algebraic $K$-theory of the Hecke algebra of a reductive $p$-adic group.
\end{abstract}

\maketitle

\newlength{\origlabelwidth}\setlength\origlabelwidth\labelwidth


\typeout{------------------- Introduction -----------------}
\section{Introduction}%
\label{sec:introduction}


\subsection*{Background}

The Bass-Heller-Swan Theorem gives  isomorphisms 
\begin{equation*}
K_n R[t,t^{-1}] \cong K_{n-1}(R) \oplus K_n(R) \oplus \widetilde{\Nil}_{n-1}(R) \oplus \widetilde{\Nil}_{n-1}(R)
\end{equation*}
for K-theory of rings.  For regular rings all Nil groups
$\widetilde{\Nil}_{n}(R)$, $n \in \IZ$ and negative K-groups
$K_{n}(R)$, $n \in \IZ_{< 0}$ vanish and this simplifies the
Bass-Heller-Swan formula.
Waldhausen~\cite{Waldhausen(1978genfreeI+II),Waldhausen(1978genfreeIII+IV)}
proved far reaching extensions of the Bass-Heller-Swan formula for
other group rings.  He also introduced regular coherence for rings and
proved generalizations of the above vanishing results for regular
coherent rings.
Waldhausen's motivation was that some group rings are regular coherent
(but not regular) and this allowed him to bootstrap K-theory
computations for group rings.  The Bass-Heller-Swan Theorem is also an
important ingredient in K-theory computations via the Farrell-Jones
conjecture.  If $R$ is regular, then so is $R[t,t^{-1}]$, but we do
not know, whether the same inheritance statement holds for regular
coherence.  This is one reason, why we will not only concentrate on
regular coherence here, but also on regularity.

The goal of this paper is to extend the notions of regularity and
regular coherence from rings to additive categories and to extend the
vanishing results in $K$-theory to additive categories.  The basic
strategy will be to embed a given additive category $\cala$ in the
category of ${\IZ\cala}$-modules.  The latter category is abelian.
This mimics the additive subcategory of finitely generated free
$R$-modules of the abelian category of all $R$-modules and allows the
extension of arguments and definitions from rings to additive
categories.  This is a standard construction and has been used for a
long time, for example to define Noetherian additive categories and
global dimension for additive categories.

We also extract intrinsic characterizations on the level of additive categories.  For
instance, we call a sequence $A_0 \xrightarrow{f_0} A_1 \xrightarrow{f_1} A_2$ in $\cala$
\emph{exact at $A_1$}, if $f_1 \circ f_0 = 0$ and for every object $A$ and morphism
$g \colon A \to A_1$ with $f_1 \circ g = 0$ there exists a morphism
$\overline{g} \colon A \to A_0$ with $f_0 \circ \overline{g} = g$, see
Definition~\ref{def:exact_sequence_in_cala}.  We show in
Lemma~\ref{lem:intrinsic_reformulation_of_regular_coherent}~%
\ref{lem:intrinsic_reformulation_of_regular_coherent:not_uniform} that an idempotent
complete additive category $\cala$ is regular coherent, if and only if for every morphism
$f_1\colon A_1 \to A_0$ we can find a sequence of finite length in $\cala$
\[0 \to A_n \xrightarrow{f_n} A_{n-1} \xrightarrow{f_{n-1}} \cdots \xrightarrow{f_2} A_{1}
  \xrightarrow{f_1} A_0,
\]
which is exact at $A_i$ for $i = 1,2, \ldots, n$. It is \emph{$l$-uniformly regular coherent}
if the number $n$ can be arranged to satisfy $n \le l$ for every morphisms $f_1$.


\subsection*{Our motivation}
In our experience it is often more convenient to work with additive categories in place of
rings in connection with K-theory.  Sometimes a minor drawback is that results for
K-theory of rings have not been fully developed for additive K-theory, although often they are
really no more complicated.  This paper takes care of the extension of regular coherence
from rings to additive categories that we expect to be helpful.

More concretely, we rely on the present paper in our ongoing work aimed at the computation
of the K-theory of Hecke algebras of reductive $p$-adic groups.  There we apply regular
coherence and the K-theory vanishing results to certain additive categories. Namely, we consider
a decreasing nested sequence of additive 
subcategories $\cala_* = \big(\cala_0 \supseteq \cala_1 \supseteq \cala_2 \supseteq \cdots\big)$, see
Definition~\ref{def:inverse_system_of_additive_categories}, and associate to it the
additive category $\cals(\cala_*)$, called \emph{sequence category}, and a certain
quotient additive category $\call(\cala_*)$, called \emph{limit category}, see
Definition~\ref{def:calo_upper_0(cals_ast)}. An object in $\cals(\cala_*)$ is a sequence
$\underline{A} = (A_m)_{m \ge 0}$ of objects in $\cala_0$ such that for every $l \in \IN$
almost all $\phi_m$ lie in $\cala_l$. A morphism
$\underline{\phi} \colon \underline{A} \to \underline{A'}$ in $\cals(\cala_*)$ consists of
a sequence of morphisms $\phi_m \colon A_m \to A'_m$ in $\cala_0$ such that for every
$l \in \IN$ almost all $\phi_m$ lie in $\cala_l$.\footnote{These categories come in our
  situation from controlled algebra; typically control conditions get more restrictive
  with $m\to \infty$.}. If the system $\cala_*$ is constant, i.e., $\cala_m = \cala_0$,
then $\cals(\cala_*)= \prod_{m \in \IN} \cala_0$ and $\call(\cala_*)$ is the quotient
additive category
$\prod_{m \in \IN} \cala_0 \bigl/ \bigoplus_{m \in \IN}\cala_0$.

Typically each $\cala_m$ will be regular, but this does not imply that $\cals(\cala_*)$ or
$\call(\cala_*)$ is regular as well, see
Remark~\ref{lem:Advantage_of_the_notion_l-uniformly_regular_coherent}.  The problem is
that the property Noetherian does not pass to infinite products of additive categories,
see
Example~\ref{exa:The_property_Noetherian_does_not_pass_to_the_sequence_category}. Therefore
we have to discard the condition Noetherian in our considerations.


\subsection*{Main results}

As mentioned above we discuss various regularity properties, which are well-known for rings
and extend them to additive categories. As long as we are concerned with the notion
regular or Noetherian, we follow the standard proof for rings, which carry over to additive
categories.  This is done for the convenience of the reader. 

As described above, we need to discard the property Noetherian and stick to regular
coherence and the new notion of uniform regular coherence.  These notions do pass to
infinite products of additive categories, see Lemma~\ref{lem:regular_coherent_and_products},
and more generally
under a certain exactness condition about $\cala_*$ to   the additive categories
$\cals(\cala_*)$ and  $\call(\cals_*)$, see Lemma~\ref{lem:inverse_systems_and_regularity}.
We remark that algebraic $K$-theory does commute
with infinite products for additive categories, see~\cite{Carlsson(1995)} and
also~\cite[Theorem~1.2]{Kasprowski-Winges(2020)},  but not with infinite products of rings. 

We will show the vanishing of twisted Nil-terms and of the negative $K$-theory for regular
coherent additive categories in Sections~\ref{sec:Vanishing_of_Nil-terms} and
Section~\ref{sec:Vanishing_of_negative_K-groups}.

The for us most valuable result is the technical
Theorem~\ref{the:assembly_for_cald_upper_0(cals_ast)_with_Zr-action},
whose proof relies on the vanishing of twisted Nil-terms. It will be a key
ingredient in our project to extend the $K$-theoretic Farrell-Jones Conjecture for discrete
groups to reductive $p$-adic groups, notably, when we want to reduce  the family of
subgroups, which map with a compact kernel to $\IZ$, to the family of compact open
subgroups.
For discrete groups there is a well-known similar reduction relying also on regularity conditions.
However, in the discrete case it typically suffices to use regularity for rings, while
our approach to $K$-theory of reductive $p$-adic groups necessitates the use of
the weaker regularity conditions introduced in the present paper.


\subsection*{Acknowledgements}

This paper is funded by the ERC Advanced Grant ``KL2MG-interactions''
(no.  662400) of the second author granted by the European Research
Council, by the Deutsche Forschungsgemeinschaft (DFG, German Research
Foundation) \-– Project-ID 427320536 \-- SFB 1442, as well as under
Germany's Excellence Strategy \-- GZ 2047/1, Projekt-ID 390685813,
Hausdorff Center for Mathematics at Bonn, and EXC 2044 \-- 390685587,
Mathematics M\"unster: Dynamics \-- Geometry \-- Structure.
\medskip

\noindent
The paper is organized as follows:

\tableofcontents


\typeout{----- Section: $\IZ$-categories, additive categories and idempotent completions  -------}

\section{$\IZ$-categories, additive categories, and idempotent completions}%
\label{sec:IZ-categories_additive_categories_and_idempotent_completions}


\subsection{$\IZ$-categories}%
\label{subsec:Z-categories}

A \emph{$\IZ$-category} is a category $\cala$ such that for every two objects $A$ and $A'$
in $\cala$ the set of morphisms $\mor_{\cala}(A,A')$ has the structure of a $\IZ$-module,
for which composition is a $\IZ$-bilinear map.  Given a ring $R$, we denote by
$\underline{R}$ the $\IZ$-category with precisely one object, whose $\IZ$-module of
endomorphisms is given by $R$ with its $\IZ$-module structure and composition is given by
the multiplication in $R$.


\subsection{Additive categories}%
\label{subsec:additive_categories}
 
An \emph{additive category} is a $\IZ$-category such that for any two objects $A_1$ and
$A_2$ there is a model for their direct sum, i.e., an object $A$ together with morphisms
$i_k \colon A_k \to A$ for $k = 1,2$ such that for every object $B$ in $\cala$ the $\IZ$-map
\[
\mor_{\cala}(A,B) \xrightarrow{\cong} \mor_{\cala}(A_1,B) \times \mor_{\cala}(A_2,B),
\quad f \mapsto (f \circ i_0, f\circ i_1)
\]
is bijective.

Given a ring $R$, the category $\MODcatl{R}_{\fgf}$
of finitely generated free left $R$-modules carries an obvious structure of an additive
category.

An \emph{equivalence} $F \colon \cala \to \cala'$ of  $\IZ$-categories or of
 additive categories respectively is a functor of  $\IZ$-categories or of
 additive categories respectively such that for all objects $A_1, A_2$ in
$\cala$ the induced map
$F_{A_1,A_2} \colon \mor_{\cala}(A_1,A_2) \xrightarrow{\cong} \mor_{\cala}(F(A_1),F(A_2))$
sending $f$ to $F(f)$ is bijective, and for any object $A'$ in $\cala'$ there exists an
object $A$ in $\cala$ such that $F(A)$ and $A'$ are isomorphic in $\cala'$. This is
equivalent to the existence of a functor $F \colon \cala' \to \cala$ of 
$\IZ$-categories or of  additive categories respectively such that both
composites $F \circ F'$ and $F' \circ F$ are naturally equivalent as such functors to the
identity functors.

Given a $\IZ$-category, let $\cala_{\oplus}$ be the associated additive
category, whose objects are finite tuples of objects in $\cala$ and whose
morphisms are given by matrices of morphisms in $\cala$ (of the right size) and the direct
sum is given by concatenation of tuples and the block sum of matrices, see for
instance~\cite[Section~1.3]{Lueck-Steimle(2016BHS)}. 

Let $R$ be a  ring.
Then we can consider the additive category $\underline{R}_{\oplus}$.
The obvious inclusion of  additive categories 
\begin{equation}
\theta_{\fgf} \colon \underline{R}_{\oplus} \xrightarrow{\simeq} \MODcatl{R}_{\fgf}
\label{underline(r)_oplus_to_MODcat(R)_fgf}
\end{equation}
is an equivalence of additive categories. Note that
$\underline{R}_{\oplus}$ is small, in contrast to $\MODcatl{R}_{\fgf}$.


\subsection{Idempotent completion}%
\label{subsec:idempotent_completion}

Given an additive category $\cala$, its \emph{idempotent completion}
$\Idem(\cala)$ is defined to be the following additive category. Objects are
morphisms $p \colon A \to A$ in $\cala$ satisfying $p \circ p = p$.  A morphism $f$ from
$p_1 \colon A_1 \to A_1$ to $p_2 \colon A_2 \to A_2$ is a morphism $f \colon A_1 \to A_2$
in $\cala$ satisfying $p_2 \circ f \circ p_1 = f$.  The structure of an additive
category on $\cala$ induces the structure of an additive category on $\Idem(\cala)$
in the obvious way.  The
identity of an object $(A,p)$ is given by the morphism $p \colon (A,p) \to (A,p)$.
A functor of additive categories $F \colon \cala\to \cala'$
induces a functor $\Idem(F) \colon \Idem(\cala) \to \Idem(\cala')$ of additive
categories by sending $(A,p)$ to $(F(A),F(p))$.

There is a obvious embedding
\[
\eta(\cala)\colon\cala \to \Idem(\cala)
\]
sending an object $A$ to
$\id_A \colon A \to A$ and a morphism $f \colon A \to B$ to the morphism given by $f$ again.  
An  additive category $\cala$ is called
\emph{idempotent complete}, if $\eta(\cala)\colon \cala \to \Idem(\cala)$ is an
equivalence of  additive categories, or, equivalently, 
if for every idempotent $p \colon A \to A$ in $A$
there exists objects $B$ and $C$ and an isomorphism 
$f \colon A \xrightarrow{\cong} B \oplus C$ in $\cala$
such that $f \circ p \circ f^{-1} \colon B \oplus C \to B \oplus C$ is given by 
$\squarematrix{\id_B}{0}{0}{0}$.  The idempotent completion $\Idem(\cala)$ of an 
additive  category $\cala$ is idempotent complete.

For a ring $R$, let $\MODcatl{R}_{\fgp}$ be
the  additive category of finitely generated projective $R$-modules. We
get an equivalence of  additive categories
$\Idem(\MODcatl{R}_{\fgf}) \xrightarrow{\simeq} \MODcatl{R}_{\fgp}$ by sending an object
$(F,p)$ to $\im(p)$. It and the functor of~\eqref{underline(r)_oplus_to_MODcat(R)_fgf}
induce an equivalence of  additive categories
\begin{equation}
\theta_{\fgp}  \colon \Idem\bigl(\underline{R}_{\oplus}\bigr) \xrightarrow{\simeq}  \MODcatl{R}_{\fgp}.
\label{underline(r)_oplus_to_MODcat(R)_fgp}
\end{equation}
Note that $\Idem\bigl(\underline{R}_{\oplus}\bigr)$ is small, in contrast to
$\MODcatl{R}_{\fgp}$.


\subsection{Twisted finite Laurent category}%
\label{subsec:Twisted_finite_Laurent_category}

Let $\cala$ be an additive category. Let $\Phi \colon \cala \to \cala$ be an
automorphism of additive categories.

\begin{definition}[{Twisted finite Laurent category $\cala_{\Phi}[t,t^{-1}]$}]\label{def:A_phi[t,t(-1)]}
  Define the \emph{$\Phi$-twisted finite Laurent category}  $\cala_{\Phi}[t,t^{-1}]$ as follows. It has the
  same objects as $\cala$.  Given two objects $A$ and $B$, a morphism 
  $f  \colon A \to B$ in $\cala_{\Phi}[t,t^{-1}]$ is a formal sum 
  $f = \sum_{i \in    \IZ} f_i \cdot t^i$, where $f_i \colon \Phi^i(A) \to B$ is a morphism in
  $\cala$ from $\Phi^i(A)$ to $B$ and only finitely many of the morphisms $f_i$ are non-trivial. If 
  $g   = \sum_{j \in \IZ} g_j \cdot t^j$ is a morphism in $\cala_{\Phi}[t,t^{-1}]$
  from $B$ to $C$, we define the composite $g \circ f \colon A \to C$ by
  \[
  g \circ f := \sum_{k \in \IZ} \biggl( \sum_{\substack{i,j \in \IZ,\\i+j = k}}
    g_j \circ \Phi^j(f_i)\biggr) \cdot t^k.
  \]  
  The  direct sum and the structure of a $\IZ$-module on the
  set of morphism from $A$ to $B$ in $\cala_{\Phi}[t,t^{-1}]$ are
  defined in the obvious way using the corresponding structures of
  $\cala$. We sometimes also  write $\cala_{\Phi}[\IZ]$ instead of $\cala_{\Phi}[t,t^{-1}]$.
\end{definition}

\begin{example}\label{exa:Finitely_generated_free_R-modules}
  Let $R$ be a  ring with an automorphism
  $\phi \colon R \xrightarrow{\cong} R$ of rings. Let $R_{\phi}[t,t^{-1}]$ be
  the ring of $\phi$-twisted finite Laurent series with coefficients in $R$.  We obtain
  from $\phi$ an automorphism
  $\Phi \colon \underline{R} \xrightarrow{\cong} \underline{R}$ of $\IZ$-categories.
  There is an obvious isomorphism of $\IZ$-categories
  \begin{equation}
    \underline{R}_{\Phi}[t,t^{-1}] \xrightarrow{\cong} \underline{R_{\phi}[t,t^{-1}]}.
    \label{underline_and_Laurent}
  \end{equation}
  We obtain equivalences of  additive categories
  \begin{eqnarray*}
    (\underline{R}_{\oplus})_{\Phi}[t,t^{-1}] 
    &\xrightarrow{\simeq}  &
                             \MODcatl{R_{\phi}[t,t^{-1}]}_{\fgf};
    \\
    \Idem\bigl((\underline{R}_{\oplus})_{\Phi}[t,t^{-1}]\bigr)
    &\xrightarrow{\simeq}  &
                             \MODcatl{R_{\phi}[t,t^{-1}]}_{\fgp}.
  \end{eqnarray*}
\end{example}

\begin{definition}[{$\cala_{\Phi}[t]$} and {$\cala_{\Phi}[t^{-1}]$}]\label{def:A_phi(t_and_t(-1))} 
  Let $\cala_{\Phi}[t]$ and $\cala_{\Phi}[t^{-1}]$ respectively be the additive
  subcategory of $\cala_{\Phi}[t,t^{-1}]$, whose set of objects is the set of objects in
  $\cala$ and whose morphism from $A$ to $B$ are given by finite formal Laurent series
  $\sum_{i \in \IZ} f_i \cdot t^i$ with $f_i = 0$ for $i < 0$ and $i > 0$ respectively.
\end{definition}


\typeout{------ Section: The algebraic $K$-theory of additive categories -------------------}

\section{The algebraic $K$-theory of $\IZ$-categories}%
\label{sec:The_algebraic_K-theory_of_Z-categories}

Let $\cala$ be an  additive category. One can interprete it as an exact category in
the sense of Quillen or as a category with cofibrations and weak equivalence in the sense
of Waldhausen and obtains the \emph{connective algebraic $K$-theory spectrum }
$\bfK(\cala)$ by the constructions due to Quillen~\cite{Quillen(1973)} or
Waldhausen~\cite{Waldhausen(1985)}.  A construction of the \emph{non-connective $K$-theory
spectrum} $\bfKinfty(\cala)$ of an  additive category can be found for instance
in~\cite{Lueck-Steimle(2014delooping)} or~\cite{Pedersen-Weibel(1985)}.

  \begin{definition}[Algebraic $K$-theory of  $\IZ$-categories]%
\label{def:Algebraic_K-theory_of_Z-categories}
    We will define the \emph{algebraic $K$-theory spectrum $\bfKinfty(\cala)$} of the 
   $\IZ$-category $\cala$ to be  the non-connective algebraic $K$-theory spectrum of the 
     additive  category $\cala_{\oplus}$. Define for $n \in \IZ$
   \[
   K_n(\cala) := \pi_n(\bfKinfty(\cala)).
   \]
   The connective algebraic $K$-theory spectrum  $\bfK(\cala)$ is defined to be the  connective 
  algebraic $K$-theory spectrum of the additive  category $\cala_{\oplus}$.
  \end{definition}

  If $\cala$ is an additive category and $i(\cala)$ is the underlying $\IZ$-category,
  then there is a canonical  equivalence of additive categories $i(\cala)_{\oplus} \to \cala$. Hence 
  there are canonical weak homotopy equivalences $\bfK(i(\cala)) \to \bfK(\cala)$
  and $\bfKinfty(i(\cala)) \to \bfKinfty(\cala)$.

  A functor $F \colon \cala \to \cala'$ of $\IZ$-categories
  induces a map of spectra
   \begin{equation}
   \bfKinfty(F) \colon \bfKinfty(\cala) \to \bfKinfty(\cala').
   \label{bfK(F)_colon_bfK(cala)_to_bfK(cala')}
 \end{equation}

 We call a full additive subcategory $\cala$ of $\cala'$ \emph{cofinal}, if for any
 object $A'$ in $\cala'$ there is an object $A$ in $\cala$ together with morphisms
 $i \colon A' \to A$ and $r \colon A' \to A$ satisfying $r \circ i = \id$.

   \begin{lemma}\label{K_and_cofinal}
     Let $I \colon \cala \to \cala'$ be the inclusion of a full cofinal additive subcategory.
     \begin{enumerate}
     \item\label{K_and_cofinal:connective} The induced map
       \[
       \pi_n(\bfK(I)) \colon \pi_n(\bfK(\cala)) \to \pi_n(\bfK(\cala'))
     \]
     is bijective for $n \ge 1$;
   \item\label{K_and_cofinal:non-connective}
     The induced map
     \[
      \bfKinfty(I) \colon \bfKinfty(\cala) \to \bfKinfty(\cala')
     \]
     is a weak homotopy equivalence.
   \end{enumerate}
 \end{lemma}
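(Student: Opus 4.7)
The plan is to handle part~\ref{K_and_cofinal:connective} by invoking the classical cofinality theorem for connective algebraic $K$-theory of additive categories, and then to bootstrap to part~\ref{K_and_cofinal:non-connective} using the delooping machinery underlying the non-connective construction.

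For part~\ref{K_and_cofinal:connective}, I would observe that an additive category carries a canonical structure of an exact category with split exact sequences, and that a full cofinal additive subcategory in the present sense is cofinal in the sense of Waldhausen and Grayson. The cofinality theorem (originally due to Waldhausen for categories with cofibrations and weak equivalences, and recorded for additive and exact categories by Grayson and Staffeldt) produces a homotopy fibre sequence
\[
\bfK(\cala) \to \bfK(\cala') \to H\bigl(K_0(\cala')/\im(K_0(I))\bigr),
\]
whose target is the Eilenberg--MacLane spectrum of a discrete abelian group. Its long exact homotopy sequence immediately gives the bijectivity of $\pi_n(\bfK(I))$ for $n \ge 1$.

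For part~\ref{K_and_cofinal:non-connective}, I would exploit that $\bfKinfty$ is constructed (e.g.\ in~\cite{Lueck-Steimle(2014delooping)} or~\cite{Pedersen-Weibel(1985)}) via a delooping endofunctor $\Sigma$ on additive categories together with natural weak equivalences $\bfK(\Sigma^n \cala) \simeq \Omega \bfK(\Sigma^{n+1} \cala)$ in positive degrees, so that $\bfKinfty(\cala)$ is the resulting $\Omega$-spectrum and
\[
\pi_m(\bfKinfty(\cala)) \cong \pi_{m+n}(\bfK(\Sigma^n \cala))
\]
for every $m \in \IZ$ and every $n$ with $m + n \ge 1$. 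The crucial technical point is that $\Sigma$ preserves full cofinal inclusions, i.e.\ that $\Sigma^n I \colon \Sigma^n \cala \to \Sigma^n \cala'$ is again a full cofinal inclusion. Granting this, part~\ref{K_and_cofinal:connective} applied to $\Sigma^n I$ yields the bijectivity of $\pi_{m+n}(\bfK(\Sigma^n I))$ whenever $m + n \ge 1$; choosing $n$ large enough for each $m$ then delivers the weak equivalence $\bfKinfty(I)$.

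The main obstacle lies in verifying that the delooping functor $\Sigma$ preserves cofinality. In the Pedersen--Weibel or L\"uck--Steimle models, an object of $\Sigma \cala'$ is a family of objects of $\cala'$ parametrised over a geometric control space subject to local finiteness and bounded control conditions. Given such an object $X'$ together with the retraction data expressing it as a summand of something coming from $\cala$, one constructs a complement objectwise using cofinality of $I$; the required verification is that this complement still satisfies the control condition and therefore defines an object of $\Sigma \cala$. This is routine in the standard models but must be recorded with care, as the various models of $\bfKinfty$ phrase the control condition differently.
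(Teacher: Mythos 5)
Your proposal is correct, and for part~\ref{K_and_cofinal:connective} it is essentially the paper's argument in different packaging: the paper reduces to the case $\cala' = \Idem(\cala)$ and cites Thomason--Trobaugh, using that a cofinal full inclusion induces an equivalence $\Idem(\cala) \to \Idem(\cala')$, whereas you invoke the cofinality fibre sequence with Eilenberg--MacLane cofibre directly; both are instances of the same classical theorem and both give bijectivity on $\pi_n$ for $n \ge 1$ (and injectivity on $\pi_0$, which is all one can expect there).

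For part~\ref{K_and_cofinal:non-connective} the routes genuinely diverge. The paper simply quotes~\cite[Corollary~3.7]{Lueck-Steimle(2014delooping)}, which packages exactly the statement that a functor inducing an equivalence after idempotent completion (together with the connective information) induces a weak equivalence on $\bfKinfty$. You instead unwind the delooping: identify $\pi_m(\bfKinfty(\cala))$ with $\pi_{m+n}(\bfK(\Sigma^n\cala))$ for $m+n \ge 1$ and apply part~\ref{K_and_cofinal:connective} to $\Sigma^n\cala \to \Sigma^n\cala'$. This works, and the step you flag as the obstacle --- that the delooping preserves full cofinal inclusions --- is indeed true and routine in the Pedersen--Weibel/L\"uck--Steimle models: objects of the delooping are locally finite families over a control space, a complement can be chosen objectwise using cofinality of $\cala \subseteq \cala'$, the resulting family is again locally finite, and the retraction data has zero control; fullness is inherited because morphisms are controlled matrices with entries in the subcategory. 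What the paper's route buys is that none of this needs to be checked; what your route buys is self-containedness and an explicit mechanism. A slightly slicker variant of your argument that avoids the delooping-preserves-cofinality check entirely: for $n \le 0$ the groups $\pi_n(\bfKinfty(-))$ depend only on the idempotent completion, and $\Idem(I)$ is an equivalence of additive categories, so those homotopy groups match automatically; combining this with part~\ref{K_and_cofinal:connective} for $n \ge 1$ and the identification $\pi_0(\bfKinfty(\cala)) \cong K_0(\Idem(\cala))$ gives the weak equivalence. Either way, the proposal is sound.
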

 \begin{proof}~\ref{K_and_cofinal:connective} This is proved for $\cala' = \Idem(\cala)$
   in~\cite[Theorem~A.9.1.]{Thomason-Trobaugh(1990)}.  Now the general case follows from
   the observation that $\Idem(\cala) \to \Idem(\cala')$ is an equivalence of additive
   categories.  \\[1mm]~\ref{K_and_cofinal:non-connective} This follows from
   assertion~\ref{K_and_cofinal:connective}
   and~\cite[Corollary~3.7]{Lueck-Steimle(2014delooping)}.
 \end{proof}


\typeout{---- Section:  The Bass-Heller-Swan decomposition for  additive categories ----}

\section{The Bass-Heller-Swan decomposition for additive categories}%
\label{sec:The_Bass-Heller-Swan_decomposition_for_additive_categories}

Denote by $\addcat$ the category of additive categories. Let us consider the group $\IZ$
as a groupoid with one object and denote by $\addcat^\IZ$ the category of functors
$\IZ\to\addcat$, with natural transformations as morphisms. Note that an object of this
category is a pair $(\cala,\Phi)$ consisting of an additive category together with an
automorphism $\Phi \colon \cala \xrightarrow{\cong} \cala$ of additive categories.  We
recall from~\cite[Theorem~0.1]{Lueck-Steimle(2016BHS)} using the notation
of this paper here and in the sequel:

\begin{theorem}[The Bass-Heller-Swan decomposition for non-connective $K$-theory of
  additive categories]\label{the:BHS_decomposition_for_non-connective_K-theory}
  Let $\Phi \colon \cala \to \cala$ be an  automorphism of additive categories.

  \begin{enumerate}
  \item\label{the:BHS_decomposition_for_non-connective_K-theory:BHS-iso}

  There exists a weak homotopy equivalence of
  spectra, natural in $(\cala,\Phi)$, 
  \[
  \bfa^{\infty} \vee \bfb_+^{\infty}\vee \bfb_-^{\infty} \colon \bfT_{\bfKinfty(\Phi^{-1})} \vee
  \bfNKinfty(\cala_{\Phi}[t]) \vee \bfNKinfty(\cala_{\Phi}[t^{-1}]) \xrightarrow{\simeq}
  \bfKinfty(\cala_{\Phi}[t,t^{-1}])
\]
where $\bfT_{\bfKinfty(\Phi^{-1})}$ is the mapping torus of
$\bfKinfty(\Phi^{-1}) \colon \bfKinfty(\cala) \to \bfKinfty(\cala)$ and
$\bfNKinfty(\cala_{\Phi}[t^{\pm}])$ is the homotopy fiber of the map
$\bfKinfty(\cala_{\Phi}[t^{-1}]) \to \bfKinfty(\cala)$ given by evaluation $t = 0$;
  
\item\label{the:BHS_decomposition_for_non-connective_K-theory:Nil} There exist a functor
  $\bfEinfty\colon \addcat^\IZ\to\Spectra$ and weak homotopy equivalences of spectra,
  natural in $(\cala,\Phi)$,
  \begin{eqnarray*}
   \Omega \bfNKinfty(\cala_{\Phi}[t]) & \xleftarrow{\simeq} & \bfE^{\infty}(\cala,\Phi);
   \\
   \bfKinfty(\cala) \vee  \bfE^{\infty} (\cala,\Phi)  & \xrightarrow{\simeq} & \bfKNilinfty(\cala,\Phi),
  \end{eqnarray*}
  where $\bfKNilinfty(\cala,\Phi)$ is the non-connective $K$-theory of a certain
  Nil-category $\Nil(\cala,\Phi)$. 
  \end{enumerate}
\end{theorem}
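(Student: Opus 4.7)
\medskip
\noindent
\textbf{Proof plan.} Both parts of the theorem are already established in \cite[Theorem~0.1 and Theorem~0.4]{Lueck-Steimle(2016BHS)}, so the proof in this paper ultimately amounts to quoting that result; I still sketch the strategy one would follow.

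My plan for part~\ref{the:BHS_decomposition_for_non-connective_K-theory:BHS-iso} is to realize $\cala_{\Phi}[t,t^{-1}]$ as the homotopy pushout of $\cala_{\Phi}[t]$ and $\cala_{\Phi}[t^{-1}]$ along $\cala$, where the maps are the inclusions of the sub-Laurent categories from Definition~\ref{def:A_phi(t_and_t(-1))}, and then apply connective $K$-theory. Waldhausen's fibration theorem combined with the additivity theorem turns this into a homotopy pullback square of connective spectra. Each evaluation map $\cala_{\Phi}[t^{\pm 1}] \to \cala$ at $t^{\pm 1}=0$ is canonically split by the obvious inclusion of $\cala$, so the homotopy fibres are precisely the spectra $\bfNK(\cala_{\Phi}[t^{\pm 1}])$, producing the two $\bfb$-summands. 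The remaining piece identifies with the mapping torus $\bfT_{\bfK(\Phi^{-1})}$: the twist enters because, under the inclusions into $\cala_{\Phi}[t,t^{-1}]$, the two copies of $\cala$ sitting inside $\cala_{\Phi}[t]$ and $\cala_{\Phi}[t^{-1}]$ are identified via conjugation by $t$, which on objects is exactly $\Phi^{-1}$.

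For part~\ref{the:BHS_decomposition_for_non-connective_K-theory:Nil} my plan is to introduce the Nil-category $\Nil(\cala,\Phi)$ whose objects are pairs $(A,\nu)$ with $A$ an object of $\cala$ and $\nu \colon A \to \Phi(A)$ a nilpotent morphism, and whose morphisms intertwine the $\nu$'s. The obvious functor $\Nil(\cala,\Phi) \to \cala$ forgetting $\nu$ is split by $A \mapsto (A,0)$, which peels off a summand $\bfKinfty(\cala)$; one then \emph{defines} $\bfE^{\infty}(\cala,\Phi)$ as the complementary summand, automatically functorial in $(\cala,\Phi) \in \addcat^\IZ$. The comparison with $\Omega \bfNKinfty(\cala_{\Phi}[t])$ is effected by the functor $\Nil(\cala,\Phi) \to \cala_{\Phi}[t]$ sending $(A,\nu)$ to the object of $\cala_{\Phi}[t]$ whose characteristic endomorphism is $\id - \nu \cdot t$; a direct connective calculation shows this becomes a $K$-theory equivalence after restricting to the reduced Nil-piece.

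To pass from connective to non-connective one invokes the functorial delooping built into $\bfKinfty$ (see~\cite{Lueck-Steimle(2014delooping)}) and checks that all constructions above---Laurent, inclusion, evaluation, Nil---are natural on $\addcat^\IZ$ and commute strictly with delooping, so that the connective splittings deloop to give the asserted non-connective equivalences. The hard part, and the technical core of~\cite{Lueck-Steimle(2016BHS)}, lies not in any individual step but in producing the natural equivalences $\bfa^{\infty}$ and $\bfb^{\infty}_\pm$ at the spectrum level (not merely on homotopy groups) and showing they are functorial in the pair $(\cala,\Phi)$; this requires assembling the fibre sequences, Nil-splitting, and delooping into genuinely coherent natural transformations of spectrum-valued functors on $\addcat^\IZ$.
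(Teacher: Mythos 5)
Your proposal is correct and matches the paper exactly: the paper gives no proof of this theorem but simply recalls it from Lück--Steimle, \emph{Forum Math.} 28 (2016), Theorems~0.1 and~0.4, which is precisely what you do. One small remark on your optional sketch: the paper's convention (Definition~\ref{def:Nilpotent_morphisms_and_Nil-categories_Steimle}) takes objects of $\Nil(\cala,\Phi)$ to be pairs $(A,\phi)$ with $\phi\colon \Phi(A)\to A$, i.e.\ the opposite direction to your $\nu\colon A\to\Phi(A)$, which matters for keeping the twisting conventions between $\cala_{\Phi}[t]$ and $\cala_{\Phi}[t^{-1}]$ straight but does not affect the validity of citing the reference.
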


\begin{theorem}[Fundamental sequence of $K$-groups]\label{the:Fundamental_sequence_of_K-groups}
  Let $\cala$ be an additive category. Then there exists for $n \in \IZ$ a split exact sequence, natural in
  $\cala$
  \begin{multline}
    0  \to K_n(\cala) \xrightarrow{(k_+)_* \oplus -(k_-)_*} K_n(\cala[t]) \oplus  K_n(\cala[t^{-1}])
    \xrightarrow{(l_+)_* \oplus (l_-)_*}
    \\
    K_n(\cala[t,t^{-1}]) \xrightarrow{\delta_n} K_{n-1}(\cala) \to 0,
    \end{multline}
    where $(k_+)_*$, $(k_-)_*$, $(l_+)_*$, and $(l_-)_*$ are induced by the obvious
    inclusions $k_+$, $k_-$, $l_+$, and $l_-$ and $\delta_n$ is the composite of the
    inverse of the (untwisted) Bass-Heller-Swan isomorphism
    \[K_n(\cala) \oplus K_{n-1}(\cala) \oplus N\!K_n(\cala[t]) \oplus N\!K_n(\cala[t^{-1}])
      \xrightarrow{\cong} K_n(\cala[t,t^{-1}]),
    \]
    see Theorem~\ref{the:BHS_decomposition_for_non-connective_K-theory}, with the
    projection onto the summand $K_{n-1}(\cala)$.
  \end{theorem}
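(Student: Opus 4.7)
The plan is to derive the fundamental sequence directly from Theorem~\ref{the:BHS_decomposition_for_non-connective_K-theory} specialized to the untwisted case $\Phi=\id$. In that case the mapping torus $\bfT_{\bfKinfty(\id)}$ is the mapping torus of the identity map on $\bfKinfty(\cala)$, which is weakly equivalent to $\bfKinfty(\cala)\vee\Sigma\bfKinfty(\cala)$. Taking $\pi_n$ of the weak equivalence in Theorem~\ref{the:BHS_decomposition_for_non-connective_K-theory}\ref{the:BHS_decomposition_for_non-connective_K-theory:BHS-iso} yields the untwisted Bass-Heller-Swan isomorphism
\[
K_n(\cala) \oplus K_{n-1}(\cala) \oplus N\!K_n(\cala[t]) \oplus N\!K_n(\cala[t^{-1}]) \xrightarrow{\cong} K_n(\cala[t,t^{-1}])
\]
already mentioned in the statement, and by construction $\delta_n$ is its inverse followed by projection onto the $K_{n-1}(\cala)$ summand.

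Next, I would record that for each sign the inclusion $k_\pm\colon \cala\to\cala[t^{\pm}]$ is split by the evaluation at $t=0$. Hence $\bfNKinfty(\cala[t^{\pm}])$ is a wedge summand of $\bfKinfty(\cala[t^{\pm}])$, giving natural decompositions $K_n(\cala[t^{\pm}])\cong K_n(\cala)\oplus N\!K_n(\cala[t^{\pm}])$ in which $(k_{\pm})_*$ is the inclusion of the first summand. Since $l_{\pm}\circ k_{\pm}$ agrees with the canonical inclusion $\cala\to\cala[t,t^{-1}]$, unraveling the BHS splitting shows that $(l_{\pm})_*$ carries the $K_n(\cala)$ summand identically onto the $K_n(\cala)$ summand of $K_n(\cala[t,t^{-1}])$ and the $N\!K_n(\cala[t^{\pm}])$ summand identically onto the corresponding summand, while avoiding the $K_{n-1}(\cala)$ direct summand altogether.

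With all four arrows of the sequence expressed in these coordinates, exactness reduces to a short direct inspection: $(k_+)_*\oplus -(k_-)_*$ sends $z\mapsto((z,0),(-z,0))$, the map $(l_+)_*\oplus(l_-)_*$ sends $((a,\nu_+),(b,\nu_-))\mapsto (a+b,0,\nu_+,\nu_-)$, and $\delta_n$ is projection onto $K_{n-1}(\cala)$. Injectivity at the left, exactness at the middle (the kernel $\{((a,0),(-a,0))\}$ of the second map agrees with the image of the first), exactness at the third position ($\ker\delta_n=\{(x,0,\nu_+,\nu_-)\}$ is precisely the image of the second map), and surjectivity of $\delta_n$ are then all immediate. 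A compatible splitting of each map, e.g.\ the section $(x,y,\nu_+,\nu_-)\mapsto((x,\nu_+),(0,\nu_-))$ for the middle map, can be read off from the same decomposition.

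The principal obstacle is bookkeeping: correctly identifying the various inclusion-induced maps $(k_\pm)_*$, $(l_\pm)_*$ with the inclusions and projections coming from the BHS summand decomposition, and tracking the sign in $-(k_-)_*$ so that the image of the first map matches the kernel of the second. Once this identification is made, the theorem becomes purely formal, and naturality in $\cala$ is inherited from the naturality statement in Theorem~\ref{the:BHS_decomposition_for_non-connective_K-theory}.
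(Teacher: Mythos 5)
Your proposal is correct and is essentially the paper's own argument: the paper simply states that the theorem "follows directly from the untwisted version of Theorem~\ref{the:BHS_decomposition_for_non-connective_K-theory}," and your write-up spells out precisely that deduction, identifying $\bfT_{\bfKinfty(\id)}\simeq \bfKinfty(\cala)\vee\Sigma\bfKinfty(\cala)$ and reading off exactness of the four-term sequence from the summand decomposition. The bookkeeping of $(k_\pm)_*$ and $(l_\pm)_*$ in BHS coordinates is exactly the intended content.
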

  \begin{proof} This follows directly from the untwisted version of
    Theorem~\ref{the:BHS_decomposition_for_non-connective_K-theory}.
  \end{proof}

There is also a version for the \emph{connective $K$-theory spectrum} $\bfK$. Denote by
 $\addcatic\subset\addcat$ the full subcategory of idempotent complete additive categories.

 \begin{theorem}[The Bass-Heller-Swan decomposition for connective $K$-theory of additive
   categories]\label{the:BHS_decomposition_for_connective_K-theory}
   Let $\cala$ be an additive category, which is idempotent complete. Let
   $\Phi \colon \cala \to \cala$ be an automorphism of additive categories.

  \begin{enumerate}
  \item\label{the:BHS_decomposition_for_connective_K-theory:BHS-iso}

  Then there is a weak equivalence of  spectra, natural in $(\cala,\Phi)$,
  \[
  \bfa \vee \bfb_+\vee \bfb_- \colon \bfT_{\bfK(\Phi^{-1})} \vee
  \bfNK(\cala_{\Phi}[t]) \vee \bfN\bfK(\cala_{\Phi}[t^{-1}]) \xrightarrow{\simeq}
  \bfK(\cala_{\Phi}[t,t^{-1}])
  \]
  where $\bfT_{\bfK(\Phi^{-1})}$ is the mapping torus of
  $\bfK(\Phi^{-1}) \colon \bfK(\cala) \to \bfK(\cala)$ and $\bfNK(\cala_{\Phi}[t^{\pm}])$
  is the homotopy fiber of the map $\bfK(\cala_{\Phi}[t^{-1}]) \to \bfK(\cala)$ given by
  evaluation $t = 0$;
  
\item\label{the:BHS_decomposition_for_connective_K-theory:Nil} There exist a functor
  $\bfE\colon (\addcatic)^\IZ\to\Spectra$ and weak homotopy equivalences of spectra,
  natural in $(\cala,\Phi)$,
  \begin{eqnarray*}
   \Omega \bfN\bfK(\cala_{\Phi}[t]) & \xleftarrow{\simeq} & \bfE(\cala,\Phi);
   \\
   \bfK(\cala) \vee  \bfE (\cala,\Phi)  & \xrightarrow{\simeq} & \bfK(\Nil(\cala,\Phi)),
  \end{eqnarray*}
  where $\bfK(\Nil(\cala,\Phi))$ is the connective $K$-theory of a certain  Nil-category $\Nil(\cala,\Phi)$.
   \end{enumerate}
\end{theorem}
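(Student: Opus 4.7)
The plan is to follow the argument used for the non-connective version (Theorem~\ref{the:BHS_decomposition_for_non-connective_K-theory}, from [Lück-Steimle(2016BHS)]) but carried out directly at the level of connective $K$-theory. The naive strategy of simply taking connective covers of the non-connective BHS equivalence is insufficient: even if $\cala$ is idempotent complete, $\cala_{\Phi}[t,t^{-1}]$ is typically not, so $\bfK(\cala_{\Phi}[t,t^{-1}])$ need not coincide with the connective cover of $\bfKinfty(\cala_{\Phi}[t,t^{-1}])$, and $K_0$ of the two can differ by negative $K$-terms of the latter.

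First I would construct $\bfa$, $\bfb_+$, $\bfb_-$ as maps of connective spectra. The map $\bfa$ arises from the observation that in $\cala_{\Phi}[t,t^{-1}]$ the formal symbol $t$ defines a natural isomorphism $A \xrightarrow{\cong} \Phi(A)$, so the two inclusions $\bfK(\cala) \to \bfK(\cala_{\Phi}[t,t^{-1}])$ given by $\id$ and by $\Phi^{-1}$ become canonically homotopic, producing the map out of the mapping torus $\bfT_{\bfK(\Phi^{-1})}$. The maps $\bfb_\pm$ are by definition the canonical maps from the homotopy fibers $\bfNK(\cala_{\Phi}[t^{\pm 1}])$ through $\bfK(\cala_{\Phi}[t^{\pm 1}])$ into $\bfK(\cala_{\Phi}[t,t^{-1}])$. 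To prove that $\bfa\vee\bfb_+\vee\bfb_-$ is a weak equivalence, I would use the Waldhausen chain-complex model of $\bfK$ together with the Bass-Heller-Swan filtration on bounded complexes: a bounded complex in $\cala_{\Phi}[t,t^{-1}]$ decomposes up to quasi-isomorphism into a torus summand and two nil summands coming from $\cala_{\Phi}[t]$ and $\cala_{\Phi}[t^{-1}]$, and Waldhausen additivity promotes this to a spectrum-level inverse. Part~\ref{the:BHS_decomposition_for_connective_K-theory:Nil} is then handled by defining $\bfE(\cala,\Phi)$ as the fiber of the split surjection $\bfK(\Nil(\cala,\Phi)) \to \bfK(\cala)$ induced by $t=0$ and identifying it with $\Omega \bfNK(\cala_{\Phi}[t])$ using the defining fiber sequence of $\bfNK$.

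The main obstacle is controlling $\pi_0$. This is precisely where the idempotent completeness hypothesis on $\cala$ is used: it guarantees that $\Nil(\cala,\Phi)$ is again idempotent complete, so $\bfE$ is well-defined on $(\addcatic)^\IZ$, and it prevents $K_{-1}$-type obstructions for $\cala$ from leaking into the $K_0$-level decomposition of $\cala_{\Phi}[t,t^{-1}]$ (which would otherwise invalidate the chain-complex filtration argument in degree zero). A secondary technical issue is naturality in $(\cala,\Phi)$, which I would address by using the functorial models of $\bfK$ for additive categories developed in [Lück-Steimle(2014delooping)] rather than ad hoc point-set constructions of the homotopy fibers defining $\bfNK$ and of the mapping torus.
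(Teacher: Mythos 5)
First, a point of calibration: the paper does not prove this theorem at all --- it is recalled verbatim from \cite[Theorems~0.1 and~0.4]{Lueck-Steimle(2016BHS)}, immediately after the sentence announcing that both the non-connective and the connective versions are quoted from that reference. So the ``intended proof'' here is a citation, and your outline has to be measured against the argument in that paper, whose general toolkit (functorial Waldhausen models, bounded chain complexes with homotopy nilpotent self-maps, additivity and approximation) it does broadly resemble.

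As a standalone argument, however, your sketch has two genuine gaps. The serious one is in part~(ii): defining $\bfE(\cala,\Phi)$ as the homotopy fiber of the split map $\bfK(\Nil(\cala,\Phi))\to\bfK(\cala)$ does give the second displayed equivalence essentially for free, but the identification $\bfE(\cala,\Phi)\simeq\Omega\bfNK(\cala_{\Phi}[t])$ does \emph{not} follow from ``the defining fiber sequence of $\bfNK$''. That fiber sequence involves only $\bfK(\cala_{\Phi}[t])$ and $\bfK(\cala)$; there is no a priori map relating it to $\bfK(\Nil(\cala,\Phi))$, and constructing one with the correct degree shift (the additive-category analogue of $N\!K_n(R)\cong\widetilde{\Nil}_{n-1}(R)$) is precisely the hard content of the theorem, occupying most of \cite{Lueck-Steimle(2016BHS)} via the comparison of $\Nil(\cala,\Phi)$ with homotopy-nilpotent endomorphisms of bounded complexes and an explicit delooping. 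Secondly, in part~(i) the assertion that ``a bounded complex in $\cala_{\Phi}[t,t^{-1}]$ decomposes up to quasi-isomorphism into a torus summand and two nil summands'' is not true object by object; the decomposition exists only at the level of $K$-theory spectra and is obtained by constructing $\bfa$, $\bfb_{\pm}$ and proving the wedge of these maps is an equivalence through localization/approximation arguments, not by filtering individual complexes. Your remarks about why idempotent completeness is needed (control of $\pi_0$, and $\Nil(\cala,\Phi)$ inheriting idempotent completeness so that $\bfE$ is defined on $(\addcatic)^{\IZ}$) are correct and do reflect the role this hypothesis plays in the cited proof.
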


The purpose of the following sections is to find properties of $\cala$, which imply for
any automorphism $ \Phi$ the vanishing of the Nil-terms above and are hopefully inherited by the
passage from $\cala$ to $\cala[t,t^{-1}]$.


\typeout{------ Section:  $\IZ\cala$-modules and the Yoneda embedding ----------}

\section{$\IZ\cala$-modules and the Yoneda embedding}%
\label{sec:IZ_cala-modules_and_the_Yoneda_embedding}


\subsection{Basics about $\IZ\cala$-modules}%
\label{subsec:Basics_about_IZ_cala-modules}

Let $\cala$ be a  $\IZ$-category. 
We denote by $\MODcatl{\IZ\cala}$ and $\MODcatr{\IZ\cala}$ respectively the
abelian category of covariant or contravariant respectively functors of
$\IZ$-categories $\cala$ to $\MODcatl{\IZ}$.  The abelian structure comes from
the abelian structure in $\MODcatl{\IZ}$.  For instance, a sequence
$F_0 \xrightarrow{T_1} F_1 \xrightarrow{T_2} F_2$ in $\MODcatr{\IZ\cala}$ is declared
to be \emph{exact}, if for each object $A \in \cala$ the evaluation at $A$ yields an exact
sequence of $\IZ$-modules
$F_0(A) \xrightarrow{T_1(A)} F_1(A) \xrightarrow{T_2(A)} F_2(A)$. The cokernel and kernel
of a morphism $T \colon F_0 \to F_1$ are defined by taking for each object $A \in \cala$
the kernel or cokernel of the morphism $T(A) \colon F_0(A) \to F_1(A)$ in
$\MODcatr{\IZ}$.

In the sequel $\IZ\cala$-module means contravariant $\IZ\cala$-module, unless
specified explicitly differently.

Given an object $A$ in $\cala$, we obtain an object $\mor_{\cala}(?,A)$ in
$\MODcatr{\IZ\cala}$ by assigning to an object $B$ the $\IZ$-module
$\mor_{\cala}(B,A)$ and to a morphism $g \colon B_0 \to B_1$ the $\IZ$-homomorphism
$g^* \colon \mor_{\cala}(B_1,A) \to \mor_{\cala}(B_0,A)$ given by precomposition with $g$.

The elementary proof of the following lemma is left to the reader.

\begin{lemma}[Yoneda Lemma]\label{lem:Yoneda_lemma}
For each object $A$ in $\cala$ and each object $M$ in $\MODcatr{\IZ\cala}$, we obtain an
isomorphism of $\IZ$-modules
\[
\mor_{\MODcatr{\IZ\cala}}(\mor_{\cala}(?,A),M(?)) \xrightarrow{\cong} M(A), \quad T \mapsto T(A)(\id_A).
\]
\end{lemma}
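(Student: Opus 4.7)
The plan is to construct an explicit inverse to the evaluation map $T \mapsto T(A)(\id_A)$ and verify the two compositions are identities, following the standard Yoneda argument but paying attention to the $\IZ$-enrichment.

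First I would note that the evaluation map is visibly a $\IZ$-homomorphism, since addition of natural transformations in $\MODcatr{\IZ\cala}$ is defined componentwise and each $T(A)$ is itself a $\IZ$-homomorphism. Then, given $m \in M(A)$, I would define a candidate inverse by setting
\[
T_m(B) \colon \mor_{\cala}(B,A) \to M(B), \quad f \mapsto M(f)(m).
\]
This is $\IZ$-linear in $f$ because $M$, being a functor of $\IZ$-categories, is $\IZ$-linear on morphism sets, so $M(f + f')(m) = M(f)(m) + M(f')(m)$. For naturality, given $g \colon B_0 \to B_1$ and $f \colon B_1 \to A$, one computes
\[
T_m(B_0)(g^*(f)) = T_m(B_0)(f \circ g) = M(f \circ g)(m) = M(g)\bigl(M(f)(m)\bigr) = M(g)\bigl(T_m(B_1)(f)\bigr),
\]
using that $M$ is contravariantly functorial. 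Hence $T_m$ is a morphism in $\MODcatr{\IZ\cala}$, and the assignment $m \mapsto T_m$ is clearly $\IZ$-linear in $m$.

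To finish I would check that the two composites are the identity. On the one hand, $T_m(A)(\id_A) = M(\id_A)(m) = m$, so evaluation $\circ$ ``$m \mapsto T_m$'' is the identity on $M(A)$. On the other hand, given $T \colon \mor_{\cala}(?,A) \to M(?)$ and setting $m := T(A)(\id_A)$, naturality of $T$ applied to $f \colon B \to A$ gives the commutative square
\[
T(B)(f) = T(B)(f^*(\id_A)) = M(f)\bigl(T(A)(\id_A)\bigr) = M(f)(m) = T_m(B)(f),
\]
so $T_m = T$ as natural transformations. Thus the two maps are mutually inverse $\IZ$-homomorphisms.

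There is no real obstacle here; the only point that distinguishes this from the ordinary categorical Yoneda Lemma is the need to keep track of $\IZ$-linearity of all maps involved, and this is automatic from the fact that $M$ is a functor of $\IZ$-categories (so $M$ is additive on morphism sets) and that $\mor_{\MODcatr{\IZ\cala}}$ inherits its $\IZ$-module structure pointwise.
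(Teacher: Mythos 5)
Your proof is correct and is precisely the standard Yoneda argument, with the $\IZ$-linearity checks (using that $M$ is a functor of $\IZ$-categories, so $M(f+f')=M(f)+M(f')$) that the paper's setting requires; the paper explicitly leaves this elementary proof to the reader, and yours is the intended one.
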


We call a $\IZ\cala$-module $M$ \emph{free}, if it is isomorphic as $\IZ\cala$-module to
$\bigoplus_{I} \mor_{\cala}(?,A_i)$ for some collection of objects $\{A_i \mid i \in I\}$
in $\cala$ for some index set $I$. A $\IZ\cala$-module $M$ is called \emph{projective}, if
for any epimorphism $p \colon N_0 \to N_1$ of $\IZ\cala$-modules and morphism
$f \colon M \to N_1$ there is a morphism $\overline{f} \colon M \to N_0$ with
$p \circ \overline{f} = f$.  A $\IZ\cala$-module $M$ is \emph{finitely generated}, if there
exists a collection of objects $\{A_j \mid j \in J\}$ in $\cala$ for some finite index set
$J$ and an epimorphism of $\IZ\cala$-modules
$\bigoplus_{j \in J} \mor_{\cala}(?,A_j) \to M$.  Equivalently, $M$ is finitely generated,
if there exists a finite collection of objects $\{A_j \mid j \in J\}$ in $\cala$ together
with elements $x_j \in M(A_j)$ such that for any object $A$ and any $x \in M(A)$ there are
morphisms $\varphi \colon A \to A_j$ such that $x = \sum_{j} M(\varphi_j)(x_j)$.  (The
$x_j$ are the images of $\id_{A_j}$ under the above epimorphism.)  Given a collection of
objects $\{A_i \mid i \in I\}$ in $\cala$ for some index set $I$, the free
$\IZ\cala$-module $\bigoplus_{I} \mor_{\cala}(?,A_i)$ is finitely generated, if and only if
$I$ is finite. A $\IZ\cala$-module $M$ is \emph{finitely presented}, if there are finitely
generated free $\IZ\cala$-modules $F_1$ and $F_0$ and an exact sequence
$F_1 \to F_0 \to M \to 0$.  We say that a $\IZ\cala$-module \emph{has projective dimension
  $\le d$}, denoted by $\pdim_{\IZ\cala}(M) \le d$, for a natural number $d$, if there
exists an exact sequence $0 \to P_d \to P_{d-1} \to \cdots \to P_1 \to P_0 \to M \to 0$
such that each $\IZ \cala$-module $P_i$ is projective. If we replace projective by free,
we get an equivalent definition, if $d \ge 1$.  We call a $\IZ\cala$-module \emph{of type
  $\FL$} or \emph{of type $\FP$} respectively, if there exists an exact sequence of finite
length $0 \to F_n \to F_{n-1} \to \cdots \to F_1 \to F_0 \to M \to 0$ such that each
$\IZ \cala$-module $F_i$ is finitely generated free or finitely generated projective
respectively.

\begin{remark}\label{rem:comparing_notions_of_IZCala_modules}
Note the setting in this paper is different from the one appearing in~\cite{Lueck(1989)},
since here a $\IZ\cala$-module $M$ satisfies $M(f+g) = M(f) + M(g)$ for two morphisms
$f,g \colon A \to B$, which is not required in~\cite{Lueck(1989)}.  Nevertheless many of
the arguments in~\cite{Lueck(1989)} carry over to the setting of this paper because of the
Yoneda Lemma~\ref{lem:Yoneda_lemma}, which replaces the corresponding Yoneda Lemma
in~\cite[Subsection~9.16 on page 167]{Lueck(1989)}.
\end{remark} 

However, the next result has no analogue in the setting of~\cite{Lueck(1989)}.

\begin{lemma}\label{lem:compatibility_with_direct_sums} Let $\cala$ be an additive
  category. For a $\IZ\cala$-module $M$ and objects $A_1, A_2,\ldots, A_n$, we obtain a natural
  isomorphism
  \[
    \bigoplus_{i=1}^n M(\pr_i) \colon \bigoplus_{i = 1}^n M(A_i) \xrightarrow{\cong}
    M\biggl(\bigoplus_{i = 1}^n A_i\biggr),
  \]
  where $\pr_j \colon \bigoplus_{i=1}^n A_i \to A_j$ is the canonical projection for
  $j = 1,2 \ldots, n$.
\end{lemma}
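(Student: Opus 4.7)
The plan is to explicitly construct the inverse using the structural morphisms of the direct sum and the additivity of $M$.

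Set $A := \bigoplus_{i=1}^n A_i$ and let $i_k \colon A_k \to A$ denote the canonical inclusions, so that the standard biproduct relations hold in $\cala$:
\[
\pr_k \circ i_k = \id_{A_k}, \quad \pr_k \circ i_l = 0 \text{ for } k \neq l, \quad \sum_{k=1}^n i_k \circ \pr_k = \id_A.
\]
These relations are available because $\cala$ is additive and finite direct sums in an additive category are automatically biproducts. First, I would note that $M$, being a $\IZ\cala$-module in the sense of this paper, is additive on morphisms and hence sends the zero morphism to the zero homomorphism (since $M(0) = M(0+0) = M(0) + M(0)$).

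Next, I would define the candidate inverse $\psi \colon M(A) \to \bigoplus_{k=1}^n M(A_k)$ as the map whose $k$-th component is $M(i_k)$ (recall $M$ is contravariant, so $M(i_k) \colon M(A) \to M(A_k)$). Call the map in the statement $\varphi := \bigoplus_{i=1}^n M(\pr_i)$, so its restriction to the $l$-th summand is $M(\pr_l) \colon M(A_l) \to M(A)$. To check $\psi \circ \varphi = \id$, observe that on the $l$-th summand $M(A_l)$ the $k$-th component of $\psi \circ \varphi$ is $M(i_k) \circ M(\pr_l) = M(\pr_l \circ i_k)$, which equals $\id_{M(A_l)}$ when $k = l$ and $0$ otherwise by the biproduct relations and the vanishing of $M$ on zero morphisms.

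For the other composite, I would compute
\[
\varphi \circ \psi = \sum_{k=1}^n M(\pr_k) \circ M(i_k) = \sum_{k=1}^n M(i_k \circ \pr_k),
\]
and then use the additivity of $M$ on the parallel endomorphisms $i_k \circ \pr_k$ of $A$ to rewrite this as $M\bigl(\sum_{k=1}^n i_k \circ \pr_k\bigr) = M(\id_A) = \id_{M(A)}$. Naturality in $M$ is immediate since all maps are defined via $M$ applied to the fixed morphisms $\pr_k, i_k$. The only subtle point — which I expect to be the main (minor) obstacle — is keeping track of the direction reversal from the contravariance of $M$ and explicitly invoking additivity of $M$ (as emphasized in Remark~\ref{rem:comparing_notions_of_IZCala_modules}) to pull the sum $\sum i_k \circ \pr_k$ out of $M$; without the additivity hypothesis the argument collapses.
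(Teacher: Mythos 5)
Your proof is correct and follows exactly the same route as the paper: the paper's proof also defines the inverse by $x \mapsto (M(k_i)(x))_i$ using the canonical inclusions and appeals to the compatibility of $M$ with the $\IZ$-module structure on morphisms. You have merely spelled out the verification of the two composites (biproduct relations plus additivity of $M$) that the paper leaves as "one easily checks."
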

\begin{proof}
  One easily checks, using the fact that the functor $M$ is compatible with the
  $\IZ$-module structures on the morphisms, that the inverse is given
  \[
    M\biggl(\bigoplus_{i = 1}^n A_i\biggr) \to \bigoplus_{i = 1}^n M(A_i), \quad x \mapsto
    \bigl(M(k_i)(x)\bigr)_i,
  \]
  where $k_j \colon A_j \to \bigoplus_{i=1}^n A_i$ is the inclusion of the $j$-the summand
  for $j = 1,2 \ldots, n$.
\end{proof}

\begin{lemma}\label{lem:IZ_cala_modules}
Let $\cala$ be a $\IZ$-category.
\begin{enumerate}

\item\label{lem:IZ_cala_modules:stably-free_implies_projective}
Every free $\IZ \cala$-module is projective;

\item\label{lem:IZ_cala_modules:FF_and_exact_sequences} Let $0 \to M \to M' \to M''\to 0$
  be an exact sequence of $\IZ\cala$-modules.  If both $M$ and $M''$ are free or
  projective respectively, then $M'$ is free or projective respectively;

\item\label{lem:IZ_cala_modules:exact_sequences_FL} Let $0 \to M \to M' \to M''\to 0$
  be an exact sequence of $\IZ\cala$-modules.  If two of the $\IZ\cala$-modules
  $M$, $M'$ and $M''$ are of type $\FL$ or $\FP$ respectively, then all three are of type
  $\FL$ or $\FP$ respectively;

\item\label{lem:IZ_cala_modules:finite_dimension} Let $C_*$ be a projective
  $\IZ\cala$-chain complex i.e., a $\IZ\cala$-chain complex, all whose chain
  modules $C_n$ are projective.  Then the following assertions are equivalent:
  \begin{enumerate}
  \item\label{lem:IZ_cala_modules:finite_dimension:(1)} Consider a natural number
    $d$.  Let $B_d(C_*)$ be the image of $c_{d+1} \colon C_{d+1} \to C_d$ and
    $j \colon B_d(C) \to C_d$ be the inclusion.  There is a $\IZ\cala$-submodule
    $C_d^{\perp}$ such that for the inclusion $i \colon C_d^{\perp} \to C_d$ the map
    $i \oplus j \colon C_d^{\perp} \oplus B_d(C_*) \to C_d$ is an isomorphism.  Moreover,
    the following chain map from a $d$-dimensional projective $\IZ\cala$-chain complex
    to $C_*$ is a $\IZ\cala$-chain homotopy equivalence
\[
\quad \quad \quad \quad \quad \xymatrix{
\cdots \ar[r]
& 
0 \ar[r] \ar[d]
& 
0 \ar[r] \ar[d]
& 
C_d^{\perp} \ar[r]^{c_d \circ i} \ar[d]^{i}
& 
C_{d-1} \ar[r]^{c_{d-1}} \ar[d] ^{\id_{C_{d-1}}}
& 
\cdots \ar[r]^{c_1} 
&
C_0 \ar[d]^{\id_{C_0}}
\\
\cdots \ar[r]^{c_{d+3}}
& 
C_{d+2} \ar[r]^{c_{d+2}}
& 
C_{d+1}\ar[r]^{c_{d+1}} 
& 
C_d \ar[r]^{c_d}
& 
C_{d-1} \ar[r]^{c_{d-1}}
& 
\cdots \ar[r]^{c_1} 
&
C_0; 
\\
}
\]
\item\label{lem:IZ_cala_modules:finite_dimension:(2)} $C_*$ is $\IZ\cala$-chain
  homotopy equivalent to a $d$-dimensional projective $\IZ\cala$-chain complex;
\item\label{lem:IZ_cala_modules:finite_dimension:(3)} $C_*$ is dominated by
  $d$-dimensional projective $\IZ\cala$-chain complex $D_*$, i.e., there are
  $\IZ\cala$-chain maps $i \colon C_* \to D_*$ and $r_* \colon D_* \to   C_*$ 
  satisfying $r_* \circ i_* \simeq \id_{C_*}$;
\item\label{lem:IZ_cala_modules:finite_dimension:(4)}
  $B_d(C_*)$ is a direct summand in $C_d$ and $H_i(C_*) = 0$ for $i \ge d+1$;
\item\label{lem:IZ_cala_modules:finite_dimension:(5)} $H^{d+1}_{\IZ\cala}(C_*;M) :=
  H^{d+1}(\hom_{\IZ\cala}(C_*,M))$ vanishes for every $\IZ\cala$-module $M$ and
  $H_i(C_*) = 0$ for all $i  \geq d+1$;
\end{enumerate}

\item\label{lem:IZ_cala_modules:exact_sequences_homological_dimension} Let
  $0 \to M \to M' \to M''\to 0$ be an exact sequence of $\IZ\cala$-modules.
  \\  If $\pdim_{\IZ\cala}(M), \pdim_{\IZ\cala}(M'') \leq d$, then $\pdim_{\IZ\cala}(M') \le d$;
  \\  If $\pdim_{\IZ\cala}(M), \pdim_{\IZ\cala}(M') \leq d$, then $\pdim_{\IZ\cala}(M'') \leq  d+1$;
  \\  If $\pdim_{\IZ\cala}(M') \leq d$, $\pdim_{\IZ\cala}(M'') \leq d+1$, then $\pdim_{\IZ\cala}(M) \leq d$;

\item\label{lem:IZ_cala_modules:additive} Suppose that  $\cala$ is an additive
  category. For two objects $A_0$ and $A_1$ in
  $\cala$ together with a choice of a direct sum $i_k \colon A_k \to A_0 \oplus A_1$ for
  $k = 0,1$, the induced $\IZ$-map
  \[
    {i_0}_* \oplus{i_1}_* \colon \mor_{\cala}(?,A_0) \oplus \mor_{\cala}(?,A_1)
    \xrightarrow{\cong} \mor_{\cala}(?,A_0 \oplus A_1)
  \]
  is an isomorphism.  In particular each finitely generated free
  $\IZ\cala$-module is isomorphic to $\IZ\cala$-module of the shape
  $\mor_{\cala}(?,A)$ for an appropriate object $A$ in $\cala$.

\end{enumerate}
\end{lemma}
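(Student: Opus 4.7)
The plan is to treat the parts in an order that lets later parts reuse the machinery set up earlier. First I would dispatch~\ref{lem:IZ_cala_modules:additive}, since it is a pure consequence of the Yoneda Lemma~\ref{lem:Yoneda_lemma}. Indeed, for any $\IZ\cala$-module $N$, applying Yoneda and Lemma~\ref{lem:compatibility_with_direct_sums} gives natural isomorphisms
\[
\mor(\mor_{\cala}(?,A_0 \oplus A_1),N) \cong N(A_0 \oplus A_1) \cong N(A_0) \oplus N(A_1) \cong \mor(\mor_{\cala}(?,A_0),N) \oplus \mor(\mor_{\cala}(?,A_1),N),
\]
and this natural isomorphism identifies with the one induced by ${i_0}_* \oplus {i_1}_*$; the second clause then follows by induction on the number of summands. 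Next I would prove~\ref{lem:IZ_cala_modules:stably-free_implies_projective}: given a surjection $p \colon N_0 \to N_1$ and a map $f \colon \mor_{\cala}(?,A) \to N_1$, Yoneda identifies $f$ with an element $x \in N_1(A)$ which lifts to $N_0(A)$ since $p(A)$ is surjective, and this lift corresponds via Yoneda to the required $\overline{f}$. For a free module $\bigoplus_I \mor_{\cala}(?,A_i)$ one then lifts componentwise.

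For~\ref{lem:IZ_cala_modules:FF_and_exact_sequences}, I would use~\ref{lem:IZ_cala_modules:stably-free_implies_projective} to split the sequence: if $M''$ is projective (resp.\ free, hence projective), the sequence splits, so $M' \cong M \oplus M''$; if both $M$ and $M''$ are free, then so is $M \oplus M''$. For~\ref{lem:IZ_cala_modules:exact_sequences_FL} I would run the standard horseshoe argument. Given finite free (resp.\ projective) resolutions $F_* \to M$ and $F''_* \to M''$, one builds a length-$n$ free resolution $F_* \oplus F''_* \to M'$ using projectivity of $F''_n$ to lift; the two-out-of-three cases for $M$ and $M'$ require truncating a kernel argument that again rests on~\ref{lem:IZ_cala_modules:FF_and_exact_sequences}. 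Part~\ref{lem:IZ_cala_modules:exact_sequences_homological_dimension} uses the same kind of chase: take projective resolutions, form the long exact sequence of $\Ext$, and apply the dimension shifting implicit in~\ref{lem:IZ_cala_modules:FF_and_exact_sequences}.

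Part~\ref{lem:IZ_cala_modules:finite_dimension} is the most substantial; the abelian category $\MODcatr{\IZ\cala}$ has enough projectives by~\ref{lem:IZ_cala_modules:stably-free_implies_projective}, so the standard chain-complex arguments apply verbatim, but one must actually carry them out. I would argue in a cycle. The direction \ref{lem:IZ_cala_modules:finite_dimension:(1)}$\Rightarrow$\ref{lem:IZ_cala_modules:finite_dimension:(2)} is immediate from the explicit chain map displayed in the statement; \ref{lem:IZ_cala_modules:finite_dimension:(2)}$\Rightarrow$\ref{lem:IZ_cala_modules:finite_dimension:(3)} is trivial. For \ref{lem:IZ_cala_modules:finite_dimension:(3)}$\Rightarrow$\ref{lem:IZ_cala_modules:finite_dimension:(4)}: a dominated chain complex has the same homology as its dominator, so vanishing above dimension $d$ transfers, and $B_d(C_*) \to C_d$ admits a retraction by transporting a splitting of $B_d(D_*) \hookrightarrow D_d$ through the domination data. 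For \ref{lem:IZ_cala_modules:finite_dimension:(4)}$\Leftrightarrow$\ref{lem:IZ_cala_modules:finite_dimension:(5)}: given \ref{lem:IZ_cala_modules:finite_dimension:(4)}, the complement $C_d^{\perp}$ of $B_d(C_*)$ is a direct summand of the projective $C_d$, hence projective, and one checks $H^{d+1}$ vanishes by a cocycle computation using the splitting; the converse is the standard characterization via cocycles on $B_d$. Finally \ref{lem:IZ_cala_modules:finite_dimension:(4)}$\Rightarrow$\ref{lem:IZ_cala_modules:finite_dimension:(1)}: the direct summand decomposition $C_d \cong C_d^{\perp} \oplus B_d(C_*)$ is exactly the data required, and chain contracting the higher part is routine.

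The main obstacle I expect is the bookkeeping in \ref{lem:IZ_cala_modules:finite_dimension}\ref{lem:IZ_cala_modules:finite_dimension:(3)}$\Rightarrow$\ref{lem:IZ_cala_modules:finite_dimension:(4)}: one needs to combine the chain homotopy data $r_* \circ i_* \simeq \id_{C_*}$ with a chosen splitting of $B_d(D_*) \hookrightarrow D_d$ to produce a splitting of $B_d(C_*) \hookrightarrow C_d$, and the naive pullback by $r_*$ does not land in $B_d(C_*)$; one has to correct by a chain-homotopy term. Once this is handled, the rest is standard. The lack of Noetherian hypotheses on $\cala$ is harmless throughout because every argument uses only finite generation, projectivity of morphism functors, and the Yoneda Lemma.
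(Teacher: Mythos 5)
Your proposal is correct and takes essentially the same route as the paper, which simply delegates parts~\ref{lem:IZ_cala_modules:exact_sequences_FL} and~\ref{lem:IZ_cala_modules:finite_dimension} to the standard homological arguments in L\"uck's book and treats the remaining parts as direct consequences of the Yoneda Lemma; you flesh out exactly those standard arguments. One small correction to your sketch of \ref{lem:IZ_cala_modules:finite_dimension:(3)}$\Rightarrow$\ref{lem:IZ_cala_modules:finite_dimension:(4)}: there is no splitting of $B_d(D_*) \hookrightarrow D_d$ to transport, since $D_{d+1}=0$ forces $B_d(D_*)=0$; the actual mechanism is that a chain homotopy $h$ with $ch+hc=\id - r i$ satisfies $c_{d+1}h_d = \id$ on $B_d(C_*)$ (because $i_d$ kills $B_d(C_*)$, as it factors through $D_{d+1}=0$ and $c_d$ vanishes on boundaries), so $c_{d+1}\circ h_d \colon C_d \to B_d(C_*)$ is already the desired retraction. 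Likewise, domination only makes $H_i(C_*)$ a retract of $H_i(D_*)$ rather than isomorphic to it, but that suffices for the vanishing.
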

\begin{proof}~\ref{lem:IZ_cala_modules:stably-free_implies_projective} This follows
  from the Yoneda Lemma~\ref{lem:Yoneda_lemma}.
  \\[1mm]~\ref{lem:IZ_cala_modules:FF_and_exact_sequences} This is obviously true.
  \\[1mm]~\ref{lem:IZ_cala_modules:exact_sequences_FL} The proof is
  analogous to the one of~\cite[Lemma~11.6 on page~216]{Lueck(1989)}. 
  \\[1mm]~\ref{lem:IZ_cala_modules:finite_dimension} The proof is analogous to the one
  of~\cite[Proposition~11.10 on page~221]{Lueck(1989)}.
  \\[1mm]~\ref{lem:IZ_cala_modules:exact_sequences_homological_dimension} This follows
  from~\ref{lem:IZ_cala_modules:finite_dimension} 
  for the projective dimension using the long exact {(co)}ho\-mo\-logy
  sequence associated to a short exact sequence of {(co)}chain complexes, since every
  $\IZ\cala$-module has a free resolution by the Yoneda Lemma~\ref{lem:Yoneda_lemma}.
  \\[1mm]~\ref{lem:IZ_cala_modules:additive} This is obvious and hence 
  the proof of Lemma~\ref{lem:IZ_cala_modules} is finished.
\end{proof}

If $M$ and $N$ are $\IZ\cala$-modules, then
$\hom_{\IZ\cala}(M,N)$ is the $\IZ$-module of $\IZ\cala$-homomorphisms
$M \to N$.  Given a contravariant or covariant $\IZ\cala$-module $M$ and a
$\IZ$-module $T$, we obtain a covariant or contravariant $\IZ\cala$-module
$\hom_{\IZ}(M,T)$ by sending an object $A$ to $\hom_{\IZ}(M(A),T)$. Given a
contravariant $\cala$-module $M$ and covariant $\IZ\cala$-module $N$, their
\emph{tensor product} $M \otimes_{\IZ \cala} N$ is the $\IZ$-module given by
$\bigoplus_{A \in \ob(\cala)} M(A) \otimes_{\IZ} N(A)/T$. Here $T$ is the
$\IZ$-submodule of $\bigoplus_{A \in \ob(\cala)} M(A) \otimes_{\IZ} N(A)$
generated by elements of the form $mf \otimes n - m \otimes fn$ for a morphism
$f \colon A \to B$ in $\cala$, $m \in M(A)$ and $n \in N(B)$, where $mf := M(f)(m)$ and
$fn = N(f)(n)$.  It is characterized by the property that for any $\IZ$-module $T$,
there are natural adjunction isomorphisms
\begin{eqnarray}
\hom_{\IZ}(M \otimes_{\IZ\cala} N,T) & \xrightarrow{\cong} & \hom_{\IZ\cala}(M,\hom_{\IZ}(N,T));
\label{adjunction_otimes_hom_N}
\\
\hom_{\IZ}(M \otimes_{\IZ\cala} N,T) & \xrightarrow{\cong} & \hom_{\IZ\cala}(N,\hom_{\IZ}(M,T)).
\label{adjunction_otimes_hom_M}
\end{eqnarray}

Let $F \colon \cala \to \calb$ be a functor of $\IZ$-categories. Then the restriction functor
\[
F^* \colon \MODcatr{\IZ\calb} \to \MODcatr{\IZ\cala}
\]
is given by precomposition with $F$. The induction functor 
\[
F_* \colon \MODcatr{\IZ\cala} \to \MODcatr{\IZ\calb}
\]
sends a contravariant $\IZ\cala$-module $M$ to
$M(?)  \otimes_{\IZ\cala}  \mor_{\calb}(??,F(?))$.  We get for a $\IZ\calb$-module an identification
$F^*N = \hom_{\IZ\calb}(\mor_{\cala}(??,F(?)),N(??))$ from the Yoneda
Lemma~\ref{lem:Yoneda_lemma}. We conclude from~\eqref{adjunction_otimes_hom_N}  
\begin{equation}
\hom_{\IZ\calb}(F_*M,N) \xrightarrow{\cong} \hom_{\IZ\cala}(M,F^*N)
\label{adjunction_(F_ast,F_upper_ast)}
\end{equation} 
for a $\IZ\cala$-module $M$ and a $\IZ\calb$-module $N$.  The counit
$\beta(N) \colon F_*F^*(N) \to N$ of the adjunction~\eqref{adjunction_(F_ast,F_upper_ast)}
is the adjoint of $\id_{F^*N}$ and sends the equivalence class of $x \otimes f$ for
$x \in N(F(A))$ and $f \in \mor_\calb(B,F(A))$ to $xf = N(f)(x)$.  The unit
$\alpha(M) \colon M \to F^*F_*(M)$ is the adjoint of $\id_{F_*M}$ and sends $x \in M(A)$
to the equivalence class of $x \otimes \id_{F(A)}$.

The functor $F^*$ is 
flat. The functor $F_*$ is compatible with direct sums over arbitrary index sets, is right
exact, see~\cite[Theorem~2.6.1. on page~51]{Weibel(1994)}, and $F_*\mor_{\IZ\cala}(?,C)$
is $\IZ\calb$-isomorphic to $\mor_{\IZ\calb}(?,F(C))$. In particular $F_*$ respect the
properties finitely generated, free, and projective.


\subsection{The Yoneda embedding}%
\label{subsec:The_Yoneda_embedding}

The \emph{Yoneda embedding} is the following covariant functor
\begin{equation} \iota \colon \cala \to \MODcatr{\IZ\cala}.
 \label{Yoneda_embedding}
\end{equation}
It sends an object $A$ to $\iota(A) = \mor_{\cala}(?,A)$ and a morphism
$f \colon A_0 \to A_1$ to the transformation
$\iota(f) \colon \mor_{\cala}(?,A_0) \to \mor_{\cala}(?,A_1)$ given by composition with
$f$. Let $\MODcatr{\IZ\cala}_{\cala}$ be the full subcategory of $\MODcatr{\IZ\cala}$
consisting of $\IZ\cala$-modules $\mor_{\cala}(?,A)$ for any object $A$ in $\cala$.
Let $\MODcatr{\IZ\cala}_{\fgf}$ be the full subcategory of $\MODcatr{\IZ\cala}$
consisting of finitely generated free $\IZ\cala$-modules.

\begin{definition}\label{def:exact_sequence_in_cala}
Let $\cala$ be a $\IZ$-category. We call a sequence
$A_0 \xrightarrow{f_0} A_1 \xrightarrow{f_1} A_2$ in $\cala$ \emph{exact at $A_1$}, if
$f_1 \circ f_0 = 0$ and, for every object $A$ and morphism $g \colon A \to A_1$ with
$f_1 \circ g = 0$, there exists a morphism $\overline{g} \colon A \to A_0$ with
$f_0 \circ \overline{g} = g$.
\end{definition}

\begin{lemma}\label{lem:Yoneda_equivalence} 
 If $\cala$ is a $\IZ$-category, the Yoneda embedding~\eqref{Yoneda_embedding}
  induces an equivalence of $\IZ$-categories denoted by the same symbol
\[
\iota \colon \cala \to \MODcatr{\IZ\cala}_{\cala}.
\]
If $\cala$ is an additive  category, the Yoneda embedding~\eqref{Yoneda_embedding}
  induces an equivalence of  additive categories denoted by the same symbol
\[
\iota \colon \cala \to \MODcatr{\IZ\cala}_{\fgf}.
\]
Both functors are faithfully flat.
\end{lemma}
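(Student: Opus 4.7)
The plan is to assemble the statement from three pieces: full-faithfulness, essential surjectivity, and faithful flatness, each of which is essentially packaged by results already in the excerpt.

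First I would verify that $\iota$ is fully faithful. For objects $A, B$ in $\cala$, the Yoneda Lemma~\ref{lem:Yoneda_lemma} gives a natural isomorphism
\[
\mor_{\MODcatr{\IZ\cala}}\bigl(\mor_{\cala}(?,A),\mor_{\cala}(?,B)\bigr) \xrightarrow{\cong} \mor_{\cala}(A,B), \quad T \mapsto T(A)(\id_A),
\]
and an inverse sends $f \colon A \to B$ to post-composition with $f$, which by construction is exactly $\iota(f)$. This is manifestly a $\IZ$-module isomorphism and covers the $\IZ$-category case. In the additive setting, one checks additionally that $\iota$ is a functor of additive categories; the compatibility with direct sums comes from Lemma~\ref{lem:IZ_cala_modules}~\ref{lem:IZ_cala_modules:additive}.

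Next I would treat essential surjectivity. For the first claim this is tautological, since $\MODcatr{\IZ\cala}_{\cala}$ is by definition the full subcategory spanned by the modules $\mor_{\cala}(?,A)$. For the additive case, an arbitrary object of $\MODcatr{\IZ\cala}_{\fgf}$ is of the form $\bigoplus_{i=1}^n \mor_{\cala}(?,A_i)$ and Lemma~\ref{lem:IZ_cala_modules}~\ref{lem:IZ_cala_modules:additive} identifies this with $\mor_{\cala}(?,A_1 \oplus \cdots \oplus A_n)$, so every finitely generated free $\IZ\cala$-module lies in the essential image of $\iota$.

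Finally, for faithful flatness, the key observation is that Definition~\ref{def:exact_sequence_in_cala} is tailored precisely so that a sequence $A_0 \xrightarrow{f_0} A_1 \xrightarrow{f_1} A_2$ is exact at $A_1$ in $\cala$ if and only if for every object $A$ the induced sequence of $\IZ$-modules
\[
\mor_{\cala}(A,A_0) \xrightarrow{(f_0)_*} \mor_{\cala}(A,A_1) \xrightarrow{(f_1)_*} \mor_{\cala}(A,A_2)
\]
is exact at the middle term. By the definition of exactness in $\MODcatr{\IZ\cala}$ via objectwise evaluation, this is exactly exactness of $\iota(A_0) \to \iota(A_1) \to \iota(A_2)$ at $\iota(A_1)$, so $\iota$ both preserves and reflects exactness. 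I expect the main (though still mild) obstacle to be bookkeeping: spelling out additivity of $\iota$ using Lemma~\ref{lem:IZ_cala_modules}~\ref{lem:IZ_cala_modules:additive} carefully, and making sure the term "faithfully flat" here is interpreted via the objectwise criterion just described rather than via the induction functor on module categories.
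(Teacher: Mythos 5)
Your proposal is correct and follows exactly the route the paper intends: its own proof is just the one-line citation of the Yoneda Lemma~\ref{lem:Yoneda_lemma} and Lemma~\ref{lem:IZ_cala_modules}~\ref{lem:IZ_cala_modules:additive}, and your three steps (full faithfulness via Yoneda, essential surjectivity via the direct-sum identification, faithful flatness via the match between Definition~\ref{def:exact_sequence_in_cala} and objectwise exactness of represented modules) are precisely the details being left to the reader.
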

\begin{proof}
This follows directly from the Yoneda Lemma~\ref{lem:Yoneda_lemma} and
Lemma~\ref{lem:IZ_cala_modules}~\ref{lem:IZ_cala_modules:additive}.
\end{proof}

The gain of Lemma~\ref{lem:Yoneda_equivalence} is that we have embedded $\cala$ as a full
subcategory of the abelian category $\MODcatr{\IZ\cala}$ and we can now do certain
standard homological constructions in $\MODcatr{\IZ\cala}$, which a priori make no sense
in $\cala$.

The elementary proof of the following lemma based on Lemma~\ref{lem:Yoneda_equivalence} is
left to the reader.

\begin{lemma}\label{lem:idempotent_complete_and_Zcala_modules}
  An additive category $\cala$ is idempotent complete, if and only if every finitely
  generated projective $\IZ\cala$-module is a finitely generated free $\IZ\cala$-module.
\end{lemma}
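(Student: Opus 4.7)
The plan is to use the Yoneda embedding of Lemma~\ref{lem:Yoneda_equivalence} together with Lemma~\ref{lem:IZ_cala_modules}~\ref{lem:IZ_cala_modules:additive} to translate between splittings of idempotents in $\cala$ and splittings of idempotents in $\MODcatr{\IZ\cala}$. The key point is that the abelian category $\MODcatr{\IZ\cala}$ is automatically idempotent complete (any idempotent morphism of modules has a kernel and image, so it splits), and the Yoneda embedding is fully faithful, so splittings that exist in the ambient abelian category can be pulled back to $\cala$ provided the summands remain representable.

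For the direction $(\Rightarrow)$, assume $\cala$ is idempotent complete and let $P$ be a finitely generated projective $\IZ\cala$-module. Choose an epimorphism $F \twoheadrightarrow P$ from a finitely generated free module $F$; projectivity of $P$ splits this, so $P$ is a direct summand of $F$. By Lemma~\ref{lem:IZ_cala_modules}~\ref{lem:IZ_cala_modules:additive}, we may assume $F = \iota(A)$ for some $A \in \cala$, so $P$ is the image of an idempotent $\pi \colon \iota(A) \to \iota(A)$. By the Yoneda Lemma~\ref{lem:Yoneda_lemma}, $\pi = \iota(p)$ for a unique idempotent $p \colon A \to A$ in $\cala$. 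Idempotent completeness of $\cala$ splits $p$ via an isomorphism $A \cong B \oplus C$ identifying $p$ with the projection onto $B$; applying $\iota$ and using Lemma~\ref{lem:IZ_cala_modules}~\ref{lem:IZ_cala_modules:additive} yields $P \cong \iota(B)$, which is finitely generated free.

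For the direction $(\Leftarrow)$, let $p \colon A \to A$ be an idempotent in $\cala$. The morphism $\iota(p)$ is an idempotent endomorphism of $\iota(A)$ in the abelian category $\MODcatr{\IZ\cala}$, so it splits: $\iota(A) = \im(\iota(p)) \oplus \ker(\iota(p))$. Both summands are direct summands of the projective module $\iota(A)$, hence are finitely generated projective $\IZ\cala$-modules. By hypothesis they are finitely generated free, and by Lemma~\ref{lem:IZ_cala_modules}~\ref{lem:IZ_cala_modules:additive} there exist objects $B, C$ in $\cala$ with $\im(\iota(p)) \cong \iota(B)$ and $\ker(\iota(p)) \cong \iota(C)$. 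Combining these gives $\iota(A) \cong \iota(B) \oplus \iota(C) \cong \iota(B \oplus C)$, with $\iota(p)$ corresponding to the projection onto $\iota(B)$.

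The only subtle step is transferring this decomposition back to $\cala$: since $\iota$ is fully faithful, the isomorphism $\iota(A) \cong \iota(B \oplus C)$ comes from a unique isomorphism $A \cong B \oplus C$ in $\cala$, and by another application of Yoneda it conjugates $p$ to $\squarematrix{\id_B}{0}{0}{0}$. Hence $p$ splits, so $\cala$ is idempotent complete. I expect this last translation—making sure that full faithfulness delivers not just an abstract splitting but one realized by a genuine direct sum decomposition of $A$ in $\cala$—to be the only nontrivial bookkeeping point.
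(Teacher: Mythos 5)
Your proof is correct and follows exactly the route the paper intends: the paper leaves this lemma to the reader as an ``elementary proof based on Lemma~\ref{lem:Yoneda_equivalence}'', and your argument—splitting idempotents in the abelian category $\MODcatr{\IZ\cala}$, identifying finitely generated free modules with representables via Lemma~\ref{lem:IZ_cala_modules}~\ref{lem:IZ_cala_modules:additive}, and transporting the splitting back along the fully faithful Yoneda embedding—is precisely that elementary proof.
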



\typeout{----------- Section: Properties of additive categories  ----------}

\section{Regularity properties of additive categories}%
\label{sec:Regularity_properties_of_additive_IZ-categories}


\subsection{Definition of regularity properties in terms of the Yoneda embedding}%
\label{subsec:Definition_of_regularity_properties_in_terms_of_the_Yoneda_embedding}

Recall  the following standard  ring theoretic notions:

\begin{definition}[Regularity properties of rings]%
\label{def:Regularity_properties_of_rings}

Let $R$ be a  ring and let $l$ be a natural number.

\begin{enumerate}

\item\label{def:Regularity_properties_of_rings;Noetherian}
We call $R$ \emph{Noetherian}, if  any $R$-submodule of a finitely generated
$R$-module is again finitely generated;

\item\label{def:Regularity_properties_of_rings:regular_coherent}
We call $R$  \emph{regular coherent}, if  every finitely presented
$R$-module $M$ is of type $\FP$;

\item\label{def:Regularity_properties_of_rings:l-uniformly_regular_coherent}
We call $R$  \emph{$l$-uniformly regular coherent}, if  every finitely presented
$R$-module $M$ admits an $l$-dimensional finite projective
  resolution, i.e., there exist an exact sequence
  $0 \to P_l \to P_{l-1} \to \cdots \to P_0 \to M \to 0$ such that each $P_i$ is finitely
  generated projective;

\item\label{def:Regularity_properties_of_rings:von_Neumann_regular} We call $R$
 \emph{von  Neumann regular}, if for any element $r \in R$ there exists an element $s \in R$ with
  $r = rsr$;

\item\label{def:Regularity_properties_of_rings:regular}
We call $R$ \emph{regular}, if  it is Noetherian and regular coherent;

\item\label{def:Regularity_properties_of_ringss:l-uniformly_regular_coherent}
We call $R$  \emph{$l$-uniformly regular}, if  it is Noetherian and $l$-uniformly regular coherent;
 
\item\label{def:Regularity_properties_of_rings:global_homological_dimension}
We say that $R$ has global dimension $\le l$, if 
each $R$-module $M$ has projective dimension $\le l$.

\end{enumerate}
\end{definition}

The notion von Neumann regular should not be confused with the notion regular. It stems
from operator theory.  A ring is von Neumann regular, if and only if it is $0$-uniformly
regular coherent. For more information about von Neumann regular rings, see for
instance~\cite[Subsection~8.2.2 on pages 325-327]{Lueck(2002)}.

Let $\cala$ be an additive category. Then we define analogously:

\begin{definition}[Regularity properties of additive categories]%
\label{def:Regularity_properties_of_additive_categories}

Let $\cala$ be an  
additive category and let $l$ be a natural number.

\begin{enumerate}

\item\label{def:Regularity_properties_of_additive_categories;Noetherian}
  We call $\cala$ \emph{Noetherian} if the category
  $\MODcatr{\IZ\cala}$ is Noetherian in the sense that any
  $\IZ\cala$-submodule of a finitely generated $\IZ\cala$-module is
  again finitely generated,
  see~\cite[p.18]{Mitchell(1972)}\footnote{In~\cite[page~18]{Mitchell(1972)}
    this is called left Noetherian; one obtains right Noetherian by
    working with $\MODcatl{\IZ\cala}$ in place of
    $\MODcatr{\IZ\cala}$};

\item\label{def:Regularity_properties_of_additive_categories:regular_coherent}
We call $\cala$ \emph{regular coherent}, if  every finitely presented
$\IZ\cala$-module $M$ is of type $\FP$;

\item\label{def:Regularity_properties_of_additive_categories:l-uniformyl_regular_coherent}
  We call $\cala$ \emph{$l$-uniformly regular coherent}, if every finitely presented
  $\IZ\cala$-module $M$ possesses an $l$-dimensional finite projective
  resolution, i.e., there exist an exact sequence
  $0 \to P_l \to P_{l-1} \to \cdots \to P_0 \to M \to 0$ such that each $P_i$ is finitely
  generated projective;

\item\label{def:Regularity_properties_of_additive_categories:regular}
We call $\cala$ \emph{regular}, if it is Noetherian and regular coherent;

\item\label{def:Regularity_properties_of_additive_categories:l-uniformly_regular_coherent}
  We call $\cala$ \emph{$l$-uniformly regular}, if is Noetherian and $l$-uniformly
  regular coherent;
 
\item\label{def:Regularity_properties_of_additive_categories:global_homological_dimension}
  We say that $\cala$ has global dimension $\le l$, if each $\IZ\cala$-module $M$ has
  projective dimension $\le l$, see~\cite[page~42]{Mitchell(1972)}. 

\end{enumerate}
\end{definition}


\subsection{The definitions of the regularity properties for rings and additive categories are compatible}%
\label{subsec:The_definitions_of_the_regularity_properties_for_rings_and_additive_categories_are_compatible}

\begin{lemma}\label{lem:R-MOD-Versus_MOD-IZ_underline(R)_oplus} Let $R$ be a  ring.
 The functor
  \[
  F \colon \MODcatl{R} \to \MODcatr{\IZ\underline{R}_{\oplus}}
  \]
  sending    $M$   to    $\hom_R(\theta_{\fgf}(-),M)$    is   an    equivalence   of     additive
  categories, is faithfully  flat, and respects each of  the properties finitely
  generated, free  and projective, where the equivalence $\theta_{\fgf}$ has been defined 
in~\eqref{underline(r)_oplus_to_MODcat(R)_fgf}
\end{lemma}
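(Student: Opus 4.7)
The plan is to exhibit $F$ as an equivalence by constructing an explicit quasi-inverse given by evaluation at a generator. Let $A_0 \in \underline{R}_{\oplus}$ denote the $1$-tuple object, so that $\End_{\underline{R}_{\oplus}}(A_0) = R$ and every object of $\underline{R}_{\oplus}$ is isomorphic to a finite direct sum of copies of $A_0$. Define a candidate quasi-inverse
\[
G \colon \MODcatr{\IZ\underline{R}_{\oplus}} \to \MODcatl{R}, \qquad G(N) := N(A_0),
\]
where the $R$-module structure on $N(A_0)$ comes from the ring homomorphism $R = \End_{\underline{R}_{\oplus}}(A_0) \to \End_{\IZ}(N(A_0))$, $r \mapsto N(r)$.

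First I would verify $G \circ F \cong \id_{\MODcatl{R}}$ via the natural isomorphism $\hom_R(R,M) \xrightarrow{\cong} M$, $\phi \mapsto \phi(1)$, checking compatibility with the $R$-actions. For the other composite, Lemma~\ref{lem:compatibility_with_direct_sums} gives, for each $N$ and each $A \cong A_0^{\oplus n}$, a natural isomorphism $N(A) \cong N(A_0)^n$ that matches the computation $F(G(N))(A) = \hom_R(R^n,N(A_0)) \cong N(A_0)^n$. This produces the natural isomorphism $F\circ G \cong \id$, so $F$ is an equivalence of abelian categories; in particular it is exact and faithful, hence faithfully flat.

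For the preservation statements, the key computation is that, using that $\theta_{\fgf}$ is fully faithful, $F(R) = \hom_R(\theta_{\fgf}(-),R) \cong \mor_{\underline{R}_{\oplus}}(-, A_0)$, the free $\IZ\underline{R}_{\oplus}$-module on $A_0$. Since $F$ is additive and an equivalence, it sends finite direct sums $R^n$ to $\mor_{\underline{R}_{\oplus}}(-, A_0^{\oplus n}) \cong \bigoplus_{i=1}^n \mor_{\underline{R}_{\oplus}}(-, A_0)$, and arbitrary direct sums of copies of $R$ to free $\IZ\underline{R}_{\oplus}$-modules in the sense of Section~\ref{subsec:Basics_about_IZ_cala-modules}. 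Because $F$ and $G$ are mutually quasi-inverse additive equivalences, they preserve and reflect epimorphisms, direct summands, and all categorical structure; hence both the finitely generated and the free properties transport back and forth, and so does projectivity (being characterized as a direct summand of a free module). The main obstacle is the bookkeeping needed to align left/right and composition conventions so that the $R$-action from evaluation at $A_0$ matches the $R$-action on $\hom_R(R,M)$ up to the appropriate identification coming from $\theta_{\fgf}$; once this is settled the remaining checks are routine.
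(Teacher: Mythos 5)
Your proposal is correct and follows essentially the same route as the paper's proof: the quasi-inverse is evaluation at the one-generator object, the isomorphism $F\circ G\cong\id$ is obtained from Lemma~\ref{lem:compatibility_with_direct_sums}, and the preservation statements come from $F(R)\cong\mor_{\underline{R}_{\oplus}}(?,[1])$ together with compatibility with direct sums.
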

\begin{proof} In the sequel we denote by $[n]$ the $n$-fold direct sum in
  $\underline{R}_{\oplus}$ of the unique object in $\underline{R}$. Notice that
  $\theta([n]) = R^n$. Define
\[
G \colon \MODcatr{\IZ\underline{R}_{\oplus}} \to \MODcatl{R}
\]
by sending $M$ to $M(\theta(1))$. There is a natural equivalence
$G\circ F \to \id_{\MODcatl{R}}$ of functors of additive categories, its value on
the $R$-module $M$ is given by evaluating at $1 \in R = \theta([1])$,
\[
 G \circ F(M) = \hom_R(\theta([1]),M) \xrightarrow{\cong} M.
\]
Next we construct an equivalence $S \colon F \circ G \to \id_{\MODcatl{R}}$ of functors of
additive categories.  For a $\IZ\cala$-module $N$ and objects $A_1, \ldots, A_n$, we obtain
from Lemma~\ref{lem:compatibility_with_direct_sums} a natural isomorphism
\[
\bigoplus_{i=1}^n N(\pr_i) \colon \bigoplus_{i = 1}^n N(A_i) 
\xrightarrow{\cong} N\biggl(\bigoplus_{i = 1}^n A_i\biggr),
\]
where $\pr_j \colon \bigoplus_{i=1}^n A_i \to A_j$ is the canonical projection for
$j = 1,2 \ldots, n$.  

Recall that $[n]$ is the $n$-fold direct sum of copies of $[1]$, in other words, we have an 
identification $[n] = \bigoplus_{i =1}^n [1]$. It induces an isomorphism
\[
\bigoplus_{i =1}^n \theta([1])  \xrightarrow{\cong} \theta([n]).
\]
Given an object $[n]$ in $\underline{R}_{\oplus}$ and an $R$-module $M$,
we define $S(M)([n])$ by the following composite of $R$-isomorphisms
\begin{multline*}
F \circ G(M)([n]) = \hom_R\bigl(\theta([n]),M(\theta(1))\bigr)
\xrightarrow{\cong} 
\hom_R\biggl(\bigoplus_{k = 1}^n \theta([1]),M(\theta(1))\biggr)
\\
\xrightarrow{\cong} 
\bigoplus_{k = 1}^n  \hom_R\bigl(\theta([1]),M(\theta(1))\bigr)
=
\bigoplus_{k = 1}^n  \hom_R\bigl(R,M(\theta(1))\bigr)
\\
\xrightarrow{\cong} 
\bigoplus_{k = 1}^n  M(\theta(1))
\xrightarrow{\cong}
M\biggl(\bigoplus_{i = 1}^n \theta([1])\biggr)
=
M(\theta([n])).
\end{multline*}
The functor $F$ is faithfully exact, since for any object $[n]$ in
$\underline{R}_{\oplus}$ there is an $R$-isomorphism
$\bigoplus_{i= 1}^n M \xrightarrow{\cong} F(M)([n])$, natural in $M$.
Since $F$ is compatible with direct sums over arbitrary index sets and
sends $R$ to
$\hom_R(\theta(-),R) = \mor_{\underline{R}_{\oplus}}(?,[1])$, it
respects the properties finitely generated, free and projective.
\end{proof}

The following lemma implies in particular that the inclusion $i \colon \cala \to \Idem(\cala)$ induces equivalences 
\begin{equation*}
	\xymatrix{\MODcatr{\IZ\cala} \ar[rr]<.5ex>^{i_*} & & \MODcatr{\IZ\Idem(\cala)}\ar[ll]<.5ex>^{i^*}
	}. 
      \end{equation*}

\begin{lemma}\label{lem:inclusion_of_full_cofinal_subcategory}
  Let $i \colon \cala \to \cala'$ be a  inclusion of an additive  subcategory
  $\cala$ of the  additive  subcategory $\cala'$,
  which is full and cofinal,  for instance $\cala \to \cala' = \Idem(\cala)$.  
  Then:
\begin{enumerate}

\item\label{lem:inclusion_of_full_cofinal_subcategory:M_is_i_upper_ast_I_ast_M} 
 If $M$ is a $\IZ\cala$-module,   then the adjoint 
  \[
   \alpha(M) \colon M \xrightarrow{\cong}  i^*i_*M
   \] 
   of $\id_{i_*M}$ under the adjunction~\eqref{adjunction_(F_ast,F_upper_ast)} is an
   isomorphism of $\IZ\cala$-modules, natural in $M$;

\item\label{lem:inclusion_of_full_cofinal_subcategory:i_upper_ast_faithfully_flat} The
  restriction functor $i^* \colon \MODcatr{\IZ\cala'} \to \MODcatr{\IZ\cala}$ is
  faithfully flat. It sends a finitely generated $\IZ\cala'$-module to a finitely
  generated $\IZ\cala$-module and a projective $\IZ\cala'$-module to a projective
  $\IZ\cala$-module;

\item\label{lem:inclusion_of_full_cofinal_subcategory:i_lower_ast_faithfully_flat} The
  induction functor $i_* \colon \MODcatr{\IZ\cala} \to \MODcatr{\IZ\cala'}$ is
  faithfully flat. It sends a finitely generated $\IZ\cala$-module to a finitely generated
  $\IZ\cala'$-module and a projective $\IZ\cala$-module to a projective
  $\IZ\cala'$-module;

\item\label{lem:inclusion_of_full_cofinal_subcategory:i_upper_ast_i_ast_M_versus_M} 
If $M'$ is a $\IZ\cala'$-module,   then the adjoint 
  \[
   \beta(M') \colon i_*i^*M' \xrightarrow{\cong}  M'
   \] 
  of $\id_{i^*M'}$ under the  adjunction~\eqref{adjunction_(F_ast,F_upper_ast)}
  is an isomorphism of $\IZ\cala'$-modules, natural in $M'$;

\item\label{lem:inclusion_of_full_cofinal_subcategory:Noetherian}
$\cala$ is Noetherian, if and only $\cala'$ is Noetherian;

\item\label{lem:inclusion_of_full_cofinal_subcategory:regular_coherent}
The category $\cala$ is regular coherent or $l$-uniformly regular coherent respectively,
if and only if $\cala'$ is regular coherent or $l$-uniformly regular coherent;

\item\label{lem:inclusion_of_full_cofinal_subcategory:global_dimension}
The category $\cala$ is of global dimension $\le l$,
if and only if $\cala'$ is of global dimension $\le l$.

\end{enumerate}

\end{lemma}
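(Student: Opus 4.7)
The proof naturally splits into two halves. First I would establish the unit~\ref{lem:inclusion_of_full_cofinal_subcategory:M_is_i_upper_ast_I_ast_M} and counit~\ref{lem:inclusion_of_full_cofinal_subcategory:i_upper_ast_i_ast_M_versus_M} of the adjunction $(i_*,i^*)$ are isomorphisms, so that $i_*$ and $i^*$ define an adjoint equivalence between the abelian categories $\MODcatr{\IZ\cala}$ and $\MODcatr{\IZ\cala'}$. From this the remaining statements drop out.

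For~\ref{lem:inclusion_of_full_cofinal_subcategory:M_is_i_upper_ast_I_ast_M}, evaluating at $A \in \cala$ gives
\[
(i^*i_*M)(A) = M(?) \otimes_{\IZ\cala} \mor_{\cala'}(i(A),i(?)).
\]
Fullness and faithfulness of $i$ identify $\mor_{\cala'}(i(A),i(?)) = \mor_{\cala}(A,?)$, so the right-hand side becomes $M(?) \otimes_{\IZ\cala} \mor_{\cala}(A,?) \cong M(A)$ by the standard Yoneda-tensor computation, and tracing through the definitions shows the resulting map agrees with $\alpha(M)(A)$. For~\ref{lem:inclusion_of_full_cofinal_subcategory:i_upper_ast_i_ast_M_versus_M}, cofinality produces for every $A' \in \cala'$ morphisms $s \colon A' \to i(A)$ and $t \colon i(A) \to A'$ with $t \circ s = \id_{A'}$, whence the representable $\mor_{\cala'}(?,A')$ is a retract of $i_* \mor_{\cala}(?,A) = \mor_{\cala'}(?,i(A))$. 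Since $i_*$ is fully faithful by~\ref{lem:inclusion_of_full_cofinal_subcategory:M_is_i_upper_ast_I_ast_M}, the corresponding idempotent on $i_* \mor_{\cala}(?,A)$ comes from an idempotent on $\mor_{\cala}(?,A)$, which splits in the abelian category $\MODcatr{\IZ\cala}$; applying $i_*$ shows $\mor_{\cala'}(?,A')$ lies in the essential image of $i_*$. As $i_*$ is a left adjoint it preserves colimits, so its essential image is closed under colimits, and since every $\IZ\cala'$-module is a colimit of representables, $i_*$ is essentially surjective. Thus $\beta$ is a natural isomorphism.

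Given the equivalence, both $i^*$ and $i_*$ are exact and faithful, which gives the faithful flatness claims in~\ref{lem:inclusion_of_full_cofinal_subcategory:i_upper_ast_faithfully_flat} and~\ref{lem:inclusion_of_full_cofinal_subcategory:i_lower_ast_faithfully_flat}. The functor $i_*$ sends the generator $\mor_{\cala}(?,A)$ to $\mor_{\cala'}(?,i(A))$, hence preserves finitely generated (free) modules, and preserves projectives by the equivalence. For $i^*$, given finitely many generators $x'_j \in M'(A'_j)$ of a $\IZ\cala'$-module $M'$, cofinality supplies $A_j \in \cala$ with retractions $s_j \colon A'_j \to i(A_j)$, $t_j \colon i(A_j) \to A'_j$; the elements $y_j := M'(t_j)(x'_j) \in M'(i(A_j)) = (i^*M')(A_j)$ generate $i^*M'$, because for any $\varphi \colon i(B) \to A'_j$ one writes $\varphi = t_j \circ (s_j \circ \varphi)$ and uses fullness of $i$ to realize $s_j \circ \varphi$ as a morphism $B \to A_j$ in $\cala$. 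Preservation of projectives by $i^*$ is automatic from the equivalence.

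For~\ref{lem:inclusion_of_full_cofinal_subcategory:Noetherian},~\ref{lem:inclusion_of_full_cofinal_subcategory:regular_coherent}, and~\ref{lem:inclusion_of_full_cofinal_subcategory:global_dimension}, the properties Noetherian, (uniformly) regular coherent, and global dimension $\le l$ are each formulated purely in terms of the abelian category $\MODcatr{\IZ\cala}$ together with its subcategories of finitely generated and of projective modules; since $(i^*,i_*)$ is an equivalence matching these subcategories, all three transfer between $\cala$ and $\cala'$. The main obstacle is the argument for~\ref{lem:inclusion_of_full_cofinal_subcategory:i_upper_ast_i_ast_M_versus_M}, where cofinality must be combined with the splitting of idempotents in the ambient abelian category in order to force every $\IZ\cala'$-module into the essential image of $i_*$.
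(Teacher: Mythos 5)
Your proposal is correct, and its skeleton coincides with the paper's: prove the unit~\ref{lem:inclusion_of_full_cofinal_subcategory:M_is_i_upper_ast_I_ast_M} and counit~\ref{lem:inclusion_of_full_cofinal_subcategory:i_upper_ast_i_ast_M_versus_M} are isomorphisms via fullness and cofinality, and let the remaining assertions follow by transport along the resulting equivalence. The one place where you genuinely diverge is the counit. The paper proves~\ref{lem:inclusion_of_full_cofinal_subcategory:i_upper_ast_faithfully_flat} and~\ref{lem:inclusion_of_full_cofinal_subcategory:i_lower_ast_faithfully_flat} first by hand (retraction diagrams and a diagram chase for faithful flatness of $i^*$, plus the observation that $i^*\mor_{\cala'}(?,A')$ is a direct summand of a representable), and then checks $\beta(M')$ is an isomorphism in three concrete steps: on $\mor_{\cala'}(?,i(A))$ via assertion~\ref{lem:inclusion_of_full_cofinal_subcategory:M_is_i_upper_ast_I_ast_M}, on general representables via a retract of natural transformations, and on arbitrary $M'$ via a free presentation, right-exactness of $i_*$, compatibility of $\beta$ with direct sums, and the five lemma. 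You instead establish essential surjectivity of $i_*$ abstractly: the idempotent on $i_*\mor_{\cala}(?,A)$ splitting off $\mor_{\cala'}(?,A')$ lifts along the fully faithful $i_*$, splits in the abelian category $\MODcatr{\IZ\cala}$, and the essential image of a fully faithful left adjoint is closed under colimits, so the free presentation of an arbitrary module finishes the job. Both are valid; your version buys a cleaner logical order (you never need~\ref{lem:inclusion_of_full_cofinal_subcategory:i_upper_ast_faithfully_flat} before~\ref{lem:inclusion_of_full_cofinal_subcategory:i_upper_ast_i_ast_M_versus_M}, and faithful flatness of both functors becomes automatic), at the cost of invoking the slightly less elementary fact that the essential image of a fully faithful colimit-preserving left adjoint is closed under colimits of diagrams landing in it (which does require full faithfulness to lift the morphisms of the diagram, so keep that hypothesis explicit). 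Your explicit generator argument for $i^*$ preserving finite generation is needed and correct, since the equivalence does not match representables with representables; the paper makes the same point via direct summands of representables.
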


\begin{proof}~\ref{lem:inclusion_of_full_cofinal_subcategory:M_is_i_upper_ast_I_ast_M}  
  This follows from the   Yoneda-Lemma~\ref{lem:Yoneda_lemma}, namely, an inverse 
  of $\alpha(M)$ is given by
  \begin{multline*}
    i^*i_*M = M(?) \otimes_{\IZ\cala} \mor_{\IZ\cala'}(i({?'}),i(?))  = M(?) \otimes_{\IZ\cala} \mor_{\IZ\cala}({?'},?)
     \xrightarrow{\cong} M({?'}), 
    \\
    x  \otimes \phi \mapsto x\phi = M(\phi)(x).
  \end{multline*}
  \\~\ref{lem:inclusion_of_full_cofinal_subcategory:i_upper_ast_faithfully_flat}
  Obviously $i^*$ is flat.  
  
  Consider a sequence of $\IZ\cala'$-modules
  $M_0 \xrightarrow{f_0} M_1 \xrightarrow{f_1} M_1$ such that restriction with $i$ yields
  the exact sequence of $\IZ\cala$-modules 
  $i^*M_0 \xrightarrow{i^*f_0} i^*M_1 \xrightarrow{i^*f_1} i^*M_1$.  We have to show for
  any object $A'$ in $\cala'$ that the sequence of $R$-modules
  $M_0(A') \xrightarrow{f_0(A')} M_1(A') \xrightarrow{f_1(A')} M_1(A')$ is exact. Since
  $\cala$ is by assumption cofinal in $\cala'$,  we can find an object $A$ in $\cala$ and
  and morphisms $j \colon A' \to i(A)$ and $r \colon i(A) \to A'$ in $\cala'$ satisfying
  $r \circ i = \id_{A'}$.  We obtain the following commutative diagram of $R$-modules
\[
\xymatrix@!C=6em{M_0(A') \ar[r]^{f_0(A')} \ar[d]^{M_0(j)}
&
M_1(A') \ar[r]^{f_1(A')} \ar[d]^{M_1(j)}
& M_2(A')  \ar[d]^{M_2(j)}
\\
M_0(i(A)) \ar[r]^{f_0(i(A))}  \ar[d]^{M_0(r)}
&
M_1(i(A)) \ar[r]^{f_1(i(A))} \ar[d]^{M_1(r)}
& M_2(i(A)) \ar[d]^{M_2(r)}
\\
M_0(A') \ar[r]^{f_0(A')} 
&
M_1(A') \ar[r]^{f_1(A')} 
& M_2(A')
}
\]
such that the composite of the two vertical arrows appearing in each of the three columns is
the identity.  Since the middle horizontal sequence is exact, an easy diagram chase
shows that the upper horizontal sequence is exact. This shows that $i^*$ is faithfully flat.

Consider an object $A'$ in $\cala'$. Since $\cala$ is by assumption cofinal in $\cala$, we
can find an object $A$ in $\cala$ and and morphism $j \colon A' \to i(A)$ and
$q \colon i(A) \to A'$ in $\cala'$ satisfying $q \circ j = \id_{A'}$.  Composition with $q$
and $j$ yield maps of $\IZ\cala'$-modules
$J \colon \mor_{\cala'}({?'},A') \to \mor_{\cala'}({?'},i(A))$ and
$Q \colon \mor_{\cala'}({?'},i(A)) \to \mor_{\cala'}({?'},A')$ satisfying
$Q \circ J = \id_{\mor_{\cala'}({?'},A')}$. If we apply $i^*$, we obtain homomorphisms of
$\IZ\cala'$-modules $i^*J \colon i^*\mor_{\cala'}({?'},A') \to i^*\mor_{\cala'}({?'},i(A))$
and $i^*Q \colon i^*\mor_{\cala'}({?'},i(A)) \to i^*\mor_{\cala'}({?'},A')$ satisfying
$i^*Q \circ i^* J = \id_{i^*\mor_{\cala'}({?'},A')}$.  Since
$i^*\mor_{\cala'}({?'},i(A)) = \mor_{\cala'}(i({?'}),i(A)) = \mor_{\cala}({?'},A)$, the
$\IZ\cala$-module $i^*\mor_{\cala'}({?'},A')$ is a direct summand in
$\mor_{\cala}({?'},A)$ and hence a finitely generated projective
$\IZ\cala$-module. 

Let $M'$ be a finitely generated $\IZ\cala'$-module. Fix 
an epimorphism $\mor_{\cala}({?'},A') \to M'$ for some object $A'$ in
$\cala'$. We conclude that the $\IZ\cala$-module $i^*M$ is a quotient of $\mor_{\cala}(?,A)$ 
for some object $A$ in $\cala$ and hence
finitely generated. Hence $i^*$ respects the property finitely generated.

Let $P$ be a projective $\IZ\cala'$-module. Then we can find a collection of objects
$\{A'_k \mid k \in K\}$ together with an epimorphism
$\bigoplus_{k \in K} \mor_{\cala'}(?,A'_k) \to P$  by the Yoneda
Lemma~\ref{lem:Yoneda_lemma}.  Since $P$ is projective, $P$ is a direct summand in
$\bigoplus_{k \in K} \mor_{\cala'}(?,A'_k)$.  This implies that  $i^*P$ is a direct summand in the direct
sum $\bigoplus_{k \in K} i^*\mor_{\cala'}(?,A'_i)$ of projective $\IZ\cala$-modules
and hence itself a projective $\IZ\cala$-module. Hence $i^*$ respects the property
projective.  
\\[1mm]~\ref{lem:inclusion_of_full_cofinal_subcategory:i_lower_ast_faithfully_flat} 
The faithful flatness follows from
assertions~\ref{lem:inclusion_of_full_cofinal_subcategory:M_is_i_upper_ast_I_ast_M}
and~\ref{lem:inclusion_of_full_cofinal_subcategory:i_upper_ast_faithfully_flat}. Since
$i_*\mor_{\cala}(?,A) = \mor_{\cala'}(?,i(A))$ holds for any object $A$ in $\cala$, the
functor $i_*$ respects the properties finitely generated and projective.
\\[1mm]~\ref{lem:inclusion_of_full_cofinal_subcategory:i_upper_ast_i_ast_M_versus_M} 
We begin with the case $M = \mor_{\cala'}({?'},i(A)) = i_* \mor_{\cala'}(?,A)$ for some object $A$ in $\cala$.
Then the claim follows from
assertion~\ref{lem:inclusion_of_full_cofinal_subcategory:M_is_i_upper_ast_I_ast_M} applied
to the $\IZ\cala$-module $\mor_{\cala'}(?,A)$, since in this case $\beta(M) = i_*\alpha(M)$. 
Consider an object $A'$ in $\cala'$. Since $\cala$ is by assumption cofinal in $\cala$, we
can find an object $A$ in $\cala$ and and morphism $j \colon A' \to i(A)$ and
$q \colon i(A) \to A'$ in $\cala'$ satisfying $q \circ j = \id_{A'}$.  Composition with $q$
and $j$ yield maps of $\IZ\cala'$-modules
$J \colon \mor_{\cala'}({?'},A') \to \mor_{\cala'}({?'},i(A))$ and
$Q \colon \mor_{\cala'}({?'},i(A)) \to \mor_{\cala'}({?'},A')$ satisfying
$Q \circ J = \id_{\mor_{\cala'}({?'},A')}$. Hence we get a commutative diagram of $\IZ\cala'$-modules
\[
\xymatrix@!C=14em{
i_*i^*\mor_{\cala'}({?'},A') \ar[r]^-{\beta(\mor_{\cala'}({?'},A'))} \ar[d]^{i_*i^*J}
& 
\mor_{\cala'}({?'},A') \ar[d]^J
\\
i_*i^*\mor_{\cala'}({?'},i(A)) \ar[r]^-{\beta(\mor_{\cala'}({?'},i(A)))}  \ar[d]^{i_*i^*Q}
& 
\mor_{\cala'}({?'},i(A)) \ar[d]^Q
\\
i_*i^*\mor_{\cala'}({?'},A') \ar[r]^-{\beta(\mor_{\cala'}({?'},A'))} 
& 
\mor_{\cala'}({?'},A') 
}
\]
such that the composite of the two vertical maps in each of the two columns is the
identity and the middle arrow is an isomorphism.  Hence the upper arrow is an isomorphism.

For any $\IZ\cala'$-module $M'$ we can find a collection of objects
$\{A'_k \mid k \in K\}$ in $\cala'$ together with an epimorphism
$f_0 \colon F_0 := \bigoplus_{k \in K} \mor_{\cala'}({?'},A'_k) \to M'$ by the Yoneda
Lemma~\ref{lem:Yoneda_lemma}. Repeating this construction for $\ker(f_0)$ instead of $M$, we obtain
another collection $\{A''_l \mid l \in L\}$ of objects in $\cala'$ together with a map
$f_1 \colon F_1 := \bigoplus_{l \in L} i^*\mor_{\cala'}(?,A'_l) \to F_0$ whose image is $\ker(f_1)$.
We obtain from assertions~\ref{lem:inclusion_of_full_cofinal_subcategory:i_upper_ast_faithfully_flat} 
and~\ref{lem:inclusion_of_full_cofinal_subcategory:i_lower_ast_faithfully_flat}
a commutative diagram of $\IZ\cala'$-modules with exact rows
\[
\xymatrix{i_*i^*F_1 \ar[r]^{i_*i^*f_1} \ar[d]^{\beta(F_1)}
&
i_*i^*F_0 \ar[r]^{i_*i^*f_0} \ar[d]^{\beta(F_0)}
& i_*i^*M' \ar[r] \ar[d]^{\beta(M)}
& 0
\\
F_1 \ar[r]^{f_1} 
&
F_0 \ar[r]^{f_0}
& M' \ar[r]
& 0.
}
\]
Since $\beta$ is compatible with direct sums over arbitrary index sets, the maps
$\beta(F_1)$ and $\beta(F_0)$ are isomorphisms.  Hence $\beta(M')$ is an isomorphism.
\\[1mm]~\ref{lem:inclusion_of_full_cofinal_subcategory:Noetherian},~%
~\ref{lem:inclusion_of_full_cofinal_subcategory:regular_coherent}
and~\ref{lem:inclusion_of_full_cofinal_subcategory:global_dimension} They follow now
directly from
assertions~\ref{lem:inclusion_of_full_cofinal_subcategory:M_is_i_upper_ast_I_ast_M},%
~\ref{lem:inclusion_of_full_cofinal_subcategory:i_upper_ast_faithfully_flat},%
~\ref{lem:inclusion_of_full_cofinal_subcategory:i_lower_ast_faithfully_flat}
and~\ref{lem:inclusion_of_full_cofinal_subcategory:i_upper_ast_i_ast_M_versus_M}.
\end{proof}

We conclude from Lemma~\ref{lem:R-MOD-Versus_MOD-IZ_underline(R)_oplus}
and Lemma~\ref{lem:inclusion_of_full_cofinal_subcategory}~%
\ref{lem:inclusion_of_full_cofinal_subcategory:Noetherian},~%
\ref{lem:inclusion_of_full_cofinal_subcategory:regular_coherent}, 
and~\ref{lem:inclusion_of_full_cofinal_subcategory:global_dimension}.

\begin{corollary}\label{cor:regualiyt_for_R-versus_underline(r)_oplus)} 
  Let $R$ be a  ring and let $l$ be a natural number.  Then the
  following assertions are equivalent: 

  \begin{enumerate}

  \item\label{cor:regualiyt_for_R-versus_underline(r)_oplus):R} 

  The ring $R$ is Noetherian,
  regular coherent, $l$-uniformly regular coherent, regular, uniformly $l$-regular, or of
  global dimension $\le l$ in the sense of
  Definition~\ref{def:Regularity_properties_of_rings} respectively;

  \item\label{cor:regualiyt_for_R-versus_underline(r)_oplus):underline(R)_oplus} 
  The  additive category $\underline{R}_{\oplus}$ is Noetherian, regular
  coherent, $l$-uniformly regular coherent, regular, uniformly $l$-regular, or of global
  dimension $\le d$ in the sense of
  Definition~\ref{def:Regularity_properties_of_additive_categories} respectively;

 \item\label{cor:regualiyt_for_R-versus_underline(r)_oplus):Idem(underline(R)_oplus)} 
  The  additive category $\Idem(\underline{R}_{\oplus})$ is Noetherian, regular
  coherent, $l$-uni\-form\-ly regular coherent, regular, uniformly $l$-regular, or of global
  dimension $\le l$ in the sense of
  Definition~\ref{def:Regularity_properties_of_additive_categories} respectively.

\end{enumerate}
\end{corollary}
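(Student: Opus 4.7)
The plan is to break the equivalence into two pieces, $(i) \Leftrightarrow (ii)$ coming from Lemma~\ref{lem:R-MOD-Versus_MOD-IZ_underline(R)_oplus}, and $(ii) \Leftrightarrow (iii)$ coming from Lemma~\ref{lem:inclusion_of_full_cofinal_subcategory} applied to the cofinal inclusion $\eta(\underline{R}_{\oplus}) \colon \underline{R}_{\oplus} \to \Idem(\underline{R}_{\oplus})$. Essentially all the content has already been done; the corollary only needs us to match the translations of each named regularity notion.

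For $(i) \Leftrightarrow (ii)$, I would use the functor $F\colon \MODcatl{R} \to \MODcatr{\IZ\underline{R}_{\oplus}}$ from Lemma~\ref{lem:R-MOD-Versus_MOD-IZ_underline(R)_oplus}. Since $F$ is an equivalence of (abelian) categories, it preserves and reflects all diagrammatic notions: subobjects, quotients, kernels, cokernels, and (because it is faithfully flat) exact sequences. Because $F$ respects the properties finitely generated, free, and projective, it also respects finitely presented, of type $\FL$, of type $\FP$, and admitting a finite $l$-dimensional projective resolution; consequently it preserves and reflects projective dimension $\le l$. Going through Definition~\ref{def:Regularity_properties_of_rings} and Definition~\ref{def:Regularity_properties_of_additive_categories} item by item then yields $(i) \Leftrightarrow (ii)$ for Noetherian, regular coherent, $l$-uniformly regular coherent, and global dimension $\le l$; the remaining two properties (regular, uniformly $l$-regular) are conjunctions of the first four and follow formally.

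For $(ii) \Leftrightarrow (iii)$, the inclusion $\eta(\underline{R}_{\oplus}) \colon \underline{R}_{\oplus} \to \Idem(\underline{R}_{\oplus})$ is a full cofinal inclusion of additive categories by construction of the idempotent completion. Therefore parts~\ref{lem:inclusion_of_full_cofinal_subcategory:Noetherian},~\ref{lem:inclusion_of_full_cofinal_subcategory:regular_coherent}, and~\ref{lem:inclusion_of_full_cofinal_subcategory:global_dimension} of Lemma~\ref{lem:inclusion_of_full_cofinal_subcategory} directly give the equivalence for Noetherian, regular coherent, $l$-uniformly regular coherent, and global dimension $\le l$. Once more, regularity and uniform $l$-regularity follow from these by definition.

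Since no routine calculations remain, there is no real obstacle: the only thing to check carefully is that every clause of Definition~\ref{def:Regularity_properties_of_rings} really translates under $F$ to the corresponding clause of Definition~\ref{def:Regularity_properties_of_additive_categories}, which amounts to unwinding the definitions using that $F$ is an equivalence preserving finitely generated, free, and projective objects.
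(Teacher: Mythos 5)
Your proposal is correct and follows essentially the same route as the paper: the corollary is deduced there precisely by combining Lemma~\ref{lem:R-MOD-Versus_MOD-IZ_underline(R)_oplus} for the equivalence between $R$ and $\underline{R}_{\oplus}$ with Lemma~\ref{lem:inclusion_of_full_cofinal_subcategory}, parts~\ref{lem:inclusion_of_full_cofinal_subcategory:Noetherian}, \ref{lem:inclusion_of_full_cofinal_subcategory:regular_coherent}, and~\ref{lem:inclusion_of_full_cofinal_subcategory:global_dimension}, applied to the full cofinal inclusion $\underline{R}_{\oplus} \to \Idem(\underline{R}_{\oplus})$. Your additional remarks on why the equivalence of module categories transports each regularity notion, and why regular and $l$-uniformly regular follow formally as conjunctions, are exactly the unwinding the paper leaves implicit.
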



\subsection{Intrinsic  definitions of the regularity properties}%
\label{subsec:Intrinsic_definitions_of_the_regularity_properties}

One can give an intrinsic definition of the regularity properties above without referring
to the Yoneda embedding.  The situation is quite nice for regular coherent and
$l$-uniformly regular coherent for an idempotent complete additive category as as
explained below.

\begin{lemma}[Intrinsic Reformulation of regular coherent]%
\label{lem:intrinsic_reformulation_of_regular_coherent}
  Let $\cala$ be an idempotent complete additive category.
  
  \begin{enumerate}

  \item\label{lem:intrinsic_reformulation_of_regular_coherent:uniform_l_greater_equal_2}
    Let $l \ge 2$ be a natural
    number.  Then $\cala$ is $l$-uniformly regular coherent, if and only
    if for every morphism $f_1\colon A_1 \to A_0$ we can find a sequence of length $l$ in
    $\cala$
    \[0 \to A_l \xrightarrow{f_l} A_{l-1} \xrightarrow{f_{l-1}} \cdots \xrightarrow{f_2}
      A_{1} \xrightarrow{f_1} A_0
    \]
    which is exact at $A_i$ for $i = 1,2, \ldots, n$;

    \item\label{lem:intrinsic_reformulation_of_regular_coherent:uniform_l_is_1}
     $\cala$ is $1$-uniformly regular coherent, if and only
     if for every morphism $f\colon A_1 \to A_0$ we can find a factorization
     $A_1 \xrightarrow{f_1} B \xrightarrow{f_0} A_0$ of $f$ such that $f_1$ is surjective and $f_0$ is injective;

   \item\label{lem:intrinsic_reformulation_of_regular_coherent:uniform_l_is_0}
     The following assertions are equivalent:
     \begin{enumerate}

     \item\label{lem:intrinsic_reformulation_of_regular_coherent:uniform_l_is_0:(1)}
     $\cala$ is $0$-uniformly regular coherent;

      \item\label{lem:intrinsic_reformulation_of_regular_coherent:uniform_l_is_0:(2)}
      For every morphism $f_1 \colon A_1 \to A_0$ there exists a morphism $f_0 \colon A_0 \to A_{-1}$
      such that $A_1 \xrightarrow{f_1} A_0 \xrightarrow{f_0} A_{-1} \to 0$ is exact;

      \item\label{lem:intrinsic_reformulation_of_regular_coherent:uniform_l_is_0:(3)}
        For every morphism $f \colon A_1 \to A_0$ there exists a morphism $g \colon A_0 \to A_1$ satisfying
           $f \circ g \circ f = f$;
         \end{enumerate}
         
  \item\label{lem:intrinsic_reformulation_of_regular_coherent:not_uniform}
  $\cala$ is regular coherent, if and only if
  for every morphism $f_1\colon A_1 \to A_0$ we can find a sequence of finite length  in
  $\cala$
  \[0 \to A_n \xrightarrow{f_n} A_{n-1} \xrightarrow{f_{n-1}} \cdots \xrightarrow{f_2}
    A_{1} \xrightarrow{f_1} A_0
  \]
  which is exact at $A_i$ for $i = 1,2, \ldots, n$.

\end{enumerate}
\end{lemma}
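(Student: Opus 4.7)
The plan is to translate each intrinsic condition into a statement about $\IZ\cala$-modules by means of the Yoneda embedding $\iota\colon\cala\to\MODcatr{\IZ\cala}_{\fgf}$ of Lemma~\ref{lem:Yoneda_equivalence}. The central dictionary, an immediate consequence of the Yoneda Lemma~\ref{lem:Yoneda_lemma} evaluated at arbitrary objects of $\cala$, is that a sequence $A_0\xrightarrow{f_0}A_1\xrightarrow{f_1}A_2$ in $\cala$ is exact at $A_1$ in the sense of Definition~\ref{def:exact_sequence_in_cala} if and only if $\iota(A_0)\to\iota(A_1)\to\iota(A_2)$ is exact at $\iota(A_1)$ in the abelian category $\MODcatr{\IZ\cala}$; a leading or trailing zero encodes $\iota(f_n)$ being a monomorphism or $\iota(f_0)$ being an epimorphism, respectively. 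Two further facts will be used throughout: every finitely generated free $\IZ\cala$-module is $\iota(A)$ for some $A\in\cala$ by Lemma~\ref{lem:IZ_cala_modules}~\ref{lem:IZ_cala_modules:additive}, and since $\cala$ is idempotent complete, Lemma~\ref{lem:idempotent_complete_and_Zcala_modules} identifies finitely generated projective with finitely generated free $\IZ\cala$-modules, so that ``of type $\FP$'' coincides with ``of type $\FL$''.

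Part~\ref{lem:intrinsic_reformulation_of_regular_coherent:not_uniform} then follows almost at once. Any finitely presented $\IZ\cala$-module $M$ is isomorphic to $\cok\bigl(\iota(f_1)\bigr)$ for some $f_1\colon A_1\to A_0$, and conversely every such cokernel is finitely presented; under the dictionary above, the intrinsic condition on $f_1$ says precisely that $\cok\bigl(\iota(f_1)\bigr)$ admits a finite finitely generated free resolution, i.e.\ is of type $\FL=\FP$. For part~\ref{lem:intrinsic_reformulation_of_regular_coherent:uniform_l_greater_equal_2} the same translation works, but one has to ensure that the length-$l$ resolution of $M=\cok\bigl(\iota(f_1)\bigr)$ can be arranged to \emph{begin} with $\iota(f_1)$. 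This is the one non-formal point and I would handle it by iteratively resolving kernels: starting from $\iota(A_1)\xrightarrow{\iota(f_1)}\iota(A_0)\to M\to 0$ and setting $K_1:=\ker\iota(f_1)$, one has $\pdim K_1\le l-2$ by Lemma~\ref{lem:IZ_cala_modules}~\ref{lem:IZ_cala_modules:exact_sequences_homological_dimension}, and $K_1$ is finitely generated by a standard Schanuel argument. Choose a finitely generated free cover $\iota(A_2)\twoheadrightarrow K_1$ and iterate; after $l-1$ such steps the final kernel has projective dimension zero, hence is projective and, by idempotent completeness, equal to $\iota(A_l)$ for some $A_l\in\cala$, closing the sequence.

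Part~\ref{lem:intrinsic_reformulation_of_regular_coherent:uniform_l_is_1} is the length-one instance of this iteration: $\pdim M\le 1$ forces $\im\iota(f)$ to be projective and hence finitely generated free $\iota(B)$ for some $B\in\cala$, so $\iota(f)$ factors as the (split, because $\iota(B)$ is projective) epimorphism $\iota(A_1)\twoheadrightarrow\iota(B)$ followed by the monomorphism $\iota(B)\hookrightarrow\iota(A_0)$, which translates via Yoneda into the required factorization $A_1\twoheadrightarrow B\hookrightarrow A_0$; the converse is immediate. In part~\ref{lem:intrinsic_reformulation_of_regular_coherent:uniform_l_is_0}, the equivalence of $0$-uniform regular coherence with the existence of an exact sequence $A_1\xrightarrow{f_1}A_0\xrightarrow{f_0}A_{-1}\to 0$ follows from the same principle, since $\pdim M\le 0$ means $M\cong\iota(A_{-1})$ for some $A_{-1}\in\cala$ and the surjection onto it is then $\iota(f_0)$ for some $f_0\colon A_0\to A_{-1}$. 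Finally I would prove the equivalence with the existence of $g$ satisfying $fgf=f$ directly in $\cala$: given such $g$, replace it by $gfg$ so that also $gfg=g$; then $q:=fg$ is an idempotent on $A_0$ which splits, by idempotent completeness, as $A_0\cong B\oplus A_{-1}$, and $f$ factors through $B$, so that projection to $A_{-1}$ is a valid $f_0$. Conversely, a section of $f_0$ together with a section of the induced split epimorphism $A_1\twoheadrightarrow B$ obtained from the exact sequence yields, after composition with the projection $A_0\twoheadrightarrow B$, a $g$ with $fgf=f$.

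The main technical obstacle throughout is the insistence in all four statements that the resolution or factorization in $\cala$ begin with the \emph{given} morphism $f_1$, rather than merely that some resolution of the prescribed length exist in $\MODcatr{\IZ\cala}$. It is exactly at this point that the hypothesis of idempotent completeness enters essentially, in order to replace ``stably free'' or ``projective'' modules arising as images and kernels by honest finitely generated free modules, and thereby to close the resolutions inside $\cala$.
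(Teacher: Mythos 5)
Your overall framework --- translating every intrinsic condition through the Yoneda embedding and using idempotent completeness to identify finitely generated projectives with finitely generated frees --- is exactly the paper's, and your treatments of parts~\ref{lem:intrinsic_reformulation_of_regular_coherent:uniform_l_is_1}, \ref{lem:intrinsic_reformulation_of_regular_coherent:uniform_l_is_0} and~\ref{lem:intrinsic_reformulation_of_regular_coherent:not_uniform} match the paper's essentially step for step (the paper routes (iii) as (1)$\Rightarrow$(3)$\Rightarrow$(2)$\Rightarrow$(1) rather than your (1)$\Leftrightarrow$(2), (2)$\Leftrightarrow$(3), which is immaterial). Where you genuinely diverge is the key implication of part~\ref{lem:intrinsic_reformulation_of_regular_coherent:uniform_l_greater_equal_2}: to splice the given $f_1$ into an $l$-dimensional resolution, the paper builds a chain map $u_*\colon P_*\to Q_*$ from the two-term complex $\iota(f_1)$ to an arbitrary $l$-dimensional resolution of $\cok(\iota(f_1))$, and then manipulates $\cyl(u_*)$, $\cone(u_*)$ and a pullback to produce an $l$-dimensional complex agreeing with $P_*$ in degrees $0,1$; you instead resolve kernels iteratively and close the resolution with dimension shifting. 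Your route is more elementary and arguably cleaner, but it leans on one point you dispatch too quickly with the phrase ``standard Schanuel argument'': to continue the iteration you need each successive kernel $Z_k$ to be \emph{finitely generated}, and the clean way to see this is that $M$ is of type $\FP$ and that type $\FP$ propagates through the short exact sequences $0\to Z_0\to\iota(A_0)\to M\to 0$, $0\to Z_1\to\iota(A_1)\to Z_0\to 0$, etc., by Lemma~\ref{lem:IZ_cala_modules}~\ref{lem:IZ_cala_modules:exact_sequences_FL}; plain Schanuel gives finite generation of $Z_0$ but not directly of the deeper kernels. With that reference supplied, your argument is complete and correct.
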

\begin{proof}~\ref{lem:intrinsic_reformulation_of_regular_coherent:uniform_l_greater_equal_2}
  it suffices to prove that the following
  statements are equivalent:

\begin{enumerate}

\item[(a)] For any morphisms
  $f_1\colon P_1 \to P_0$ of finitely generated projective $\IZ\cala$-modules, we can
  find finitely generated projective $\IZ\cala$-modules $P_2, P_3, \ldots, P_l$ and an exact
  sequence of $\IZ\cala$-modules
  \[
    0 \to P_l \xrightarrow{f_l} P_{l-1}\xrightarrow{f_{l-1}} \cdots \xrightarrow{f_2}P_1
    \xrightarrow{f_1} P_0;
  \]

\item[(b)]  For any finitely presented
  $\IZ\cala$-module $M$, there exists finitely generated projective $\IZ\cala$-modules
  $P_0,P_1, \ldots, P_l$ and an exact sequence of $\IZ\cala$-modules 
    \[ 0 \to P_l \xrightarrow{f_l} P_{l-1} \xrightarrow{f_{l-1}} \cdots \xrightarrow{f_2}
      P_{1} \xrightarrow{f_1} P_0 \xrightarrow{f_0} M \to 0.
    \]
  \end{enumerate}
  The implication implication (b) $\implies$ (a) is obvious, since $\cok(f_1)$ is a
  finitely presented $\IZ\cala$-module.  It remains to prove the implication (a)
  $\implies$ (b).  Let $f_1 \colon P_1 \to P_0$ be a $\IZ\cala$-homomorphism of finitely
  generated projective $\IZ\cala$-modules.  By assumption we can find an exact sequence of
  $\IZ\cala$-modules
\[
0 \to Q_l \xrightarrow{c_l} Q_{l-1} \xrightarrow{c_{n-1}} \cdots   \xrightarrow{c_2} Q_{1} 
\xrightarrow{c_1} Q_0 \xrightarrow{c_0} \cok(f_1) \to 0.
\]
Let $P_*$ be the $1$-dimensional $\IZ\cala$-chain complex whose first differential is $f_1$. Let
$Q_*$ be the $l$-dimensional $\IZ\cala$-chain complex whose $i$th chain module is $Q_i$ for 
$0 \le i \le l$ and whose $i$th differential is $c_i \colon Q_i \to Q_{i-1}$ for $1 \le i \le
l$.  One easily constructs a $\IZ\cala$-chain map $u_* \colon P_* \to Q_*$ such that $H_0(u_*)$
is an isomorphism. Let $\cone(u_*)$ be the mapping cone. We conclude $H_i(\cone(u_*))  = 0$
for $i \not= 2$ from the long exact homology sequence associated to the exact sequence $0
\to P_* \xrightarrow{i_*} \cyl(u_*) \xrightarrow{p_*} \cone(u_*) \to 0$ and the fact that
the canonical projection $q_* \colon \cyl(u_*) \to Q_*$ is a $\IZ\cala$-chain homotopy
equivalence with $q_* \circ i_* = u_*$.  Let $D_* \subseteq \cone(u_*)$ be the
$\IZ\cala$-subchain complex, whose $i$-th chain module is $\cone(u_*)$ for $i \ge 3$, the kernel
of the second differential of $\cone(u_*)$ for $i = 2$ and $\{0\}$ for $i = 0,1$.  Then
$D_i$ is finitely generated projective for $i \ge 0$ and the inclusion $k_* \colon D_* \to
\cone(u_*)$ induces isomorphisms on homology groups. Define the $\IZ\cala$-chain complex $C_*$ by
the pullback
\[
\xymatrix{C_* \ar[r]^{\overline{p_*}} \ar[d]^{\overline{k_*}}
& D_* \ar[d]^{k_*}
\\
\cyl(u_*) \ar[r]^{p_*}
&
\cone(u_*)
}
\]
This can be extended to a commutative diagram of $\IZ\cala$-chain complexes with exact
rows
\[
\xymatrix{0 \ar[r]
&
P_* \ar[r]^-{\overline{i_*}} \ar[d]^{\id}
&
C_* \ar[r]^-{\overline{p_*}} \ar[d]^{\overline{k_*}}
& D_* \ar[d]^{k_*} \ar[r]
& 
0
\\
0 \ar[r]
&
P_* \ar[r]^-{i_*}
&
\cyl(u_*) \ar[r]^-{p_*}
&
\cone(u_*)\ar[r]
& 
0
}
\]
Then $C_*$ is an $l$-dimensional $\IZ\cala$-chain complex whose $\IZ\cala$-chain
modules are finitely generated projective.  Since $D_i = 0$ for $i = 0,1$, we can identify
$P_1 = C_1$ and $P_0 = C_0$ and the first differentials of $P_*$ and $C_*$.  Since $k_*$
induces isomorphisms on homology, the same is true for $\overline{k_*}$.  Hence $C_*$
yields the desired extension of $f_1$ to an exact sequence
\[
0 \to C_l \to C_{l-1} \to \cdots \to C_2 \to P_1 \xrightarrow{f_1} P_0 
\]
This finishes the proof of assertion~\ref{lem:intrinsic_reformulation_of_regular_coherent:uniform_l_greater_equal_2}.
\\[1mm]~\ref{lem:intrinsic_reformulation_of_regular_coherent:uniform_l_is_1} Suppose that
$\cala$  is $1$-uniformly regular coherent.  Consider a morphism $f \colon A_1 \to A_0$. Let
$M$ be the finitely presented $\IZ\cala$-module given by the cokernel of the $\IZ\cala$-homomorphism
$\iota(f) \colon \iota(A_1) \to \iota(A_0)$.  By assumption we can find an exact sequence
$0 \to P_1 \to P_0 \to M \to 0$ of $\IZ\cala$-modules, where $P_1$ and $P_0$ are
finitely generated projective.  We conclude from
Lemma~\ref{lem:IZ_cala_modules}~\ref{lem:IZ_cala_modules:finite_dimension} that the image
of $\iota(f)$ is finitely generated projective. 
Hence we obtain a factorization of $\iota(f)$
as a composite $\iota(f) \colon \iota(A_1) \xrightarrow{f_1'} \im(\iota(f)) \xrightarrow{f_0'} \iota(A_0)$ such that
$\im(\iota(f))$ is a finitely generated projective $\IZ\cala$-module, $f_1'$ is surjective, and
$f_0'$ is injective. We conclude from Lemma~\ref{lem:Yoneda_equivalence}
and Lemma~\ref{lem:idempotent_complete_and_Zcala_modules} that $\im(f)$ can be identified with
$\iota(B)$ for some object $B$ in $\cala$ and there are morphisms $f_1 \colon A_1 \to B$ and
$f_0 \colon B \to A_1$ such that $f_1' = \iota(f_1)$ and $f_0' = \iota(f_1)$. Moreover,
$f_1$ is surjective, $f_0$ is injective and $f = f_0 \circ f_1$.

Suppose that for every morphism $f\colon A_1 \to A_0$ we can find a factorization
$A_1 \xrightarrow{f_1} B \xrightarrow{f_0} A_0$ of $f$ such that $f_1$ is surjective and
$f_0$ is injective.  Consider any finitely presented $\IZ\cala$-module $M$. We conclude
from Lemma~\ref{lem:Yoneda_equivalence} that there is a morphism $f \colon A_1 \to A_0$ in
$\cala$ and a morphism $p \colon \iota(A_0) \to M$ of $\IZ\cala$-modules such that the
sequence $\iota(A_1) \xrightarrow{\iota(f)} \iota(A_0) \xrightarrow{p} M \to 0$ is exact.
Choose a factorization $f = f_1 \circ f_0$ such that $f_1$ is surjective and $f_0$ is
injective. Let $B$ be the domain of $f_1$. We conclude from Lemma~\ref{lem:Yoneda_equivalence}
that we obtain a short exact sequence
$0 \to \iota(B) \xrightarrow{\iota(f_1)} \iota(A_0) \xrightarrow{p} M \to 0$. This is a
$1$-dimensional finite projection $\IZ\cala$-resolution of $M$.
This finishes the proof of assertion~\ref{lem:intrinsic_reformulation_of_regular_coherent:uniform_l_is_1}.
\\[1mm]~\ref{lem:intrinsic_reformulation_of_regular_coherent:uniform_l_is_0} We first
show~\ref{lem:intrinsic_reformulation_of_regular_coherent:uniform_l_is_0:(1)}
$\implies$~\ref{lem:intrinsic_reformulation_of_regular_coherent:uniform_l_is_0:(3)}.  Consider a
morphism $f \colon A_1 \to A_0$. Let $M$ be the finitely presented $\IZ\cala$-module given
by the cokernel of $\iota(f) \colon \iota(A_1) \to \iota(A_0)$. We obtain an exact sequence
of $\IZ\cala$-modules
$\iota(A_1) \xrightarrow{\iota(f)} \iota(A_0) \xrightarrow{p} M \to 0$.  
By assumption $M$
is a finitely generated projective $\IZ\cala$-module.  
Let
$\iota(f) \colon \iota(A_1) \xrightarrow{q} \im(\iota(f)) \xrightarrow{j} \iota(A_0)$ be
  the obvious factorization of $\iota(f)$.  
  Since $M$ is projective, $\im(f)$ is a direct
  summand in $\iota(A_0)$. We conclude from Lemma~\ref{lem:Yoneda_equivalence} and
  Lemma~\ref{lem:idempotent_complete_and_Zcala_modules} that we can identify
  $\im(\iota(f))$ with $\iota(B)$ for an appropriate object $B$ in $\cala$ and can find
  morphisms $r \colon A_0 \to B$ and $s \colon B \to A_1$ in $\cala$ such that
  $\iota(r) \circ j = \id_{\iota(B)}$ and $q \circ \iota(s) = \id_{\iota(B)}$.  Define
  $g \colon A_0 \to A_1$ by $g = s \circ r$. One easily checks that
  $\iota(f) \circ \iota(g) \circ \iota(f) = \iota(f)$. Hence $f \circ g \circ f = f$.

  Next we show~\ref{lem:intrinsic_reformulation_of_regular_coherent:uniform_l_is_0:(3)}
  $\implies$~\ref{lem:intrinsic_reformulation_of_regular_coherent:uniform_l_is_0:(2)}.
  Let $f \colon A_1 \to A_0$ be a morphism in $\cala$. Choose a morphism
  $h \colon A_0 \to A_1$ with $f \circ h \circ f = f$.  Then
  $f \circ h  \colon A_0 \to A_0$ is an idempotent. Since $\cala$ is
  idempotent complete, we can find  objects $A_{-1}$ and $A_{-1}^{\perp}$ and an
  isomorphism $u \colon A_0 \xrightarrow{\cong} A_{-1} \oplus A_{-1}^{\perp} $ in $\cala$ such that
  $u \circ (\id_{A_0}  -f \circ h) \circ u^{-1}$ is $\begin{pmatrix} \id_{A_{-1}} & 0 \\ 0 & 0 \end{pmatrix}$.
  Define $g \colon A_0 \to A_{-1}$ by the composite
  $A_0 \xrightarrow{u} A_{-1} \oplus A_{-1}^{\perp} \xrightarrow{\pr_{A_{-1}}} A_{-1}$.
  One easily checks that the sequence
  $A_1 \xrightarrow{f_1} A_0 \xrightarrow{g} A_{-1} \to 0$ is exact.

  Finally we
  show~\ref{lem:intrinsic_reformulation_of_regular_coherent:uniform_l_is_0:(2)}
  $\implies$~\ref{lem:intrinsic_reformulation_of_regular_coherent:uniform_l_is_0:(1)}.
  Consider a finitely presented $\IZ\cala$-module $M$. We conclude from
  Lemma~\ref{lem:Yoneda_equivalence} and that we an find a morphism
  $f_1 \colon A_1 \to A_0$ together with an exact sequence of $\IZ\cala$-modules
  $\iota(A_1) \xrightarrow{\iota(f_1)} \iota(A_0) \xrightarrow{p} M \to 0$.  Choose a
  morphism $f_0 \colon A_0 \to A_{-1}$ such that the sequence
  $A_1 \xrightarrow{f_1} A_0 \xrightarrow{f_0} A_{-1} \to 0$ is exact in $\cala$. Then we
  obtain an exact sequence of $\IZ\cala$-modules
  $\iota(A_1) \xrightarrow{\iota(f_1)} \iota(A_0) \xrightarrow{\iota(f_0)}
  \iota(A_{-1})\to 0$ by Lemma~\ref{lem:Yoneda_equivalence}.  This implies that $M$ is
  $\IZ\cala$-isomorphic to $\iota(A_{-1})$ and hence finitely generated projective.  This
  finishes the proof of
  assertion~\ref{lem:intrinsic_reformulation_of_regular_coherent:uniform_l_is_0}.
  \\[1mm]~\ref{lem:intrinsic_reformulation_of_regular_coherent:not_uniform} This follows
  from
  assertion~\ref{lem:intrinsic_reformulation_of_regular_coherent:uniform_l_greater_equal_2}.
  This finishes the proof of Lemma~\ref{lem:intrinsic_reformulation_of_regular_coherent}.
\end{proof}

Next we deal with the property Noetherian.  Consider two morphisms $f \colon A \to B$ and
$f' \colon A' \to B$. We write $f \subseteq f'$ if there exists a morphism
$g \colon A \to A'$ with $f = f' \circ g$.  Obviously we have
\begin{multline}
  f \subseteq f'\;\Longleftrightarrow  \\
  \im(f_* \colon \mor_{\cala}(?,A) \to \mor_{\cala}(?,B))  \subseteq \im(f'_* \colon \mor_{\cala}(?,A') \to \mor_{\cala}(?,B)).
  \\ 
\label{equivalence_f_le_f'_and_images}  
\end{multline}

\begin{lemma}[Intrinsic Reformulation of Noetherian]\label{lem:intrinsic_reformulation_of_Noetherian}
  Let $\cala$ be an additive category. Then the following assertions are equivalent:

  \begin{enumerate}
  \item\label{lem:intrinsic_reformulation_of_Noetherian:plain}
   $\cala$ is Noetherian;
 \item\label{lem:intrinsic_reformulation_of_Noetherian:chain} Each object $A$ has the
   following property: Consider a sequence of morphisms $f_n \colon A_n \to A$ with fixed
   target $A$ and $f_n \subseteq f_{n+1}$ for $n \ge 0$. Then there exists $n_0$ such that
   $f_{n} \subseteq f_{n_0}$ holds for all $n \in \IN$ with $n \ge n_0$.
  \end{enumerate}
\end{lemma}
\begin{proof} Let $N$ be a finitely generated $\IZ\cala$-module and $M \subseteq N$ a
  $\IZ\cala$-submodule.  Then there exists an object $A$ in $\cala$ together with an
  epimorphism of $\IZ\cala$-module $u \colon \mor_{\cala}(?,A) \to M$, see
  Lemma~\ref{lem:compatibility_with_direct_sums}.  Obviously  $M$ is finitely generated if
  $u^{-1}(M)$ is finitely generated. Hence $\cala$ is Noetherian if and only if for any
  object $A$ in $\cala$ the $\IZ\cala$-module $\mor_{\cala}(?,A)$ is Noetherian, i.e., any
  $\IZ\cala$-submodule $M$ of $\mor_{\cala}(?,A)$ is finitely generated.  By the usual
  argument $\mor_{\cala}(?,A)$ is Noetherian if and only if for any nested sequence
  $M_0 \subseteq M_1 \subseteq M_2 \subseteq M_3 \subseteq \cdots $ of finitely generated
  $\IZ\cala$-submodules of $\mor_{\cala}(?,A)$ there exists a natural number $n_0$ such
  that $M_n \subseteq M_{n_0}$ holds for all $n \geq n_0$,  see for instance~\cite[Proposition~0.2.17 on
  page~18]{Rowen(1991)}.

  Consider finitely generated $\IZ\cala$-submodules $M_0$, $M_1$, $M_2$, $\ldots$ of $\mor_{\cala}(?,A)$.
  We can find for every natural number $n$ an object $A_n$ together with an epimorphism
  $\mor_{\IZ\cala}(?,A_n) \to M_n$, see Lemma~\ref{lem:compatibility_with_direct_sums}. 
  By the Yoneda Lemma~\ref{lem:Yoneda_lemma} there is a morphism $f_n \colon A_n \to A$ such
  that the image of $(f_n)_* \colon \mor_{\cala}(?,A_n) \to \mor_{\cala}(?,A)$ is
  $M_n$.  Hence we get $M_m \subseteq M_n$ if and and only if $(f_m) \subseteq (f_n)$  holds. Now Lemma~\ref{lem:intrinsic_reformulation_of_Noetherian} follows.
\end{proof}

\begin{lemma}\label{lem:passage_to_full_subcategrories}
  Let $\cala$ be a full additive subcategory of the additive category $\calb$.
  If $\calb$ is Noetherian, $0$-uniformly regular coherent, or $0$-uniformly regular, then
  $\cala$ has the same property.
\end{lemma}
\begin{proof}
  This follows from
  Lemma~\ref{lem:inclusion_of_full_cofinal_subcategory}~%
\ref{lem:inclusion_of_full_cofinal_subcategory:regular_coherent}, 
Lemma~\ref{lem:intrinsic_reformulation_of_regular_coherent}~%
\ref{lem:intrinsic_reformulation_of_regular_coherent:uniform_l_is_0}
  and Lemma~\ref{lem:intrinsic_reformulation_of_Noetherian}.
\end{proof}


\typeout{------------------- Section:  Vanishing of Nil-terms -----------------}

\section{Vanishing of Nil-terms}%
\label{sec:Vanishing_of_Nil-terms}


\subsection{Nil-categories}%
\label{subsec:Nil-categories}

The next definition is taken from~\cite[Definition~7.1]{Lueck-Steimle(2016BHS)}.

\begin{definition}[Nilpotent morphisms and Nil-categories]%
\label{def:Nilpotent_morphisms_and_Nil-categories_Steimle}
  Let $\cala$ be an additive category and $\Phi$ be an automorphism of
  $\cala$.
  \begin{enumerate}
  \item A morphism $f\colon \Phi(A)\to A$ of $\cala$ is called \emph{$\Phi$-nilpotent}, if
    for some $n \ge 1 $, the $n$-fold composite
    \[f^{(n)}:=f \circ \Phi(f) \circ \cdots \circ \Phi^{n-1}(f) \colon \Phi^n(A)\to A.
    \]
    is trivial;
 
  \item The category $\Nil(\cala, \Phi)$ has as objects pairs $(A, \phi)$ where $\phi\colon
    \Phi(A)\to A$ is a $\Phi$-nilpotent morphism in $\cala$. A morphism from $(A, \phi)$ to
    $(B, \mu)$ is a morphism $u\colon A\to B$ in $\cala$ such that the following diagram is
    commutative:
    \[\xymatrix{
      {\Phi(A)} \ar[rr]^{\phi} \ar[d]^{\Phi(u)} && A \ar[d]^u\\
      {\Phi(B)} \ar[rr]^{\mu} && B. }\]
  \end{enumerate}
\end{definition}

The category $\Nil(\cala, \Phi)$ inherits the structure of an exact category from $\cala$,
a sequence in $\Nil(\cala,\Phi)$ is declared to be exact if the underlying sequence in
$\cala$ is split exact.

Let $\Phi \colon \cala \xrightarrow{\cong} \cala$ be an automorphism of an additive
category $\cala$.  It induces an automorphism ${\Phi^{-1}}^* \colon \MODcatr{\IZ\cala}
\xrightarrow{\cong} \MODcatr{\IZ\cala}$ of abelian categories by precomposition with
$\Phi^{-1} \colon \cala \xrightarrow{\cong} \cala$. It sends $\MODcatr{\IZ\cala}_{\fgf}$ to
itself, since ${\Phi^{-1}}^* \mor_{\cala}(?,A)$ is isomorphic to
$\mor_{\cala}(?,\Phi(A))$. Thus we obtain an automorphism of additive categories 
${\Phi^{-1}}^* \colon\MODcatr{\IZ\cala}_{\fgf} \xrightarrow{\cong} \MODcatr{\IZ\cala}_{\fgf}$

\begin{lemma}\label{lem:Nil_and_Yoneda}
There is an equivalence of exact categories
\[
\iota \colon \Nil(\cala;\Phi) \xrightarrow{\simeq} \Nil(\MODcatr{\IZ\cala}_{\fgf};{\Phi^{-1}}^*)
\]
\end{lemma}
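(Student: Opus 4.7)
The plan is to transport the Yoneda equivalence $\iota\colon \cala \xrightarrow{\simeq} \MODcatr{\IZ\cala}_{\fgf}$ from Lemma~\ref{lem:Yoneda_equivalence} to the level of Nil-categories. The main ingredient is a natural isomorphism comparing $\iota\circ\Phi$ with ${\Phi^{-1}}^*\circ\iota$, namely for each object $A \in \cala$ and test object $B$,
\[
({\Phi^{-1}}^*\iota(A))(B) = \mor_{\cala}(\Phi^{-1}(B),A) \xrightarrow{\;\Phi\;} \mor_{\cala}(B,\Phi(A)) = \iota(\Phi(A))(B),
\]
where the bijection sends $f$ to $\Phi(f)$. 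Naturality in both $A$ and $B$ is automatic from the functoriality of $\Phi$. This explains why ${\Phi^{-1}}^*$ (rather than $\Phi^*$) is the appropriate automorphism on the module side: precomposition on contravariant functors inverts the variance of the action.

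Next I would define the functor on Nil-categories by sending $(A,\phi)$ with $\phi\colon \Phi(A)\to A$ to the pair $(\iota(A),\widetilde{\iota(\phi)})$, where $\widetilde{\iota(\phi)}\colon {\Phi^{-1}}^*\iota(A) \to \iota(A)$ is the composite of $\iota(\phi)$ with the above natural isomorphism. A morphism $u\colon (A,\phi)\to(B,\mu)$ is sent to $\iota(u)$; because the natural isomorphism is natural in $A$, the commutation square defining a Nil-morphism transports correctly. Iterating the comparison isomorphism shows that ${\widetilde{\iota(\phi)}}^{(n)}$ corresponds to $\iota(\phi^{(n)})$ under the identification, so faithfulness of $\iota$ gives that $\phi$ is $\Phi$-nilpotent if and only if $\widetilde{\iota(\phi)}$ is ${\Phi^{-1}}^*$-nilpotent. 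Hence the functor is well defined.

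Full faithfulness on Nil-categories follows directly from full faithfulness of the underlying Yoneda embedding, since a morphism $u\colon A\to B$ in $\cala$ satisfies the Nil-compatibility $\mu\circ\Phi(u) = u\circ\phi$ if and only if $\iota(u)$ satisfies the corresponding compatibility in $\MODcatr{\IZ\cala}_{\fgf}$, using again the naturality of the comparison isomorphism. For essential surjectivity, given $(M,\psi)$ on the right, pick $A\in\cala$ with an isomorphism $v\colon M \xrightarrow{\cong} \iota(A)$; conjugating $\psi$ by $v$ and ${\Phi^{-1}}^*v$ and converting via the natural isomorphism yields a morphism $\iota(\Phi(A))\to\iota(A)$, which by full faithfulness has the form $\iota(\phi)$ for a unique $\phi\colon\Phi(A)\to A$; the same faithfulness argument as above forces $\phi$ to be $\Phi$-nilpotent.

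Finally, the exact structure on both Nil-categories is defined by declaring a sequence to be exact exactly when the underlying sequence in $\cala$, respectively $\MODcatr{\IZ\cala}_{\fgf}$, is split exact. Since $\iota$ is an equivalence of additive categories, it preserves and reflects direct sums and hence split exactness, so the equivalence is one of exact categories. There is no genuine obstacle beyond careful bookkeeping; the only point requiring attention is the correct matching $\iota\circ\Phi \cong {\Phi^{-1}}^*\circ\iota$, which is the reason the statement uses ${\Phi^{-1}}^*$.
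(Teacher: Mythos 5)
Your proposal is correct and follows essentially the same route as the paper: the functor is defined by composing $\iota(\phi)$ with the comparison isomorphism ${\Phi^{-1}}^*\iota(A)\cong\iota(\Phi(A))$ (the paper writes this as the composite $\mor_{\cala}(?,f)\circ\Phi$), and the equivalence is then deduced from Lemma~\ref{lem:Yoneda_equivalence}. You merely spell out in more detail the nilpotence transfer, essential surjectivity, and preservation of the split-exact structure, which the paper leaves implicit.
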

\begin{proof}
  The desired functors $\iota$ sends an object $(A,f)$ in $\Nil(\cala;\phi)$ given by a
  morphism $f \colon \Phi(A) \to A$ to the object in
  $\Nil(\MODcatr{\IZ\cala}_{\fgf};{\Phi^{-1}}^*)$ given by the composite
\[
{\Phi^{-1}}^*\mor_{\cala}(?,A) = \mor_{\cala}(\Phi^{-1}(?),A) \xrightarrow{\Phi} \mor_{\cala}(?,\Phi(A)) 
\xrightarrow{\mor_{\cala}(?,f)} \mor_{\cala}(?,A).
\] 
A morphism $u \colon (A,f) \to (A',f')$ in $\Nil(\cala;\Phi)$, which given by a morphism
$u \colon A \to A'$ in $\cala$ satisfying $f' \circ \Phi(u) = u \circ f$, is sent to the
morphism in $\Nil(\MODcatr{\IZ\cala}_{\fgf};{\Phi^{-1}}^*)$ given by the morphism
$u_*  \colon \mor_{\cala}(?,A) \to \mor_{\cala}(?,A')$. It defines indeed a
morphism from $\iota(A,f)$ to $\iota(A',f')$ by the commutativity of the following diagram
\[
\xymatrix@!C=11em{
\mor_{\cala}(\Phi^{-1}(?),A)  \ar[r]^-{\Phi} \ar[d]^{\mor_{\cala}(\Phi^{-1}(?),u)}
&
\mor_{\cala}(?,\Phi(A)) \ar[r]^-{\mor_{\cala}(?,f)} \ar[d]^{\mor_{\cala}(?,\Phi(u))}
&
\mor_{\cala}(?,A) \ar[d]^{\mor_{\cala}(?,u)}
\\
\mor_{\cala}(\Phi^{-1}(?),A')  \ar[r]^-{\Phi} 
&
\mor_{\cala}(?,\Phi(A')) \ar[r]^-{\mor_{\cala}(?,f')} 
&
\mor_{\cala}(?,A')
}
\]
It is an equivalence of additive categories by Lemma~\ref{lem:Yoneda_equivalence}.
\end{proof}


\subsection{Connective $K$-theory}%
\label{subsec:Connective-K-theory}

\begin{lemma}%
\label{lem:The_connective_K-theory_of_NIL-categories_for_additive_categories}
Let $\cala$ be an idempotent complete additive category. Suppose that $\cala$ is regular
coherent.  Let $\Phi \colon \cala \xrightarrow{\cong} \cala$ be any automorphism of
additive categories.  Denote by $J \colon \cala \to \Nil(\cala,\phi)$ the inclusion
sending an object $A$ to the object $(A,0)$.

Then the induced map on connective $K$-theory
\[
\bfK(J) \colon \bfK(\cala) \to \bfK(\Nil(\cala,\Phi))
\]
is a weak homotopy equivalence.
\end{lemma}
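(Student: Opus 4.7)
The plan is to reduce to an abelian ambient category via the Yoneda embedding and then run the classical resolution-plus-filtration argument that gives vanishing of Nil-groups for regular coherent rings. By Lemma~\ref{lem:Nil_and_Yoneda} we may replace $\Nil(\cala,\Phi)$ by $\Nil(\MODcatr{\IZ\cala}_{\fgf},(\Phi^{-1})^*)$, and by Lemma~\ref{lem:idempotent_complete_and_Zcala_modules} the category $\MODcatr{\IZ\cala}_{\fgf}$ consists precisely of the finitely generated projective $\IZ\cala$-modules. Let $\calp\subseteq\MODcatr{\IZ\cala}$ denote the full subcategory of finitely presented $\IZ\cala$-modules; since $\cala$ is regular coherent, $\calp$ is an abelian subcategory (kernels of maps of type $\FP$ are again of type $\FP$) in which every object admits a finite resolution by f.g.\ free $\IZ\cala$-modules, and $(\Phi^{-1})^*$ preserves $\calp$.

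I then consider the commutative square
\[
\xymatrix@C=3em{
\bfK(\MODcatr{\IZ\cala}_{\fgf}) \ar[r]^-{\bfK(J)} \ar[d]_{(i)} & \bfK(\Nil(\MODcatr{\IZ\cala}_{\fgf},(\Phi^{-1})^*)) \ar[d]^{(ii)} \\
\bfK(\calp) \ar[r]_-{\bfK(J')} & \bfK(\Nil(\calp,(\Phi^{-1})^*))
}
\]
whose horizontal arrows are induced by $A\mapsto(A,0)$. The map $(i)$ is a weak equivalence by Quillen's resolution theorem applied to the inclusion $\MODcatr{\IZ\cala}_{\fgf}\hookrightarrow\calp$. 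The bottom map $\bfK(J')$ is a weak equivalence by a devissage/additivity argument: an object $(M,\phi)\in\Nil(\calp,(\Phi^{-1})^*)$ with $\phi^n=0$ carries the finite filtration by sub-Nil-objects $0\subseteq\ker\phi\subseteq\ker\phi^2\subseteq\cdots\subseteq\ker\phi^n=M$ (well-defined inside $\calp$ because $\calp$ is abelian) whose successive subquotients have vanishing nil-structure because $\phi(\ker\phi^i)\subseteq\ker\phi^{i-1}$, and the additivity theorem then identifies $\bfK(\Nil(\calp,(\Phi^{-1})^*))$ with $\bfK(\calp)$.

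The main obstacle is the right-hand vertical map $(ii)$: one has to show that every finitely presented Nil-object admits a finite resolution by finitely generated free Nil-objects, so that the resolution theorem applies inside $\Nil$. My plan is to start from a finite free resolution $F_\bullet\to M$ in $\MODcatr{\IZ\cala}$ (which exists by regular coherence) and to equip it with compatible nilpotent lifts $\phi_i\colon(\Phi^{-1})^*F_i\to F_i$ of $\phi$; projectivity of the $F_i$ provides the existence of lifts, and nilpotency can be arranged by replacing $F_\bullet$ with a bar-type resolution built functorially from the left adjoint of the forgetful functor from Nil-objects satisfying $\phi^n=0$ to $\calp$, which assigns to a module $N$ the free Nil-object $\bigoplus_{i=0}^{n-1}((\Phi^{-1})^*)^i N$ with shift nilpotent structure. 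Once $(ii)$ is established, chasing through the square together with Lemma~\ref{lem:Nil_and_Yoneda} and Lemma~\ref{lem:Yoneda_equivalence} shows that $\bfK(J)$ is a weak homotopy equivalence.
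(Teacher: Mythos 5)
Your reduction via the Yoneda embedding, the commutative square comparing $\MODcatr{\IZ\cala}_{\fgf}$ with the finitely presented modules $\calp$ (which under the hypotheses coincide with the modules of type $\FL$), the Resolution Theorem for the left vertical arrow, and the devissage argument for the bottom arrow all match the paper's proof; your kernel filtration works just as well as the paper's image filtration, modulo keeping track of the twist ($\phi$ is a map $\Psi(M)\to M$, so ``$\ker\phi^i$'' must be interpreted as a suitable submodule of $M$). The proofs diverge exactly where you locate the main obstacle, the right-hand vertical map $(ii)$, and that is where your argument has a real gap. Your first mechanism --- lift $\phi$ to a finite free resolution $F_\bullet\to M$ using projectivity, then ``arrange'' nilpotency --- cannot work as stated: a chain-level lift of a nilpotent $\phi$ only satisfies $\tilde\phi^{(n)}\simeq 0$ up to chain homotopy, so the resulting $(F_i,\tilde\phi_i)$ are merely homotopy nilpotent and need not be objects of $\Nil(\MODcatr{\IZ\cala}_{\fgf},\Psi)$. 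This is precisely the difficulty the paper sidesteps: it never proves $(ii)$ is an equivalence directly, but only that it is split injective on homotopy groups, by factoring it through the categories $\HNil(\Chcat(\MODcatr{\IZ\cala}_{\fgf}),\Psi)$ of finite chain complexes with homotopy nilpotent self-maps (using a result of L\"uck--Steimle and the Approximation Theorem), and then deduces the equivalence from the other three sides of the square.

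Your fallback --- the induced nil-objects $\bigoplus_{i=0}^{n-1}\Psi^i(P)$ with shift operator --- is the right classical fix, but as written it does not close the argument: the associated bar resolution is infinite-dimensional (the analogue of $R[x]/(x^n)$ having infinite global dimension), so one cannot simply ``replace $F_\bullet$ by the bar resolution.'' What is needed is to iterate the counit surjection from $\bigoplus_{i=0}^{n-1}\Psi^i(P)$ onto $(M,\phi)$ and to observe that its kernel is again a finitely presented nil-object whose underlying module has projective dimension strictly smaller than that of $M$ (Schanuel, using that the underlying module of the induced object is finitely generated free); after $\pdim_{\IZ\cala}(M)$ steps the syzygy is a finitely generated projective (hence, by idempotent completeness, free) module carrying a nilpotent operator, i.e.\ an object of $\Nil(\MODcatr{\IZ\cala}_{\fgf},\Psi)$, which yields the finite resolution required by Quillen's Resolution Theorem. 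With that supplement your route is complete and is in fact the classical argument for regular rings; the paper's $\HNil$ detour buys exactly the luxury of never having to construct nilpotent resolutions.
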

\begin{proof}
   We abbreviate $\Psi = {\Phi^{-1}}^*$ We have the following commutative diagram
  \[
  \xymatrix{\cala \ar[r]^-J \ar[d]_{\iota}
    &
   \Nil(\cala,\Phi) \ar[d]^{\iota}
   \\
   \MODcatr{\IZ\cala}_{\fgf} \ar[r]_-J
   &
   \Nil(\MODcatr{\IZ\cala}_{\fgf},\Psi)
   }
   \]
   where the  vertical arrows are equivalences of exact categories given by Yoneda embeddings,
   see Lemma~\ref{lem:Yoneda_equivalence}  and Lemma~\ref{lem:Nil_and_Yoneda},
   and the lower horizontal arrow is the obvious analogue of the upper horizontal  arrow.
    Hence it suffices to show that the map
   \[
  \bfK(J) \colon \bfK(\MODcatr{\IZ\cala}_{\fgf}) \to \bfK(\Nil(\MODcatr{\IZ\cala}_{\fgf},\Psi))
  \]
  is a weak homotopy equivalence. 

  Denote by $\MODcatr{\IZ\cala}_{\FL}$ the full subcategory of
  $\MODcatr{\IZ\cala}$ consisting of $\IZ\cala$-modules which are of type $\FL$, i.e.,
  possess a finite dimensional resolution by finitely generated free $\IZ\cala$-modules. 
  
  Consider the following commutative diagram 
  \begin{equation}
   \xymatrix{\bfK(\MODcatr{\IZ\cala}_{\fgf}) \ar[r]\ar[d]
   &
   \bfK(\Nil(\MODcatr{\IZ\cala}_{\fgf},\Psi)) \ar[d]
   \\
   \bfK(\MODcatr{\IZ\cala}_{\FL}) \ar[r]
   &
   \bfK(\Nil(\MODcatr{\IZ\cala}_{\FL},\Psi)) 
   }
  \label{Costa}
 \end{equation}
 where all arrows are induced by the obvious inclusions of categories.
  
 The left vertical arrow in the diagram~\eqref{Costa} is a weak homotopy equivalence by
 the Resolution Theorem, see~\cite[Theorem~4.6 on page~41]{Srinivas(1991)}.

 Next we show that the lower horizontal arrow in the diagram~\eqref{Costa} is a weak
 homotopy equivalence.  Consider an object $(M,f)$ in
 $\Nil(\MODcatr{\IZ\cala}_{\FL},\Psi)$.
  
  Recall that nilpotent means that for some  natural number $n\ge  0$ the composite
  \[
  f^{(n)} \colon \Psi^n(M) \xrightarrow{\Psi^{n-1}(f)} \Psi^{n-1}(M) 
  \xrightarrow{\Psi^{n-2}(f)} \cdots \xrightarrow{\Psi(f)} \Psi(M) \xrightarrow{f} M
  \]
  is trivial.  We get a filtration of (M,f) by subobjects
  \begin{multline*}
   (M,f) \supseteq (\im(f),f|_{\Psi(\im(f))}) \supseteq  (\im(f^{(2)}),f|_{\Psi(\im(f^{(2)}))}) 
   \\
    \supseteq \cdots \supseteq (\im(f^{(n-1)}),f|_{\Psi(\im(f^{(n-1)}))})  
    \\
     \supseteq (\im(f^{(n)}),f|_{\Psi(\im(f^{(n)}))}) = (\{0\},\id_{\{0\}}).
\end{multline*}
Here we consider $\Psi(\im(f^{(i)}))$ as a $\IZ\cala$-submodule of $\Psi(M)$ by the injective
map $\Psi(\im(f^{(i)})) \to \Psi(M)$, which is obtained by applying $\Psi$ to the inclusion
$\im(f^{(i)}) \to M$.  We get exact sequences of $\IZ\cala$-modules
\begin{eqnarray*}
&0 \to \im(f^{(i)}) \to M \to M/\im(f^{(i)}) \to 0; &
\\
& 0 \to \im(f^{i+1}) \to \im(f^{i}) \to \im(f^{i})/\im(f^{i+1}) \to 0. &
\end{eqnarray*}
Since $M$ is finitely presented and $\im(f^{i})$ is finitely generated, $M/\im(f^{(i)})$
is finitely presented.  Since $\cala$ is regular coherent and idempotent complete by assumption, $M$ and
$M/\im(f^{(i)})$ for all $i$ are of type $\FL$.  We conclude by induction over
$i = 0,1, \ldots $ from Lemma~\ref{lem:IZ_cala_modules}~%
\ref{lem:IZ_cala_modules:exact_sequences_FL} that $\im(f^{(i)})$
and $\im(f^{(i)})/\im(f^{(i+1)})$ belong to $\MODcatr{\IZ\cala}_{\FL}$ again. The
quotient of $(\im(f^{(i)}),f|_{\Psi(\im(f^{(i)}))})$ by
$(\im(f^{(i+1)}),f|_{\Psi(\im(f^{(i+1)}))})$ is given by
$(\im(f^{(i)})/\im(f^{(i+1)}),0)$, and hence belongs to $\MODcatr{\IZ\cala}_{\FL}$ for
all $i$. Now the lower horizontal arrow in diagram~\eqref{Costa} is a weak homotopy
equivalence by the Devissage Theorem, see~\cite[Theorem~4.8 on page~42]{Srinivas(1991)}.

Next we show that the right vertical arrow in the diagram~\eqref{Costa} induces split
injections on homotopy groups.  For this purpose we consider the following commutative
diagram of exact categories
\[\xymatrix@!C=15em{
\Nil(\MODcatr{\IZ\cala}_{\fgf}, \Psi) \ar[r]^-{I_1} \ar[rd]^{I_2} \ar[rdd]^{I_3}
&
\HNil(\Chcat(\MODcatr{\IZ\cala}_{\fgf}), \Psi)
\\
 & 
\HNil(\Chcat_{\res}(\MODcatr{\IZ\cala}_{\fgf}), \Psi) \ar[u]_{I_4} \ar[d]^{H_0}
\\
& \Nil(\MODcatr{\IZ\cala}_{\FL},\Psi)
}
\]
The category $\HNil(\Chcat(\MODcatr{\IZ\cala}_{\fgf}), \Psi)$ is given by
finite-dimensional chain complexes $C_*$ over $\MODcatr{\IZ\cala}_{\fgf}$ (with $C_i = 0$
for $i \le -1$) together with chain maps $\phi \colon C_* \to C_*$, which are homotopy
nilpotent, and $\HNil(\Chcat_{\res}(\MODcatr{\IZ\cala}_{\fgf}), \Psi)$ is the full
subcategory of $\HNil(\Chcat(\MODcatr{\IZ\cala}_{\fgf}), \Psi)$ consisting of those chain
complexes, for which $H_i(C_*) = 0$ for $i \ge 1$. The maps $I_k$ for $k = 1,2,3,4$ are the
obvious inclusions, the functor $H_0$ is given by taking the zeroth homology group. The
upper horizontal arrow induces a weak homotopy equivalence on connective $K$-theory
by~\cite[page~173]{Lueck-Steimle(2016BHS)}. The functor $H_0$ induces a weak homotopy
equivalence on connective $K$-theory by the Approximation Theorem of Waldhausen, see for
instance~\cite[Theorem~4.18]{Lueck-Steimle(2016BHS)}. Hence the map induced by $I_3$ on
connective $K$-theory, which is the right vertical arrow in the diagram~\eqref{Costa},
induces split injections on homotopy groups.

We conclude that all arrows appearing in the diagram~\eqref{Costa} induce weak homotopy
equivalences on connective algebraic $K$-theory.  This finishes the proof of
Lemma~\ref{lem:The_connective_K-theory_of_NIL-categories_for_additive_categories}.
\end{proof}

\begin{theorem}[The connective $K$-theory of additive categories]%
\label{the:The_connective_K-theory_of_additive_categories}
Let $\cala$ be an  additive category, which is idempotent complete and regular coherent.
Consider any  automorphism $\Phi \colon \cala \xrightarrow{\cong}  \cala$ of additive categories.

Then we get  a map of connective spectra
\[
\bfa  \colon \bfT_{\bfK(\Phi^{-1})}  \to  \bfK(\cala_{\Phi}[t,t^{-1}])
\]
such that $\pi_n(\bfa)$ is bijective for $n \ge 1$.
\end{theorem}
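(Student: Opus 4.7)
The plan is to reduce the theorem to a vanishing statement for the connective Nil-spectra and to deduce that vanishing from Lemma~\ref{lem:The_connective_K-theory_of_NIL-categories_for_additive_categories} together with Theorem~\ref{the:BHS_decomposition_for_connective_K-theory}~\ref{the:BHS_decomposition_for_connective_K-theory:Nil}. First I would apply Theorem~\ref{the:BHS_decomposition_for_connective_K-theory}~\ref{the:BHS_decomposition_for_connective_K-theory:BHS-iso}, which is available since $\cala$ is idempotent complete. It produces a natural weak equivalence
\[
\bfa \vee \bfb_+ \vee \bfb_- \colon \bfT_{\bfK(\Phi^{-1})} \vee \bfNK(\cala_{\Phi}[t]) \vee \bfNK(\cala_{\Phi}[t^{-1}]) \xrightarrow{\simeq} \bfK(\cala_{\Phi}[t,t^{-1}]),
\]
in which the first wedge-summand inclusion is the map $\bfa$ of the theorem. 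Consequently the statement reduces to proving that the homotopy groups $\pi_n(\bfNK(\cala_{\Phi}[t]))$ and $\pi_n(\bfNK(\cala_{\Phi}[t^{-1}]))$ vanish for every $n \ge 1$.

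Next I would invoke Theorem~\ref{the:BHS_decomposition_for_connective_K-theory}~\ref{the:BHS_decomposition_for_connective_K-theory:Nil}, which supplies a natural weak equivalence $\bfK(\cala) \vee \bfE(\cala,\Phi) \xrightarrow{\simeq} \bfK(\Nil(\cala,\Phi))$ whose restriction to the first wedge summand is the map $\bfK(J) \colon \bfK(\cala) \to \bfK(\Nil(\cala,\Phi))$ induced by $J(A) = (A,0)$. Because $\cala$ is idempotent complete and regular coherent, Lemma~\ref{lem:The_connective_K-theory_of_NIL-categories_for_additive_categories} tells us that $\bfK(J)$ is itself a weak homotopy equivalence. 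Comparing these two weak equivalences in the homotopy category of spectra, $\bfE(\cala,\Phi)$ must be weakly contractible, so $\pi_n(\bfE(\cala,\Phi)) = 0$ for all $n$.

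Finally I would use the natural weak equivalence $\Omega \bfNK(\cala_{\Phi}[t]) \xleftarrow{\simeq} \bfE(\cala,\Phi)$ from the same theorem to conclude
\[
\pi_n(\bfNK(\cala_{\Phi}[t])) \;\cong\; \pi_{n-1}(\bfE(\cala,\Phi)) \;=\; 0 \qquad \text{for } n \ge 1.
\]
For $\bfNK(\cala_{\Phi}[t^{-1}])$ the same argument applies after replacing $\Phi$ by $\Phi^{-1}$, using the identification of $\cala_{\Phi}[t^{-1}]$ with (a category equivalent to) $\cala_{\Phi^{-1}}[t]$ and the observation that the hypotheses ``idempotent complete'' and ``regular coherent'' on $\cala$ are preserved under swapping $\Phi$ with $\Phi^{-1}$. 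Feeding these two vanishing statements back into the BHS splitting of the first paragraph yields the desired bijectivity of $\pi_n(\bfa)$ for $n \ge 1$.

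The main obstacle I expect is the bookkeeping in the middle step: namely, confirming that the first wedge summand in the Nil-decomposition of Theorem~\ref{the:BHS_decomposition_for_connective_K-theory}~\ref{the:BHS_decomposition_for_connective_K-theory:Nil} is literally (up to the natural weak equivalence) the map $\bfK(J)$ appearing in Lemma~\ref{lem:The_connective_K-theory_of_NIL-categories_for_additive_categories}, so that the latter genuinely forces $\bfE(\cala,\Phi) \simeq *$ rather than merely something weaker. A secondary, more cosmetic issue is to handle the $t^{-1}$ Nil-term, since Theorem~\ref{the:BHS_decomposition_for_connective_K-theory}~\ref{the:BHS_decomposition_for_connective_K-theory:Nil} is phrased only for $\bfNK(\cala_{\Phi}[t])$; the cleanest route is to cite naturality in $(\cala,\Phi)$ and apply it to $(\cala,\Phi^{-1})$, together with the canonical equivalence of additive categories $\cala_{\Phi}[t^{-1}] \simeq \cala_{\Phi^{-1}}[t]$.
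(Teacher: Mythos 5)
Your proposal is correct and follows essentially the same route as the paper: the paper's proof is a two-line deduction combining Theorem~\ref{the:BHS_decomposition_for_connective_K-theory} with Lemma~\ref{lem:The_connective_K-theory_of_NIL-categories_for_additive_categories} to get $\pi_n(\bfE(\cala,\Phi))=0$ for $n\ge 0$ and hence $\pi_n(\bfNK(\cala_{\Phi}[t^{\pm 1}]))=0$ for $n\ge 1$. Your extra care about identifying the wedge summand with $\bfK(J)$ and about the $t^{-1}$-term via $(\cala,\Phi^{-1})$ only makes explicit what the paper leaves implicit.
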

\begin{proof} This follows from Theorem~\ref{the:BHS_decomposition_for_connective_K-theory},
since Lemma~\ref{lem:The_connective_K-theory_of_NIL-categories_for_additive_categories}
implies $\pi_n(\bfE(R,\Phi)) = 0$ for $n \ge 0$ and hence 
$\pi_n(\bfNK(\cala_{\Phi}[t]))  = \pi_n(\bfNK(\cala_{\Phi}[t^{-1}])) =0$ for all $n \ge 1$.
\end{proof}

We will need later the following consequence of
Lemma~\ref{lem:The_connective_K-theory_of_NIL-categories_for_additive_categories},
where we can drop the assumption that $\cala$ is idempotent complete.

\begin{lemma}%
\label{lem:The_connective_K-theory_of_NIL-categories_for_additive_categories_without_idempotent_complete}
Let $\cala$ be an additive category. Suppose that $\cala$ is regular
coherent.  Let $\Phi \colon \cala \xrightarrow{\cong} \cala$ be any automorphism of
additive categories.  Denote by $J \colon \cala \to \Nil(\cala,\phi)$ the inclusion
sending an object $A$ to the object $(A,0)$.

Then the induced map 
\[
\pi_n(\bfK(J)) \colon \pi_n(\bfK(\cala)) \to \pi_n(\bfK(\Nil(\cala,\Phi)))
\]
is bijective for $n \ge 1$.
\end{lemma}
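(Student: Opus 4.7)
The plan is to reduce to Lemma~\ref{lem:The_connective_K-theory_of_NIL-categories_for_additive_categories} by passing to the idempotent completion $\Idem(\cala)$. The automorphism $\Phi$ extends canonically to an automorphism $\Idem(\Phi) \colon \Idem(\cala) \xrightarrow{\cong} \Idem(\cala)$, and $\Idem(\cala)$ is again regular coherent by Lemma~\ref{lem:inclusion_of_full_cofinal_subcategory}~\ref{lem:inclusion_of_full_cofinal_subcategory:regular_coherent}. Hence Lemma~\ref{lem:The_connective_K-theory_of_NIL-categories_for_additive_categories} applies to $\Idem(\cala)$ and yields a weak homotopy equivalence $\bfK(\Idem(\cala)) \xrightarrow{\simeq} \bfK(\Nil(\Idem(\cala),\Idem(\Phi)))$. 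Moreover, the inclusion $\eta(\cala) \colon \cala \to \Idem(\cala)$ is full and cofinal, so Lemma~\ref{K_and_cofinal}~\ref{K_and_cofinal:connective} gives bijections $\pi_n(\bfK(\cala)) \xrightarrow{\cong} \pi_n(\bfK(\Idem(\cala)))$ for all $n \ge 1$.

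The main step is to show that the induced functor
\[
\Nil(\eta(\cala)) \colon \Nil(\cala,\Phi) \to \Nil(\Idem(\cala),\Idem(\Phi))
\]
is also the inclusion of a full cofinal additive subcategory; combining this with Lemma~\ref{K_and_cofinal}~\ref{K_and_cofinal:connective} and the evident commutative square will then yield the claim. Fullness is immediate, since the morphism sets between objects of $\cala$ agree in $\cala$ and in $\Idem(\cala)$.

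For cofinality, consider an object $(A,\phi) \in \Nil(\Idem(\cala),\Idem(\Phi))$ with underlying idempotent $A = (A_0,p)$. Set $B := A_0 \in \cala$, and let $i \colon A \to B$ and $r \colon B \to A$ both be given by the morphism $p$ in $\cala$, so that $r \circ i = p = \id_A$ in $\Idem(\cala)$. Define
\[
\psi \;:=\; i \circ \phi \circ \Phi(r) \colon \Phi(B) \to B,
\]
a morphism in $\cala$. Using $r \circ i = \id_A$ and the relations $p \circ \phi = \phi = \phi \circ \Phi(p)$ (which follow from $p \circ \phi \circ \Phi(p) = \phi$), a direct induction yields
\[
\psi^{(n)} \;=\; i \circ \phi^{(n)} \circ \Phi^n(r) \qquad \text{for all } n \ge 1,
\]
so nilpotency of $\phi$ forces nilpotency of $\psi$. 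The identities $i \circ \phi = \psi \circ \Phi(i)$ and $r \circ \psi = \phi \circ \Phi(r)$ are immediate from $r \circ i = \id$, so $i$ and $r$ are morphisms $(A,\phi) \leftrightarrows (B,\psi)$ in $\Nil(\Idem(\cala),\Idem(\Phi))$ with $r \circ i = \id_{(A,\phi)}$, as required.

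The only real calculation is the nilpotency check for $\psi$, and it is elementary; the rest of the argument is assembling the commutative square
\[
\xymatrix{
\bfK(\cala) \ar[r]^-{\bfK(J)} \ar[d]
& \bfK(\Nil(\cala,\Phi)) \ar[d]
\\
\bfK(\Idem(\cala)) \ar[r]^-{\simeq}
& \bfK(\Nil(\Idem(\cala),\Idem(\Phi)))
}
\]
whose bottom arrow is a weak homotopy equivalence and both vertical arrows induce bijections on $\pi_n$ for $n \ge 1$, whence the top arrow does too.
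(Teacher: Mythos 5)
Your proof is correct and follows essentially the same route as the paper: pass to $\Idem(\cala)$, use Lemma~\ref{K_and_cofinal}~\ref{K_and_cofinal:connective} on the vertical arrows and Lemma~\ref{lem:The_connective_K-theory_of_NIL-categories_for_additive_categories} on the bottom arrow of the same square. The only difference is that the paper merely asserts that cofinality of $\Nil(\cala,\Phi)$ in $\Nil(\Idem(\cala),\Idem(\Phi))$ "follows from" cofinality of $\cala$ in $\Idem(\cala)$, whereas you supply the explicit (and correct) construction $\psi = i\circ\phi\circ\Phi(r)$ with the nilpotency check $\psi^{(n)} = i\circ\phi^{(n)}\circ\Phi^n(r)$.
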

\begin{proof}
We have the obvious commutative diagram coming from the inclusion $\cala \to \Idem(\cala)$.
\[\xymatrix{\pi_n(\bfK(\cala)) \ar[r]  \ar[d]
  &
  \pi_n(\bfK(\Nil(\cala,\Phi))) \ar[d]
  \\
  \pi_n(\bfK(\Idem(\cala))) \ar[r]  
  &
  \pi_n(\bfK(\Nil(\Idem(\cala)),\Idem(\Phi))).
}
\]
The left vertical arrow is bijective for $n \ge 1$ by
Lemma~\ref{K_and_cofinal}~\ref{K_and_cofinal:connective}.
The lower horizontal arrow is bijective for $n \ge 1$ by
Lemma~\ref{lem:The_connective_K-theory_of_NIL-categories_for_additive_categories}, since
$\Idem(\cala)$ is regular coherent by
Lemma~\ref{lem:inclusion_of_full_cofinal_subcategory}~%
\ref{lem:inclusion_of_full_cofinal_subcategory:regular_coherent}.  Hence we have to
show that the right vertical arrow is bijective for $n \ge 1$. For this purpose it
suffices to show because of Lemma~\ref{K_and_cofinal}~\ref{K_and_cofinal:connective} 
that $\Nil(\cala,\Phi)$ is a cofinal full subcategory of
$\Nil(\Idem(\cala),\Idem(\Phi))$.  This follows from the fact that $\cala$ is a cofinal
full subcategory of $\Idem(\cala)$.  
\end{proof}


\subsection{Non-connective $K$-theory}%
\label{subsec:Non-connective-K-theory}

In the sequel define $\cala[\IZ^m]$ 
inductively over $m$ by $\cala[\IZ^m] := \cala[\IZ^{m-1}]_{\id}[t,t^{-1}]$,
where $\cala[\IZ^{m-1}]_{\id}[t,t^{-1}]$ is the (untwisted) finite Laurent category associated to $\cala[\IZ^{m-1}]$ 
and the automorphism given by the identity, see Subsection~\ref{subsec:Twisted_finite_Laurent_category}.

\begin{lemma}%
\label{lem:The_non-connective_K-theory_of_NIL-categories_for_additive_categories}
Let $\cala$ be an  additive category.
Suppose that $\cala[\IZ^m]$  is regular coherent for every $m \ge 0$.
Consider any  automorphism $\Phi \colon \cala \xrightarrow{\cong}  \cala$ of additive categories.
Denote by $J \colon \cala \to \Nil(\cala,\Phi)$ the inclusion sending an object $A$ to the object $(A,0)$.

Then the induced map on non-connective $K$-theory
\[
\bfKinfty(J) \colon \bfKinfty(\cala) \to \bfKNilinfty(\Nil(\cala,\Phi))
\]
is a weak homotopy equivalence.
\end{lemma}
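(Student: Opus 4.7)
The plan is to reduce the non-connective statement to the connective Lemma~\ref{lem:The_connective_K-theory_of_NIL-categories_for_additive_categories_without_idempotent_complete} applied not only to $\cala$ but to each of its iterated Laurent extensions $\cala[\IZ^m]$, using the untwisted Bass-Heller-Swan decomposition to propagate vanishing of Nil-terms from positive degrees down to all degrees. The starting point is that, by Theorem~\ref{the:BHS_decomposition_for_non-connective_K-theory}~\ref{the:BHS_decomposition_for_non-connective_K-theory:Nil}, the map $\bfKinfty(J)$ is a weak equivalence if and only if $\bfEinfty(\cala,\Phi)$ is weakly contractible, equivalently $\pi_n(\bfNKinfty(\cala_\Phi[t])) = 0$ for all $n \in \IZ$ (the analogous statement for $\cala_\Phi[t^{-1}]$ being symmetric). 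For $n \ge 1$ this follows at once from Lemma~\ref{lem:The_connective_K-theory_of_NIL-categories_for_additive_categories_without_idempotent_complete} applied to $(\cala,\Phi)$ together with Theorem~\ref{the:BHS_decomposition_for_connective_K-theory}~\ref{the:BHS_decomposition_for_connective_K-theory:Nil}, since non-connective and connective $K$-theory agree in positive degrees.

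For $n \le 0$ I would shift to positive degrees by iterated untwisted BHS. The algebraic input is the canonical isomorphism of additive categories
\[
(\cala_\Phi[t])[\IZ^m] \;\cong\; (\cala[\IZ^m])_{\Phi[\IZ^m]}[t],
\]
where $\Phi[\IZ^m]$ denotes the extension of $\Phi$ by the identity on the Laurent variables; this is verified by unwinding the composition rule of Definition~\ref{def:A_phi[t,t(-1)]} and noting that the identity automorphism commutes strictly with $\Phi$. Iterating Theorem~\ref{the:BHS_decomposition_for_non-connective_K-theory}~\ref{the:BHS_decomposition_for_non-connective_K-theory:BHS-iso} with trivial twist shows that for any additive category $\calb$ the inclusion $\calb \hookrightarrow \calb[\IZ^m]$ realizes $\pi_{n+m}(\bfKinfty(\calb))$ as a natural direct summand of $\pi_{n+m}(\bfKinfty(\calb[\IZ^m]))$. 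Applied to the evaluation map $\bfKinfty(\cala_\Phi[t]) \to \bfKinfty(\cala)$ (which is split by the inclusion $\cala \hookrightarrow \cala_\Phi[t]$) and passing to kernels, one obtains a natural summand inclusion
\[
\pi_n(\bfNKinfty(\cala_\Phi[t])) \;\hookrightarrow\; \pi_{n+m}\bigl(\bfNKinfty\bigl((\cala[\IZ^m])_{\Phi[\IZ^m]}[t]\bigr)\bigr).
\]

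Choosing $m \ge 1-n$ so that $n+m \ge 1$, the right-hand side coincides with its connective analogue $\pi_{n+m}(\bfNK((\cala[\IZ^m])_{\Phi[\IZ^m]}[t]))$, and this vanishes by Lemma~\ref{lem:The_connective_K-theory_of_NIL-categories_for_additive_categories_without_idempotent_complete} applied to the regular coherent additive category $\cala[\IZ^m]$ (regular coherent by hypothesis) with automorphism $\Phi[\IZ^m]$, combined once more with Theorem~\ref{the:BHS_decomposition_for_connective_K-theory}~\ref{the:BHS_decomposition_for_connective_K-theory:Nil}. This forces $\pi_n(\bfNKinfty(\cala_\Phi[t])) = 0$ in every non-positive degree, completing the required vanishing. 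I expect the main obstacle to be the careful verification of naturality: namely, that the iterated BHS summand inclusions for $\bfKinfty$ are sufficiently functorial to descend through the homotopy fiber sequence defining $\bfNKinfty$, and that the commutation isomorphism between twisted and untwisted Laurent constructions holds strictly rather than merely up to weak equivalence. Both should be essentially bookkeeping in Section~\ref{subsec:Twisted_finite_Laurent_category}, but they constitute the technical heart of the reduction.
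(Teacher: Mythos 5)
Your proposal follows essentially the same route as the paper's proof: handle positive degrees by the connective statement
(Lemma~\ref{lem:The_connective_K-theory_of_NIL-categories_for_additive_categories_without_idempotent_complete}) and then shift an arbitrary degree $n$ into the positive range via the iterated untwisted Bass--Heller--Swan retraction, using the identification $(\cala_{\Phi}[t])[\IZ^m] \cong (\cala[\IZ^m])_{\Phi[\IZ^m]}[t]$ and the hypothesis that $\cala[\IZ^m]$ is regular coherent; the paper phrases the retraction directly for the map $\pi_{-n}(\bfKinfty(\cala)) \to \pi_{-n}(\bfKNilinfty(\cala,\Phi))$ rather than for $\pi_n(\bfNKinfty(\cala_{\Phi}[t]))$, but this is the same argument. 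The one point to adjust is that in the two places where you invoke Theorem~\ref{the:BHS_decomposition_for_connective_K-theory}, which assumes idempotent completeness that neither $\cala$ nor $\cala[\IZ^m]$ is given to have, you should instead use the non-connective version Theorem~\ref{the:BHS_decomposition_for_non-connective_K-theory}, which carries no such hypothesis and agrees with the connective one in the positive degrees you need.
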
 
\begin{proof} 
Fix $n \in \IZ$. We have to show  that $\pi_n(\bfKinfty(J))$ is bijective.
This follows from  Lemma~\ref{lem:The_connective_K-theory_of_NIL-categories_for_additive_categories_without_idempotent_complete}
for $n \ge 1$ and is proved in general as follows.

From the definitions and the construction in~\cite[Section~6]{Lueck-Steimle(2014delooping)},
one obtains for every $n \in \IZ$ a commutative diagram
\[
  \xymatrix{\pi_{n}(\bfKinfty(\cala)) \ar[d]_{i} \ar[r] 
  & 
  \pi_{n}(\bfKNilinfty(\cala,\Phi))\ar[d]^{j}
   \\
   \pi_{n+1}(\bfKinfty(\cala[\IZ])) \ar[d]_r \ar[r]
   &
   \pi_{n+1}(\bfKNilinfty(\cala[\IZ],\Phi[\IZ]))\ar[d]^{s}
  \\
  \pi_{n}(\bfKinfty(\cala)) \ar[r] 
   & 
   \pi_{n}(\bfKNilinfty(\cala,\Phi))
}
\]
where $r \circ i = \id$ and $j \circ s = \id$ and these maps are part of the corresponding
(untwisted) Bass-Heller-Swan decompositions.  Iterating this, one obtains for every $m \ge 0$ a commutative diagram
\[
  \xymatrix{\pi_{n}(\bfKinfty(\cala)) \ar[d]_{i} \ar[r] 
  & 
  \pi_{n}(\bfKNilinfty(\cala,\Phi))\ar[d]^{j}
   \\
   \pi_{n+m}(\bfKinfty(\cala[\IZ^m])) \ar[d]_r \ar[r]
   &
   \pi_{n+m}(\bfKNilinfty(\cala[\IZ^m],\Phi[\IZ^m]))\ar[d]^{s}
  \\
  \pi_{n}(\bfKinfty(\cala)) \ar[r] 
   & 
   \pi_{n}(\bfKNilinfty(\cala,\Phi))
}
\]
where $r \circ i = \id$ and $j \circ s = \id$ holds. Now choose $m$ such that $n + m \ge 1$ holds.
Then the middle horizontal arrow can be identified by construction with its connective  version
\[
\pi_{n+m}(\bfK(\cala[\IZ^m])) \to \pi_{n+m}(\bfK(\Nil(\cala[\IZ^m],\Phi[\IZ^m]))).
\]  
Since this map is a bijection by
Lemma~\ref{lem:The_connective_K-theory_of_NIL-categories_for_additive_categories}
the upper horizontal arrow is a retract of an isomorphism and hence itself an  isomorphism.
\end{proof}

\begin{theorem}[The non-connective $K$-theory of additive categories]%
\label{the:The_non_connective_K-theory_of_additive_categories}
Let $\cala$ be an  additive category.
Suppose that $\cala[\IZ^m]$ 
is regular coherent for every $m \ge 0$.
Consider any  automorphism $\Phi \colon \cala \xrightarrow{\cong}  \cala$ of additive categories.

Then we get  a weak homotopy equivalence of non-connective spectra
\[
\bfa^{\infty}  \colon \bfT_{\bfKinfty(\Phi^{-1})}  \xrightarrow{\simeq}  \bfKinfty(\cala_{\Phi}[t,t^{-1}]).
\]
\end{theorem}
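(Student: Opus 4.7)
The plan is to combine the non-connective Bass-Heller-Swan decomposition (Theorem~\ref{the:BHS_decomposition_for_non-connective_K-theory}) with the vanishing of Nil-terms established in Lemma~\ref{lem:The_non-connective_K-theory_of_NIL-categories_for_additive_categories}. The desired map $\bfa^\infty$ is already provided as one summand of the natural wedge equivalence in Theorem~\ref{the:BHS_decomposition_for_non-connective_K-theory}~\ref{the:BHS_decomposition_for_non-connective_K-theory:BHS-iso},
\[
\bfa^{\infty} \vee \bfb_+^{\infty}\vee \bfb_-^{\infty} \colon \bfT_{\bfKinfty(\Phi^{-1})} \vee \bfNKinfty(\cala_{\Phi}[t]) \vee \bfNKinfty(\cala_{\Phi}[t^{-1}]) \xrightarrow{\simeq} \bfKinfty(\cala_{\Phi}[t,t^{-1}]),
\]
so it will suffice to prove that both $\bfNKinfty$-summands are weakly contractible; the restriction of the wedge equivalence to the mapping-torus summand is then itself a weak homotopy equivalence.

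To show $\bfNKinfty(\cala_{\Phi}[t])$ is weakly contractible, I would invoke Theorem~\ref{the:BHS_decomposition_for_non-connective_K-theory}~\ref{the:BHS_decomposition_for_non-connective_K-theory:Nil}, which identifies $\Omega\bfNKinfty(\cala_{\Phi}[t])$ with $\bfEinfty(\cala,\Phi)$ and provides a natural weak equivalence $\bfKinfty(\cala) \vee \bfEinfty(\cala,\Phi) \xrightarrow{\simeq} \bfKNilinfty(\cala,\Phi)$. Under this equivalence, the splitting has $\bfKinfty(J)$ as the restriction to $\bfKinfty(\cala)$, so $\bfEinfty(\cala,\Phi)$ is weakly contractible if and only if $\bfKinfty(J)\colon \bfKinfty(\cala)\to\bfKNilinfty(\cala,\Phi)$ is a weak homotopy equivalence. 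This is exactly what Lemma~\ref{lem:The_non-connective_K-theory_of_NIL-categories_for_additive_categories} delivers under the standing hypothesis that $\cala[\IZ^m]$ is regular coherent for every $m\ge 0$. Hence $\Omega\bfNKinfty(\cala_{\Phi}[t])$ is weakly contractible, and so is $\bfNKinfty(\cala_{\Phi}[t])$.

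For the $t^{-1}$-summand I would run the same argument with $\Phi$ replaced by $\Phi^{-1}$, using the natural identification of $\cala_{\Phi}[t^{-1}]$ with the analogous one-sided construction for the inverse automorphism. The hypothesis on $\cala[\IZ^m]$ is an untwisted Laurent condition and is therefore insensitive to the choice of $\Phi$ or $\Phi^{-1}$, so Lemma~\ref{lem:The_non-connective_K-theory_of_NIL-categories_for_additive_categories} applies equally well to $(\cala,\Phi^{-1})$ and gives the contractibility of $\bfNKinfty(\cala_{\Phi}[t^{-1}])$.

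The main obstacle has already been absorbed into Lemma~\ref{lem:The_non-connective_K-theory_of_NIL-categories_for_additive_categories}: the assumption of regular coherence for every iterated untwisted Laurent category $\cala[\IZ^m]$ is precisely what powers the shift-up-and-project argument passing from the connective vanishing (Lemma~\ref{lem:The_connective_K-theory_of_NIL-categories_for_additive_categories_without_idempotent_complete}) to a statement in every negative degree. Taking that lemma as a black box, the present theorem is a short formal consequence of the non-connective Bass-Heller-Swan decomposition together with the Nil-splitting of its one-sided pieces.
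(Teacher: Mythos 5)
Your proposal is correct and follows essentially the same route as the paper: the paper's proof likewise combines Theorem~\ref{the:BHS_decomposition_for_non-connective_K-theory} with Lemma~\ref{lem:The_non-connective_K-theory_of_NIL-categories_for_additive_categories} to conclude $\pi_n(\bfE^{\infty}(\cala,\Phi))=0$ and hence the vanishing of both $\bfNKinfty$-summands for all $n\in\IZ$. Your extra remarks on identifying $\bfKinfty(J)$ with the splitting inclusion and on handling the $t^{-1}$-summand via $\Phi^{-1}$ only make explicit what the paper leaves implicit.
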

\begin{proof} This follows from Theorem~\ref{the:BHS_decomposition_for_non-connective_K-theory},
since Lemma~\ref{lem:The_non-connective_K-theory_of_NIL-categories_for_additive_categories}
implies $\pi_n(\bfE^{\infty}(R,\Phi)) = 0$ and hence 
$\pi_n(\bfNKinfty(R_{\Phi}[t]))  = \pi_n(\bfNKinfty(\cala_{\Phi}[t^{-1}])) =0$ for all $n \in \IZ$.
\end{proof}


\typeout{------------------- Section: Noetherian additive categories  -----------------}

\section{Noetherian additive categories}%
\label{sec:Noetherian_additive_categories}

  \begin{theorem}[Hilbert Basis Theorem for additive categories]%
~\label{the:Hilbert_Basis_Theorem_for_additive_categories}

    Consider an additive category $\cala$ together with an automorphism
$\Phi \colon \cala \xrightarrow{\cong} \cala$.

    \begin{enumerate}
    \item\label{the:Hilbert_Basis_Theorem_for_additive_categories:cala_to_cala[t]}
    If the additive category $\cala$ is Noetherian, then the additive
    categories $\cala_{\Phi}[t]$, $\cala_{\Phi}[t^{-1}]$, and $\cala_{\Phi}[t,t^{-1}]$ are
    Noetherian; 

    \item\label{the:Hilbert_Basis_Theorem_for_additive_categories:cala[t]_to_cala[t,t_upper_-1]}
    If the additive category $\cala_{\Phi}[t]$ is Noetherian, then the additive
    category  $\cala_{\Phi}[t,t^{-1}]$ is  Noetherian. 
  \end{enumerate}
\end{theorem}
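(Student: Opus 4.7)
The plan is to adapt the classical Hilbert Basis Theorem via the leading coefficient technique, focusing on the case $\cala_\Phi[t]$ of part~(i); the case $\cala_\Phi[t^{-1}]$ then follows from the identification $\cala_\Phi[t^{-1}] \cong \cala_{\Phi^{-1}}[s]$ (substituting $s = t^{-1}$ turns the composition formula for $\Phi$ and negative powers into the one for $\Phi^{-1}$ and positive powers) applied with $\Phi$ replaced by $\Phi^{-1}$, and the case $\cala_\Phi[t, t^{-1}]$ follows from part~(ii) applied to $\cala_\Phi[t]$.

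For the main case, I would fix an object $A$ and show that every $\IZ\cala_\Phi[t]$-submodule $M$ of the finitely generated free module $\mor_{\cala_\Phi[t]}(?, A)$ is finitely generated. For each $n \ge 0$, define the \emph{degree-$n$ leading coefficient} submodule $L_n M \subseteq \mor_\cala(?, A)$ by declaring that $g \colon B \to A$ in $\cala$ lies in $L_n M(B)$ exactly when $g = f_n$ for some $f \in M(\Phi^{-n}(B))$ of degree $\le n$ (note $f_n \colon \Phi^n(\Phi^{-n}(B)) = B \to A$ is indeed a morphism in $\cala$). A short check shows that $L_n M$ is a $\IZ\cala$-submodule, and that precomposition with the morphism $t \colon \Phi^{-(n+1)}(B) \to \Phi^{-n}(B)$, which shifts coefficients up by one, gives inclusions $L_n M \subseteq L_{n+1} M$. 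The Noetherian assumption on $\cala$ then forces each $L_n M$ to be finitely generated and the ascending chain to stabilize at some $L_N M$.

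Picking finite generating sets $g_{n, 1}, \ldots, g_{n, k_n}$, with $g_{n, i} \colon B_{n, i} \to A$, for each $L_n M$ with $0 \le n \le N$, and lifting each $g_{n, i}$ to a morphism $\tilde g_{n, i} \in M(\Phi^{-n}(B_{n, i}))$ of degree $\le n$ with $n$-th coefficient equal to $g_{n, i}$, I would argue by induction on the degree $d$ of an arbitrary $f \in M(C)$ that the $\tilde g_{n, i}$ generate $M$. The leading coefficient $f_d$ lies in $L_{\min(d, N)} M(\Phi^d(C))$ by construction and the stability of the chain, so writing $f_d = \sum_i g_{\min(d, N), i} \circ h_i$ over $\cala$, one computes that $\sum_i \tilde g_{\min(d, N), i} \circ t^{d - \min(d, N)} \circ \Phi^{-d}(h_i)$ is an element of $M(C)$ of degree $\le d$ with matching $d$-th coefficient; subtracting it from $f$ strictly lowers the degree and the induction continues.

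Part~(ii) is easier. The composition formula shows every morphism $f \colon B \to A$ in $\cala_\Phi[t, t^{-1}]$ factors as $g \circ t^{-m}$ for some $m \ge 0$ and $g \colon \Phi^{-m}(B) \to A$ in $\cala_\Phi[t]$, obtained by taking $m$ to be the absolute value of the smallest (most negative) exponent appearing in $f$. Given a $\IZ\cala_\Phi[t, t^{-1}]$-submodule $M$ of $\mor_{\cala_\Phi[t, t^{-1}]}(?, A)$, the objectwise intersection $M' := M \cap \mor_{\cala_\Phi[t]}(?, A)$ is a $\IZ\cala_\Phi[t]$-submodule of $\mor_{\cala_\Phi[t]}(?, A)$, hence finitely generated by hypothesis; the factorization implies that any finite $\IZ\cala_\Phi[t]$-generating set of $M'$ also generates $M$ over $\IZ\cala_\Phi[t, t^{-1}]$. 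The principal obstacle is bookkeeping in part~(i), since the twist by $\Phi$ forces systematic applications of $\Phi^{\pm k}$ on objects to align sources and targets; once the lifts $\tilde g_{n, i}$ are in place at the right object level, the degree-reduction argument runs almost verbatim as in the classical ring case.
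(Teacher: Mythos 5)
Your proposal is correct and follows essentially the same route as the paper: part~(i) via the leading-coefficient filtration of a submodule of $\mor_{\cala_{\Phi}[t]}(?,A)$, stabilization of that ascending chain by Noetherianity of $\cala$, and induction on degree using lifts of the generators (your ``$n$-th coefficients of elements of degree $\le n$'' modules $L_nM$ are just a cosmetic, and slightly cleaner, reindexing of the paper's $I_d$); part~(ii) via intersecting with $\mor_{\cala_{\Phi}[t]}(?,A)$ and using that powers of $t$ are invertible in $\cala_{\Phi}[t,t^{-1}]$. The bookkeeping with $\Phi^{\pm k}$ checks out, so no further comment is needed.
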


\begin{proof}~\ref{the:Hilbert_Basis_Theorem_for_additive_categories:cala_to_cala[t]} We
  only treat $\cala_{\Phi}[t]$, the proof for $\cala_{\Phi}[t^{-1}]$ is analogous. For
  $\cala_\Phi[t,t^{-1}]$ the claim will follow then
  from~\ref{the:Hilbert_Basis_Theorem_for_additive_categories:cala[t]_to_cala[t,t_upper_-1]}.
    
  We  translate the usual proof of the Hilbert Basis Theorem for rings to additive categories. 
  Consider a finitely generated $\IZ\cala_{\Phi}[t]$-module $N$ and a
  $\IZ\cala_{\Phi}[t]$-submodule $M \subseteq N$.  We have to show that $M$ is finitely
  generated.  Lemma~\ref{lem:IZ_cala_modules}~\ref{lem:IZ_cala_modules:additive} implies
  that there is an epimorphisms $\phi \colon \mor_{\cala_{\Phi}[t]}(?,A) \to N$ for some
  object $A$.  If $\phi^{-1}(M)$ is finitely generated, then $M$ is finitely generated,
  since $f$ induces an epimorphism $f^{-1}(M) \to M$.  Hence we can assume without loss of
  generality $N = \mor_{\cala_{\Phi}[t]}(?,A)$.

  Fix an object $Z$ in $\cala$.  Consider a non-trivial element $f \colon Z\to A$ in
  $N(Z)$.  We can write it as a finite sum $\sum_{k= 0}^{d(f)} f_k \cdot t^k$, where
  $f_k \colon \Phi^k(Z) \to A$ is a morphism in $\cala$ and $f_{d(f)} \not= 0$.  We call
  the natural number $d(f)$ the \emph{degree} of $f$ and
  $R(f) = f_{d(f)} \colon \Phi^{d(f)}(Z) \to A$ the \emph{leading coefficient} of $f$.  We
  put $d(0 \colon Z \to A) = -\infty$ and $R(0 \colon Z \to A) = 0$.
  
  We define now $I_d$ as the $\IZ\cala$-submodule of $\mor_\cala(?,A)$ that is generated by all
  $R(f)$ with $f \in M(Z)$ and $d(f) = d$ for some object $Z$ from $\cala$.
  We have $I_0 \subseteq I_1 \subseteq I_2 \subseteq \cdots$ and
  define $I$  to be the  $\IZ\cala$-submodule $\bigcup_{d \ge 0} I_d$.  As $\cala$ is by
  assumption Noetherian, $I$ and all the $I_d$ are finitely generated.  Therefore we find a
  finite collection of morphisms
  $f_i \in M(Z_i) \subseteq \mor_{\cala_\Phi[t]}(Z_i,\cala)$ such that the $R(f_i)$
  generate $I$. We abbreviate $d_i := d(f_i)$.  Since each $f_i$ lies in of the $I_d$-s,
  we can find a natural number $d_0$ such that 
  $I = I_{d_0} = I_d$ holds for $d \ge d_0$.  Hence we can also arrange for the
  $f_i$ to have the following property: for each $d$ the $R(f_i)$ with $d_i \leq d$
  generate $I_d$.  We record that
  $R(f_i) \in I_{d_i}(\Phi^{d_i}(Z_i)) \subseteq \mor_\cala(\Phi^{d_i}(Z_i),A)$.

  We will show that the $f_i$ generate $M$.
  Let $f \in M(Z)$, $f \neq 0$. We abbreviate $d := d(f)$.
  We have $R(f) \in I_{d}(\Phi^{d} (Z)) \subseteq \mor_\cala(\Phi^{d}(Z),A)$.
  We can write 
  \begin{equation*} 
    R(f) = \sum_{i} R(f_i) \circ \varphi_i
  \end{equation*}
   with $\varphi_i \in \mor_\cala(\Phi^{d}(Z),\Phi^{d_i}(Z_i)$ and $\varphi_i = 0$ whenever $d(f_i) > d(f)$.
  Set 
  \begin{equation*}
  	\tilde \varphi_i := {\Phi}^{-d_i}(\varphi_i) \cdot t^{d-d_i} \in \mor_{\cala_\phi[t]}(Z,Z_i).
  \end{equation*} 
  Then  
  \begin{equation*}
    R\Big(\sum_i f_i \circ \tilde \varphi_i \Big)
    = \sum_i R(f_i) \circ \Phi^{d_i}(\Phi^{-d_i}(\varphi_i)) = \sum_i R(f_i) \circ \varphi_i.
  \end{equation*}
  Thus $d(f - \sum_i f_i \circ \tilde \varphi) < d$.  Now we can repeat the argument for
  $f' := f - \sum_i f_i \circ \tilde \varphi$.  By induction on $d(f)$ we now find that
  $f$ belongs to the submodule of $\mor_{\cala_\phi[t](?,A)}$ generated by the $f_i$.
  Hence $M$ is a finitely generated $\IZ\cala_{\Phi}[t]$-module.
  \\[1mm]\ref{the:Hilbert_Basis_Theorem_for_additive_categories:cala[t]_to_cala[t,t_upper_-1]}
  It suffices to show for a $\cala_{\Phi}[t,t^{-1}]$-submodule $M$ of
  $\mor_{\cala_{\Phi}[t,t^{-1}]}(?,A)$ that $M$ is finitely generated as
  $\cala_{\Phi}[t,t^{-1}]$-module.  For $Z \in \cala$ we have
  $\mor_{\cala_\Phi[t]}(Z,A) \subseteq \mor_{\cala_\Phi[t,t^{-1}]}(Z,A)$.  We define the
  $\IZ\cala_{\Phi}[t]$-module $M'$ by
   \begin{equation*}
   	  M'(Z) := M(Z) \cap \mor_{\cala_\Phi[t]}(Z,A).
   \end{equation*}
   Since $\cala_{\Phi}[t]$ is Noetherian, we find a finite collection of morphisms
   $f_i \in M'(Z_i) \subseteq M(Z_i) \subseteq \mor_{\cala_\Phi[t,t^{-1}]}(Z_i,A)$ that
   generate $M'$ as an $\IZ\cala_{\Phi}[t]$-module.  We claim that the $f_i$ also generate
   $M$ as an $\IZ\cala_{\Phi}[t,t^{-1}]$-module.  Let
   $f \in M(Z) \subseteq \mor_{\cala_{\Phi}[t,t^{-1}]}(Z,A)$.  For $d \geq 0$ we have
   $\id_Z \cdot t^d \in \mor_{\cala_\Phi[t,t^{-1}]}(\Phi^{-d}(Z),Z)$.  For sufficiently
   large $d$ we have
   $f \circ (\id_Z \cdot t^d) \in \mor_{\cala_{\Phi}[t]}(\Phi^{-d}(Z),A) \cap
   M(\Phi^{-d}(Z)) = M'(\Phi^{-d}(Z))$.  Thus $f \circ (\id_Z \cdot t^d)$ belongs to the
   $\IZ\cala_{\Phi}[t,t^{-1}]$-submodule of $M$ generated by the $f_i$.  As
   $(\id_Z \cdot t^d)$ is an isomorphism in $\cala_{\Phi}[t,t^{-1}]$, $f$ also belongs to
   the $\IZ\cala_{\Phi}[t,t^{-1}]$-submodule of $M$ generated by the $f_i$.
 \end{proof}


\typeout{------------------- Section:  Additive categories with finite global dimension -----------------}

\section{Additive categories with finite global dimension}%
\label{sec:Additive_categories_with_finite_global_dimension}

Let $\Phi \colon \cala \to \cala$ be an automorphism of the additive category $\cala$.
Let $\overline{\Phi} \colon \cala_{\phi}[t] \to \cala_{\Phi}[t]$ be the automorphism of
additive categories induced by $\Phi$, which sends the morphisms
$\sum_{k = 0}^{\infty} f_k \cdot t^k \colon A \to B$ to the morphism
$\sum_{k = 0}^{\infty} \Phi(f_k) \cdot t^k \colon \Phi(A) \to \Phi(B)$.  Denote
by $i \colon \cala \to \cala_{\Phi}[t]$ the inclusion sending $f \colon A \to B$ to
$(f \cdot t^0) \colon A \to B$.  Obviously we have
$\overline{\Phi} \circ i = i \circ \Phi$.


\subsection{The characteristic sequence}%
\label{subsec:The_characteristic_sequence}

Consider a  $\IZ\cala_{\Phi}[t]$-module $M$. Let 
\[
e \colon i_*i^*M \to M
\]
be the $\IZ\cala_{\Phi}[t]$-morphism, which is the adjoint of the $\IZ\cala$-homomorphism
$\id\colon i^*M \to i^*M$ under the adjunction~\eqref{adjunction_(F_ast,F_upper_ast)}.  We
get for every object $A$ in $\cala$ a morphism 
$\id_{\Phi(A)} \cdot t \colon A \to \Phi(A)$ in $\cala_{\Phi}[t]$.  It induces a
$\IZ\cala_{\Phi}[t]$-morphism  $M(\id_{\phi(A)} \cdot t) \colon M(\Phi(A)) \to M(A)$. Since for a morphisms
$u \colon A \to B$ in $\cala$ we have
\begin{multline*}
(\id_{\phi(B)} \cdot t) \circ i(u) =(\id_{\phi(B)} \cdot t) \circ (u\cdot t^0)  
=  \Phi(u) \cdot t 
\\
= (\Phi(u) \cdot t^0) \circ (\id_{\Phi(A)} \cdot t)  = i(\Phi(u)) \circ (\id_{\Phi(A)} \cdot t), 
\end{multline*}
we obtain a morphism of $\IZ\cala$-modules 
\begin{equation} 
\alpha' \colon \Phi^*i^*M \xrightarrow{\cong}  i^*M.
\label{iso_alpha_prime}
\end{equation}
By applying $i_*$ we
obtain a morphism  of $\IZ\cala_{\Phi}[t]$-modules
\[
\alpha \colon i_*\Phi^*i^*M \to i_*i^*M.
\]
The morphism $\id_{\Phi(A)} \cdot t \colon A \to \Phi(A)$ in $\cala_{\Phi}[t]$ induces also a
$\IZ$-map 
\[
\beta(A) \colon  i_*\Phi^*i^*M(A) = i_*i^*M(\Phi(A)) \to i_*i^*M(A).
\]
Since for any morphism 
$v = \sum_{k = 0}^{\infty} f_k \cdot t^k \colon A \to B$
in $\cala_{\Phi}[t]$ we have
\begin{eqnarray*}
(\id_{\Phi(B)} \cdot t) \circ v
& = & 
(\id_{\Phi(B)} \cdot t)  \circ \biggl(\sum_{k = 0}^{\infty} f_k \cdot t^k\biggr)
\\
& = &
\sum_{k = 0}^{\infty}  (\id_{\Phi(B)} \cdot t)  \circ (f_k \cdot t^k)
\\
& = &
\sum_{k = 0}^{\infty}  \Phi(f_k) \cdot t^{k+1}
\\
& = &
\sum_{k = 0}^{\infty}  (\Phi(f_k) \cdot t^k) \circ (\id_{\Phi(A)} \cdot t)
\\
& = &
\overline{\Phi}\biggl(\sum_{k = 0}^{\infty}  f_k \cdot t^k\biggr)  \circ (\id_{\Phi(A)} \cdot t)
\\
& = &
\overline{\Phi}(v)   \circ (\id_{\Phi(A)} \cdot t),
\end{eqnarray*}
we get a $\IZ\cala_{\Phi}[t]$-homomorphism
denoted by
\[
\beta \colon i_*\Phi^*i^*M \to i_*i^*M.
\]
Define the so called \emph{characteristic sequence} of
$\IZ\cala_{\phi}[t]$-modules by
\begin{equation}
0 \to i_*\Phi^*i^*M \xrightarrow{\alpha - \beta}  i_*i^*M \xrightarrow{e}  M \to  0.
\label{characteristic_sequence}
\end{equation}
Given an object $A \in \cala$,  $(\alpha -\beta)(A)$ is explicitly given by
\begin{multline*}
 M(\Phi(?)) \otimes_{\IZ\cala} \mor_{\cala_{\phi}[t]}(A,{?})  \to  M(?) \otimes_{\IZ\cala} \mor_{\cala_{\phi}[t]}(A,{?}) , \\
x \otimes (f_k \cdot t^k \colon A \to {?}) 
\mapsto 
 M(\id_{\Phi(?)} \cdot t \colon {?} \to \Phi(?))(x) \otimes (f_k \cdot t^k \colon A \to {?}) 
\\- x \otimes (\Phi(f_k) \cdot t^{k+1} \colon A \to \Phi(?)),
\end{multline*}
and $e(A)$ is explicitly given by
\[
M(?) \otimes_{\IZ\cala}  \mor_{\cala_{\phi}[t]}(A,?)  \to M(A), \quad 
x \otimes  (u\colon A \to {?})  \mapsto M(u)(x) = xu.
\]

\begin{lemma}\label{lem:characteristic_sequence}
The characteristic sequence~\eqref{characteristic_sequence}
is  natural in $M$ and exact.
\end{lemma}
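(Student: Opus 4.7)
The plan is to reduce exactness to an object-wise verification and exploit Yoneda to turn the sequence into a concrete ``twisted telescope'' resolution, analogous to the classical resolution $0 \to M[t]\xrightarrow{t-\tau} M[t] \to M \to 0$ of an $R[t]$-module. Naturality in $M$ will be essentially automatic: the counit $e$ is built from the adjunction~\eqref{adjunction_(F_ast,F_upper_ast)}, and $\alpha$, $\beta$ arise by applying $i_*$ to two natural $\IZ\cala$-homomorphisms determined by $M$ and the morphisms $\id_{\Phi(?)} \cdot t$ in $\cala_{\Phi}[t]$; all three therefore commute with $\IZ\cala_{\Phi}[t]$-morphisms $M \to M'$.

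For exactness I would first fix an object $A \in \cala$ (the objects of $\cala_{\Phi}[t]$ coincide with those of $\cala$) and use the decomposition
\[
\mor_{\cala_{\Phi}[t]}(A,B) \;=\; \bigoplus_{k \ge 0} \mor_{\cala}(\Phi^k(A),B) \cdot t^k
\]
together with the Yoneda identity $M(?)\otimes_{\IZ\cala}\mor_{\cala}(C,?) \cong M(C)$ to obtain natural identifications
\[
(i_*i^*M)(A) \cong \bigoplus_{k\ge 0} M(\Phi^k(A)), \qquad (i_*\Phi^*i^*M)(A) \cong \bigoplus_{k\ge 0} M(\Phi^{k+1}(A)).
\]
Write $\tau \colon M(\Phi(?)) \to M(?)$ for the natural transformation $M(\id_{\Phi(?)} \cdot t)$. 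Tracing through the explicit formulas given just before the statement, one checks that under these identifications $e$ sends $(y_l)_{l \ge 0}$ to $\sum_l \tau^l(y_l) \in M(A)$; $\alpha$ sends the $k$-th summand $M(\Phi^{k+1}(A))$ of the source to the $k$-th summand of the target via $\tau$; and $\beta$ sends the $k$-th summand of the source to the $(k+1)$-th summand of the target via the identity.

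With this description in hand, exactness becomes a straightforward bookkeeping exercise. Surjectivity of $e$ is witnessed by $y \mapsto (y,0,0,\dots)$. The composite $e \circ (\alpha-\beta)$ vanishes on the $k$-th summand because $\tau^{k}(\tau(x_k)) - \tau^{k+1}(x_k) = 0$. Injectivity of $\alpha-\beta$ follows by induction from the top: if $(x_0,\dots,x_n,0,\dots)$ lies in the kernel, the equation at position $n+1$ gives $x_n = 0$, then the equation at position $k+1$ gives $x_k = \tau(x_{k+1})$ downwards, forcing all $x_k = 0$. Finally, given $(y_0,\dots,y_n,0,\dots) \in \ker(e)$, i.e.\ $\sum_{l=0}^n \tau^l(y_l) = 0$, the explicit preimage
\[
x_k \;:=\; -\sum_{l=k+1}^{n} \tau^{l-k-1}(y_l) \qquad (0 \le k \le n-1)
\]
satisfies $(\alpha-\beta)(x_*) = (y_*)$, where the relation at position $0$ uses precisely the hypothesis $\sum_l \tau^l(y_l) = 0$.

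The only genuinely fiddly point is the identification of $\alpha$ and $\beta$ in the Yoneda-decomposed form, since the twist by $\Phi$ must be tracked through two places: the precomposition defining $\Phi^*$ and the relation $(\id_{\Phi(B)}\cdot t) \circ (f\cdot t^0) = (\Phi(f)\cdot t^0)\circ(\id_{\Phi(A)}\cdot t)$ that forces $\beta$ to use $\Phi(f_k)$ in the formula. Once that indexing is pinned down correctly, both exactness and naturality fall out immediately, and the proof is complete.
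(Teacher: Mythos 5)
Your proof is correct and follows essentially the same route as the paper: both identify $i^*i_*N$ with $\bigoplus_{k\ge 0}(\Phi^k)^*N$ via the Yoneda decomposition of $\mor_{\cala_\Phi[t]}(A,?)$ and then verify exactness of the resulting twisted-telescope sequence by explicit bookkeeping, the reduction to object-wise checking being the same since $\cala$ and $\cala_\Phi[t]$ share their objects. The only immaterial difference is at the last step, where the paper exhibits an explicit splitting of the telescope (showing the sequence is even split exact over $\IZ\cala$), while you compute kernels and images directly; your preimage formula $x_k=-\sum_{l>k}\tau^{l-k-1}(y_l)$ is precisely the telescoping inverse that the paper's section matrix encodes.
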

\begin{proof} It is obviously natural in $M$. 
To prove exactness, it suffices to prove the exactness of the sequence of $\IZ\cala$-modules
\begin{equation}
0 \to i^*i_*\Phi^*i^*M \xrightarrow{\alpha - \beta}  i^*i_*i^*M \xrightarrow{e}  i^*M \to  0.
\label{characteristic_sequence_after_i_upper_ast}
\end{equation}
Let $N$ be a $\IZ\cala$-module. We obtain a $\IZ\cala$-isomorphism
\begin{equation}
S(N) \colon \bigoplus_{k = 0}^{\infty} \Phi^{k}(N) \xrightarrow{\cong} i^*i_*N,
\label{I_upper_ast_circ_i_ast_is_bigoplus}
\end{equation}
which is defined for an object $A$ in $\cala$ by the $\IZ$-isomorphism
\[
S(N)(A) \colon \bigoplus_{k = 0}^{\infty} N(\Phi^k(A)) 
\xrightarrow{\cong} i^*i_*N(A) =  N(?) \otimes_{\IZ\cala} \mor_{\cala_{\Phi}{t}}(i(A),?) 
\]
sending $(x_k)_{k \ge 0}$ to $\sum_{k=0}^{\infty} x \otimes \bigl(\id_{\Phi^k(A)} \cdot t^k \colon A  \to \Phi^k(A)\bigr) $.
The inverse of $S(N)(A)$  
sends $y \otimes \biggl(\sum_{k = 0}^{\infty} f_k \cdot t^k \colon A \to {?}\bigr)$ to
$\sum_{k = 0}^{\infty} N(f_k)(y)$. 
Applying this to $N = i^*\overline{\Phi}^*M = \Phi^*i^*M$ and $N = i^* M$, we get identifications
\begin{eqnarray*} 
i^*i_*i^*\overline{\Phi}^*M = \bigoplus_{k = 1} ^{\infty} (\Phi^{k})^*i^*M;
\\
i^*i_*i^*M  = \bigoplus_{k = 0} ^{\infty} (\Phi^{k})^*i^*M.
\end{eqnarray*} 
Consider natural numbers $m$ and $n$ with $m \ge n$. For an object $A$ let the map
$s_{m,n}(A) \colon (\overline{\Phi}^m)^*M(A) \to (\overline{\Phi}^n)^*M(A)$ be the map
obtained by applying $M$ to the morphism
$\id_{\Phi^m(A)} \cdot t^{m-n} \colon \Phi^n(A) \to \Phi^m(A)$ in $\cala_{\Phi}[t]$. This
yields a map of $\IZ\cala$-modules
\[
s_{m,n} \colon (\Phi^m)^*i^*M \to (\Phi^n)^*i^*M.
\]
Under these identifications the $\IZ\cala$-sequence~\eqref{characteristic_sequence_after_i_upper_ast}
becomes the sequence 
\begin{multline*}
0 
\to 
\bigoplus_{m= 1}^{\infty} (\Phi^m)^*i^*M 
\xrightarrow{\begin{pmatrix}
-s_{1,0} & 0 & 0 & 0 &\cdots
\\ 
\id  & -s_{2,1} & 0 & 0 & \cdots
\\
0 & \id & - s_{3,2} & 0 & \cdots 
\\
0 & 0 & \id & - s_{4,3} & \cdots
\\
\vdots & \vdots& \vdots & \vdots & \ddots & 
\end{pmatrix}
} 
\bigoplus_{n = 0}^{\infty} (\Phi^n)^*i^*M 
\\
\xrightarrow{\begin{pmatrix} \id&  s_{1,0} &  s_{2,0} &  \cdots
\end{pmatrix}}
i^*M
\to 
0.
\end{multline*}
Since $s_{m,n} \circ s_{l,m} = s_{l,n}$ for $l \ge m \ge n$ and $s_{m,m} = \id$ hold, this
sequence is split exact, with a splitting given by
\[ 
\bigoplus_{m = 1}^{\infty} (\Phi^m)^*M
\xleftarrow{
\begin{pmatrix}
0  & \id  & s_{2,1}  & s_{3,1} & s_{4,1} & \cdots
\\ 
0 & 0 & \id  & s_{3,2} &  s_{4,2} & \cdots
\\
0 & 0 & 0 & \id & s_{4,3} & \cdots 
\\
0 & 0 & 0 & 0 & \id & \cdots
\\
0 & 0 & 0 & 0 & 0  & \cdots
\\
\vdots & \vdots& \vdots & \vdots & \vdots & \ddots & 
\end{pmatrix}
} 
\bigoplus_{n= 0}^{\infty} (\Phi^n
)^*M 
\xleftarrow{\begin{pmatrix} \id \\ 0 \\ 0 \\ 0 \\ 0 \\ \vdots
\end{pmatrix}}
M.
\]
\end{proof}


\subsection{Localization}%
\label{subsec:localization}

\begin{definition}[Local module]\label{def:local_module}
  We call a $\IZ\cala_{\Phi}[t]$-module $M$ \emph{local}, if for any object $A$ in $\cala$
  and any natural number $k \in \IN$ the map
  \[
    M(\id_{\Phi^k(A)} \cdot t^k) \colon M(A) \to M(\Phi^k(A))
  \]
  induced by the morphism $\id_{\Phi^k(A)} \cdot t^k \colon A \to \Phi^k(A)$ in
  $\cala_{\Phi}[t]$ is bijective.
\end{definition}

 Let $j \colon \cala_{\Phi}[t] \to \cala_{\Phi}[t,t^{-1}]$ be the inclusion.

\begin{lemma}\label{lem:characterization_of_local} A $\IZ\cala_{\Phi}[t]$-module $M$
  is local, if and only if there is a $\IZ\cala_{\Phi}[t,t^{-1}]$-module $N$
  such that $M$ and $j^*N$ are isomorphic as $\IZ\cala_{\Phi}[t]$-modules.
\end{lemma}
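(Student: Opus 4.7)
The plan is to verify both implications directly from the composition formula in Definition~\ref{def:A_phi[t,t(-1)]}; conceptually the lemma will express that $\cala_\Phi[t,t^{-1}]$ is a categorical localization of $\cala_\Phi[t]$ at the set $\{\id_{\Phi^k(A)} \cdot t^k \mid A \in \cala,\ k \ge 0\}$, so that modules over the former correspond to modules over the latter that send this set to isomorphisms, i.e.\ to local modules. For the ``if'' direction I first check that each $\id_{\Phi^k(A)} \cdot t^k \colon A \to \Phi^k(A)$ is invertible in $\cala_\Phi[t,t^{-1}]$, with inverse $\id_A \cdot t^{-k} \colon \Phi^k(A) \to A$; this is a short direct calculation with the composition formula. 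Consequently, for any $\IZ\cala_\Phi[t,t^{-1}]$-module $N$, the restriction $j^*N$ sends all these morphisms to isomorphisms, so $j^*N$ is local.

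For the ``only if'' direction I construct $N$ from a local $M$ by setting $N(A) := M(A)$ on objects, and for a morphism $\phi \colon A \to B$ in $\cala_\Phi[t,t^{-1}]$ by introducing the auxiliary morphism $\sigma_k := \id_A \cdot t^k \colon \Phi^{-k}(A) \to A$, which lies in $\cala_\Phi[t]$ for each $k \ge 0$. The composition formula gives $\phi \circ \sigma_k = \sum_l \phi_{l-k} \cdot t^l$, which lies in $\cala_\Phi[t]$ once $k$ exceeds the absolute value of the smallest index occurring in $\phi$. By locality of $M$ the map $M(\sigma_k)$ is an isomorphism (after renaming $A$ to $\Phi^{-k}(A)$, $\sigma_k$ is exactly the morphism from Definition~\ref{def:local_module}), so I set
\[
N(\phi) := M(\phi \circ \sigma_k) \circ M(\sigma_k)^{-1},
\]
with the composition ordered appropriately for the variance convention used for $M$.

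Three verifications then complete the argument. Independence of $k$: replacing $k$ by $k+1$ inserts a common factor $\id_{\Phi^{-k}(A)} \cdot t$ into both $\phi \circ \sigma_{k+1}$ and $\sigma_{k+1}$, whose $M$-images cancel. Functoriality under composition: for composable $\phi \colon A \to B$ and $\psi \colon B \to C$ one picks a sufficiently large common $k$ and uses the commutation identity $\sigma_k^{-1} \circ h = \Phi^{-k}(h) \circ \sigma_k^{-1}$ in $\cala_\Phi[t,t^{-1}]$, valid for any $h \colon X \to B$ in $\cala_\Phi[t]$ (itself an immediate consequence of the composition formula), to slide the $\sigma_k^{-1}$ factors past the $\cala_\Phi[t]$-pieces and match $N(\psi \circ \phi)$ against $N(\psi) \circ N(\phi)$. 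Restriction: if $\phi$ already lies in $\cala_\Phi[t]$ one may take $k = 0$, so $\sigma_0 = \id_A$ and $N(\phi) = M(\phi)$, hence $j^*N \cong M$. The main obstacle will be the bookkeeping in the functoriality check, where the interplay between the $\Phi$-twist and the localizing morphisms $\sigma_k$ must be tracked carefully; but every step ultimately reduces to the composition formula of Definition~\ref{def:A_phi[t,t(-1)]} and the locality hypothesis on $M$.
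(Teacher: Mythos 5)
Your argument is correct and is essentially the proof the paper gives: the easy direction follows from the invertibility of $\id_{\Phi^k(A)}\cdot t^k$ in $\cala_\Phi[t,t^{-1}]$, and the converse extends the module structure by clearing denominators with a power of $t$ and inverting it via locality, exactly as in the paper (which postcomposes with $\id_{\Phi^m(B)}\cdot t^m$ at the target where you precompose with $\id_A\cdot t^k$ at the source — a purely cosmetic mirror image). The verifications of well-definedness and functoriality that you sketch via the commutation identity $t^{-k}h=\Phi^{-k}(h)t^{-k}$ are precisely the ones the paper leaves to the reader.
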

\begin{proof} Since the morphism $\id_{\Phi^k(A)} \cdot t^k \colon A \to \Phi^k(A)$ in
$\cala_{\Phi}[t]$ becomes invertible when considered in $\cala_{\Phi}[t,t^{-1}]$,
a $\IZ\cala_{\Phi}[t]$-module $M$ is local, if there is a $\IZ\cala_{\Phi}[t,t^{-1}]$-module $N$
such that $M$ and $j^*N$ are isomorphic as $\IZ\cala_{\Phi}[t,t^{-1}]$-modules.

Now consider a local $\IZ\cala_{\Phi}[t]$-module $M$. We have to explain how the
$\IZ\cala_{\Phi}[t]$-structure extends to a $\IZ\cala_{\Phi}[t,t^{-1}]$-structure.
Consider a morphism $u \colon A \to B$ in $\cala_{\Phi}[t,t^{-1}]$. Then we can choose a
natural number $m$ such that the composite
$A \xrightarrow{u} B \xrightarrow{\id_{\Phi^m(B)} \cdot t^m} \Phi^m(B)$ is a morphism in
$\cala_{\Phi}[t]$. Hence we have the $\IZ$-map
$M((\id_{\Phi^m(B)} \cdot t^m)\circ u) \colon M(\Phi^m(B)) \to M(A)$.  Since $M$ is local,
the $\IZ$-map $M(\id_{\Phi^m(B)} \cdot t^m) \colon M(\Phi^m(B)) \to M(B)$ is a
isomorphism.  Now define
\[
M(u) \colon M(B) \xrightarrow{M(\id_{\Phi^m(B)} \cdot t^m)^{-1}} M(\Phi^m(B)) 
\xrightarrow{M((\id_{\Phi^m(B)} \cdot t^m)\circ u)} M(A).
\]
We leave it to the reader to check that the definition of $M(u)$ is independent of the
choice of $m$ and that we obtain the desired $\IZ\cala_{\Phi}[t,t^{-1}]$-structure on $M$
extending the given $\IZ\cala_{\Phi}[t]$-structure.
\end{proof}

Let $M$ be a $\IZ\cala_{\Phi}[t]$-module. We want to assign to it a
$\IZ\cala_{\Phi}[t,t^{-1}]$-module $S^{-1}M$ as follows.  Consider an object $A$ in
$\cala$. Define the abelian group
\[
S^{-1}M(A) := \{(l,x) \mid  l \in \IZ, x \in M(\Phi^l(A))\}/\sim
\]
for the equivalence relation $\sim$, where $(l_0,x_0)$ and $(l_1,x_1)$ are equivalent, if
and only if there is an integer $l \in \IZ$ with $l \le l_0,l_1$ such that the elements
$M(\id_{\Phi^{l_0}(A)} \cdot t^{l_0-l})(x_0)$ and
$M(\id_{\Phi^{l_1}(A)} \cdot t^{l_1-l})(x_0)$ of $M(\Phi^l(A))$ agree.  Given a morphism
$u \colon A \to B$ in $\cala_{\Phi}[t,t^{-1}]$, we can choose a natural number $m$ such
that the composite $A \xrightarrow{u} B \xrightarrow{\id_{\Phi^m(B)} \cdot t^m} \Phi^m(B)$
is a morphism in $\cala_{\Phi}[t]$.  Define
$S^{-1}(M)(u) \colon S^{-1}(M)(B) \to S^{-1}M(A)$ by sending $[l,x]$ 
to the class of
$\bigl(l-m, M\bigl(\overline{\Phi}^{l-m}((\id_{\Phi^m(B)} \cdot t^m) \circ
u)\bigr)(x)\bigr)$.  This is independent of the choice of the representative of $[l,x]$,
since we get for the different representative $(l-1,M(\id_{\Phi^{l}(B)} \cdot t)(x))$ 
\begin{eqnarray*}
\lefteqn{S^{-1}(M)([l-1,M(\id_{\Phi^{l}(B)} \cdot t)(x)])}
& & 
\\
& = &
\bigl[l-1-m, M\bigl(\overline{\Phi}^{l-1-m}((\id_{\Phi^m(B)} \cdot t^m) \circ u)\bigr) \circ M(\id_{\Phi^{l}(B)} \cdot t)(x)\bigr]
\\
& = &
\bigl[l-1-m, M\bigl((\id_{\Phi^{l}(B)} \cdot t) \circ \overline{\Phi}^{l-1-m}((\id_{\Phi^m(B)} \cdot t^m) \circ u)\bigr)(x)\bigr]
\\
& = &
\bigl[l-1-m, M\bigl((\id_{\Phi^{l}(B)} \cdot t) \circ (\id_{\Phi^{l-1(B)}} \cdot t^m) \circ \overline{\Phi}^{l-1-m}(u)\bigr)(x)\bigr]
\\
& = &
\bigl[l-1-m, M\bigl((\id_{\Phi^{l}(B)} \cdot t^{m} \circ \id_{\phi^{l-m}(B)} \cdot t) \circ \overline{\Phi}^{l-1-m}(u)\bigr)(x)\bigr]
\\
& = &
\bigl[l-1-m, M\bigl((\id_{\Phi^{l}(B)} \cdot t^{m}) \circ  \overline{\Phi}^{l-m}(u)\bigr)(x)\bigr]
\\
& = &
S^{-1}(M)([l,x]).
\end{eqnarray*}
This is independent of the choice of $m$ by the following calculation
\begin{eqnarray*}
\lefteqn{\bigl[l-(m+1), M\bigl(\overline{\Phi}^{l-(m+1)}((\id_{\Phi^{m+1}(B)} \cdot t^{m+1}) \circ u)\bigr)(x)\bigr]}
& & 
\\
& = &
\bigl[l-(m+1), M\bigl(\overline{\Phi}^{l-(m+1)}(\id_{\Phi^{m+1}(B)} \cdot t^{m+1})  \circ \overline{\Phi}^{l-(m+1)}(u)\bigr)(x)\bigr]
\\
& = &
\bigl[l-(m+1), M\bigl((\id_{\Phi^{l}(B)} \cdot t^{m+1})  \circ \overline{\Phi}^{l-(m+1)}(u)\bigr)(x)\bigr]
\\
& = & 
\bigl[l-m-1 , M\bigl((\id_{\Phi^l(B)} \cdot t^m)  \circ (\id_{\Phi^{l-m}(B)} \cdot t) \circ \overline{\Phi}^{l-m-1}(u)\bigr)(x)\bigr]
\\
& = & 
\bigl[l-m-1 , M\bigl((\id_{\Phi^l(B)} \cdot t^m) \circ \overline{\Phi}^{l-m}(u) \circ (\id_{\Phi^{l-m}(B)} \cdot t)\bigr)(x)\bigr]
\\
& = & 
\bigl[l-m-1 , M\bigl(\overline{\Phi}^{l-m}(\id_{\Phi^m(B)} \cdot t^m) 
\circ \overline{\Phi}^{l-m}(u) \circ (\id_{\Phi^{l-m}(B)} \cdot t)\bigr)(x)\bigr]
\\
& = & 
\bigl[l-m-1 , M\bigl(\overline{\Phi}^{l-m}((\id_{\Phi^m(B)} \cdot t^m) \circ u) \circ (\id_{\Phi^{l-m}(B)} \cdot t)\bigr)(x)\bigr]
\\
& = & 
\bigl[l-m-1 , M(\id_{\phi^{l-m}(B)} \cdot t) \circ M\bigl(\overline{\Phi}^{l-m}((\id_{\Phi^m(B)} \cdot t^m) \circ u)\bigr)(x)\bigr]
\\
& = &
\bigl[l-m, M\bigl(\overline{\Phi}^{l-m}((\id_{\Phi^m(B)} \cdot t^m) \circ u)\bigr)(x)\bigr].
\end{eqnarray*}
We leave it to the reader to check that $S^{-1}M(v \circ u) = S^{-1}M(u) \circ S^{-1}M(v)$
holds for any two composable morphisms $u \colon A \to B$ and $v \colon B \to C$ in
$\cala_{\Phi}[t,t^{-1}]$ and $S^{-1}M(\id_A) = \id_{S^{-1}M(A)}$ holds for any object $A$
in $\cala$. Note that the $\cala_{\phi}[t]$-module $j^*S^{-1}M$ is local by
Lemma~\ref{lem:characterization_of_local}

There is a natural map of $\cala_{\Phi}[t]$-modules
\begin{equation*}
I \colon M \to j^*S^{-1}M,
\end{equation*}
which is given for an object $A$ of $\cala$ by the map $I(A) \colon M(A) \to S^{-1}M(A)$
sending $x$ to $(0,x)$. We claim that $I$ is \emph{a localization} in the sense that for any
local $\cala_{\phi}[t]$-module $N$ and any $\cala_{\phi}[t]$-homomorphism
$f \colon M \to N$ there exists precisely one $\cala_{\phi}[t]$-homomorphism
$S^{-1}f \colon S^{-1}M \to N$.

Firstly we explain that there is at most one such map $S^{-1}f$ with these properties.
Namely, consider an object $A \in \cala$ and an element $[m,x] \in S^{-1}(M)(A)$. If
$m \ge 0$, then we compute
\begin{eqnarray}
S^{-1}f(A)([m,x])
& = &
S^{-1}(A)([0,M(\id_{\Phi^m(A)} \cdot t^m)(x)])
\label{S_upper_(-1)f(A)([m,x])_m_le_0}
\\
& = & 
S^{-1}(A) \circ I(A) \circ M(\id_{\Phi^m(A)} \cdot t^m)(x)
\nonumber
\\
& = &
f(A) \circ M(\id_{\Phi^m(A)} \cdot t^m)(x).
\nonumber
\end{eqnarray}
Suppose $m \le 0$. 
Since we have $S^{-1}(M)(\id_{A} \cdot t^{-m})([m,x]) = [0,x]$,
we compute for $[m,x] \in S^{-1}M(A)$
\begin{eqnarray*}
\lefteqn{S^{-1}(N)(\id_{A} \cdot t^{-m}) \circ S^{-1}f(A)([m,x])}
& & 
\\
& = & 
S^{-1}f(\Phi^{m}(A)) \circ S^{-1}(M)(\id_{A} \cdot t^{-m})([m,x])
\\
& = & 
S^{-1}f(\Phi^{m}(A))([0,x])
\\
& = & 
S^{-1}f(\Phi^{m}(A)) \circ I(A)(x)
\\
& = & 
f(\Phi^m(A))(x).
\end{eqnarray*}
Since the locality of $N$ implies that
$S^{-1}(N)(\id_{\phi^{m}(A)} \cdot t^m)$ is an isomorphism, we conclude
\begin{equation}
 S^{-1}f(A)([m,x])  =  S^{-1}(N)(\id_{A} \cdot t^{-m})^{-1} \circ f(\Phi^m(A))(x).
\label{S_upper_(-1)f(A)([m,x])_m_ge_0}
\end{equation}
Hence $S^{-1}f(A)$ is determined by the equations~\eqref{S_upper_(-1)f(A)([m,x])_m_le_0}
and~\eqref{S_upper_(-1)f(A)([m,x])_m_ge_0}.  We leave it to the reader to check that it makes
sense to define the desired $\IZ_{\cala}[t]$-homomorphism $S^{-1}f(A)$ by the
equations~\eqref{S_upper_(-1)f(A)([m,x])_m_le_0} and~\eqref{S_upper_(-1)f(A)([m,x])_m_ge_0}.

The adjoint of $I \colon M \to j^*S^{-1}M$ under the adjunction~\eqref{adjunction_(F_ast,F_upper_ast)} 
is denoted by
\begin{equation}
\alpha \colon  j_*M \to S^{-1}M.
\label{alpha_adjoint_of_I}
\end{equation}
The adjoint of $\id_{j_*M}$ under the adjunction~\eqref{adjunction_(F_ast,F_upper_ast)}
  is the $\IZ\cala_{\Phi}[t]$-homomorphism
  \begin{equation}
    \lambda \colon M \to j^*j_*M,
   \label{lambda_M_to_j_upper_astJ_astM}
  \end{equation}
which is explicitly given by
$M(??) \to \mor_{\cala_{\Phi}[t,t^{-1}]}(??,?) \otimes_{\IZ\cala_{\Phi}[t]} M(?) $
sending $u \in M(??) $ to $\id_{??} \otimes u$.  
Given  an $\IZ\cala_{\Phi}[t,t^{-1}]$-module $N$, the adjoint of $\id_{j^*N}$ under the 
adjunction~\eqref{adjunction_(F_ast,F_upper_ast)} is the
$\IZ\cala_{\Phi}[t,t^{-1}]$-homomorphism
\begin{equation}
\rho \colon j_*j^*N \to N,
\label{rho_j_astj_upper_astN_to_N}
\end{equation}
which is explicitly given by 
$ N(?) \otimes_{\IZ\cala_{\Phi}[t]} \mor_{\cala_{\Phi}[t,t^{-1}]}(??,?)  \to N(??)$ sending
$x \otimes u$ to $N(u)(x)=xu$. 

\begin{lemma}\label{lem:localization}

  \begin{enumerate}

  \item\label{lem:localization:lambda_is_localization} The
    $\IZ\cala_{\Phi}[t]$-homomorphism $\lambda \colon M \to j^*j_*M$
    of~\eqref{lambda_M_to_j_upper_astJ_astM} is a localization;

  \item\label{lem:localization:alpha_is_iso} The $\IZ\cala_{\phi}[t,t^{-1}]$-homomorphism
    $\alpha \colon j_*M \to S^{-1}M$ of~\eqref{alpha_adjoint_of_I} is an
    isomorphism, which is natural in $M$;

  \item\label{lem:localization:j_astj_upper_ast_is_id} Let $N$ be a
    $\IZ\cala_{\Phi}[t,t^{-1}]$-module. Then the $\IZ\cala_{\Phi}[t,t^{-1}]$-map
    $\rho \colon j_*j^*N \to N$ of~\eqref{rho_j_astj_upper_astN_to_N} is an isomorphism.

  \end{enumerate}
\end{lemma}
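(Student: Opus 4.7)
I plan to prove the assertions in the order (iii), (i), (ii), with (i) and (ii) following formally from (iii) via adjunction arguments. The main substantive work will be in (iii).

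For (iii), I will construct an inverse to $\rho_N(A) \colon (j_*j^*N)(A) \to N(A)$ explicitly on each object $A \in \cala$, by setting $\sigma_N(A)(y) := [y \otimes \id_A]$. Then $\rho_N(A) \circ \sigma_N(A) = \id_{N(A)}$ is immediate, so the real task is to show that every elementary tensor $[x \otimes u]$ with $x \in N(B)$ and $u \in \mor_{\cala_{\Phi}[t,t^{-1}]}(A,B)$ equals $[N(u)(x) \otimes \id_A]$ in $(j_*j^*N)(A)$. The key trick is that for sufficiently large $n$, the morphism $q := \id_{\Phi^n(B)} \cdot t^n \colon B \to \Phi^n(B)$ of $\cala_{\Phi}[t]$ has the property that the composite $v := q \circ u \colon A \to \Phi^n(B)$ again lies in $\cala_{\Phi}[t]$, because left composition with $q$ shifts all powers of $t$ up by $n$ and thereby absorbs the negative-degree terms appearing in $u$. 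Since $q$ is invertible in $\cala_{\Phi}[t,t^{-1}]$, we have $u = q^{-1} \circ v$, and applying the tensor relation twice over the $\cala_{\Phi}[t]$-morphisms $q$ and $v$ gives
\[
  [x \otimes u] = [N(q^{-1})(x) \otimes (q \circ q^{-1} \circ v)] = [N(q^{-1})(x) \otimes v] = [N(v)(N(q^{-1})(x)) \otimes \id_A] = [N(u)(x) \otimes \id_A],
\]
the last equality using contravariance $N(u) = N(v) \circ N(q^{-1})$.

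For (i), $j^*j_*M$ is local by Lemma~\ref{lem:characterization_of_local}. Since every local module is isomorphic to one of the form $j^*N'$ by the same lemma, the universal property of $\lambda_M$ reduces to showing that for every $\IZ\cala_{\Phi}[t,t^{-1}]$-module $N'$, precomposition with $\lambda_M$ is a bijection $\Lambda \colon \hom_{\IZ\cala_{\Phi}[t]}(j^*j_*M, j^*N') \to \hom_{\IZ\cala_{\Phi}[t]}(M, j^*N')$. I will factor $\Lambda$ through the map $\Psi \colon \hom_{\IZ\cala_{\Phi}[t,t^{-1}]}(j_*M, N') \to \hom_{\IZ\cala_{\Phi}[t]}(j^*j_*M, j^*N')$, $\psi \mapsto j^*\psi$, so that $\Lambda \circ \Psi$ is the standard $(j_*, j^*)$-adjunction bijection $\psi \mapsto j^*\psi \circ \lambda_M$; it will then suffice to prove $\Psi$ is bijective. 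For this I plan to compose the adjunction bijection $\hom(j_*j^*j_*M, N') \cong \hom(j^*j_*M, j^*N')$, $\phi \mapsto j^*\phi \circ \lambda_{j^*j_*M}$, with pre-composition by $\rho_{j_*M}$, which is an isomorphism by (iii). The composite will send $\psi \mapsto j^*(\psi \circ \rho_{j_*M}) \circ \lambda_{j^*j_*M} = j^*\psi \circ j^*\rho_{j_*M} \circ \lambda_{j^*j_*M} = j^*\psi$ by the triangle identity, exhibiting $\Psi$ as a composition of two bijections.

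For (ii), the construction of $S^{-1}M$ in the preceding discussion establishes that $I \colon M \to j^*S^{-1}M$ also has the universal property of a localization, so combined with (i) there is a unique $\IZ\cala_{\Phi}[t]$-isomorphism $\bar{\alpha} \colon j^*j_*M \to j^*S^{-1}M$ with $\bar{\alpha} \circ \lambda_M = I$. Since $\alpha \colon j_*M \to S^{-1}M$ is defined as the adjoint of $I$, we have $j^*\alpha \circ \lambda_M = I$, so uniqueness will force $j^*\alpha = \bar{\alpha}$. Because $j$ is the identity on objects (both $\cala_{\Phi}[t]$ and $\cala_{\Phi}[t,t^{-1}]$ share the object set of $\cala$), the evaluation $(j^*\alpha)(A) = \alpha(A)$ for every object $A$, hence $\alpha(A)$ will be an isomorphism of abelian groups for every $A$, and consequently $\alpha$ itself will be a $\IZ\cala_{\Phi}[t,t^{-1}]$-isomorphism.
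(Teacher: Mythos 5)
Your proposal is correct, but it reverses the logical order of the paper's argument. The paper proves~\ref{lem:localization:lambda_is_localization} first and directly: existence of the factorization $\overline{f}$ comes from the adjunction~\eqref{adjunction_(F_ast,F_upper_ast)} applied to $u \circ f$ after identifying the local target with some $j^*N'$, and uniqueness is deduced (somewhat tersely) from the explicit description of $\lambda$ together with the observation that any morphism of $\cala_{\Phi}[t,t^{-1}]$ lands in $\cala_{\Phi}[t]$ after postcomposition with $\id_{\Phi^m(B)}\cdot t^m$. Part~\ref{lem:localization:j_astj_upper_ast_is_id} is then derived formally: $\lambda_{j^*N}$ and $\id_{j^*N}$ are both localizations of the local module $j^*N$, and $j^*\rho \circ \lambda_{j^*N} = \id$ forces $j^*\rho$, hence $\rho$, to be an isomorphism. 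You do the opposite: the concrete work is concentrated in~\ref{lem:localization:j_astj_upper_ast_is_id}, where you build an explicit inverse $y \mapsto [y\otimes \id_A]$ to $\rho(A)$ and verify $[x\otimes u]=[N(u)(x)\otimes \id_A]$ by factoring $u = q^{-1}\circ v$ with $q = \id_{\Phi^n(B)}\cdot t^n$ and $v = q\circ u$ in $\cala_{\Phi}[t]$ and applying the tensor relation twice; then~\ref{lem:localization:lambda_is_localization} follows purely formally from the adjunction, the triangle identity $j^*\rho_{j_*M}\circ\lambda_{j^*j_*M}=\id$, and the invertibility of $\rho_{j_*M}$. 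The computational core -- clearing negative powers of $t$ by composing with $t^n$ -- is the same in both proofs; it just lives in the uniqueness step of~\ref{lem:localization:lambda_is_localization} in the paper and in the tensor-product manipulation of~\ref{lem:localization:j_astj_upper_ast_is_id} in yours. Your arrangement has the advantage that the element-level argument appears exactly once and everything else is category-theoretic bookkeeping, at the price of having to check that both triangle identities and the reduction of $\Lambda$ through $\Psi$ are set up correctly. For part~\ref{lem:localization:alpha_is_iso} both arguments coincide, and both rest on the claim, established (with existence left to the reader) in the discussion preceding the lemma, that $I\colon M \to j^*S^{-1}M$ is a localization; you should also record that naturality of $\alpha$ in $M$ is immediate from the naturality of $I$ and of the adjunction, since the statement asserts it.
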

\begin{proof}~\ref{lem:localization:lambda_is_localization} 
 Let $f \colon M \to N$ be a $\IZ\cala_{\Phi}[t]$-map with a local
  $\IZ\cala_{\Phi}[t]$-module as target. Because of
  Lemma~\ref{lem:characterization_of_local} there is a $\IZ\cala_{\Phi}[t,t^{-1}]$-module
  $N'$ and a $\IZ\cala_{\Phi}[t]$-isomorphism $u \colon N \to j^*N'$.  Let the
  $\IZ\cala_{\Phi}[t,t^{-1}]$-map $v_0 \colon j_*M \to N$ be the adjoint of $u \circ f$
  under the adjunction~\eqref{adjunction_(F_ast,F_upper_ast)}. Because of the naturality
  of the adjunction~\eqref{adjunction_(F_ast,F_upper_ast)} we get for the composite
  $\overline{f} \colon j^*j_*M \xrightarrow{j^*v_0} j^*N' \xrightarrow{u^{-1}} N$ that
  $\overline{f} \circ \lambda = f$ holds. We conclude that
  $\overline{f}$ is uniquely determined by $\overline{f} \circ \lambda = f$
  from the explicite description of
  $\lambda$ and from the fact that for any morphism $u \colon A \to B$ in
  $\cala_{\Phi}[t,t^{-1}]$ there is a natural number such that the composite of $u$ with
  $\id_{\Phi^m(B)} \cdot t^m \colon \Phi(M) \to \Phi^m(B)$ lies in $\cala_{\Phi}[t]$.
\\[1mm]~\ref{lem:localization:alpha_is_iso}
Obviously $\alpha$ is natural in $M$. The naturality of the adjunction~\eqref{adjunction_(F_ast,F_upper_ast)}  implies
\[
j^*\alpha \circ \lambda = I.
\]
Since both $I \colon M \to j^*S^{-1}M$ and
$\lambda \colon M \to j^*j_* M$ a localizations,
$j^*\alpha$ and hence $\alpha$ are bijective.
\\[1mm]~\ref{lem:localization:j_astj_upper_ast_is_id} It suffices to show that
$j^*\rho \colon j^*j_*j^*N \to j^*N$ is bijective.
Assertion~\ref{lem:localization:lambda_is_localization} applied to $j^*N$ and the
naturality of the adjunction~\eqref{adjunction_(F_ast,F_upper_ast)} imply that
$j^*\rho \colon j^*N \to j^*j_*j^*N$ is a localization.  Since
$\id_{j^*N} \colon j^*N \to j^*N$ is a localization, $j^*\rho$ is an isomorphism.
\end{proof}

\begin{lemma}\label{lem:j_ast_is_flat}
The functor $j_* \colon \MODcatr{\IZ\cala_{\phi}[t]} \to \MODcatr{\IZ\cala_{\phi}[t,t^{-1}]}$ is flat.
\end{lemma}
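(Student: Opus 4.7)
The plan is to reduce flatness of $j_*$ to exactness of the localization functor $S^{-1}$ introduced above, exploiting the identification $j_*M \cong S^{-1}M$ from Lemma~\ref{lem:localization}~\ref{lem:localization:alpha_is_iso}. The functor $j_*$ is right exact (it is a left adjoint to $j^*$, see the discussion following~\eqref{adjunction_(F_ast,F_upper_ast)}), so it suffices to show that $j_*$ preserves monomorphisms. Moreover, since $\cala_{\Phi}[t,t^{-1}]$ and $\cala$ have the same objects, the restriction functor $j^*$ is exact and reflects both monomorphisms and isomorphisms; so one can equivalently verify that whenever $f \colon M' \to M$ is a monomorphism of $\IZ\cala_{\Phi}[t]$-modules, the induced map $S^{-1}f \colon S^{-1}M' \to S^{-1}M$ is injective on every object of $\cala$.

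To carry this out, fix an object $A$ in $\cala$ and suppose $[l,x'] \in S^{-1}M'(A)$ is mapped to zero in $S^{-1}M(A)$ under $S^{-1}f(A)$. Unwinding the definition of $S^{-1}$, this means that the element $f(\Phi^l(A))(x') \in M(\Phi^l(A))$ is equivalent to $0$, i.e., there exists $l_0 \leq l$ with
\[
M(\id_{\Phi^l(A)} \cdot t^{l-l_0})\bigl(f(\Phi^l(A))(x')\bigr) = 0 \in M(\Phi^{l_0}(A)).
\]
Since $f$ is a natural transformation, the left-hand side equals $f(\Phi^{l_0}(A))\bigl(M'(\id_{\Phi^l(A)} \cdot t^{l-l_0})(x')\bigr)$. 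Because $f(\Phi^{l_0}(A))$ is injective (as $f$ is a monomorphism of $\IZ\cala_{\Phi}[t]$-modules, hence injective objectwise), it follows that $M'(\id_{\Phi^l(A)} \cdot t^{l-l_0})(x') = 0$ in $M'(\Phi^{l_0}(A))$, and therefore $[l,x'] = 0$ in $S^{-1}M'(A)$.

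Combined with the natural isomorphism $\alpha \colon j_*M \xrightarrow{\cong} S^{-1}M$ of Lemma~\ref{lem:localization}~\ref{lem:localization:alpha_is_iso}, this shows that $j_*$ carries the injection $f$ to an injection, hence $j_*$ is exact, i.e., flat. The only place where care is needed is the bookkeeping with the equivalence relation on pairs $(l,x)$ defining $S^{-1}M(A)$; but no obstacle arises because the transition maps $M(\id_{\Phi^m(A)} \cdot t^m)$ are natural in $M$, so injectivity of $f$ propagates cleanly through the colimit-type construction of $S^{-1}$.
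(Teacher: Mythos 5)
Your proposal is correct and follows essentially the same route as the paper: right exactness of $j_*$ from the adjunction, reduction to injectivity of $S^{-1}f$ via the natural isomorphism $\alpha \colon j_*M \xrightarrow{\cong} S^{-1}M$, and then the same unwinding of the equivalence relation using naturality of $f$ with respect to the morphisms $\id_{\Phi^l(A)} \cdot t^{l-l_0}$ together with objectwise injectivity. The brief detour through $j^*$ reflecting monomorphisms is unnecessary but harmless.
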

\begin{proof}
Because of the adjunction~\eqref{adjunction_(F_ast,F_upper_ast)}  the functor $j_*$ is 
right exact by a general argument, see~\cite[Theorem~2.6.1. on page~51]{Weibel(1994)}.
Hence it remains to show that for an injective $\IZ\cala_{\phi}[t]$-map $i \colon M \to N$
the $\IZ\cala_{\phi}[t,t^{-1}]$-map $j_*i \colon j_*M \to j_*N$ is injective. In view of 
Lemma~\ref{lem:localization}~\ref{lem:localization:alpha_is_iso}
it suffices to show that $S^{-1}i \colon S^{-1}M \to S^{-1}N$ is injective.
Consider an object $A$ in $\cala$ and an element $[l,x]$ in the kernel of $S^{-1}(i)(A)$. Since 
$S^{-1}i([l,x] )= [l,i(\Phi^l(A))(x)]$, there is a natural number $m \le l$
such that $N(\id_{\Phi^{l}(A)} \cdot t^{m-l})(i(\Phi^l(A))(x)) = 0$. 
Since $N(\id_{\Phi^{l}(A)} \cdot t^{m-l}) \circ i(\Phi^l(A)) = i(\Phi^{m-l}(A)) \circ M(\id_{\Phi^{l}(A)} \cdot t^{m-l})$
and $i(\Phi^{m-l}(A))$ is by assumption injective, $M(\id_{\Phi^{l}(A)} \cdot t^{m-l})(x) = 0$.
This implies $[l,x] = 0$. 
\end{proof}


\subsection{Global dimension}%
\label{subsec:Global_dimension}

Recall that an additive category $\cala$ has \emph{global dimension $\le d$}, if 
the abelian category $\MODcatr{\IZ\cala}$  has global dimension $\le d$, i.e.,
if every $\IZ\cala$-module has a projective $d$-dimensional resolution,
see Definition~\ref{def:Regularity_properties_of_additive_categories}~%
\ref{def:Regularity_properties_of_additive_categories:global_homological_dimension}.

\begin{theorem}[Global dimension and the passage from $\cala$ to $\cala_{\Phi}{[t]}$]%
\label{the:global_dimension_for_cala_and_cala[t]}
  Let $\cala$ be an additive category $\cala$ and $\Phi \colon \cala \to \cala$ be an
  automorphism of additive categories. 

  \begin{enumerate}
  \item\label{the:global_dimension_for_cala_and_cala[t]_modules} Let $M$ be a
    $\IZ\cala_{\Phi}[t]$-module. If      $\pdim_{\cala}(i^*M) \le d$, then $\pdim_{\cala[t]}(M) \le d+1$;   
  \item\label{the:global_dimension_for_cala_and_cala[t]:global}
  If  $\cala$ has global dimension $\le d$, then $\cala_{\Phi}[t]$ has global  dimension $\le (d+1)$.
\end{enumerate}
\end{theorem}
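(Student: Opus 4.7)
The plan is to deduce both parts from the exact characteristic sequence
$0 \to i_*\Phi^*i^*M \to i_*i^*M \to M \to 0$
of Lemma~\ref{lem:characteristic_sequence}, combined with the dimension-shifting inequality for short exact sequences provided by Lemma~\ref{lem:IZ_cala_modules}~\ref{lem:IZ_cala_modules:exact_sequences_homological_dimension}. The endpoint will be that both outer terms of the characteristic sequence have projective dimension $\le d$ over $\IZ\cala_{\Phi}[t]$, which forces $M$ itself to have projective dimension $\le d+1$.

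The key preliminary to establish is the bound $\pdim_{\cala_{\Phi}[t]}(i_*N) \le \pdim_{\cala}(N)$ for every $\IZ\cala$-module $N$. I would verify this by checking that the induction functor $i_*$ is exact and sends projectives to projectives. Preservation of projectives is straightforward: the identification $i_*\mor_{\cala}(?,A) = \mor_{\cala_{\Phi}[t]}(?,i(A))$ shows that $i_*$ sends finitely generated free modules to free modules, and the general case follows because $i_*$ respects arbitrary direct sums and because any functor preserves direct summands.

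The exactness of $i_*$ is, I expect, the main obstacle. The idea is to exploit the natural decomposition $i^*i_*N \cong \bigoplus_{k \ge 0} (\Phi^k)^*N$ already set up as~\eqref{I_upper_ast_circ_i_ast_is_bigoplus} in the proof of Lemma~\ref{lem:characteristic_sequence}: each $(\Phi^k)^*$ is exact (since $\Phi^k$ is an equivalence of $\IZ$-categories), so $i^* \circ i_*$ is exact. Since $i^*$ is itself exact (it is simply precomposition with $i$) and detects exactness objectwise (because $\cala$ and $\cala_{\Phi}[t]$ share the same set of objects), the exactness of $i_*$ follows. Applying $i_*$ to a projective resolution of $N$ of length $\le \pdim_{\cala}(N)$ then produces a projective resolution of $i_*N$ of the same length.

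Granting this, assertion~\ref{the:global_dimension_for_cala_and_cala[t]_modules} falls out quickly. The functor $\Phi^*$ is an automorphism of the abelian category $\MODcatr{\IZ\cala}$ and so preserves projective dimension; hence $\pdim_{\cala}(\Phi^*i^*M) \le d$ as well. The preliminary bound then gives $\pdim_{\cala_{\Phi}[t]}(i_*i^*M) \le d$ and $\pdim_{\cala_{\Phi}[t]}(i_*\Phi^*i^*M) \le d$, and the characteristic sequence combined with the relevant case of Lemma~\ref{lem:IZ_cala_modules}~\ref{lem:IZ_cala_modules:exact_sequences_homological_dimension} yields $\pdim_{\cala_{\Phi}[t]}(M) \le d+1$. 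Assertion~\ref{the:global_dimension_for_cala_and_cala[t]:global} is then immediate: for any $\IZ\cala_{\Phi}[t]$-module $N$ the hypothesis gives $\pdim_{\cala}(i^*N) \le d$, and applying part~(i) to $N$ yields $\pdim_{\cala_{\Phi}[t]}(N) \le d+1$.
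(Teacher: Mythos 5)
Your proposal is correct and follows essentially the same route as the paper: both arguments rest on the characteristic sequence of Lemma~\ref{lem:characteristic_sequence}, establish that $i_*$ is exact and preserves projectives via the decomposition $i^*i_* \cong \bigoplus_{k\ge 0}(\Phi^k)^*$ together with the fact that $i^*$ reflects exactness, and then conclude with Lemma~\ref{lem:IZ_cala_modules}~\ref{lem:IZ_cala_modules:exact_sequences_homological_dimension}. The only cosmetic difference is that you bound $\pdim_{\cala}(\Phi^*i^*M)$ by noting $\Phi^*$ is an automorphism of the module category, whereas the paper invokes the explicit isomorphism $\Phi^*i^*M \cong i^*M$ of~\eqref{iso_alpha_prime}; both are valid.
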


Theorem~\ref{the:global_dimension_for_cala_and_cala[t]} is a version
of the Hilbert syzygy theorem. Its the proof is not much
different from the classical syzygy Theorem for rings.  For a more general version
see~\cite[Corollary~31.1 on page~119]{Mitchell(1972)}.  

\begin{proof}[Proof of Theorem~\ref{the:global_dimension_for_cala_and_cala[t]}]
  Obviously $i^* \colon \MODcatr{\IZ\cala_{\Phi}[t]} \to \MODcatr{\IZ\cala}$ 
  is faithfully flat and is compatible with direct sums  over arbitrary index sets.  Next we show that
 $i^*$ sends projective $\IZ\cala_{\phi}[t]$-modules to projective
 $\IZ\cala$-modules. It suffices to show that
 $i^*\mor_{\cala_{\Phi}[t]}(?,A) \cong  i^*i_*\mor_{\cala}(?,A)$ is free as a $\IZ\cala$-module for any object 
$A$.  This  follows from the $\IZ\cala$-isomorphism~\eqref{I_upper_ast_circ_i_ast_is_bigoplus},
since $(\Phi^k)^*\mor_{\cala}(?,A) \cong \mor_{\cala}(?,\Phi^{-k}(A))$. 

The functor $i_* \colon \MODcatr{\IZ\cala} \to \MODcatr{\IZ\cala_{\Phi}[t]}$ is compatible
with direct sums over arbitrary index sets, is right exact and sends $\mor_{\cala}(?,A)$
to $\mor_{\cala_{\Phi}[t]}(?,A)$.  In particular $i_*$ respects the properties finitely
generated, free, and projective. Next we want to show that $i_*$ is faithfully flat. For
this purpose it suffices to show that $i^* \circ i_*$ is faithfully flat. This is obvious
since $i^* \circ i_*$ is the functor sending a morphism $f \colon M \to N$ to the morphism
$\bigoplus_{k \in \IN} (\Phi^{k})^*(f) \colon \bigoplus_{k \in \IN} (\Phi^{k})^*(M) \to
\bigoplus_{k \in \IN} (\Phi^{k})^*(f)$ under the identification~\eqref{I_upper_ast_circ_i_ast_is_bigoplus}.

Now consider a $\IZ_{\Phi}[t]$-module $M$ with $\pdim_{\cala}(i^*M) \le d$.
Since the $\IZ\cala$-modules $i^*M$ and $\Phi^*i^*M$ are isomorphic, see~\eqref{iso_alpha_prime},
we get $\pdim_{\cala}(\phi^*i^*M) \le d$. Since $i_*$ is faithfully flat and
respects projective modules, we conclude $\pdim_{\cala_{\Phi}[t]}(i_*i^*M) \le d$ and 
$\pdim_{\cala_{\Phi}[t]}(i_*\Phi^*i^*M) \le d$. Now 
Lemma~\ref{lem:IZ_cala_modules}~%
\ref{lem:IZ_cala_modules:exact_sequences_homological_dimension} and
Lemma~\ref{lem:characteristic_sequence} together imply $\pdim_{\cala_{\Phi}[t]}(M) \le (d+1)$.
\\[1mm]~\ref{the:global_dimension_for_cala_and_cala[t]:global}
This follows directly from assertion~\ref{the:global_dimension_for_cala_and_cala[t]_modules}.
\end{proof}

\begin{theorem}[Global dimension and the passage from $\cala_{\phi}{[t]}$ to $\cala_{\Phi}{[t,t^{-1}]}$]%
\label{the:global_dimension_for_cala[t]_and_cala[t,t_upper_-1]}
Let $\cala$ be an additive category  and let $\Phi \colon \cala \to \cala$ be an
automorphism of additive categories.

\begin{enumerate}

\item\label{the:global_dimension_for_cala[t]_and_cala[t,t_upper_-]:modules}
Let $M$ be a $\IZ\cala_{\Phi}[t,t^{-1}]$-module. 
If we have $\pdim_{\cala[t]}(j^*M) \le d$, then  we get $\pdim_{\cala[t,t^{-1}]}(M) \le d$;

\item\label{the:global_dimension_for_cala[t]_and_cala[t,t_upper_-1]:global} If
  $\cala_{\Phi}[t]$ has global dimension $\le d$, then $\cala_{\Phi}[t,t^{-1}]$ has global
  dimension $\le d$.
\end{enumerate}
\end{theorem}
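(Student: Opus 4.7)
The plan is to use the adjoint pair $(j^*, j_*)$ associated to the inclusion $j \colon \cala_{\Phi}[t] \to \cala_{\Phi}[t,t^{-1}]$, together with two key facts already established: the flatness of $j_*$ from Lemma~\ref{lem:j_ast_is_flat}, and the isomorphism $\rho \colon j_*j^*N \xrightarrow{\cong} N$ for any $\IZ\cala_{\Phi}[t,t^{-1}]$-module $N$ from Lemma~\ref{lem:localization}~\ref{lem:localization:j_astj_upper_ast_is_id}. Compared with Theorem~\ref{the:global_dimension_for_cala_and_cala[t]}, where the characteristic sequence forces the projective dimension to jump by one, the localization is much better behaved and no increase occurs.

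For assertion~\ref{the:global_dimension_for_cala[t]_and_cala[t,t_upper_-]:modules}, I would start from a projective resolution
\[
0 \to P_d \to P_{d-1} \to \cdots \to P_0 \to j^*M \to 0
\]
in $\MODcatr{\IZ\cala_{\Phi}[t]}$ of length $\le d$, whose existence is guaranteed by the hypothesis $\pdim_{\cala_{\Phi}[t]}(j^*M) \le d$. Applying $j_*$ and invoking exactness (flat plus right exact) yields an exact sequence
\[
0 \to j_*P_d \to \cdots \to j_*P_0 \to j_*j^*M \to 0
\]
in $\MODcatr{\IZ\cala_{\Phi}[t,t^{-1}]}$; by Lemma~\ref{lem:localization}~\ref{lem:localization:j_astj_upper_ast_is_id} we identify its rightmost nontrivial term with $M$ via the isomorphism $\rho$. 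It remains to check that each $j_*P_i$ is a projective $\IZ\cala_{\Phi}[t,t^{-1}]$-module. Since $j_*$ sends the representable functor $\mor_{\cala_{\Phi}[t]}(?,A)$ to $\mor_{\cala_{\Phi}[t,t^{-1}]}(?,A)$ and is compatible with arbitrary direct sums, it carries free modules to free modules and hence projectives (direct summands of free modules) to projectives. This produces a projective resolution of $M$ of length $\le d$, giving $\pdim_{\cala_{\Phi}[t,t^{-1}]}(M) \le d$.

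Assertion~\ref{the:global_dimension_for_cala[t]_and_cala[t,t_upper_-1]:global} follows formally: given any $\IZ\cala_{\Phi}[t,t^{-1}]$-module $M$, the global dimension hypothesis on $\cala_{\Phi}[t]$ gives $\pdim_{\cala_{\Phi}[t]}(j^*M) \le d$, and assertion~\ref{the:global_dimension_for_cala[t]_and_cala[t,t_upper_-]:modules} then upgrades this to $\pdim_{\cala_{\Phi}[t,t^{-1}]}(M) \le d$.

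No genuine obstacle arises in this argument; essentially all the work has been absorbed into the preceding structural results about $j_*$. The only point that might require a line of care is the preservation of projectivity under $j_*$, but this is routine from its explicit description on representable functors combined with its compatibility with direct sums. The conceptual reason the bound does not increase is that $\cala_{\Phi}[t,t^{-1}]$ arises as a localization of $\cala_{\Phi}[t]$, and localizations of rings (or of additive categories) never raise the global dimension.
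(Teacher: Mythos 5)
Your proposal is correct and follows essentially the same route as the paper: apply the flat functor $j_*$ (Lemma~\ref{lem:j_ast_is_flat}) to a length-$\le d$ projective resolution of $j^*M$, note that $j_*$ preserves projectives, and identify $j_*j^*M$ with $M$ via Lemma~\ref{lem:localization}~\ref{lem:localization:j_astj_upper_ast_is_id}. The only difference is that you spell out the (routine) verification that $j_*$ preserves projectivity, which the paper takes as already established for induction functors.
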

\begin{proof}~\ref{the:global_dimension_for_cala[t]_and_cala[t,t_upper_-]:modules}
  Let $M$ be a $\IZ[\cala]_{\Phi}[t,t^{-1}]$-module satisfying $\pdim_{\cala[t]}(j^*M) \le d$.   The
  functor $j_* \colon \MODcatr{\IZ\cala_{\phi}[t]} \to \MODcatr{\IZ\cala_{\phi}[t,t^{-1}]}$
  is flat by Lemma~\ref{lem:j_ast_is_flat}.  Since it respects the property projective,
  we get $\pdim_{\cala_{\Phi}[t,t^{-1}]}(j_*j^*M) \le d$. 
  Lemma~\ref{lem:localization}~\ref{lem:localization:j_astj_upper_ast_is_id} implies
  $\pdim_{\cala[t,t^{-1}]}(M) \le d$.
  \\[1mm]~\ref{the:global_dimension_for_cala[t]_and_cala[t,t_upper_-1]:global}
 This follows from assertion~\ref{the:global_dimension_for_cala[t]_and_cala[t,t_upper_-]:modules}.
\end{proof}


\typeout{------------------- Section:  Regular additive categories -----------------}

\section{Regular additive categories}%
\label{sec:Regular_additive_categories}

Regularity for additive categories $\cala$ requires finite resolutions of finitely
presented modules, but not for arbitrary modules.  In particular, regularity has no
consequence for global dimension and we cannot use
Theorem~\ref{the:global_dimension_for_cala_and_cala[t]} in the following result.

\begin{theorem}[Regularity and  the passage from $\cala$ to $\cala_{\Phi}{[t]}$]%
\label{the:Regularity_for_cala_and_cala[t]}
Let $\cala$ be an additive category $\cala$ and let $\Phi \colon \cala \to \cala$ be an
automorphism of additive categories.  Let $l$ be a natural number.

\begin{enumerate}
\item\label{the:Regularity_for_cala_and_cala[t]:first}
Suppose that $\cala$ is regular or $l$-uniformly regular respectively.
Then $\cala_{\Phi}[t]$ is regular or $(l+2)$-uniformly regular respectively;

\item\label{the:Regularity_for_cala_and_cala[t]:second}
Suppose that $\cala[t]$ is regular or $l$-uniformly regular respectively.
Then $\cala_{\Phi}[t,t^{-1}]$ is regular or $l$-uniformly regular respectively.

\end{enumerate}
\end{theorem}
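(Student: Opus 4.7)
The Noetherian conclusions in both parts are immediate from the Hilbert Basis Theorem (Theorem~\ref{the:Hilbert_Basis_Theorem_for_additive_categories}), so the real work lies in the (uniform) regular coherent statements.

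For part~(i), my approach is to apply the characteristic sequence of Lemma~\ref{lem:characteristic_sequence} together with the properties of the restriction and induction functors along $i \colon \cala \to \cala_{\Phi}[t]$ established in the proof of Theorem~\ref{the:global_dimension_for_cala_and_cala[t]}: namely $i_*$ is faithfully flat (hence exact) and sends finitely generated projective $\IZ\cala$-modules to finitely generated projective $\IZ\cala_{\Phi}[t]$-modules. Given a finitely presented $\IZ\cala_{\Phi}[t]$-module $M$, the characteristic sequence
\[0 \to i_*\Phi^*i^*M \to i_*i^*M \to M \to 0\]
combined with Lemma~\ref{lem:IZ_cala_modules}~\ref{lem:IZ_cala_modules:exact_sequences_homological_dimension} reduces the problem to bounding $\pdim_{\cala}(i^*M)$. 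In the $l$-uniformly regular case, I would first establish the standard fact that $l$-uniform regular coherence together with the Noetherian property implies that $\cala$ has global dimension $\le l$ (every $\IZ\cala$-module is a filtered colimit of finitely presented submodules by Noetherian-ness, and these all have projective dimension $\le l$); hence $\pdim_{\cala}(i^*M) \le l$. The characteristic sequence then yields $\pdim_{\cala_{\Phi}[t]}(M) \le l+1$, and Noetherian-ness of $\cala_{\Phi}[t]$ lets one realize this by a resolution of finitely generated projectives (truncate any resolution by finitely generated free modules at the right stage, where the kernel is finitely generated and projective), giving in particular the claimed $(l+2)$-uniform regular coherence. For the non-uniform regular case, where $\cala$ need not have finite global dimension, I would argue module-by-module, exploiting the decomposition $i^*i_*N \cong \bigoplus_{k \ge 0}(\Phi^k)^*N$ from the proof of Lemma~\ref{lem:characteristic_sequence} together with Noetherian-ness of $\cala$ to write $i^*M$ as a directed union of finitely presented $\cala$-modules, each having finite projective dimension by regular coherence of $\cala$.

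For part~(ii), my plan is to reduce to the regular coherence hypothesis on $\cala_{\Phi}[t]$ via the localization framework. The key point is that every finitely presented $\IZ\cala_{\Phi}[t,t^{-1}]$-module $M$ is isomorphic to $j_*N$ for some finitely presented $\IZ\cala_{\Phi}[t]$-module $N$. Given a presentation $\mor_{\cala_{\Phi}[t,t^{-1}]}(?,A_1) \xrightarrow{g_*} \mor_{\cala_{\Phi}[t,t^{-1}]}(?,A_0) \to M \to 0$ with $g = \sum_k g_k t^k$ a finite Laurent polynomial supported in $[k_-,k_+] \subset \IZ$, I ``clear denominators'' by precomposing $g$ with the isomorphism $\id_{A_1} \cdot t^{-k_-} \colon \Phi^{k_-}(A_1) \to A_1$ in $\cala_{\Phi}[t,t^{-1}]$; a direct calculation using the composition formula of Definition~\ref{def:A_phi[t,t(-1)]} shows that the resulting morphism is supported in $[0,k_+-k_-]$, so it comes from a morphism in $\cala_{\Phi}[t]$. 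Taking $N$ to be its cokernel in $\MODcatr{\IZ\cala_{\Phi}[t]}$ produces a finitely presented $N$ with $j_*N \cong M$, using that $j_*$ sends representables to representables, $j_*\mor_{\cala_{\Phi}[t]}(?,A) \cong \mor_{\cala_{\Phi}[t,t^{-1}]}(?,A)$. By regular coherence (respectively $l$-uniform regular coherence) of $\cala_{\Phi}[t]$, the module $N$ has a finite resolution by finitely generated projective $\IZ\cala_{\Phi}[t]$-modules (of length $\le l$ in the uniform case). Applying the flat functor $j_*$ (Lemma~\ref{lem:j_ast_is_flat}), which preserves finitely generated projectivity as just noted, yields a finite resolution of $j_*N \cong M$ of the same length, establishing the desired (uniform) regular coherence of $\cala_{\Phi}[t,t^{-1}]$.

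The main obstacle I anticipate is bounding $\pdim_{\cala}(i^*M)$ in part~(i), particularly in the non-uniformly regular case: $i^*M$ is typically not finitely presented over $\cala$, and the filtered colimit of its finitely presented submodules can have unbounded projective dimensions a priori, so one needs the specific structural decomposition of $i^*M$ coming from $M$ being finitely presented over $\cala_{\Phi}[t]$ to bound $\pdim_{\cala}(i^*M)$ by a finite number depending on $M$.
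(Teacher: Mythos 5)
Your treatment of part (ii) and of the Noetherian statements coincides with the paper's proof (clear denominators with $\id_{A'}\cdot t^{s}$, descend the presentation to $\cala_{\Phi}[t]$, resolve the cokernel $N$ there, and push the resolution forward along the flat, projective-preserving functor $j_*$), and the reduction in part (i) via the characteristic sequence to a bound on $\pdim_{\IZ\cala}(i^*M)$ is also the paper's strategy. The genuine gap is in how you bound $\pdim_{\IZ\cala}(i^*M)$. In the uniform case you invoke the ``standard fact'' that Noetherian plus $l$-uniform regular coherence forces global dimension $\le l$, justified by writing an arbitrary module as a filtered colimit of finitely presented submodules of projective dimension $\le l$; but a directed union of submodules of projective dimension $\le l$ need not have projective dimension $\le l$ (e.g.\ $\IQ$ is the directed union of its finitely generated free $\IZ$-submodules, yet $\pdim_{\IZ}\IQ = 1$), so this inference fails. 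The fact itself could be rescued by an Auslander-type global dimension theorem (transfinite filtrations with cyclic quotients), but that is a nontrivial input you neither prove nor cite. In the non-uniform case you yourself identify the missing uniformity and do not supply it: knowing that $i^*M$ is a directed union of finitely presented submodules of finite projective dimension gives no bound at all without a uniform bound on those dimensions.

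The paper closes exactly this gap with a concrete filtration. Choosing finitely many generators $x_j\in M(Z_j)$ of $M$ over $\IZ\cala_{\Phi}[t]$ and setting $x_j[d]:=M(\id_{Z_j}\cdot t^d)(x_j)$, one lets $M_n\subseteq i^*M$ be the $\IZ\cala$-submodule generated by the $x_j[d]$ with $d\le n$, so that $i^*M=\bigcup_n M_n$. The Noetherian hypothesis forces the kernels of the natural surjections $M_0\to (\Phi^n)^*M_n/(\Phi^n)^*M_{n-1}$ to stabilize, so the successive quotients $M_n/M_{n-1}$ become isomorphic (up to twisting by $\Phi$) for large $n$; this yields a single bound $D$ (finite, resp.\ $\le l$) on all $\pdim_{\IZ\cala}(M_n)$. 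The telescope sequence $0\to\bigoplus_n M_n\to\bigoplus_n M_n\to i^*M\to 0$ then gives $\pdim_{\IZ\cala}(i^*M)\le D+1$, and the characteristic sequence gives $\pdim_{\IZ\cala_{\Phi}[t]}(M)\le D+2$ --- which is why the theorem asserts $(l+2)$-uniform regularity rather than the $(l+1)$ your route would produce. You need to supply this stabilization argument (or a correct proof of the global dimension bound) for part (i) to be complete.
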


\begin{proof}~\ref{the:Regularity_for_cala_and_cala[t]:first} We know already that
  $\cala_{\phi[t]}$ is Noetherian because of
  Theorem~\ref{the:Hilbert_Basis_Theorem_for_additive_categories}~%
\ref{the:Hilbert_Basis_Theorem_for_additive_categories:cala_to_cala[t]}.
  Let $M$ be a finitely generated $\cala_{\phi}[t]$-module.  We have to show that it has a
  finitely generated projective resolution, which is finite-dimensional or
  $(l+1)$-dimensional.  Since $\cala_{\phi}[t]$ is Noetherian, there exists a finitely
  generated projective resolution of $M$, which may be infinite-dimensional.  We conclude
  from Theorem~\ref{lem:IZ_cala_modules}~\ref{lem:IZ_cala_modules:finite_dimension} that
  it suffices to show the projective dimension of $M$ is finite or bounded by $(l+1)$
  respectively.  As $M$ is finitely generated, we find a finite collection of elements
  $x_j \in M(Z_j)$ with objects $Z_j$ from $\cala$ such that the $x_j$ generate $M$ as an
  $\IZ\cala_\Phi[t]$-module.  For $d \geq 0$ consider the morphism
  $\id_{Z_j}\cdot t^d \colon \Phi^{-d}(Z_j) \to Z_j$ in $\cala_\Phi[t]$ and set
  $x_j[d] := M(\id_{Z_j}\cdot t^d)(x_j) \in M(\Phi^{-d}(Z_j))$.  Let $M_n$ be the
  $\IZ\cala$-submodule of $i^*M$ generated by all $x_j[d]$ with $d \leq n$.  We obtain an
  increasing subsequence $M_0 \subseteq M_0 \subseteq M_1 \subseteq M_2 \subseteq \cdots $
  of $\IZ\cala$-submodules of $i^*M$ with $i^*M = \bigcup_{n \ge 0} M_n$.  Let
  $T_n \colon i^*M \to \Phi^*i^*M$ be the following $\IZ\cala$-morphism.  For an object
  $Z$ from $\cala$ consider
  $\id_{\Phi^n(Z)} \cdot t^n \in \mor_{\cala_\Phi[t]}(Z,\Phi^n(Z))$ and define
  $T_Z \colon i^*M(Z) = M(Z) \to \Phi^*i^*M(Z) = M(\Phi(Z))$ to be
  $M(\id_{\Phi^n(Z)} \cdot t^n)$.  Let
  $\pr_n \colon (\Phi^n)^*(M_n) \to (\Phi^n)^*(M_n) / (\Phi^n)^*(M_{n-1})$ be the
  projection.  The composite
 \begin{equation*}
 f_n \colon 	M_0 \xrightarrow{\overline{T_n}} (\Phi^n)^*(M_n) \xrightarrow{\pr_n} (\Phi^n)^*(M_n) / (\Phi^n)^*(M_{n-1})
 \end{equation*}
 is surjective and we write $K_n$ for its kernel.  We obtain an increasing sequence of
 $\cala$-submodules $K_0 \subseteq K_1 \subseteq K_2 \subseteq \cdots $ of $M_0$.  Since
 $\cala$ is Noetherian and $M_0$ is finitely generated, there exists an integer
 $n_0 \ge 1$ such that $K_n = K_{n_0}$ holds for $n \ge n_0$.  Define
 $g_n \colon (\Phi^{n_0})^*M_{n_0}/(\Phi^{n_0})^*M_{n_0-1} \to
 (\Phi^n)^*M_n/(\Phi^n)^*M_{n-1}$ for $n \ge n_0$ to be the map induced by
 $\Phi^*({T_{n-n_0}})$ for $n \ge n_0$.  We obtain for every natural number $n$ with
 $n \ge n_0$ a commutative diagram of $\IZ\cala$-modules with exact rows
   \[
   \xymatrix{0 \ar[r]
    & 
    K_{n_0} \ar[r] \ar[d]^{\cong}
    & 
    M_0 \ar[r]^-{f_{n_0}} \ar[d]^{\id_{M_0}}
    & 
    (\Phi^{n_0})^*M_{n_0}/(\Phi^{n_0})^*M_{n_0-1} \ar[r] \ar[d]^{g_n}
    &
    0
    \\
    0 \ar[r]
    & 
    K_{n} \ar[r] 
    & 
    M_0 \ar[r]^-{f_n} 
    & 
    (\Phi^n)^*M_n/(\Phi^n)^*M_{n-1} \ar[r] 
    &
    0
    }
    \]
    Hence $g_n$ is an isomorphism of $\IZ\cala$-module $n \ge n_0$.  
    As $\Phi^*$ is an isomorphism we have   
    \begin{equation*}
    	\pdim_{\IZ\cala} M_n/M_{n-1} = \pdim_{\IZ\cala} (\Phi^n)^*(M_n/M_{n-1}) = \pdim_{\IZ\cala} (\Phi^n)^*M_n/(\Phi^n)^*M_{n-1}). 
    \end{equation*}
    Thus for $n \geq n_0$ we have $\pdim_{\IZ\cala}(M_n/M_{n-1}) = \pdim_{\IZ\cala}(M_{n_0} / M_{n_0-1})$.
    We have the short exact sequence $0 \to M_{n-1} \to M_n \to M_n/M_{n-1} \to 0$ and hence get from 
    Lemma~\ref{lem:IZ_cala_modules}~\ref{lem:IZ_cala_modules:exact_sequences_homological_dimension} 
    \[
    \pdim_{\IZ\cala}(M_n) \le 
    \sup\{\pdim_{\IZ\cala}(M_{n-1}), \pdim_{\IZ\cala}(M_n/M_{n-1})\}.
    \]
    This implies by induction over $n \ge n_0$
    \[
    \pdim_{\IZ\cala}(M_n) \le 
    \sup\{\pdim_{\IZ\cala}(M_{n_0-1}), \pdim_{\IZ\cala}(M_{n_0}/M_{n_0-1})\}.
    \]
    Put 
    \[
    D := \sup\bigl\{\sup\{\pdim_{\IZ\cala}(M_k) \mid k = 0,1, \ldots, n_0-1\},\pdim_{\IZ\cala}(M_{n_0}/M_{n_0-1})\bigr\}.
    \]
    Note that $D < \infty$, if $\cala$ is regular, and $D \le l$, if $\cala$ is uniformly $l$-regular.
    We get 
    \[
    \pdim_{\IZ\cala}\biggl(\bigoplus_{n \in \IN} M_n\biggr)  \le \sup\{\pdim_{\IZ\cala}(M_n) \mid n \ge 0\} \le D.
     \]
     We have the  short exact sequence of $\cala$-modules 
     \[
     0 \to \bigoplus_{n \in \IN} M_n \to \bigoplus_{n \in \IN} M_n \to i^*M \to 0,
     \]
     where the first map is given by $(x_n)_{n \ge 0} \mapsto (x_0, x_1 -x_0. x_2, -x_1, \ldots )$ 
     and the second by  $(x_n)_{n \ge 0} \mapsto \sum_{n \ge 0} x_n$.  We conclude from
     Lemma~\ref{lem:IZ_cala_modules}~\ref{lem:IZ_cala_modules:exact_sequences_homological_dimension} 
    \[
    \pdim_{\IZ\cala}(i^*M) \le D +1.
    \]
    Now Theorem~\ref{the:global_dimension_for_cala_and_cala[t]}~\ref{the:global_dimension_for_cala_and_cala[t]_modules}
    implies 
    \[
    \pdim_{\IZ\cala_{\Phi}[t]}(M) \le D +2.
    \]
    This finishes the proof if assertion~\ref{the:Regularity_for_cala_and_cala[t]:first}.
    \\[1mm]~\ref{the:Regularity_for_cala_and_cala[t]:second}
    We know already that
    $\cala_{\phi}[t,t^{-1}]$ is Noetherian because of
    Theorem~\ref{the:Hilbert_Basis_Theorem_for_additive_categories}~%
\ref{the:Hilbert_Basis_Theorem_for_additive_categories:cala[t]_to_cala[t,t_upper_-1]}.
    Let $M$ be a finitely generated $\cala_{\phi}[t,t^{-1}]$-module.  We can find a
    finitely generated free $\IZ\cala_{\Phi}[t]$-module $F_0$ and a free
    $\IZ\cala_{\Phi}[t]$-module $F_1$ together with an exact sequence of
    $\IZ\cala_{\Phi}[t,t^{-1}]$-modules
    $j_* \F_0 \xrightarrow{f} j_*F_1 \xrightarrow{e} M \to 0$.  Here we write $j$ for the
    inclusion $\cala_{\Phi}[t] \to \cala_\phi[t,t^{-1}]$. By composing $f$ with an
    appropriate automorphism of $j_*F_0$ one can arrange that $f = j_*g$ for some
    $\IZ\cala_{\Phi}[t]$-homomorphism $g \colon F_0 \to F_1$.  The cokernel of $g$ is a
    finitely generated $\IZ\cala_{\Phi}[t]$-module $N$ and there is an obvious exact
    sequence of $\IZ\cala_{\Phi}[t]$-modules $F_1 \xrightarrow{g} F_1 \to N \to 0$.  Since
    the functor $j_*$ is flat by Lemma~\ref{lem:j_ast_is_flat} and respects the property
    projective, we obtain an $\IZ\cala_{\Phi}[t,t^{-1}]$-isomorphism
    $j_*N \xrightarrow{\cong} M$ and have
    $\dim_{\IZ\cala_{\Phi}[t,t^{-1}]}(j_*N) \le \dim_{\IZ\cala_{\Phi}[t]}(N)$. Hence we
    get $\dim_{\IZ\cala_{\Phi}[t,t^{-1}]}(M) \le \dim_{\IZ\cala_{\Phi}[t]}(N)$.  This
    finishes the proof of Theorem~\ref{the:Regularity_for_cala_and_cala[t]}.
\end{proof}

\begin{remark}\label{rem:regular_versus_regular_coherent}
We do not know whether Theorem~\ref{the:Regularity_for_cala_and_cala[t]}
remains true if we replace  regular by regular coherent.  To our knowledge it is an open problem,
whether for a regular coherent ring $R$ the rings $R[t]$ or $R[t,t^{-1}]$ are  regular coherent again.
\end{remark}


\typeout{----------- Directed union and infinite products of additive categories   ----------}

\section{Directed union and infinite products of additive categories}%
\label{sec:Directed_union_and_infinite_products_of_additive_categories}

A functor of additive categories $F \colon \cala \to \calb$ is called \emph{flat}, if for
every exact sequence $A_0 \xrightarrow{i} A_1 \xrightarrow{p} A_2$ in $\cala$ the sequence
in $F(A_0) \xrightarrow{F(i)} F(A_1) \xrightarrow{F(p)} F(A_2)$ in $\calb$ is exact.
It is called \emph{faithfully flat}, if 
a sequence $A_0 \xrightarrow{i} A_1 \xrightarrow{p} A_2$ in $\cala$ is exact, if and only if  the sequence
in $F(A_0) \xrightarrow{F(i)} F(A_1) \xrightarrow{F(p)} F(A_2)$ in $\calb$ is exact.

\begin{lemma}\label{lem:flatness_and_cofinality}
  Let $i \colon \cala \to \cala'$ and $j \colon \calb \to \calb'$ be inclusions of cofinal
  full additive subcategories.  Suppose that the following diagram of functors of additive
  categories commutes
  \[\xymatrix{\cala \ar[r]^{F} \ar[d]_{i}
     &
     \calb \ar[d]^{j}
     \\
     \cala' \ar[r]_{F'}
     &
     \calb'
   }
 \]
 Then
 \begin{enumerate}
 \item\label{lem:flatness_and_cofinality:cala_to_cala_Prime_faithfully_falt}
   The inclusion $i \colon \cala \to \cala'$ is faithfully flat;

 \item\label{lem:flatness_and_cofinality_F_versus_F_prime}
 $F$ is flat or faithfully flat respectively  if  and only if $F'$ is flat or faithfully flat respectively.
\end{enumerate}
\end{lemma}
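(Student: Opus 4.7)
For (i), the plan is to decompose the claim into flatness and reflection. For flatness, given an exact sequence $A_0 \to A_1 \to A_2$ in $\cala$ and a test morphism $g \colon A' \to A_1$ in $\cala'$ with $f_1 \circ g = 0$, invoke cofinality to choose $\iota \colon A' \to A$ and $r \colon A \to A'$ with $A \in \cala$ and $r \circ \iota = \id_{A'}$. Then $g \circ r \colon A \to A_1$ lies in $\cala$ by fullness and is annihilated by $f_1$, so it factors through $f_0$; precomposing the factorization with $\iota$ produces the required lift of $g$. For the reflection direction, any lift obtained from exactness in $\cala'$ of a sequence of objects in $\cala$ already lies in $\cala$ automatically by fullness.

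For the direction $F'$ flat (resp.\ faithfully flat) $\Longrightarrow$ $F$ flat (resp.\ faithfully flat) of (ii), the plan is a routine diagram chase: pass a sequence in $\cala$ through $i$ (faithfully flat by (i)), then through $F'$, and use the faithful flatness of $j$ — which is (i) applied to the cofinal pair $\calb \subseteq \calb'$ — to transport exactness between $\calb$ and $\calb'$. The commutativity of the square identifies the $F'$-image with the $j \circ F$-image.

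The main obstacle is the converse, $F$ flat $\Longrightarrow$ $F'$ flat. The retractions $A_k' \xrightarrow{\iota_k} A_k \xrightarrow{r_k} A_k'$ with $A_k \in \cala$ need not split into direct summand decompositions, since $\cala'$ may fail to be idempotent complete, so one cannot simply replace objects of $\cala'$ by direct summands of objects of $\cala$. The plan is to introduce a compensating auxiliary sequence. Given an exact sequence $A_0' \xrightarrow{f_0'} A_1' \xrightarrow{f_1'} A_2'$ in $\cala'$, choose retractions as above, set $\tilde f_k := \iota_{k+1} \circ f_k' \circ r_k$, and form the sequence in $\cala$
\[
A_0 \oplus A_1 \xrightarrow{\bigl(\tilde f_0,\ \id_{A_1} - \iota_1 \circ r_1\bigr)} A_1 \xrightarrow{\tilde f_1} A_2,
\]
in which the second summand $\id - \iota_1 \circ r_1$ absorbs the failure of $\iota_1 \circ r_1$ to be the identity. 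A direct verification, using $r_k \circ \iota_k = \id$ and the exactness of the original sequence, shows this is exact at $A_1$ in $\cala$. Applying $F$ (flat by hypothesis) and then $j$ yields an exact sequence $F'(A_0 \oplus A_1) \to F'(A_1) \to F'(A_2)$ in $\calb'$. Finally, given $g \colon B' \to F'(A_1')$ in $\calb'$ with $F'(f_1') \circ g = 0$, push up via $F'(\iota_1)$, observe the composite with $F'(\tilde f_1)$ vanishes after applying $F'(r_2)$, lift through the displayed sequence, and extract a lift of $g$ by composing the first coordinate with $F'(r_0)$; the second coordinate contribution drops out because $r_1 \circ (\id - \iota_1 \circ r_1) = 0$.

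For $F$ faithfully flat $\Longrightarrow$ $F'$ faithfully flat, I would first observe that a faithfully flat additive functor is automatically faithful on morphisms: for $\alpha \colon A \to B$, the sequence $A \xrightarrow{\alpha} B \xrightarrow{\id_B} B$ is exact at $B$ if and only if $\alpha = 0$, so faithful flatness yields $F(\alpha) = 0 \Leftrightarrow \alpha = 0$. A retraction chase then transfers faithfulness of $F$ to $F'$, so $F'(f_1' \circ f_0') = 0$ forces $f_1' \circ f_0' = 0$ and the auxiliary sequence above is well-defined. The remaining argument mirrors the flat case in reverse: derive exactness of $F'(A_0 \oplus A_1) \to F'(A_1) \to F'(A_2)$ in $\calb'$ from the assumed exactness of $F'(A_0') \to F'(A_1') \to F'(A_2')$ by the analogous push-up/push-down with $\iota$ and $r$, then use that $j$ and $F$ are faithfully flat, together with (i), to descend to exactness in $\cala'$ of the auxiliary sequence, and finally extract exactness of the original $A_0' \to A_1' \to A_2'$ in $\cala'$ by one last lifting argument.
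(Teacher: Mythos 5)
Your proposal is correct and follows essentially the same route as the paper: the crux in both is the compensating summand $\id - \iota_1\circ r_1$ that absorbs the failure of the retraction to be an idempotent splitting, the only cosmetic difference being that you attach it to the domain of the first map ($A_0\oplus A_1$) while the paper attaches it to the codomain of the second ($A_2\oplus A_1$). Your direct cofinality argument for part (i), and your explicit remark that faithful flatness of $F$ reflects $f_1'\circ f_0'=0$, are fine (the paper obtains (i) as the special case $F=\id$ of the same diagram chase).
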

\begin{proof}
  We first  show that $F'$ is exact or
  faithfully exact respectively, provided that $F$ is exact or faithfully exact respectively.

  Consider morphisms $f' \colon A'_0 \to A'_1$ and $g' \colon A_1' \to A_2'$ in $\cala$.
  Choose objects $A_k$ in $\cala$ and morphisms $i_k \colon A_k' \to A_k$ and
  $r_k \colon A_k \to A_k'$ in $\cala'$ satisfying $r_k \circ i_k = \id_{A_k}$ for
  $k = 0,1,2$. Define $f \colon A_0 \to A_1$ and $g \colon A_1\to A_2$ by
  $f = i_1 \circ f' \circ r_0$ and $g = i_2 \circ g' \circ r_1$. Then the following diagram of
  morphisms in $\cala'$ commutes
  \[
    \xymatrix@!C=8em{A_0' \ar[r]^-{f'} \ar[d]^-{i_0}
      &
      A_1' \ar[r]^-{g'}\ar[d]^-{i_1}
       &
       A_2' \ar[d]^-{i_2 \oplus 0}
       \\
       A_0 \ar[r]^-{f} \ar[d]^-{r_0}
      &
      A_1 \ar[r]^-{g \oplus (\id_{A_1} - i_1 \circ r_1)}\ar[d]^-{r_1}
       &
       A_2 \oplus A_1 \ar[d]^-{r_2 \oplus 0}
       \\
       A_0' \ar[r]^-{f'} 
      &
      A_1' \ar[r]^-{g'}
       &
       A_2'.
     }
   \]
   Next we check that the middle row is exact in $\cala$, if and only if the upper row is
   exact in $\cala'$.  Suppose that the middle row is exact in $\cala$. Consider a morphism
   $v' \colon B' \to A_1'$ in $\cala'$ such that $g' \circ v' = 0$.  Choose an object $B$ in $\cala$
   and maps $j \colon B' \to B$ and $s \colon B \to B'$ with $s \circ j = \id_{B'}$.  Then
   we have the morphism $i_1 \circ v' \circ s \colon B \to A_1$ whose composite with
   $g \oplus (\id_{A_1} - i_1 \circ r_1) \colon A_1 \to A_2 \oplus A_1$ is zero. Hence we can find a
   morphism $u_0 \colon B \to A_0$ with $f \circ u_0 = i_1 \circ v' \circ s$. Define
   $u' \colon B' \to A_0'$ by the composite $r_0 \circ u_0 \circ j$. One easily checks
   that $f' \circ u' = v'$. Hence the upper row is exact in $\cala'$.

   Suppose that the upper row is exact in $\cala'$. Consider a morphism $v \colon B \to A_1$ in $\cala$
   such that $g \oplus (\id_{A_1} - i_1 \circ r_1)  \circ v = 0$. Then $g \circ v = 0$ and
   $v = i_1 \circ r_1 \circ v$. We conclude
   \[
   g' \circ (r_1 \circ v) = r_2 \circ i_2 \circ g'  \circ r_1 \circ v 
= r_2 \circ g \circ i_1 \circ r_1  \circ v   = r_2 \circ g \circ v  = r_2 \circ 0 =0.
   \]
   Since the upper row is exact, we can find $u' \colon B \to A_0'$ satisfying $f' \circ u_0' = r_1 \circ v$.
   Define  $u \colon B \to A_0$ by $i_0 \circ u'$. Then
   \[
   f \circ u = f \circ i_0 \circ u' = i_1 \circ f' \circ u' = i_1 \circ r_1 \circ v = v.
   \]
   Hence the middle row is exact.
   
   If we apply $F'$ and put $i_k' = F'(i_k)$ and $r_k' = F'(r_k)$,
   we get  $r_k' \circ i_k' = \id_{F'(A_k')}$ and the commutative diagram
   \[
    \xymatrix@!C=13em{F'(A_0' )\ar[r]^-{F'(f')} \ar[d]^-{i_0'}
      &
      F'(A_1') \ar[r]^-{F'(g')}\ar[d]^-{i_1'}
       &
       F'(A_2') \ar[d]^-{i_2' \oplus 0}
       \\
       F(A_0) \ar[r]^-{F(f)} \ar[d]^-{r_0'}
      &
      F(A_1) \ar[r]^-{F(g) \oplus (\id_{F(A_1)} -i_1' \circ r_1')}\ar[d]^-{r_1'}
       &
       F'(A_2) \oplus F(A_1) \ar[d]^-{r_2 \oplus 0}
       \\
       F'(A_0') \ar[r]^-{F'(f')} 
      &
      F'(A_1') \ar[r]^-{F'(g')}
       &
       F'(A_2') 
     }
   \]
   and,  by the same argument as above, 
   the middle row is exact in $\calb$, if and only if the upper row is exact in $\calb'$. 
   We conclude that  the functor $F'$ is exact or
  faithfully exact respectively, provided  that  $F$ is exact or faithfully exact respectively.

  Since both $\id_{\cala}$ and $\id_{\calb}$ are faithfully flat, this special case shows
  that both $i \colon \cala \to \cala'$ and $j \colon \calb \to \calb'$ are faithfully
  flat.

   Suppose that $F'$ is flat or faithfully flat respectively. Then
   $j \circ F = F' \circ i$ is flat or faithfully flat respectively.  This implies that
   $F$ is flat or faithfully flat respectively.  This finishes the proof of
   Lemma~\ref{lem:flatness_and_cofinality}.
 \end{proof}   
 
\begin{lemma}\label{lem:regular_coherent_and_unions}
  Let $\cala = \bigcup_{i \in I} \cala_i$ be the directed union of 
  additive subcategories $\cala_i$ for an arbitrary directed set $I$.

  \begin{enumerate}
  \item\label{lem:regular_coherent_and_unions:idempotent_complete} The idempotent
    completion $\Idem(\cala)$ is the directed union of the idempotent completions
    $\Idem(\cala_i)$;

    \item\label{lem:regular_coherent_and_unions:regular_coherent_l_ge_1} Consider $l \ge 1$.

  Suppose that $\cala_i$ is regular coherent or $l$-uniformly regular
  coherent respectively  for every $i \in I$ and for every $i,j \in I$ with
  $i \le j$ the inclusion $\cala_i \to \cala_j$ is flat.
  Then the inclusion $\Idem(\cala_i) \to \Idem(\cala_j)$ is flat for every $i,j \in I$ with
  $i \le j$  and   both $\cala$  and $\Idem(\cala)$ are regular coherent or $l$-uniformly regular coherent respectively;

\item\label{lem:regular_coherent_and_unions:regular_coherent_l_is_0}
  Suppose that $\cala_i$ is $0$-uniformly regular
  coherent respectively  for every $i \in I$.
  Then  both $\cala$  and $\Idem(\cala)$ are $0$-uniformly regular coherent respectively.
\end{enumerate}
\end{lemma}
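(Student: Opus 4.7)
My plan is to leverage the intrinsic reformulations of regular coherence from Lemma~\ref{lem:intrinsic_reformulation_of_regular_coherent}, which translate the various regular coherence conditions into the existence of certain finite exact sequences in $\cala$ (or, in the case $l=0$, into the single morphism identity $f\circ g\circ f=f$). Once phrased this way, regular coherence can be transported along flat functors by pushing such sequences forward, while the $l=0$ case is purely elementary.

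I would first establish assertion~\ref{lem:regular_coherent_and_unions:idempotent_complete} by unpacking the definition of $\Idem$. Any idempotent $p\colon A\to A$ in $\cala$ has an object $A$ that belongs to some $\cala_i$, and $p$ itself is a morphism of $\cala$ and hence belongs to some $\cala_j$; directedness of $I$ provides a common $\cala_k$, so $(A,p)\in\Idem(\cala_k)$. The analogous argument for morphisms of $\Idem(\cala)$ then gives $\Idem(\cala)=\bigcup_{i\in I}\Idem(\cala_i)$.

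For assertion~\ref{lem:regular_coherent_and_unions:regular_coherent_l_ge_1}, I would first deduce flatness of $\Idem(\cala_i)\to\Idem(\cala_j)$ for $i\le j$ from Lemma~\ref{lem:flatness_and_cofinality} applied to the commuting square whose top arrow $\cala_i\to\cala_j$ is flat by hypothesis and whose vertical arrows are the cofinal full inclusions into idempotent completions. Next I would show that each inclusion $\Idem(\cala_{i_0})\to\Idem(\cala)$ is itself flat: if $A_0\to A_1\to A_2$ is exact in $\Idem(\cala_{i_0})$ and $g\colon A\to A_1$ is a test morphism in $\Idem(\cala)$ annihilated by $f_1$, then part~\ref{lem:regular_coherent_and_unions:idempotent_complete} and directedness of $I$ place both $A$ and $g$ in some $\Idem(\cala_j)$ with $j\ge i_0$, and the flatness of $\Idem(\cala_{i_0})\to\Idem(\cala_j)$ just established produces a lift $\overline{g}$. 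With flatness of $\Idem(\cala_{i_0})\to\Idem(\cala)$ in hand, Lemma~\ref{lem:inclusion_of_full_cofinal_subcategory}~\ref{lem:inclusion_of_full_cofinal_subcategory:regular_coherent} reduces the assertion for $\cala$ to the same assertion for $\Idem(\cala)$. Given $f_1\colon A_1\to A_0$ in $\Idem(\cala)$, I would use part~\ref{lem:regular_coherent_and_unions:idempotent_complete} to place $f_1$ inside some $\Idem(\cala_{i_0})$; regular coherence (respectively $l$-uniform regular coherence) of $\Idem(\cala_{i_0})$, combined with Lemma~\ref{lem:intrinsic_reformulation_of_regular_coherent}, produces an exact sequence of finite length (respectively of length $l$) ending at $f_1$ in $\Idem(\cala_{i_0})$; flatness then pushes this sequence to an exact sequence in $\Idem(\cala)$, verifying the intrinsic criterion.

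Assertion~\ref{lem:regular_coherent_and_unions:regular_coherent_l_is_0} will be much simpler because the intrinsic reformulation Lemma~\ref{lem:intrinsic_reformulation_of_regular_coherent}~\ref{lem:intrinsic_reformulation_of_regular_coherent:uniform_l_is_0} for $0$-uniform regular coherence is the pointwise identity $f\circ g\circ f=f$, which involves only finitely many morphisms and is therefore automatically inherited by directed unions, with no flatness hypothesis required. The hard part will be assertion~\ref{lem:regular_coherent_and_unions:regular_coherent_l_ge_1}: specifically the propagation of flatness from the pairwise inclusions $\Idem(\cala_i)\to\Idem(\cala_j)$ to the inclusions $\Idem(\cala_{i_0})\to\Idem(\cala)$ into the colimit, where the interplay between the cofinality trick of Lemma~\ref{lem:flatness_and_cofinality}, part~\ref{lem:regular_coherent_and_unions:idempotent_complete}, and the directedness of $I$ is essential, and which is precisely why the flatness hypothesis is needed only when $l\ge1$.
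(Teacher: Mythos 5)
Your proposal is correct and follows essentially the same route as the paper: reduce to the idempotent complete case via Lemma~\ref{lem:inclusion_of_full_cofinal_subcategory} and part~\ref{lem:regular_coherent_and_unions:idempotent_complete}, apply the intrinsic reformulation of Lemma~\ref{lem:intrinsic_reformulation_of_regular_coherent} in some $\cala_{i_0}$ containing the given morphism, and verify exactness in the union by placing each test morphism in some $\cala_j$ with $j\ge i_0$ and invoking flatness of $\cala_{i_0}\to\cala_j$; the $l=0$ case via $f\circ g\circ f=f$ is likewise identical. Your only departure is cosmetic: you package the inline verification as the intermediate statement that $\Idem(\cala_{i_0})\to\Idem(\cala)$ is flat, which is proved by exactly the same argument.
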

\begin{proof}~\ref{lem:regular_coherent_and_unions:idempotent_complete}
  This is obvious.
  \\[1mm]~\ref{lem:regular_coherent_and_unions:regular_coherent_l_ge_1} If 
  the inclusion $\cala_i \to \cala_j$ is flat, then also the
  inclusion $\Idem(\cala_i) \to \Idem(\cala_j)$ is flat by Lemma~\ref{lem:flatness_and_cofinality}.
  In view of Lemma~\ref{lem:inclusion_of_full_cofinal_subcategory}~%
\ref{lem:inclusion_of_full_cofinal_subcategory:regular_coherent}
  and assertion~\ref{lem:regular_coherent_and_unions:idempotent_complete},
  we can assume without loss of generality
  that each $\cala_i$ and $\cala$ are idempotent complete.
  Hence we can use  the criterion for regular coherent given in
  Lemma~\ref{lem:intrinsic_reformulation_of_regular_coherent} in the sequel. We treat only the case $l  \ge 2$,
  the case $l = 1$ is proved analogously

Consider a morphism $f_1 \colon A_1 \to A_0$ in $\cala$. Choose an index $i$ such that
$f_1$ belongs to $\cala_i$. Then we can find a sequence of morphisms
\[
0 \to A_n \xrightarrow{f_n} A_{n-1} \xrightarrow{f_{n-1}} \cdots   \xrightarrow{f_2} A_{1} \xrightarrow{f_1} A_0
\]
which is in $\cala_i$ exact at $A_k$ for $k = 1,2, \ldots, n$. It remains to show that
this sequence is exact at $\cala$ at $A_k$ for $k = 1,2, \ldots, n$. Fix
$k \in \{1,2, \ldots, n\}$.  It remains to show for any object $A \in \cala$ and morphism
$g \colon A \to A_k$ with $f_k \circ g = 0$ that there exists a morphism
$\overline{g} \colon A \to A_{k+1}$ with $f_{k+1} \circ \overline{g} = g$.  We can choose
$j \in I$ with $i \le j$ such that $g$ belongs to $\cala_j$. Since
$A_{k+1} \xrightarrow{f_{k+1}} A_{k} \xrightarrow{f_{k}} A_{k-1}$ is exact in $\cala_i$,
we conclude from the assumptions that it is also exact in $\cala_j$ and hence we can
construct the desired lift $\overline{g}$ already in $\cala_j$.
\\[1mm]~\ref{lem:regular_coherent_and_unions:regular_coherent_l_is_0}
In view of Lemma~\ref{lem:inclusion_of_full_cofinal_subcategory}~%
\ref{lem:inclusion_of_full_cofinal_subcategory:regular_coherent}
  and assertion~\ref{lem:regular_coherent_and_unions:idempotent_complete},
  we can assume without loss of generality
  that each $\cala_i$ and $\cala$ are idempotent complete.
  Now the claim follows from the
  equivalence~\ref{lem:intrinsic_reformulation_of_regular_coherent:uniform_l_is_0:(1)}
  $\Longleftrightarrow$~\ref{lem:intrinsic_reformulation_of_regular_coherent:uniform_l_is_0:(3)}
 appearing in Lemma~\ref{lem:intrinsic_reformulation_of_regular_coherent}~%
\ref{lem:intrinsic_reformulation_of_regular_coherent:uniform_l_is_0}.
\end{proof}

\begin{lemma}\label{lem:regular_coherent_and_products}
  Let $l$ be a natural number. Let $\cala = \{\cala_i \mid i \in I\}$ be a
  collection of  additive categories
  $\cala_i$ for an arbitrary index set $I$.  Let $l$ be a natural number.

  \begin{enumerate}
  \item\label{lem:regular_coherent_and_products:products}
    Suppose that each $\cala_i$ is
    $l$-uniformly regular coherent. Then $\prod_{i \in I} \Idem(\cala_i)$ is
    $l$-uniformly regular coherent;

  \item\label{lem:regular_coherent_and_products:sums}
    Suppose that each $\cala_i$ is
    $l$-uniformly regular coherent, $l$-uniformly regular,  regular coherent, regular, or Noetherian.
    Then $\bigoplus_{i \in I} \cala_i$ has the same property.
  \end{enumerate}
\end{lemma}
\begin{proof}
    Obviously $\prod_{i \in I} \cala_i$ inherits the structure of an additive category.
  Recall that $\bigoplus_{i \in I} \cala_i$ is the full additive subcategory of
  $\prod_{i \in I}\cala_i$ consisting of those objects $A_i \mid i \in I\}$, for which only
  finitely many of the objects $A_i$ are different from zero. Obviously
  \begin{eqnarray*}
    \Idem(\bigoplus_{i \in I} \cala_i) & \cong &  \bigoplus_{i \in I} \Idem(\cala_i);
    \\
    \Idem(\prod_{i \in I} \cala_i)& \cong &  \prod_{i \in I} \Idem(\cala_i).
  \end{eqnarray*}
  Lemma~\ref{lem:intrinsic_reformulation_of_regular_coherent} implies
  that $\bigoplus_{i \in I} \Idem(\cala_i)$ and $\prod_{i \in I} \Idem(\cala_i)$ are $l$-uniformly
  regular coherent, if each $\Idem(\cala_i)$ is  $l$-uniformly
  regular coherent.  We conclude from  Lemma~\ref{lem:inclusion_of_full_cofinal_subcategory}~%
\ref{lem:inclusion_of_full_cofinal_subcategory:regular_coherent} 
  that $\bigoplus_{i \in I} \cala_i$ and $\prod_{i \in I} \cala_i$ are $l$-uniformly
  regular coherent, if each $\cala_i$ is  $l$-uniformly regular coherent.

  Analogously one shows that $\bigoplus_{i \in I} \cala_i$ is regular coherent
  if each $\cala_i$ is regular coherent. 

  Next we show that $\bigoplus_{i \in I} \cala_i$ is Noetherian
  if each $\cala_i$ is Noetherian.
  Consider any object $A$ in $\bigoplus_{i \in I} \cala_i$. Choose a finite index set
  $J \subseteq I$ such that $A$ belongs to $\bigoplus_{i \in J} \cala_i$. Let
  $\pr \colon \bigoplus_{i \in I} \cala_i \to \bigoplus_{i \in J} \cala_i$ be the
  projection. Consider two morphism $f_0 \colon A_0 \to A$ and $f_1 \colon A_1 \to A$ in
  $\bigoplus_{i \in I} \cala_i$. Then $f_0 = f_1$ holds if and only if $\pr(f_0) = \pr(f_1)$
  holds.  Hence $f_0 \subseteq f_1$ holds in $\bigoplus_{i \in I} \cala_i$ if and only if
  $\pr(f_0) \subseteq \pr(f_1)$ holds in $\bigoplus_{i \in J} \cala_i$.  We conclude from
  Lemma~\ref{lem:intrinsic_reformulation_of_Noetherian} that it suffices to show that
  $\bigoplus_{i \in J} \cala_i$ is Noetherian for any finite subset $J \subseteq I$.  But
  this is an easy consequence of Lemma~\ref{lem:intrinsic_reformulation_of_Noetherian}
  again. \end{proof}

Lemma~\ref{lem:regular_coherent_and_products}~\ref{lem:regular_coherent_and_products:products}
will be generalized in Lemma~\ref{lem:inverse_systems_and_regularity}.

\begin{remark}[Advantage of the notion $l$-uniformly regular coherent]%
  \label{lem:Advantage_of_the_notion_l-uniformly_regular_coherent}
  The decisive advantage of the notion $l$-uniformly regular coherent is that it satisfies
  both Lemma~\ref{lem:regular_coherent_and_unions} and
  Lemma~\ref{lem:regular_coherent_and_products}.
  Lemma~\ref{lem:regular_coherent_and_unions} and
  Lemma~\ref{lem:regular_coherent_and_products}~\ref{lem:regular_coherent_and_products:products}
  are not true, if one replaces $l$-uniformly regular coherent by any of the properties
  regular coherent, $l$-uniformly regular, regular or Noetherian, unless   $I$ is finite.
\end{remark}


\typeout{---------------------- Vanishing of negative K-groups  -----------------------}

\section{Vanishing of negative $K$-groups}%
\label{sec:Vanishing_of_negative_K-groups}

\begin{theorem}[Vanishing of negative $K$-groups]%
\label{the:Vanishing_of_negative_K-groups}
Let $\cala$ be an additive category, such that $\cala[t_1,t_2, \ldots, t_m]$ is regular
coherent for every $m \ge 0$.
  
Then $K_n(\cala) = 0$ holds for all $n \le -1$.
\end{theorem}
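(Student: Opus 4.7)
The plan is to show $K_{-n}(\cala) = 0$ for $n \ge 1$ by combining the non-connective Bass-Heller-Swan decomposition of Theorem~\ref{the:The_non_connective_K-theory_of_additive_categories} (whose hypothesis will follow from our assumption) with the delooping structure of non-connective $K$-theory exhibited in the proof of Lemma~\ref{lem:The_non-connective_K-theory_of_NIL-categories_for_additive_categories}.

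First I would reduce to the case that $\cala$ and each iterated extension is idempotent complete: by Lemma~\ref{lem:inclusion_of_full_cofinal_subcategory}\ref{lem:inclusion_of_full_cofinal_subcategory:regular_coherent} the regular coherence hypothesis passes to $\Idem(\cala)$, and by Lemma~\ref{K_and_cofinal}\ref{K_and_cofinal:non-connective} the non-connective $K$-groups are preserved under the idempotent completion. Next I would verify that the polynomial hypothesis suffices to invoke Theorem~\ref{the:The_non_connective_K-theory_of_additive_categories}, which demands $\cala[\IZ^m]$ regular coherent for every $m \ge 0$. To bridge polynomial to Laurent extensions I would use Theorem~\ref{the:Hilbert_Basis_Theorem_for_additive_categories}\ref{the:Hilbert_Basis_Theorem_for_additive_categories:cala[t]_to_cala[t,t_upper_-1]} together with the intrinsic characterization in Lemma~\ref{lem:intrinsic_reformulation_of_regular_coherent}, exploiting the strong ``for every $m$'' clause to inductively produce the required finite exact sequences in each $\cala[\IZ^m]$. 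This is the main obstacle given the cautionary Remark~\ref{rem:regular_versus_regular_coherent}.

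With Theorem~\ref{the:The_non_connective_K-theory_of_additive_categories} applied iteratively (taking $\Phi = \id$), one obtains $\bfK^\infty(\cala[\IZ^m]) \simeq \bigvee_{k=0}^m \binom{m}{k} \Sigma^k \bfK^\infty(\cala)$, so that $\pi_n(\bfK^\infty(\cala[\IZ^m])) \cong \bigoplus_{k=0}^m \binom{m}{k} K_{n-k}(\cala)$. Via the delooping argument in the proof of Lemma~\ref{lem:The_non-connective_K-theory_of_NIL-categories_for_additive_categories}, the group $K_{-n}(\cala) = \pi_{-n}(\bfK^\infty(\cala))$ appears as a retract summand of $\pi_1(\bfK^\infty(\cala[\IZ^{n+1}]))$, and this $\pi_1$ coincides with $\pi_1(\bfK(\cala[\IZ^{n+1}]))$ since connective and non-connective $K$-theory agree in degree $\ge 1$.

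To extract the vanishing I would independently compute $\pi_1(\bfK(\cala[\IZ^{n+1}]))$ by applying the connective Bass-Heller-Swan decomposition of Theorem~\ref{the:The_connective_K-theory_of_additive_categories} iteratively to the polynomial extensions $\cala[t_1, \ldots, t_j]$, each regular coherent by hypothesis. This connective analysis expresses $\pi_1(\bfK(\cala[\IZ^{n+1}]))$ in terms of $\pi_j(\bfK(\cala))$ for $j \ge 0$ only, with no contribution from negative $K$-groups of $\cala$. Matching this against the non-connective decomposition (which contains $K_{-n}(\cala)$ as a summand), and using strong induction on $n$ to absorb the intermediate terms $K_{-k}(\cala)$ with $1 \le k < n$ that have already been shown to vanish, forces $K_{-n}(\cala) = 0$ and closes the induction.
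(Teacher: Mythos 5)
Your overall skeleton (reduce $K_{-n-1}(\cala)$ to a summand of $K_{-1}(\cala[\IZ^n])$ via the iterated Bass--Heller--Swan decomposition, and bootstrap regular coherence from polynomial to Laurent extensions) matches the paper's strategy, but the two steps that carry the actual content are missing or mis-attributed. First, the passage from ``$\cala[t]$ regular coherent'' to ``$\cala[t,t^{-1}]$ regular coherent'' cannot be obtained from Theorem~\ref{the:Hilbert_Basis_Theorem_for_additive_categories}~\ref{the:Hilbert_Basis_Theorem_for_additive_categories:cala[t]_to_cala[t,t_upper_-1]}: that statement transfers the \emph{Noetherian} property, which is not assumed here and is irrelevant to coherence. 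The paper proves the transfer directly: given a finitely presented $\IZ\cala[t,t^{-1}]$-module $M$, one multiplies the presenting morphism by $\id\cdot t^s$ to arrange that it comes from $\cala[t]$, so $M\cong j_*N$ for a finitely presented $\IZ\cala[t]$-module $N$; since $j_*$ is flat (Lemma~\ref{lem:j_ast_is_flat}) and preserves finitely generated projectives, a finite projective resolution of $N$ pushes forward to one of $M$. Nothing in your sketch replaces this argument.

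Second, and more seriously, vanishing of the Nil-terms alone does not force $K_{-1}=0$: the untwisted decomposition $K_n(\cala[\IZ])\cong K_n(\cala)\oplus K_{n-1}(\cala)$ is consistent with arbitrary negative $K$-groups. Your proposed substitute --- computing $\pi_1(\bfK(\cala[\IZ^{n+1}]))$ connectively through polynomial extensions and claiming it sees ``no contribution from negative $K$-groups'' --- breaks down because Theorem~\ref{the:The_connective_K-theory_of_additive_categories} only identifies homotopy groups in degrees $\ge 1$ and requires idempotent completeness at each stage; after one iteration you must evaluate $\pi_0$ of (the idempotent completion of) $\cala[\IZ^j]$, and $\pi_0(\bfKinfty(\cala[\IZ^j]))=\bigoplus_k\binom{j}{k}K_{-k}(\cala)$ already contains exactly the groups you are trying to kill. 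So the ``matching'' is circular. The paper instead extracts the vanishing from Theorem~\ref{the:Fundamental_sequence_of_K-groups}: $K_{-1}(\cala)$ is the cokernel of $K_0(\cala[t])\oplus K_0(\cala[t^{-1}])\to K_0(\cala[t,t^{-1}])$, and this map is surjective because, introducing the group $G_0'$ of finitely presented modules, the Resolution Theorem identifies $K_0\cong G_0'$ for regular coherent categories and the $j_*N\cong M$ argument above shows $G_0'(\IZ\cala[t])\to G_0'(\IZ\cala[t,t^{-1}])$ is onto. Some argument of this kind, producing surjectivity on $K_0$ of the Laurent extension, is indispensable and absent from your proposal.
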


\begin{proof} 
   
  For an additive category $\calb$ define $G'_0(\IZ\calb)$ to be the abelian group with
  isomorphism classes $[M]$ of finitely presented $\IZ\calb$-modules $M$ as generators
  such that for each exact sequence of finitely presented $\IZ\calb$-modules
  $0 \to M_0 \to M_1 \to M_2 \to 0$ we have the relation $[M_0] -[M_1] + [M_2] =
  0$. Define $K_0(\IZ\calb)$ analogously but with finitely presented replaced by finitely
  generated projective. A functor of additive categories $F\colon \calb \to \calb'$
  induces a homomorphism $F_* \colon K_0(\IZ\calb) \to K_0(\IZ\calb')$ by sending $[M]$ to
  $[F_*M]$.  It induces a homomorphism $F_* \colon G'_0(\IZ\calb) \to G'_0(\IZ\calb')$ by
  sending $[M]$ to $[F_*M]$, if
  $F_* \colon \MODcatr{\IZ\cala}_{\calb} \to \MODcatr{\IZ\cala}_{\calb'}$ is flat. There
  is the forgetful functor $U \colon K_0(\IZ\calb) \to G'_0(\IZ\calb')$.  If $\calb$ is
  regular coherent, then $U$ is a bijection by the Resolution Theorem,
  see~\cite[Theorem~4.6 on page~41]{Srinivas(1991)}.  The Yoneda embedding induces an
  isomorphism $K_0(\calb) \xrightarrow{\cong} K_0(\IZ\calb)$, natural in $\calb$.
  
  Suppose $\cala[t]$ is regular coherent. We show that $\cala[t,t^{-1}]$ is regular
  coherent and $K_{-1}(\cala) = 0$.  The functor
  $j_* \colon \MODcatr{\IZ\cala[t]} \to \MODcatr{\IZ\cala{[t,t^{-1}]}}$ is flat by
  Lemma~\ref{lem:j_ast_is_flat}.  Let $M_*$ be a finitely presented
  $\IZ\cala[t,t^{-1}]$-module.  Then we can find a morphism $f \colon A \to A'$ in
  $\cala[t,t^{-1}]$ together with an exact sequence of $\IZ\cala[t,t^{1-}]$-modules
  \[\mor_{\cala[t,t^{-1}]}(?,A) \xrightarrow{f_*} \mor_{\cala[t,t^{-1}]}(?,A') \to M \to
    0.
  \]
  Choose a natural number $s$ and a morphism $g \colon A \to A'$ in $\cala[t]$ such that
  $(\id_{A'} \cdot t^s) \circ f = g$ holds in $\cala[t,t^{-1}]$. Since
  $\id_{A'} \cdot t^s \colon A' \xrightarrow{\cong} A'$ is an isomorphism in
  $\cala[t,t^{-1}]$, we obtain an exact sequence of $\IZ\cala[t,t^{-1}]$-modules
  \[
    j_*(\mor_{\cala[t]}(?,A)) \xrightarrow{j_*(g_*)} j_*(\mor_{\cala[t]}(?,A')) \to M \to
    0.
  \]
  Let $N$ be the finitely presented $\IZ\cala[t]$-module, which is the cokernel of the
  $\IZ\cala[t]$-homomorphism $g_* \colon \mor_{\cala[t]}(?,A) \to \mor_{\cala[t]}(?,A')$.
  Since $j_*$ is flat and in particular right exact, we obtain an isomorphism of finitely
  presented $\IZ\cala[t,t^{-1}]$-modules $j_*N \xrightarrow{\cong} M$. This implies that
  the homomorphism $j_* \colon G'_0(\IZ\cala[t]) \to G'_0(\IZ\cala[t,t^{-1}])$ is
  surjective and that $\cala[t,t^{-1}]$ is regular coherent since $\cala[t]$ is regular
  coherent by assumption.

  Hence we obtain a commutative diagram
  \[\xymatrix{K_0(\cala[t]) \ar[r] \ar[d]_{\cong} & K_0(\cala[t,t^{-1}]) \ar[d]^{\cong}
      \\
      K_0(\IZ\cala[t]) \ar[r] \ar[d]_{\cong} & K_0(\IZ\cala[t,t^{-1}]) \ar[d]^{\cong}
      \\
      G'_0(\IZ\cala[t]) \ar[r] & G'_0(\IZ\cala[t,t^{-1}]),}
  \]
  whose vertical arrows are bijections and whose lowermost horizontal arrow is surjective.
  Hence the uppermost horizontal arrow is surjective. We conclude from
  Theorem~\ref{the:Fundamental_sequence_of_K-groups} that $K_{-1}(\cala)$ vanishes.

  Next we show by induction for $n = 1,2 \ldots$ that $K_{-m}(\cala)$ vanishes for
  $m = 1,2, \ldots, n$.  The induction beginning $n = 1$ has been taken care of above. The
  induction step from $n \ge 1 $ to $n+1$ is done as follows. One shows using the claim
  above by induction for $i = 1, 2, \ldots, n$ that $\cala[\IZ^i][t_{i+1},\dots, t_{n+1}]$
  is regular coherent.  In particular $\cala[\IZ^n][t_{n+1}]$ is regular coherent.

  We conclude from the $n$-times iterated Bass-Heller-Swan isomorphism, see
  Theorem~\ref{the:BHS_decomposition_for_non-connective_K-theory} that $K_{-n-1}(\cala)$
  is a direct summand in $K_{-1}(\cala[\IZ^n])$. Hence it suffices to show that
  $K_{-1}(\cala[\IZ^n])$ is trivial.  This follows from the induction beginning applied to
  $\cala[\IZ^n]$.
\end{proof}

We conclude from Theorem~\ref{the:Regularity_for_cala_and_cala[t]}
and Theorem~\ref{the:Vanishing_of_negative_K-groups}

\begin{corollary}[Vanishing of negative $K$-groups of regular additive categories]%
\label{the:Vanishing_of_negative_K-groups_of_regular_additive_categories}
Let $\cala$ be an additive category which is regular.

Then $K_n(\cala) = 0$ holds for all $n \le -1$.
\end{corollary}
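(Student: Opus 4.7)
The plan is to derive the corollary as a direct consequence of the two theorems cited in the sentence preceding it, with no additional work beyond an induction and the observation that regularity subsumes regular coherence by definition.

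First, I would establish by induction on $m$ that the polynomial extension $\cala[t_1, t_2, \ldots, t_m]$ is regular for every $m \ge 0$. The base case $m = 0$ is the hypothesis of the corollary. For the induction step, assuming $\cala[t_1, \ldots, t_{m-1}]$ is regular, I apply Theorem~\ref{the:Regularity_for_cala_and_cala[t]}~\ref{the:Regularity_for_cala_and_cala[t]:first} to this additive category, taking the automorphism $\Phi$ to be the identity; this yields that $\cala[t_1, \ldots, t_{m-1}]_{\id}[t_m] = \cala[t_1, \ldots, t_m]$ is regular.

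Next I observe that by Definition~\ref{def:Regularity_properties_of_additive_categories}~\ref{def:Regularity_properties_of_additive_categories:regular} regular means Noetherian plus regular coherent, so in particular $\cala[t_1, \ldots, t_m]$ is regular coherent for every $m \ge 0$. This is exactly the hypothesis of Theorem~\ref{the:Vanishing_of_negative_K-groups}, which therefore applies and gives $K_n(\cala) = 0$ for all $n \le -1$.

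There is no real obstacle here; the only point to be careful about is that the iteration uses the trivial automorphism at every stage, so part~\ref{the:Regularity_for_cala_and_cala[t]:first} of Theorem~\ref{the:Regularity_for_cala_and_cala[t]} suffices and we never need part~\ref{the:Regularity_for_cala_and_cala[t]:second} (the passage to Laurent extensions is carried out internally in the proof of Theorem~\ref{the:Vanishing_of_negative_K-groups}). All the genuine work has already been done in the proofs of those two theorems, namely the Hilbert-syzygy style estimate of projective dimension under polynomial extension and the $G'_0$-versus-$K_0$ argument combined with the iterated Bass--Heller--Swan decomposition from Theorem~\ref{the:BHS_decomposition_for_non-connective_K-theory}.
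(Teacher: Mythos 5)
Your proposal is correct and follows exactly the route the paper intends: the corollary is stated as an immediate consequence of Theorem~\ref{the:Regularity_for_cala_and_cala[t]} and Theorem~\ref{the:Vanishing_of_negative_K-groups}, and your induction with $\Phi = \id$ plus the observation that regular implies regular coherent is precisely the omitted verification. Nothing is missing.
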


\begin{remark}\label{rem:Schlichting}
  As noted in Lemma~\ref{lem:Yoneda_equivalence} the Yoneda embedding identifies $\cala$
  with the category of finitely generated free $\IZ\cala$-modules.  If $\cala$ is
  Noetherian, then the category of finitely generated $\IZ\cala$-modules is abelian.  If
  it is in addition regular coherent (i.e., if $\cala$ is regular), then $\cala$ is
  derived equivalent to this abelian category.  Schlichting showed in~\cite[Theorem~6 on
  page~117]{Schlichting(2006)} that $K_{-1}$ of abelian categories is trivial.  It follows
  that for regular $\cala$ we can obtain Theorem~\ref{the:Vanishing_of_negative_K-groups}
  from Schlichting's result.  Similarly,
  Corollary~\ref{the:Vanishing_of_negative_K-groups_of_regular_additive_categories} can
  alternatively be deduced from~\cite[Theorem~7 on page~118]{Schlichting(2006)}.
\end{remark}


\typeout{-----   Section 12: Nested sequences, the associated categories, and their K-theory ---------}

\section{Nested sequences, the associated categories, and their $K$-theory}%
\label{sec:Nested_sequences,the_associated_categories,and_their_K-theory}


\subsection{Nested sequences and the associated categories}%
\label{subsec:inverse_systems_and_the_associated_sequence_category}

\begin{definition}[Nested sequences of additive categories]%
\label{def:inverse_system_of_additive_categories}
A \emph{nested sequence of additive categories} $\cala_*$ is a decreasing 
sequence of additive subcategories
\[
\cala_0 \supseteq \cala_1 \supseteq \cala_2 \supseteq \cdots.
\]
\end{definition}

We have two notions of morphisms.
\begin{definition}[(Pro-)morphisms of nested sequences of additive categories]%
\label{def:(Pro-)morphisms_of_nested_sequences_of_additive_categories}
A \emph{morphism of nested sequences of additive categories}
$F \colon \cala_* \to \cala_*'$ is a sequence of functors of additive categories
$F_m \colon \cala_m \to \cala_m'$ for $m \in \IN$ such that $F_m$ restricted to
$\cala_{m+1}$ is $F_{m+1}$.

A \emph{pro-morphism of nested sequences of additive categories} $F \colon \cala_* \to \cala'_*$
is a functor of additive categories  $F \colon \cala_0 \xrightarrow{\cong} \cala'_0$ such that there is a
function $N \colon \IN \to \IN$ with the property that for every $m,n \in \IN$ with
$m \ge N(n)$ we have $F(\cala_m) \subseteq \cala'_n$.  
\end{definition}

Obviously a morphism $F_*$ defines a pro-morphisms by taking $F_0$, but
not every pro-morphism comes from a morphism in this way. The
composite of two morphisms and of two pro-morphisms is defined in
the obvious way.

Note that a pro-automorphism $\phi \colon \cala_* \to \cala_*$ is the same as an
automorphism of additive categories $\Phi \colon \cala_0 \xrightarrow{\cong} \cala_0$ such
that there is a function $N \colon \IN \to \IN$ with the property that for every
$m,n \in \IN$ with $m \ge N(n)$ we have $\Phi(\cala_m) \subseteq \cala_n$ and
$\cala_{m} \subseteq \Phi(\cala_{n})$.

\begin{definition}[The sequence category $\cals(\cala_*)$  and
  the limit category $\call(\cala_*)$]\label{def:calo_upper_0(cals_ast)}
  Define the additive category $\cals(\cala_*)$, called \emph{sequence category},
  associated to the nested sequence of additive categories $\cala_*$ as follows:

\begin{itemize}
\item An object in $\cals(\cala_*)$ is a sequence
  $\underline{A} = (A_m)_{m \in \IN}$ of objects in $\cala_0$ such that
  there exists a function (depending on $\underline{A}$)
  $L \colon \IN \to \IN$ with the property that $A_m$ belongs to $\cala_l$ for
  $l,m \in \IN$ with $m \ge L(l)$;

\item A morphism $\underline{\varphi} \colon \underline{A} \to \underline{A'}$ in
  $\cals(\cala_*)$ consists of a sequence of morphisms $\varphi_m \colon A_m \to A'_m$ in
  $\cala_0$ for $m \in \IN$ such that there exists a function $L \colon \IN \to \IN$ with the property
  that $\varphi_m \colon A_m \to A_m'$ belongs to $\cala_l$ for $m,l \in \IN$ with
  $m \ge L(l)$;

\item Composition and the structure of an additive category on $\cals(\cala_*)$
comes from the corresponding structure on $\cala_0$.

\end{itemize}

Let $\calt(\cala_*)$ be the full subcategory of $\cals(\cala_*)$ consisting of objects
$\underline{A}$, for which there exists a natural number $M$ (depending on $\underline{A}$)
with $A_m = 0$ for $m \ge M$.

The additive category $\call(\cala_*)$, called \emph{limit category}, is defined to be the
additive quotient category $\cals(\cala_*)/\calt(\cala_*)$.  Recall that for a full additive subcategory
$\calu \subseteq \calb$ of an additive category $\calb$  the additive quotient category $\calb/\calu$ has
the same objects as $\calb$ and that morphisms in $\calb/\calu$ are equivalence classes of
morphisms in $\calb$, where two morphisms $\varphi,\varphi'$ are identified, if their difference
$\varphi-\varphi'$ can be factorized through an object of $\calu$.  For $\call(\cala_*)$ this
means that morphisms $\varphi,\varphi'$ from $\cals(\cala_*)$ are identified in $\call(\cala_*)$,
if and only if  $\varphi_m=\varphi'_m$ for all but finitely many $m$.
\end{definition}

Obviously $\cals(\cala_*)$ is an additive subcategory of $\prod_{m \in \IN} \cala_0$ and
is equal to it, if the nested sequence is constant, i.e., $\cala_0 = \cala_m$ for
$m \in \IN$.  Obviously $\calt(\cala_*)$ can be identified with
$\bigoplus_{m \in \IN}\cala_m$.  Controlled categories appear for instance in proofs of
the Farrell-Jones Conjecture.  In for us important cases controlled categories correspond
to nested sequences of additive categories, where one should think of the control
condition to become sharper the larger the index $m$ gets.

For the notion of a Karoubi filtration and the associated  weak homotopy fibration sequence
we refer for instance to~\cite{Cardenas-Pedersen(1997)},
and~\cite[Definition~5.4]{Kasprowski(2015findeccom)} or~\cite[Section~12.2]{Lueck(2022book)}.
One easily checks

\begin{lemma}\label{lem:Karoubi_filtration_sequence_categories}
  The inclusion $\calt(\cala_*) \subseteq \cals(\cala_*)$ is a Karoubi filtration and we
  have the  weak homotopy fibration sequence 
\[
  \bfKinfty(\calt(\cala_*)) \to \bfKinfty(\cals(\cala_*)) \to \bfKinfty(\call(\cala_*)).
\]
\end{lemma}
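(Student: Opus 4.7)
The plan is to verify by hand that the inclusion $\calt(\cala_*) \subseteq \cals(\cala_*)$ satisfies the axioms of a Karoubi filtration as formulated in~\cite[Definition~5.4]{Kasprowski(2015findeccom)}, and then to invoke the general theorem (see~\cite{Cardenas-Pedersen(1997)} and~\cite[Proposition~5.6]{Kasprowski(2015findeccom)}) that a Karoubi filtration $\calu \subseteq \calb$ induces a weak homotopy fibration sequence of non-connective $K$-theory spectra with cofiber modeled by the quotient category $\calb/\calu$. Applied to $\calt(\cala_*) \subseteq \cals(\cala_*)$, whose quotient is by definition $\call(\cala_*)$, this yields the desired fibration.

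The key construction is truncation. Given $\underline{A} = (A_m)_{m \ge 0}$ in $\cals(\cala_*)$ and a natural number $M$, define $\underline{A}^{\le M}$ to have components $A_m$ for $m < M$ and $0$ for $m \ge M$, and $\underline{A}^{> M}$ to be its complement. The control function $L \colon \IN \to \IN$ witnessing that $\underline{A}$ lies in $\cals(\cala_*)$ is inherited by both sequences (replacing components by zero only improves the condition), so $\underline{A}^{\le M} \in \calt(\cala_*)$ and $\underline{A}^{> M} \in \cals(\cala_*)$. The evident inclusion and projection morphisms give a direct sum decomposition $\underline{A} \cong \underline{A}^{\le M} \oplus \underline{A}^{>M}$ in $\cals(\cala_*)$. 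I would take the directed poset $\{\underline{A}^{\le M} \mid M \in \IN\}$ as the distinguished collection of subobjects from $\calt(\cala_*)$ attached to $\underline{A}$, and verify closure under direct sum and cofinality by inspection.

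The factorization axioms then reduce to a trivial observation. If $\underline{U} \in \calt(\cala_*)$ satisfies $U_m = 0$ for $m \ge M$, then any morphism $\underline{f} \colon \underline{U} \to \underline{A}$ has $f_m = 0$ for $m \ge M$, hence factors uniquely as $\underline{U} \to \underline{A}^{\le M} \to \underline{A}$ via the split inclusion; and symmetrically for morphisms $\underline{g} \colon \underline{A} \to \underline{U}$. Compatibility with finitely many morphisms at once is handled by taking $M$ large enough to dominate all relevant vanishing thresholds.

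The main obstacle is purely formal: matching the truncation data to the precise axioms stated in the reference, and confirming that the Verdier-style quotient appearing in the Karoubi filtration fibration theorem agrees with the additive quotient category $\cals(\cala_*)/\calt(\cala_*) = \call(\cala_*)$ as constructed in Definition~\ref{def:calo_upper_0(cals_ast)}. The geometric content, however, is transparent: every object of $\cals(\cala_*)$ decomposes canonically with respect to any finite cutoff, so the germs-at-infinity mechanism applies verbatim.
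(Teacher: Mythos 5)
Your proposal is correct and follows exactly the route the paper intends: the paper itself offers no details beyond ``one easily checks,'' citing the same references, and your truncation construction $\underline{A} \cong \underline{A}^{\le M} \oplus \underline{A}^{>M}$ together with the factorization of morphisms to and from objects of $\calt(\cala_*)$ through a sufficiently large cutoff is precisely the verification being left to the reader. The identification of the Karoubi quotient with $\call(\cala_*)$ as defined in Definition~\ref{def:calo_upper_0(cals_ast)} is immediate since both have the same objects and identify morphisms whose difference factors through an object of $\calt(\cala_*)$.
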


Let $F \colon \cala_* \to \cala'_*$ be a pro-morphisms. It induces functors of additive
categories
\begin{eqnarray}
  \cals(F) \colon \cals(\cala_*)  & \to & \cals(\cala'_*);
  \label{cals(F)}
  \\
  \calt(F) \colon \calt(\cala_*) & \to & \calt(\cala'_*);
  \label{calt(F)}
  \\
  \call(F) \colon \call(\cala_*) & \to & \call(\cala'_*),
  \label{call(F)}
\end{eqnarray}
as follows. We begin with $\cals(F)$. It sends an object
$\underline{A} = (A_m)_{m \in \IN}$ to the object
$\cals(F)(\underline{A}) = (F(A_m))_{m \in \IN}$. We have to check that the latter
collection defines an element in $\cals(\cala'_*)$. Recall that there is a function
$L \colon \IN \to \IN$ with the property that $A_m$ belongs to $\cala_l$ for $l,m \in \IN$
with $m \ge L(l)$ and that there is a function $N \colon \IN \to \IN$ with the property
that for every $m,n \in \IN$ with $m \ge N(n)$ we have $F(\cala_m) \subseteq \cala_n$.
Consider the function $L \circ N \colon \IN \to \IN$.  For $m,n \in \IN$ with
$m \ge L \circ N(n)$ we conclude $A_m \in \cala_{N(n)}$ and hence
$F(A_m) \in F(\cala_{N(n)}) \subseteq \cala_n$. Hence $(F(A_m))_{m \in \IN}$ is a
well-defined object in $\cals(\cala_*)$.

Given a morphism $\underline{\varphi} \colon \underline{A} \to \underline{A'}$ in
$\cals(\cala_*)$, we can define
$\cals(F)(\underline{\varphi}) \colon \cals(F)(\underline{A}) \to
\cals(F)(\underline{A'})$ in $\cals(\cala'_*)$ by the collection
$(F(\varphi_m))_{m \in \IN}$. Obviously $\cals(F' \circ F) = \cals(F') \circ \cals(F)$ and
$\cals(\id_{\underline{A}}) = \id_{\underline{A}}$ holds.  By construction $\cals(F)$
induces a functor of additive categories
$\calt(F) \colon \calt(\cala_*) \to \calt(\cala'_*)$. By passing to the quotients we also
get a functor of additive categories $\call(F) \colon \call(\cala_*) \to \call(\cala'_*)$.

Note that  a pro-automorphism
$\Phi \colon \cala_* \to \cala_*$ induces automorphisms of additive categories
\begin{eqnarray*}
\cals(\Phi) \colon \cals(\cala_*) &\xrightarrow{\cong} & \cals(\cala_*);
  \\
  \calt(\Phi) \colon \calt(\cala_*) &\xrightarrow{\cong} & \calt(\cala_*);
  \\
  \call(\Phi) \colon \call(\cala_*) &\xrightarrow{\cong} & \call(\cala_*).
\end{eqnarray*}

\begin{definition}\label{def:admissible_function}
  We call a function $I \colon \IN \to \IN$ \emph{admissible}, if it has the following properties
  \[
   \begin{array}{rclcl}
    I(m) & \le & m & & \text{for}\; m \in \IN;
    \\
    I(m) & \le & I(m+1)  & & \text{for}\; m \in \IN;
    \\
    \lim_{m \to \infty} I(m) & =   & \infty.
   \end{array}
   \]
   Let $\cali$ be the set of admissible functions. It becomes a directed set by defining for $I,J \in \cali$
   \[I \le J \Longleftrightarrow I(m) \ge J(m) \; \text{for all}\; m \in \IN.
   \]
 \end{definition}

 Note that $\cali$ is indeed directed. For $I,J \in \cali$ define
 $K \colon \IN \to \IN$ by $K(m) = \min\{I(m),J(m)\}$.
 Then $K \in \cali$ and $I,J \le K$ holds.

 \begin{lemma}\label{lem:admissible_functions_exists_for_calo_and_its_variants}
   \begin{enumerate}
   \item\label{lem:admissible_functions_exists_for_calo_and_its_variants:phi_in_calo_upper_0(A_ast)_implies_I_exists}
     Let $\underline{\phi}$ be a morphism in $\cals(\cala_*)$. Then there exists an admissible function
     $I \in \cali$ such that $\phi_m \in \cala_{I(m)}$ holds for all $m \in \IN$;

     \item\label{lem:admissible_functions_exists_for_calo_and_its_variants:I_exists_implies_phi_in_calo_upper_0(A_ast)}
       Let $\underline{\phi}$ be a sequence of morphisms $\phi_m \colon A_m \to A_m'$ in $A_0$.
       Suppose that there exists an admissible function
       $I \in \cali$ such that $\phi_m \in \cala_{I(m)}$ holds for all $m \in \IN$. Then $\underline{\phi}$ belongs to
       $\cals(\cala_*)$.
     \end{enumerate}
   \end{lemma}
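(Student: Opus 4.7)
The plan is to prove both parts directly from the definitions by converting the function $L$ appearing in Definition~\ref{def:calo_upper_0(cals_ast)} into an admissible function and vice versa, so this is essentially a translation between two equivalent formulations.

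For part~(i), the defining condition of a morphism in $\cals(\cala_*)$ yields a function $L\colon \IN \to \IN$ with $\phi_m \in \cala_l$ whenever $m \ge L(l)$. Replacing $L$ by $L'(l) := \max\{L(0),L(1),\ldots,L(l)\}$ I may assume that $L$ is non-decreasing, and since every morphism lies in $\cala_0$ I may further assume $L(0) = 0$. I then set
\[
I(m) := \max\{l \in \IN \mid l \le m \text{ and } L(l) \le m\}.
\]
The maximum is taken over a nonempty finite set (it always contains $l = 0$), so $I$ is well-defined. The conditions $I(m) \le m$ and $I(m) \le I(m+1)$ are built into the definition. To verify $\lim_{m\to\infty} I(m) = \infty$, given $l_0$ choose $m_0 \ge \max(l_0, L(l_0))$; then $I(m) \ge l_0$ for all $m \ge m_0$. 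Finally, $\phi_m \in \cala_{I(m)}$ holds by the defining property of $L$ applied at $l = I(m)$, since $L(I(m)) \le m$.

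For part~(ii), the hypothesis $\phi_m \in \cala_{I(m)}$ forces $A_m, A_m' \in \cala_{I(m)}$ because each $\cala_l$ is a full additive subcategory of $\cala_0$. Define
\[
L(l) := \min\{m \in \IN \mid I(m) \ge l\},
\]
which is finite for every $l$ since $\lim_{m \to \infty} I(m) = \infty$. By the monotonicity of $I$, for every $m \ge L(l)$ one has $I(m) \ge l$, hence $\phi_m \in \cala_{I(m)} \subseteq \cala_l$ and also $A_m, A_m' \in \cala_l$. This simultaneously verifies both that $\underline{A}$ and $\underline{A'}$ are objects of $\cals(\cala_*)$ and that $\underline{\phi}$ is a morphism in $\cals(\cala_*)$.

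The argument is essentially bookkeeping, with no genuine obstacle; the only point that requires a little care is the simultaneous imposition of all three admissibility constraints in~(i), which is handled by the cut-off clause $l \le m$ built into the definition of $I(m)$. This cut-off ensures that the construction also produces an admissible function in the degenerate case where the original $L$ is bounded (so that morphisms eventually lie in every $\cala_l$).
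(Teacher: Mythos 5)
Your proof is correct and follows essentially the same route as the paper: both directions are a bookkeeping translation between the function $L$ from Definition~\ref{def:calo_upper_0(cals_ast)} and an admissible function $I$, with part~(ii) using the identical formula $L(l)=\min\{m\mid I(m)\ge l\}$. The only cosmetic difference is in part~(i), where the paper first sets $I'(m)=\max\{i\le m\mid \phi_m\in\cala_i\}$ and then monotonizes by taking $\min$ over the tail, while you build monotonicity directly into the definition via $L$; both constructions verify the same three admissibility conditions.
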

   \begin{proof}~\ref{lem:admissible_functions_exists_for_calo_and_its_variants:phi_in_calo_upper_0(A_ast)_implies_I_exists}
    Choose a function $L \colon \IN \to \IN$ such that $\phi_m$ belongs to $\cala_l$ if
    $m \ge L(l)$. Define a new function $I' \colon \IN \to \IN$ by
  \[
    I'(m) = \max\{i \in \{0,1,\ldots, m\} \mid 
   \phi_m \in  \cala_i\}.
  \]
  It satisfies
  \[
   \begin{array}{lclcl}
    I'(m) & \le & m & & \text{for}\; m \in \IN;
    \\
    l & \le  & I'(m)  & & \text{for}\; l,m \in \IN, m \ge L(l), m \ge l;
    \\
    \phi_m  & \in & \cala_{I'(m)} & & \text{for}\;m \in \IN.
   \end{array}
   \]
     Define the function $I \colon \IN \to \IN$ by
   \[I(m) = \min\{I'(j) \mid j \in \IN, m \le j\}.
   \]
   Then we get for all $n \in \IN$
  \[
   \begin{array}{lclcl}
    I(m) & \le & m & & \text{for}\; m \in \IN;
    \\
    I(m) & \le & I(m+1)  & & \text{for}\; m \in \IN;
    \\
    l & \le  & I(m)  & & \text{for}\; l,m, \in \IN, m \ge L(l), m \ge l;
    \\
    \phi_m  & \in & \cala_{I(m)} & & \text{for}\; m \in \IN.
   \end{array}
   \]
   The first three properties imply $I \in \cali$.
   \\[1mm]~\ref{lem:admissible_functions_exists_for_calo_and_its_variants:I_exists_implies_phi_in_calo_upper_0(A_ast)}
   Suppose that there exists $I \in \cali$ satisfying $\phi_m \in \cala_{I(m)}$ for all $m \in \IN$.
   Define the desired function $L \colon \IN \to \IN$ by $L(l) = \min\{m \in \IN \mid l \le I(m)\}$.
    \end{proof}


\subsection{Uniform regular coherence}%
\label{subsec:Uniform_regular_coherence}

\begin{lemma}\label{lem:inverse_systems_and_regularity}
Consider the nested sequence $\cala_*$ of additive  categories
$\cala_0 \supseteq \cala_1 \supseteq \cala_2 \supseteq \cdots$.
Suppose that for the natural number $l \ge 2$ each of the additive categories $\cala_m$ is $l$-uniformly
regular coherent and that the inclusion $\cala_m \to \cala_{m+1}$ is flat for $m \in \IN$.

Then $\cals(\cala_*)$ and $\call(\cala_*)$ are $l$-uniformly regular coherent.
\end{lemma}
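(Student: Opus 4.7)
The plan is to reduce to the idempotent complete case, invoke the intrinsic characterization of $l$-uniformly regular coherence from Lemma~\ref{lem:intrinsic_reformulation_of_regular_coherent}, and then do the construction componentwise, using the flatness of the inclusions between the $\cala_m$'s to glue things together. First, by Lemma~\ref{lem:inclusion_of_full_cofinal_subcategory}~\ref{lem:inclusion_of_full_cofinal_subcategory:regular_coherent} the property of being $l$-uniformly regular coherent is inherited by idempotent completions, and by Lemma~\ref{lem:flatness_and_cofinality} so is flatness of the relevant inclusion functors. Moreover $\cals(\Idem(\cala_*))$ and $\call(\Idem(\cala_*))$ contain $\cals(\cala_*)$ and $\call(\cala_*)$ as cofinal full subcategories (and equal their idempotent completions up to equivalence). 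So, using Lemma~\ref{lem:inclusion_of_full_cofinal_subcategory}~\ref{lem:inclusion_of_full_cofinal_subcategory:regular_coherent} again, we may assume from the start that every $\cala_m$ is idempotent complete, and we may use the characterization in Lemma~\ref{lem:intrinsic_reformulation_of_regular_coherent}~\ref{lem:intrinsic_reformulation_of_regular_coherent:uniform_l_greater_equal_2}.

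For $\cals(\cala_*)$: given a morphism $\underline{\phi}\colon \underline{A}_1 \to \underline{A}_0$, use Lemma~\ref{lem:admissible_functions_exists_for_calo_and_its_variants} to pick an admissible $I \in \cali$ with $\phi_m \in \cala_{I(m)}$ for all $m$. Since $\cala_{I(m)}$ is $l$-uniformly regular coherent and idempotent complete, Lemma~\ref{lem:intrinsic_reformulation_of_regular_coherent} supplies, for each $m$, an extension of $\phi_m$ to an exact sequence
\[
0 \to A_{l,m} \xrightarrow{\phi_{l,m}} A_{l-1,m} \to \cdots \to A_{2,m} \xrightarrow{\phi_{2,m}} A_{1,m} \xrightarrow{\phi_m} A_{0,m}
\]
in $\cala_{I(m)}$. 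Assembling $\underline{A}_k := (A_{k,m})_{m}$ and $\underline{\phi}_k := (\phi_{k,m})_m$, Lemma~\ref{lem:admissible_functions_exists_for_calo_and_its_variants}~\ref{lem:admissible_functions_exists_for_calo_and_its_variants:I_exists_implies_phi_in_calo_upper_0(A_ast)} (applied with the admissible function $I$) shows these define objects and morphisms of $\cals(\cala_*)$, fitting into a candidate sequence of length $l$ extending $\underline{\phi}$.

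The heart of the proof is checking exactness of this candidate sequence at each $\underline{A}_k$. Given $\underline{g}\colon \underline{B} \to \underline{A}_k$ in $\cals(\cala_*)$ with $\underline{\phi}_k \circ \underline{g} = 0$, pick an admissible $J$ with $g_m \in \cala_{J(m)}$, and let $K(m) := \min\{I(m),J(m)\}$, which is again admissible. Then both $\phi_{k,m}$ and $g_m$ lie in $\cala_{K(m)}$; since the inclusion $\cala_{I(m)} \hookrightarrow \cala_{K(m)}$ is flat (by the assumed flatness of the successive inclusions between the $\cala_m$'s), the exact sequence constructed in $\cala_{I(m)}$ is still exact at $A_{k,m}$ when viewed in $\cala_{K(m)}$. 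Thus for every $m$ there exists a lift $\overline{g}_m\colon B_m \to A_{k+1,m}$ in $\cala_{K(m)}$ with $\phi_{k+1,m} \circ \overline{g}_m = g_m$, and the admissibility of $K$ ensures via Lemma~\ref{lem:admissible_functions_exists_for_calo_and_its_variants} that $\underline{\overline{g}} = (\overline{g}_m)_m$ is a morphism in $\cals(\cala_*)$.

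For $\call(\cala_*)$: lift a given morphism $[\underline{\phi}]$ to a representative in $\cals(\cala_*)$, run the construction above, and project the resulting sequence to $\call(\cala_*)$. For exactness at $[\underline{A}_k]$ one is given $[\underline{g}]$ with $[\underline{\phi}_k \circ \underline{g}] = 0$, meaning $\phi_{k,m} \circ g_m = 0$ for all but finitely many $m$. On those indices the componentwise argument above produces a lift $\overline{g}_m$; on the remaining finite set of indices set $\overline{g}_m := 0$. The resulting $\underline{\overline{g}}$ is a morphism in $\cals(\cala_*)$ (the finitely many zero entries do not affect admissibility), and $\underline{\phi}_{k+1} \circ \underline{\overline{g}} - \underline{g}$ lies in $\calt(\cala_*)$, so $[\underline{\overline{g}}]$ is the required lift in $\call(\cala_*)$. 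The main obstacle throughout is the bookkeeping of admissible functions and a clean verification that flatness of $\cala_{m+1} \hookrightarrow \cala_m$ transports the componentwise exactness to the ambient category $\cals(\cala_*)$.
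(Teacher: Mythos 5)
Your proof is correct and follows essentially the same strategy as the paper's: choose an admissible function $I$ controlling the given morphism, extend componentwise in $\cala_{I(m)}$, and use flatness of the inclusions $\cala_{I(m)}\to\cala_{K(m)}$ (with $K=\min\{I,J\}$ for a second admissible function $J$ controlling the test morphism) to solve each lifting problem, modifying finitely many components by zero in the $\call(\cala_*)$ case. The only difference is that you make explicit the preliminary reduction to idempotent complete $\cala_m$ needed to invoke Lemma~\ref{lem:intrinsic_reformulation_of_regular_coherent}, a step the paper leaves implicit; this is a reasonable and careful addition rather than a different argument.
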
 
\begin{proof}
We first treat $\cals(\cala_*)$.
Let $\underline{\phi}^1 \colon \underline{A}^1 \to \underline{A}^0$ be a morphism in $\cals(\cala_*)$.
Because of Lemma~\ref{lem:admissible_functions_exists_for_calo_and_its_variants}~%
\ref{lem:admissible_functions_exists_for_calo_and_its_variants:phi_in_calo_upper_0(A_ast)_implies_I_exists}
we can choose $I \in \cali$ with $A^{1}_m, A^{0}_m, \phi^{1}_m  \in  \cala_{I(m)}$.
By assumption we can find for each $m\in \IN$ an exact sequence
  \[
    0 \to A^{l}_m \xrightarrow{\phi^{l}_m} A^{l-1}_m  \xrightarrow{\phi^{l}_m} \cdots
    \xrightarrow{\phi^{2}_m} A^{1}_m \xrightarrow{\phi^{1}_1} A^{0}_m
  \]
  in $\cala_{I(m)}$. We conclude from
  Lemma~\ref{lem:admissible_functions_exists_for_calo_and_its_variants}~%
\ref{lem:admissible_functions_exists_for_calo_and_its_variants:I_exists_implies_phi_in_calo_upper_0(A_ast)}
  that the collection of these sequences for $m = 0,1,2 \ldots $
  defines a sequence in $\cals(\cala_*)$
  \begin{equation}
    0 \to \underline{A}^{l} \xrightarrow{\underline{\phi}^{l}} \underline{A}^{l-1}
    \xrightarrow{\underline{\phi}^{l-1}}  \cdots \xrightarrow{\underline{\phi}^{2}} \underline{A}^{1}
    \xrightarrow{\underline{\phi}^{1}} \underline{A}^{0}.
   \label{some_resolution_of_phi_upper_1}
  \end{equation}
Finally we show that the sequence~\eqref{some_resolution_of_phi_upper_1} is exact as a
  sequence in $\cals(\cala_*)$.  We have to solve for every
  $ j \in \{1, \ldots, l\}$ the following lifting problem in $\cals(\cala_*)$.
  \begin{equation}
   \label{Wagner}
    \xymatrix{\underline{A}^{j+1} \ar[r]^-{\underline{\phi}^{j+1}}
      &
      \underline{A}^{j}\ar[r]^-{\underline{\phi}^{j}}
      &
      \underline{A}^{j-1}
      \\
      & 
      \underline{B}. \ar@{-->}[ul]^{\underline{\nu}} \ar[u]^{\underline{\mu}} \ar[ru]_{0}
      &
    }
  \end{equation}
  Because of Lemma~\ref{lem:admissible_functions_exists_for_calo_and_its_variants}~%
\ref{lem:admissible_functions_exists_for_calo_and_its_variants:phi_in_calo_upper_0(A_ast)_implies_I_exists}
  we can choose  $J \in \cali$  such $B_m, \mu_m   \in  \cala_{J(m)}$ holds.
  Choose $K \in \cali$ with $I, J \le K$.
  Now consider the following lifting problem in $\cala_{K(m)}$
  \begin{equation}
    \label{Neuer}
    \xymatrix{A^{j+1}_m  \ar[r]^-{\phi^{j+1}_m}
      &
      A^{j}_m\ar[r]^-{\phi^{j}_m}
      &
      A^{j-1}_m
      \\
      & 
      B_m.\ar@{-->}[ul]^{\nu_m} \ar[u]^{\mu_m} \ar[ru]_{0}
      &
    }
  \end{equation}
  As the inclusion
  $\cala_{I(m)} \to \cala_{K(m)}$ is flat by assumption, and the sequence
  $A^{j+1}_m \xrightarrow{\phi^{j+1}_m} A^{j}_m
  \xrightarrow{\phi^{j}_m} A^{j-1}_m$ is by construction exact at
  $A^{j}_m$, when considered in $\cala_{I(m)}$, it is exact at
  $A^{j}_m$, when considered in $\cala_{K(m)}$.
  Hence~\eqref{Neuer} has a solution $\nu_m \colon B_m \to A^{j+1}$
  when considered in $\cala_{K(m)}$.  We conclude from
  Lemma~\ref{lem:admissible_functions_exists_for_calo_and_its_variants}~%
\ref{lem:admissible_functions_exists_for_calo_and_its_variants:I_exists_implies_phi_in_calo_upper_0(A_ast)}
  that the collection of the morphisms $\nu_m$ yields a morphism
  $\underline{\nu} \colon \underline{B}\to \underline{A}^{j+1}$ 
 in $\cals(\cala_*)$.   Therefore $\overline{\nu} $ is a solution to the lifting
  problem~\eqref{Wagner} in $\cals(\cala_*)$.  We conclude that~\eqref{some_resolution_of_phi_upper_1} is an
  exact sequence in $\cals(\cala_*)$. This finishes the proof of
  Lemma~\ref{lem:inverse_systems_and_regularity} for $\cals(\cala_*)$.

The proof for $\call(\cala_*)$ is the following modification of the one for $\cals(\cala_*)$.
Let $\overline{\underline{\phi}^1} \colon \underline{A}^1 \to \underline{A}^0$ be a 
morphism in $\call(\cala_*)$. Choose a representative
$\underline{\phi}^1 \colon \underline{A}^1 \to \underline{A}^0$ in $\cals(\cala_*)$.
Now one proceeds as above and constructs the sequence~\eqref{some_resolution_of_phi_upper_1}
in $\cals(\cala_*)$. However, instead of solving the lifting problem~\eqref{Wagner} in
$\cals(\cala_*)$ we have to solve the lifting problem 
  \begin{equation}
   \label{Wagner_cald}
    \xymatrix{\underline{A}^{j+1} \ar[r]^-{\overline{\underline{\phi}^{j+1}}}
      &
      \underline{A}^{j}\ar[r]^-{\overline{\underline{\phi}^{j}}}
      &
      \underline{A}^{j-1}
      \\
      & 
      \underline{B} \ar@{-->}[ul]^{\underline{\nu}} \ar[u]^{\overline{\underline{\mu}}} \ar[ru]_{0}
      &
    }
  \end{equation}
in $\call(\cala_*)$.  Choose a representative $\underline{\mu}$ for $\overline{\underline{\mu}}$.
There is a natural number $M$ such that $\phi^{j}_m \circ \mu_m = 0$ holds for
$m \ge M$. We can change the representative $\underline{\mu}$
by putting $\mu_m = 0$ for $m < M$ and by leaving $\mu_m$ unchanged for $m \ge M$.
Now we choose a solution $\nu_m$ to the lifting problem~\eqref{Neuer} in $\cala_{K(m)}$ for $m \ge M$.
Put $\nu_m = 0$ for $m < M$. Then we get a morphism 
$\underline{\nu} \colon \underline{B} \to \underline{A}^{j+1}$
in $\cals(\cala_*)$ such that its class $\overline{\underline{\nu}} \colon \underline{B} 
\to \underline{A}^{j+1}$ in $\call(\cala_*)$
is a solution to the lifting problem~\eqref{Wagner_cald}. This finishes the proof of
Lemma~\ref{lem:inverse_systems_and_regularity}.
\end{proof}

\begin{example}[The property Noetherian does not pass to the sequence category]%
\label{exa:The_property_Noetherian_does_not_pass_to_the_sequence_category}
The analogue of Lemma~\ref{lem:inverse_systems_and_regularity} for the properties
Noetherian, regular,  or $l$-uniformly regular instead of uniformly $l$-regular coherent does not hold as the
following example shows.  Suppose that none of the $\cala_m$ is the trivial additive
category.  Consider an object $\underline{A}$ of $\cals(\cala_*)$ such that
$A_m \not= \{0\}$ for $m \in \IN$, and the $\IZ\cals(\cala_*)$-module
\[
  F = \mor_{\cals(\cala_*)}(\underline{?},\underline{A}).
\]
Define a $\IZ\cals(\cala_*)$-submodule $V$ of $F$ by
\[
  V(?) = \{\underline{\phi} \in F(?) \mid \exists M(\underline{\phi}) \in \IN \:
  \text{with}\; \phi_m = 0 \;\text{for}\; m \ge M(\underline{\phi})\}.
\]
Suppose that there exists an object $\underline{B}$ and an epimorphism
$f \colon \mor_{\cals(\cala_*)}(\underline{?},\underline{B}) \to V$.  If we write
$f(\id_{\underline{B}}) = \underline{\psi} \in V(\underline{B})$, then there must be a natural
number $M$ with $\psi_m = 0$ for $m \ge M$. This implies that for any
$\underline{\phi} \in V(?)$ we have $\phi_m = 0$ for $m \ge M$.  This is a contradiction,
since $M$ does not depend on $\underline{\phi}$.  Hence $V$ is not finitely generated and
$\cals(\cala_*)$ is not Noetherian. This construction yields also a counterexample for
$\call(\cala_*)$.
\end{example}


\subsection{The algebraic $K$-theory of the sequence categories 
$\cals(\cala_*)$, $\calt(\cala_*)$ and $\call(\cala_*)$}%
\label{subsec:The_algebraic_K-theory_of_the_sequence_categories_with_upper_0}

Given $I \in \cali$, define a subcategory
    $\cals(\cala_*)_I$ of $\cals(\cala_*)$ as follows.  An object $\underline{A}$ in
    $\cals(\cala_*)$ belongs to $\cals(\cala_*)_I$, if $A_m \in \cala_{I(m)}$ holds for
    $m \in \IN$.  A morphism $\underline{\phi} \colon \underline{A} \to \underline{T}$ in
    $\cals(\cala_*)$ belongs to $\cals(\cala_*)_I$, if and only if $\phi_m \in \cala_{I(m)}$ 
    holds for $m \in \IN$.

    Next we show
    \begin{eqnarray}
    \cals(\cala_*)_I &\subseteq & \cals(\cala_*)_J \quad \text{for}\; I,J \in \cali, I \le J;
    \label{calo_upper_0(A_ast)_I_subseteq_calo_upper_0(A_ast)_J_for_I_le_J}
      \\
      \cals(\cala_*) & = & \bigcup_{I\in \cali} \cals(\cala_*)_I.
      \label{calo_upper_0(A_ast)_is_union_of_the_calo_upper_0(A_ast)_I}
    \end{eqnarray}
    The first equation is obvious since $\cala_i \subseteq \cala_j$ for $i \ge j$.
    The second follows from Lemma~\ref{lem:admissible_functions_exists_for_calo_and_its_variants}.

    Define analogously the subcategory $\calt(\cala_*)_I$ of $\calt(\cala_*)$. Then we get
    \begin{eqnarray}
    \calt(\cala_*)_I &\subseteq & \calt(\cala_*)_J \quad \text{for}\; I,J \in \cali, I \le J;
    \label{calt_upper_0(A_ast)_I_subseteq_calt_upper_0(A_ast)_J_for_I_le_J}
      \\
      \calt(\cala_*) & = & \bigcup_{I\in \cali} \calt(\cala_*)_I.
      \label{calt_upper_0(A_ast)_is_union_of_the_calt_upper_0(A_ast)_I}
    \end{eqnarray}

    One easily checks that the inclusion $\calt(\cala_*)_I \subseteq \cals(\cala_*)_I$ is Karoubi filtration. Define
    \begin{eqnarray}
        \call(\cala_*)_I & = & \cals(\cala_*)_I/\calt(\cala_*)_I.
      \label{cald_upper_0(cals_ast)_I}
    \end{eqnarray}
    Then we get
    \begin{eqnarray}
    \call(\cala_*)_I &\subseteq & \call(\cala_*)_J \quad \text{for}\; I,J \in \cali, I \le J;
    \label{call_upper_0(A_ast)_I_subseteq_call_upper_0(A_ast)_J_for_I_le_J}
      \\
      \call(\cala_*) & = & \bigcup_{I\in \cali} \call(\cala_*)_I.
      \label{call_upper_0(A_ast)_is_union_of_the_call_upper_0(A_ast)_I}
    \end{eqnarray}

    \begin{lemma}\label{lem:hocolim_I_K(cald(A_ast)_I)_simeq_K(cald(A_ast))}
      \
      \begin{enumerate}
      \item\label{lem:hocolim_I_K(cald(A_ast)_I)_simeq_K(cald(A_ast)):functors}
      We get for $I,J \in \cali$ with $I \le J$ functors
      \begin{eqnarray*}
        \cals(A_*)_I & \to & \call(A_*)_J;
        \\
        \cals(A_*)_I & \to & \call(A_*), 
      \end{eqnarray*}
      and analogously for $\calt$ and $\call$;

      \item\label{lem:hocolim_I_K(cald(A_ast)_I)_simeq_K(cald(A_ast)):hocolim} The functors appearing
      in assertion~\ref{lem:hocolim_I_K(cald(A_ast)_I)_simeq_K(cald(A_ast)):functors}
      induce  weak homotopy  equivalences, natural in $\cala_*$
      \begin{eqnarray*}
        \hocolim_{I \in \cali} \bfKinfty(\cals(\cala_*)_I) & \xrightarrow{\simeq} &\bfKinfty(\cals(\cala_*));
         \label{hocolim_I_in_cali_for_calo_upper_0(A_ast)}
         \\
        \hocolim_{I \in \cali} \bfKinfty(\calt(\cala_*)_I) & \xrightarrow{\simeq} &\bfKinfty(\calt(\cala_*));
         \label{hocolim_I_in_cali_for_calt_upper_0(A_ast)}
          \\
         \hocolim_{I \in \cali} \bfKinfty(\call(\cala_*)_I) & \xrightarrow{\simeq} &\bfKinfty(\call(\cala_*)).
        \label{hocolim_I_in_cali_for_cald_upper_0(A_ast)}
          \end{eqnarray*}
        \end{enumerate}
      \end{lemma}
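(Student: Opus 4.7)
The plan is to split the argument into its two parts. For part~\ref{lem:hocolim_I_K(cald(A_ast)_I)_simeq_K(cald(A_ast)):functors} we simply check that the subcategory inclusions behave well. The inclusions~\eqref{calo_upper_0(A_ast)_I_subseteq_calo_upper_0(A_ast)_J_for_I_le_J} and~\eqref{calt_upper_0(A_ast)_I_subseteq_calt_upper_0(A_ast)_J_for_I_le_J} give functors of additive categories between the ``$I$'' and ``$J$'' levels for $\cals$ and $\calt$, and their composition with the inclusions into $\cals(\cala_*)$ and $\calt(\cala_*)$ gives the functors into the full sequence and truncation categories. For $\call$, the restriction of the $\cals$-functor sends $\calt(\cala_*)_I$ into $\calt(\cala_*)_J$, so it descends to the quotient additive category $\call(\cala_*)_I = \cals(\cala_*)_I/\calt(\cala_*)_I \to \call(\cala_*)_J$, and similarly into $\call(\cala_*)$.

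For part~\ref{lem:hocolim_I_K(cald(A_ast)_I)_simeq_K(cald(A_ast)):hocolim} the central input is that the non-connective algebraic $K$-theory spectrum of additive categories commutes with filtered colimits (this is standard; it follows from the same property for connective $K$-theory together with the fact that the delooping construction of~\cite{Lueck-Steimle(2014delooping)} is itself built by colimits). First I will verify that at the level of additive categories we have directed unions
\[
  \cals(\cala_*)\;=\;\bigcup_{I \in \cali}\cals(\cala_*)_I,\qquad
  \calt(\cala_*)\;=\;\bigcup_{I \in \cali}\calt(\cala_*)_I,
\]
which are exactly the content of~\eqref{calo_upper_0(A_ast)_is_union_of_the_calo_upper_0(A_ast)_I} and~\eqref{calt_upper_0(A_ast)_is_union_of_the_calt_upper_0(A_ast)_I}; these are, in particular, genuine colimits in $\addcat$ indexed by the directed set $\cali$. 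Invoking the commutation of $\bfKinfty$ with filtered colimits immediately yields the first two asserted weak equivalences, for $\calt(\cala_*)$ and for $\cals(\cala_*)$.

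To obtain the equivalence for $\call(\cala_*)$ I will compare Karoubi fiber sequences. For each $I \in \cali$ the inclusion $\calt(\cala_*)_I \subseteq \cals(\cala_*)_I$ is a Karoubi filtration, so by the reference cited for Lemma~\ref{lem:Karoubi_filtration_sequence_categories} one obtains a homotopy fiber sequence
\[
  \bfKinfty(\calt(\cala_*)_I) \to \bfKinfty(\cals(\cala_*)_I) \to \bfKinfty(\call(\cala_*)_I),
\]
and these fiber sequences are natural in $I$. Taking $\hocolim_{I \in \cali}$ preserves fiber sequences of spectra, so one gets a fiber sequence of homotopy colimits. Lemma~\ref{lem:Karoubi_filtration_sequence_categories} applied to the full nested sequence $\cala_*$ provides the comparison fiber sequence, and the maps from part~\ref{lem:hocolim_I_K(cald(A_ast)_I)_simeq_K(cald(A_ast)):functors} fit into a map of fiber sequences in which, by what has just been shown, the left and middle maps are weak equivalences; the five-lemma (in the stable homotopy category) then gives the equivalence on the right. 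Naturality in $\cala_*$ is clear from the construction.

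The only genuinely delicate point is the interchange of $\hocolim$ with $\bfKinfty$ for additive categories, but since the indexing category $\cali$ is filtered and the construction of $\bfKinfty$ from~\cite{Lueck-Steimle(2014delooping)} is a (countable) iteration of functors that each preserve filtered colimits, no obstruction arises; everything else is a bookkeeping check using~\eqref{calo_upper_0(A_ast)_I_subseteq_calo_upper_0(A_ast)_J_for_I_le_J}--\eqref{calt_upper_0(A_ast)_is_union_of_the_calt_upper_0(A_ast)_I} and the definition~\eqref{cald_upper_0(cals_ast)_I}.
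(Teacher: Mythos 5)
Your proposal is correct and follows essentially the same route as the paper: part (i) from the inclusions \eqref{calo_upper_0(A_ast)_I_subseteq_calo_upper_0(A_ast)_J_for_I_le_J} and \eqref{calt_upper_0(A_ast)_I_subseteq_calt_upper_0(A_ast)_J_for_I_le_J}, and part (ii) by combining the directed-union descriptions \eqref{calo_upper_0(A_ast)_is_union_of_the_calo_upper_0(A_ast)_I}, \eqref{calt_upper_0(A_ast)_is_union_of_the_calt_upper_0(A_ast)_I} with the compatibility of $\bfKinfty$ with directed colimits (the paper cites \cite[Corollary~7.2]{Lueck-Steimle(2014delooping)} for this) and then comparing the Karoubi fibration sequences, using that homotopy colimits preserve weak homotopy fibrations of spectra. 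The only cosmetic difference is that you sketch why $\bfKinfty$ commutes with filtered colimits where the paper simply cites the result.
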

 \begin{proof}~\ref{lem:hocolim_I_K(cald(A_ast)_I)_simeq_K(cald(A_ast)):functors}
   The desired functors come from~\eqref{calo_upper_0(A_ast)_I_subseteq_calo_upper_0(A_ast)_J_for_I_le_J},~%
\eqref{calt_upper_0(A_ast)_I_subseteq_calt_upper_0(A_ast)_J_for_I_le_J},   
and~\eqref{call_upper_0(A_ast)_I_subseteq_call_upper_0(A_ast)_J_for_I_le_J}.
\\[1mm]~\ref{lem:hocolim_I_K(cald(A_ast)_I)_simeq_K(cald(A_ast)):hocolim} 
This follows from 
of~\eqref{calo_upper_0(A_ast)_is_union_of_the_calo_upper_0(A_ast)_I}~%
\eqref{calt_upper_0(A_ast)_is_union_of_the_calt_upper_0(A_ast)_I},~%
\eqref{call_upper_0(A_ast)_is_union_of_the_call_upper_0(A_ast)_I}
and~\cite[Corollary~7.2]{Lueck-Steimle(2014delooping)}.
\end{proof}

Given $I \in \cali$, we define $\bigoplus_{m \in \IN} \cala_{I(m)}$ to be the full
subcategory of $\prod_{m \in \IN} \cala_{I(m)}$ consisting of those objects
$\underline{A}$ for which there exists a natural number $M$ (depending on $\underline{A}$)
satisfying $A_m = 0$ for $m \ge M$.  Let
$\bigl(\prod_{m \in \IN} \cala_{I(m)}\bigr) \bigl/ \bigl(\bigoplus _{m \in \IN} \cala_{I(m)}\bigr)$
be the quotient additive category.

\begin{lemma}\label{lem:proof_from_inside_to_outside}
Fix $I \in \cali$. There are weak homotopy equivalences, natural in $\cala_*$,
\begin{eqnarray*}
   \bfKinfty\bigl(\prod_{m \in \IN} \cala_{I(m)}\bigr) 
    & \xrightarrow{\simeq} & 
    \prod_{m \in \IN}  \bfKinfty\bigl(\cala_{I(m)}\bigr);
    \\
    \bigvee_{m \in \IN} \bfKinfty(\cala_{I(m)}\bigr) 
    & \xrightarrow{\simeq} & 
     \bfKinfty\bigl(\bigoplus_{m \in \IN} \cala_{I(m)}\bigr),
\end{eqnarray*}
and 
\begin{multline*}
  \hocofib\bigl(\bfKinfty\bigl(\bigoplus_{m \in \IN} \cala_{I(m)}\bigr) \to \bfKinfty\bigl(\prod_{m\in \IN} \cala_{I(m)}\bigr)\bigr)
  \\
  \xrightarrow{\simeq}
     \bfKinfty\left(\prod_{m \in \IN} \cala_{I(m)}\biggl/\bigoplus_{m \in \IN} \cala_{I(m)}\right).
   \end{multline*}
 \end{lemma}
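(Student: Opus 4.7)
The plan is to verify the three equivalences independently. For the first equivalence I would appeal to the result, already cited in the introduction, that non-connective $K$-theory of additive categories commutes with arbitrary products; this is the content of~\cite{Carlsson(1995)} and~\cite[Theorem~1.2]{Kasprowski-Winges(2020)}. The comparison map
\[
  \bfKinfty\biggl(\prod_{m \in \IN} \cala_{I(m)}\biggr) \to \prod_{m \in \IN} \bfKinfty(\cala_{I(m)})
\]
is natural in the system $(\cala_{I(m)})_{m \in \IN}$, and hence in $\cala_*$.

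For the second equivalence I would write $\bigoplus_{m \in \IN} \cala_{I(m)}$ as the directed union, over the filtered poset of finite subsets $F \subseteq \IN$, of the full additive subcategories $\bigoplus_{m \in F} \cala_{I(m)}$. On a finite index set the coproduct of additive categories coincides with the product, so the non-connective $K$-theory of $\bigoplus_{m \in F} \cala_{I(m)}$ is $\prod_{m \in F} \bfKinfty(\cala_{I(m)})$, and since $F$ is finite this product of spectra coincides with the wedge $\bigvee_{m \in F} \bfKinfty(\cala_{I(m)})$. Commuting $\bfKinfty$ with the directed union via~\cite[Corollary~7.2]{Lueck-Steimle(2014delooping)}, as was already done in Lemma~\ref{lem:hocolim_I_K(cald(A_ast)_I)_simeq_K(cald(A_ast))}, produces the infinite wedge $\bigvee_{m \in \IN} \bfKinfty(\cala_{I(m)})$, and naturality in $\cala_*$ is inherited from the naturality of each ingredient.

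For the third equivalence my plan is to show that the inclusion $\bigoplus_{m \in \IN} \cala_{I(m)} \subseteq \prod_{m \in \IN} \cala_{I(m)}$ is a Karoubi filtration in the sense of~\cite{Cardenas-Pedersen(1997)} and~\cite[Definition~5.4]{Kasprowski(2015findeccom)}. Given an object $\underline{A} = (A_m)_{m \in \IN}$ in the product and any finite set of morphisms involving $\underline{A}$ whose other endpoint is an object of $\bigoplus_{m \in \IN} \cala_{I(m)}$ with support contained in $[0, M]$, the evident splitting of $\underline{A}$ as the direct sum of its restriction to $[0,M]$ and its restriction to $[M+1,\infty)$ factors all such morphisms through the first summand, which lies in $\bigoplus_{m \in \IN} \cala_{I(m)}$; this is entirely formally parallel to the Karoubi filtration $\calt(\cala_*) \subseteq \cals(\cala_*)$ of Lemma~\ref{lem:Karoubi_filtration_sequence_categories}. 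The associated weak homotopy fibration of non-connective $K$-theory spectra then identifies $\bfKinfty$ of the quotient with the homotopy cofiber of the inclusion-induced map, yielding the zigzag of natural weak homotopy equivalences.

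The main obstacle is the first equivalence, whose substantive content is that $\bfKinfty$ of additive categories commutes with \emph{arbitrary} infinite products; this is the key input from~\cite{Carlsson(1995),Kasprowski-Winges(2020)} and I would treat it as a black box. With this in hand, the second equivalence reduces to compatibility of non-connective $K$-theory with filtered colimits of additive categories, and the third to the general Karoubi-filtration machinery, both of which are standard tools already used elsewhere in the paper.
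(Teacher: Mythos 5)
Your proposal is correct and follows essentially the same route as the paper: the first equivalence is quoted from Carlsson and Kasprowski--Winges, the second comes from writing $\bigoplus_{m \in \IN} \cala_{I(m)}$ as a directed union of finite direct sums and using compatibility of $\bfKinfty$ with directed unions together with the identification of $K$-theory of a finite direct sum with a finite wedge, and the third is the Karoubi filtration fibration sequence. Your explicit verification that the inclusion of the direct sum into the product is a Karoubi filtration is slightly more detailed than the paper, which simply asserts it.
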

\begin{proof}
  The first one is weak homotopy equivalence by~\cite{Carlsson(1995)}, see
  also~\cite[Theorem~1.2]{Kasprowski-Winges(2020)}, since the non-connective algebraic
  $K$-theory spectrum is indeed an $\Omega$-spectrum.  The second one is a weak homotopy
  equivalence, since $\bigoplus_{m \in \IN} \cala_m$ is the union of the subcategories
  $\bigoplus_{m = 0}^n \cala_i$ and hence we get a weak homotopy equivalence
  \[
\hocolim_{n \to \infty} \bfKinfty\bigl(\bigoplus_{m = 0}^n \cala_{I(m)}\bigr) 
\xrightarrow{\simeq} 
\bfKinfty\bigl(\bigoplus_{m \in \IN} \cala_{I(m)}\bigr),
\]
and the natural map 
\[
\bigvee_{m = 0}^n \bfK(\cala_{I(m)}) \xrightarrow{\simeq} \bfKinfty\bigl(\bigoplus_{m = 0}^n \cala_{I(m)}\bigr)
\]
is a weak homotopy equivalence. The third map is a weak homotopy equivalence, since
the inclusion  $\bigoplus_{m \in \IN} \cala_{I(m)} \subseteq \prod_{m\in \IN} \cala_{I(m)}$ is a Karoubi filtration.
\end{proof}

\begin{lemma}\label{lem:K_theory_of_calt_upper_0_I_calo_upper_0_I;cald_upper_0_I}
Given $I \in \cali$, there  are weak homotopy equivalences, natural in $\cala_*$,
\begin{eqnarray*}
   \bfKinfty\bigl(\cals(\cala_*)_I\bigr) 
    & \xrightarrow{\simeq} & 
    \prod_{m \in \IN}  \bfKinfty\bigl(\cala_{I(m)}\bigr);
    \\
    \bigvee_{m \in \IN} \bfKinfty(\cala_{I(m)}\bigr) 
    & \xrightarrow{\simeq} & 
    \bfKinfty\bigl(\calt(\cala_*)_I\bigr);
  \\
  \hocofib\bigl(\bfKinfty\bigl(\bigoplus_{m \in \IN} \cala_{I(m)}\bigr)
  \to \bfKinfty\bigl(\prod_{m\in \IN} \cala_{I(m)}\bigr)\bigr)
      & \xrightarrow{\simeq} & 
     \bfKinfty(\call(\cala)_I).
   \end{eqnarray*}
\end{lemma}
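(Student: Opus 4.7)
The plan is to reduce the lemma to Lemma~\ref{lem:proof_from_inside_to_outside} by identifying $\cals(\cala_*)_I$, $\calt(\cala_*)_I$, and $\call(\cala_*)_I$ with the product, direct sum, and quotient categories appearing there.

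First, I would check that the assignment $\underline{A} \mapsto (A_m)_{m \in \IN}$ defines an isomorphism of additive categories $\cals(\cala_*)_I \xrightarrow{\cong} \prod_{m \in \IN} \cala_{I(m)}$. By the very definition of $\cals(\cala_*)_I$, objects and morphisms consist of sequences $\underline{A} = (A_m)$ and $\underline{\phi} = (\phi_m)$ with $A_m, \phi_m \in \cala_{I(m)}$, which is manifestly the data of a tuple in $\prod_{m \in \IN} \cala_{I(m)}$. I only need to verify that every such sequence automatically defines an object/morphism of $\cals(\cala_*)$, i.e.~satisfies the admissibility condition from Definition~\ref{def:calo_upper_0(cals_ast)}. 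Since $I \in \cali$ is admissible, $I(m) \to \infty$ as $m \to \infty$, so for each $l$ we can take $L(l) := \min\{m \mid I(m) \ge l\}$, which is finite and gives the required function. This step is routine.

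Next, under this isomorphism, $\calt(\cala_*)_I$ corresponds exactly to $\bigoplus_{m \in \IN} \cala_{I(m)}$: an object $\underline{A}$ of $\cals(\cala_*)_I$ lies in $\calt(\cala_*)_I$ iff $A_m = 0$ for almost all $m$, which is precisely the defining condition of the direct sum subcategory. Consequently, by the definition of the quotient additive category,
\[
\call(\cala_*)_I \;=\; \cals(\cala_*)_I / \calt(\cala_*)_I \;\cong\; \Big(\prod_{m \in \IN} \cala_{I(m)}\Big) \Big/ \Big(\bigoplus_{m \in \IN} \cala_{I(m)}\Big).
\]

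The three weak homotopy equivalences claimed in the lemma now follow by applying Lemma~\ref{lem:proof_from_inside_to_outside} to these identifications, together with the naturality of the whole construction in $\cala_*$. I do not expect any real obstacle here: the content of the lemma is packaged entirely in Lemma~\ref{lem:proof_from_inside_to_outside} (which invokes~\cite{Carlsson(1995)} and~\cite{Kasprowski-Winges(2020)} to commute $\bfKinfty$ with the product, uses the description of $\bigoplus$ as a filtered colimit of finite sums to handle the wedge, and uses the Karoubi filtration property to obtain the cofiber sequence identifying $\bfKinfty$ of the quotient). The only thing to be careful about is to confirm that the natural inclusion $\calt(\cala_*)_I \subseteq \cals(\cala_*)_I$ matches, under our isomorphism, the inclusion $\bigoplus_{m \in \IN} \cala_{I(m)} \subseteq \prod_{m \in \IN} \cala_{I(m)}$, so that the induced quotient, hence the cofiber zigzag, coincides with the one in Lemma~\ref{lem:proof_from_inside_to_outside}; this is immediate from the construction.
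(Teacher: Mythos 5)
Your proposal is correct and follows exactly the paper's argument: the paper likewise observes the identifications $\cals(\cala_*)_I = \prod_{m}\cala_{I(m)}$, $\calt(\cala_*)_I = \bigoplus_{m}\cala_{I(m)}$, and $\call(\cala_*)_I = \prod_m \cala_{I(m)} \bigl/ \bigoplus_m \cala_{I(m)}$, and then invokes Lemma~\ref{lem:proof_from_inside_to_outside}. Your added verification that admissibility of $I$ makes these identifications legitimate is a welcome (if routine) elaboration of what the paper calls ``obvious.''
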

\begin{proof}
   The are obvious identifications
  \begin{eqnarray*}
    \prod_{m \in \IN} \cala_{I(m)} & = &  \cals(\cala_*)_I;
    \\
    \bigoplus_{m\in \IN} \cala_{I(m)} & = &  \calt(\cala_*)_I;
    \\
    \prod_{m \in \IN} \cala_{I(m)}\biggl/ \bigoplus _{m \in \IN} \cala_{I(m)} & = &  \call(\cala_*)_I.
  \end{eqnarray*}                                                                                                       
  Now the claim follows from Lema~\ref{lem:proof_from_inside_to_outside}.
\end{proof}

As a consequence of
Lemma~\ref{lem:hocolim_I_K(cald(A_ast)_I)_simeq_K(cald(A_ast))}~%
\ref{lem:hocolim_I_K(cald(A_ast)_I)_simeq_K(cald(A_ast)):hocolim} 
and 
Lemma~\ref{lem:K_theory_of_calt_upper_0_I_calo_upper_0_I;cald_upper_0_I}
we get

\begin{lemma}[$K$-groups of $\cals(\cala_*)$, $\calt(\cala_*)$, and $\call(\cala_*)$]%
\label{lemma:K-groups_of_cald_upper_0(cals_ast)}
  There are zigzags of weak homotopy equivalences of spectra, natural in $\cala_*$
  \begin{eqnarray*}
   \hocolim_{i \in \cali}  \prod_{m \in \IN}  \bfKinfty\bigl(\cala_{I(m)}\bigr)
    & \xleftrightarrow{\simeq}  & 
    \bfKinfty\bigl(\cals(\cala_*)\bigr);
    \\
    \hocolim_{i \in \cali}   \bigvee_{m \in \IN} \bfKinfty(\cala_{I(m)}\bigr) 
    & \xleftrightarrow{\simeq} & 
     \bfKinfty\bigl(\calt(\cala_*)\bigr);
     \\
    \hocolim_{i \in \cali}   \left(\hocofib\bigl(\bfKinfty\bigl(\bigoplus_{m \in \IN} \cala_{I(m)}\bigr)
    \to \bfKinfty\bigl(\prod_{m\in \IN} \cala_{I(m)}\bigr)\bigr)\right)
      & \xleftrightarrow{\simeq} & 
     \bfKinfty(\call(\cala)).
   \end{eqnarray*}
 \end{lemma}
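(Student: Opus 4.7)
The plan is to combine the two immediately preceding lemmas. From Lemma~\ref{lem:hocolim_I_K(cald(A_ast)_I)_simeq_K(cald(A_ast))}~\ref{lem:hocolim_I_K(cald(A_ast)_I)_simeq_K(cald(A_ast)):hocolim} we already have weak homotopy equivalences of spectra
\[
\hocolim_{I \in \cali} \bfKinfty(\calt(\cala_*)_I) \xrightarrow{\simeq} \bfKinfty(\calt(\cala_*)),\quad
\hocolim_{I \in \cali} \bfKinfty(\cals(\cala_*)_I) \xrightarrow{\simeq} \bfKinfty(\cals(\cala_*)),
\]
and similarly for $\call$. Thus it suffices to identify the spectra $\bfKinfty(\cals(\cala_*)_I)$, $\bfKinfty(\calt(\cala_*)_I)$, and $\bfKinfty(\call(\cala_*)_I)$ entry by entry in $I \in \cali$, in a way that is natural in $I$, and then apply $\hocolim_{I \in \cali}$.

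For each fixed $I \in \cali$, Lemma~\ref{lem:K_theory_of_calt_upper_0_I_calo_upper_0_I;cald_upper_0_I} gives the three desired zigzags of weak homotopy equivalences at the level $I$. The key point I would verify is that each of the maps appearing in those zigzags is natural in $I$: the identifications
$\cals(\cala_*)_I = \prod_{m\in\IN}\cala_{I(m)}$, $\calt(\cala_*)_I = \bigoplus_{m\in\IN}\cala_{I(m)}$, and $\call(\cala_*)_I = \prod_{m\in\IN}\cala_{I(m)}/\bigoplus_{m\in\IN}\cala_{I(m)}$ are strictly natural in the inclusions $\cals(\cala_*)_I \subseteq \cals(\cala_*)_J$ etc.\ for $I \le J$; the weak equivalence from~\cite{Carlsson(1995)} is natural in additive functors of product categories; the wedge-to-direct-sum equivalence is the colimit of a natural construction on finite sub-wedges; and the comparison for the quotient $\call(\cala_*)_I$ arises from the natural Karoubi filtration sequence. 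Consequently all three zigzags upgrade to zigzags of natural transformations of functors $\cali \to \Spectra$.

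Applying $\hocolim_{I \in \cali}$ to these natural zigzags and invoking the fact that $\hocolim$ in $\Spectra$ preserves weak equivalences produces zigzags of weak homotopy equivalences
\begin{eqnarray*}
\hocolim_{I\in\cali}\prod_{m\in\IN}\bfKinfty(\cala_{I(m)}) & \xleftrightarrow{\simeq} & \hocolim_{I\in\cali}\bfKinfty(\cals(\cala_*)_I),
\\
\hocolim_{I\in\cali}\bigvee_{m\in\IN}\bfKinfty(\cala_{I(m)}) & \xleftrightarrow{\simeq} & \hocolim_{I\in\cali}\bfKinfty(\calt(\cala_*)_I),
\\
\hocolim_{I\in\cali}\hocofib\Bigl(\bfKinfty\bigl(\bigoplus_{m\in\IN}\cala_{I(m)}\bigr) \to \bfKinfty\bigl(\prod_{m\in\IN}\cala_{I(m)}\bigr)\Bigr) & \xleftrightarrow{\simeq} & \hocolim_{I\in\cali}\bfKinfty(\call(\cala_*)_I).
\end{eqnarray*}
Composing each of these on the right with the corresponding weak equivalence from Lemma~\ref{lem:hocolim_I_K(cald(A_ast)_I)_simeq_K(cald(A_ast))}~\ref{lem:hocolim_I_K(cald(A_ast)_I)_simeq_K(cald(A_ast)):hocolim} yields the three zigzags in the statement, and naturality in $\cala_*$ is inherited, since at every stage the maps involved are functorial in morphisms $F_* \colon \cala_* \to \cala_*'$ of nested sequences.

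The only step that requires genuine care, rather than bookkeeping, is the naturality of the zigzag for $\call(\cala_*)_I$, because it is built from a Karoubi filtration together with the zigzag identifying the homotopy cofiber of $\bfKinfty(\bigoplus) \to \bfKinfty(\prod)$ with $\bfKinfty(\prod/\bigoplus)$. I would handle this by arranging all three Karoubi filtration sequences (for varying $I$) into a commutative diagram in the category of functors $\cali \to \Spectra$ and quoting that homotopy cofibers of natural transformations are themselves functorial, together with the preservation of homotopy (co)fiber sequences under $\hocolim$ in $\Spectra$. Once that functoriality is in place, the rest of the argument is purely formal.
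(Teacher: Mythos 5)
Your proposal is correct and follows the paper's own argument exactly: the paper obtains Lemma~\ref{lemma:K-groups_of_cald_upper_0(cals_ast)} precisely as a formal consequence of Lemma~\ref{lem:hocolim_I_K(cald(A_ast)_I)_simeq_K(cald(A_ast))}~\ref{lem:hocolim_I_K(cald(A_ast)_I)_simeq_K(cald(A_ast)):hocolim} combined with Lemma~\ref{lem:K_theory_of_calt_upper_0_I_calo_upper_0_I;cald_upper_0_I}, which is exactly your route. Your explicit attention to naturality in $I$ of the level-$I$ identifications (and to the Karoubi-filtration comparison for $\call(\cala_*)_I$) is a reasonable elaboration of details the paper leaves implicit.
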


 In particular we get from Lemma~\ref{lemma:K-groups_of_cald_upper_0(cals_ast)}
 for every $n \in \IZ$ an isomorphism
 \begin{eqnarray}
   \colim_{i\in I} \biggl(\prod_{m \in \IN} K_n(A_{I(m)})\biggl/\bigoplus_{m \in \IN} K_n(A_{I(m)})\biggr)
   & \cong &
   K_n(\call(\cala_*)).
  \label{K_n(call(cala_ast))}
 \end{eqnarray} 
 

\typeout{-----   Section 13: The main technical result ---------}

\section{The main technical result}%
\label{sec:The_main_technical_result}


\subsection{The statement of the  main technical result}%
\label{subsec:The_statement_of_the_main_technical_result}

Fix a natural number $r$ and a nested sequence of additive categories $\cala_*$.  We have
defined the additive category $\call(\cala_*)$ in
Definition~\ref{def:calo_upper_0(cals_ast)}.  Suppose that each $\cala_*$ comes with a
$\IZ^r$-action $\Psi \colon \IZ^r \to \proaut(\cala_*)$ by pro-automorphisms in the sense
of Definition~\ref{def:(Pro-)morphisms_of_nested_sequences_of_additive_categories}.  Then
we obtain a $\IZ^r$-action $\Phi \colon \IZ^r \to \aut(\call(\cala_*))$ on
$\call(\cala_*)$ see~\eqref{call(F)}.  We obtain a covariant functor, see for
instance~\cite[Section~9]{Bartels-Lueck(2009coeff)},
\[
\bfKinfty_{\call(\cala_*)} \colon \OrG{\IZ^r} \to \Spectra.
\]
It determines a $\IZ^r$-homology theory $H_n^{\IZ^r}(-,\bfKinfty_{\call(\cala_*)} )$
with the property that for every subgroup $H \subseteq \IZ^r$ and $n \in \IZ$ we have the 
natural isomorphisms
\[
  H_n^{\IZ^r}(\IZ^r/H,\bfKinfty_{\call(\cala_*)}) \xrightarrow{\cong}
  K_n(\call(\cala_*) \rtimes_{\Phi|_{H}} H)
\]
as explained for instance in~\cite[Section~9]{Bartels-Lueck(2009coeff)}.  The
nested sequence of additive categories $\cala_*$ yields for any natural number $d$ another
nested sequence of additive categories $\cala_*[\IZ^d]$ by
$\cala_0[\IZ^d] \supseteq \cala_1[\IZ^d] \supseteq \cala_2[\IZ^d] \supseteq \cdots$, where
$\cala_m[\IZ]$ is the untwisted case of Definition~\ref{def:A_phi[t,t(-1)]} and we define
inductively $\cala_m[\IZ^d] = (\cala_m[\IZ^{d-1}])[\IZ]$.  Moreover,
$\call(\cala_*[\IZ^d])$ inherits a $\IZ^r$-action
$\Phi[\IZ^d] \colon \IZ^r \to \aut(\call(\cala_*[\IZ^d]))$.

The main result of this section is

\begin{theorem}\label{the:assembly_for_cald_upper_0(cals_ast)_with_Zr-action}
Suppose:

\begin{enumerate}

\item\label{the:assembly_for_cald_upper_0(cals_ast)_with_Zr-action:uniform_regular_coherence}
  For every natural number $d$ there exists a natural number $l(d)$ such that for any natural
  number $m$ the additive category $\cala_m[\IZ^d]$ is
  $l(d)$-uniformly regular coherent;

\item\label{the:assembly_for_cald_upper_0(cals_ast)_with_Zr-action:exactness}
The inclusion $\cala_{m+1}[\IZ^d] \to \cala_m[\IZ^d]$ is exact for any natural numbers $d$ and $m$.

\end{enumerate}

Then the map induced by the projection $E\IZ^r \to \pt$
\[
H_n^{\IZ^r}(E\IZ^r;\bfKinfty_{\call(\cala_*)}) \to 
H_n^{\IZ^r}(\pt;\bfKinfty_{\call(\cala_*)}) = K_n(\call(\cala_*) \rtimes_{\phi} \IZ^r)
\] 
is bijective for all $n \in \IZ$.
\end{theorem}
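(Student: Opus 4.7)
The plan is to proceed by induction on $r$, using the Bass--Heller--Swan splitting at each step to reduce the assembly statement to the vanishing of Nil--terms, and then deducing this vanishing from hypotheses (i) and (ii) via the combination of Lemma~\ref{lem:inverse_systems_and_regularity} and the retract structure of Lemma~\ref{lem:BHS-retraction}.

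For $r=0$ the assembly map is the identity on $K_n(\call(\cala_*))$. For the inductive step we decompose $\IZ^r = \IZ^{r-1}\times\IZ$, set $\calb := \call(\cala_*)\rtimes\IZ^{r-1}$, and let $\phi\in\aut(\calb)$ be the automorphism coming from the last $\IZ$-generator, so that $\call(\cala_*)\rtimes\IZ^r \cong \calb_{\phi}[t,t^{-1}]$. Theorem~\ref{the:BHS_decomposition_for_non-connective_K-theory} gives
\[
\bfKinfty(\calb_{\phi}[t,t^{-1}])\;\simeq\; \bfT_{\bfKinfty(\phi^{-1})} \vee \bfNKinfty(\calb_{\phi}[t]) \vee \bfNKinfty(\calb_{\phi}[t^{-1}]).
\]
The mapping torus $\bfT_{\bfKinfty(\phi^{-1})}$ realizes the $\IZ$-equivariant homology of $E\IZ$ with coefficients in $\bfKinfty$ of $\calb$, viewed as the homotopy orbits of the $\IZ$-action; combined with the identification $\bfKinfty(\calb)\simeq H^{\IZ^{r-1}}_*(E\IZ^{r-1};\bfKinfty_{\call(\cala_*)})$ provided by the induction hypothesis, together with the splitting $E\IZ^r\simeq E\IZ\times E\IZ^{r-1}$, this identifies the mapping-torus summand with the source of the assembly map for $\IZ^r$. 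The theorem thus reduces to showing that both Nil--terms vanish.

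By Theorem~\ref{the:The_non_connective_K-theory_of_additive_categories} the required vanishing for $(\calb,\phi)$ follows once $\calb[\IZ^m]$ is regular coherent for every $m\ge 0$. The obstacle is that $\calb$ is a \emph{twisted} crossed product, while hypothesis (i) only addresses the \emph{untwisted} categories $\cala_k[\IZ^d]$. To bridge the gap, we appeal to an equivariant refinement of Lemma~\ref{lem:BHS-retraction}: keeping the $\IZ^{r-1}$-action on $\cala_*$ as auxiliary data, the retract diagram~\eqref{diagram-from_Lemma} exhibits $\Sigma^m\bfKinfty(\calb)$ as a retract of $\bfKinfty(\call(\cala_*[\IZ^m])\rtimes\IZ^{r-1})$ up to natural zig--zags of weak equivalences. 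Under our hypotheses, Lemma~\ref{lem:inverse_systems_and_regularity} gives that $\call(\cala_*[\IZ^m])$ is $l(m)$-uniformly regular coherent, and then Theorem~\ref{the:The_non_connective_K-theory_of_additive_categories} applied inside the retract domain yields the vanishing of the associated Nil--terms for \emph{any} automorphism on that larger sequence category. The retract then transfers this vanishing down to the BHS Nil--summands of $\calb_\phi[t,t^{-1}]$ in each degree of homotopy, completing the inductive step.

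The hardest part is the careful execution of the equivariant refinement of Lemma~\ref{lem:BHS-retraction}: one must verify that the passage from the nested sequence $\cala_*$ to $\cala_*[\IZ^m]$, together with the Bass--Heller--Swan retract $\bfJ$ and $\bfP$ constructed there, is sufficiently natural to commute with the formation of semidirect products by the residual $\IZ^{r-1}$-action, and that the resulting spectrum-level retract is compatible with the BHS splitting of Theorem~\ref{the:BHS_decomposition_for_non-connective_K-theory} applied to $(\calb,\phi)$. Granted this compatibility, the remainder is a transparent chase through the BHS splitting, the retract, and the inductive identification of the mapping-torus summand with the equivariant homology of $E\IZ^r$.
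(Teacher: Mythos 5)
Your overall strategy---reduce the assembly map via the Bass--Heller--Swan splitting to the vanishing of Nil-terms and feed in the regular coherence supplied by Lemma~\ref{lem:inverse_systems_and_regularity}---is the right one, and your reduction to the infinite cyclic case parallels the paper's (which instead invokes the Farrell--Jones Conjecture for $\IZ^r$ and the Transitivity Principle to reduce to $r=1$, thereby never having to form the twisted crossed product $\calb=\call(\cala_*)\rtimes\IZ^{r-1}$ at all). The genuine gap is at the decisive step, the Nil-vanishing. You propose to get $\bfNKinfty(\calb_{\phi}[t^{\pm 1}])\simeq \ast$ from Theorem~\ref{the:The_non_connective_K-theory_of_additive_categories}, whose hypothesis is that $\calb[\IZ^m]$ is regular coherent for \emph{every} $m\ge 0$. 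Hypothesis~(i) together with Lemma~\ref{lem:inverse_systems_and_regularity} only controls $\call(\cala_*[\IZ^m])$, and, as Subsection~\ref{subsec:Split_embedding_of_lower_K-theory_in_higher_K-theory} stresses, the canonical functor $(\call(\cala_*))[\IZ^m]\to\call(\cala_*[\IZ^m])$ is \emph{not} an equivalence and need not even be a $K$-theory equivalence; so nothing is known about $\call(\cala_*)[\IZ^m]$, let alone about $(\call(\cala_*)\rtimes\IZ^{r-1})[\IZ^m]$. Your proposed repair---applying the non-connective theorem ``inside the retract domain'' to $\call(\cala_*[\IZ^m])$---hits the same wall one level up: it would require $(\call(\cala_*[\IZ^m]))[\IZ^k]$ to be regular coherent for all $k$. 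The retract of Lemma~\ref{lem:BHS-retraction} transfers statements about homotopy groups of $K$-theory spectra, not regular coherence of categories, so it cannot be used to verify the hypotheses of the non-connective theorem.

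The missing idea is to use the \emph{connective} vanishing result, Theorem~\ref{the:The_connective_K-theory_of_additive_categories}, which only requires the idempotent completion of the category itself to be regular coherent---exactly what Lemma~\ref{lem:inverse_systems_and_regularity} and Lemma~\ref{lem:inclusion_of_full_cofinal_subcategory} give for $\Idem(\call(\cala_*[\IZ^d]))$---at the price of proving bijectivity of the assembly map only in degrees $n\ge 2$ (the passage to connective $K$-theory is harmless over the $1$-dimensional space $E\IZ$ by Lemma~\ref{K_and_cofinal}). The low and negative degrees are then recovered precisely by the degree shift built into diagram~\eqref{diagram-from_Lemma}: the assembly map for $\call(\cala_*)$ in degree $n-d$ is a retract of the assembly map for $\call(\cala_*[\IZ^d])$ in degree $n$, so choosing $d$ with $n\ge 2$ settles every degree. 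In your write-up the retract is deployed for the wrong purpose (to transfer Nil-vanishing) rather than for this degree shift, and without the shift the argument does not close up in degrees $n\le 1$.
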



\subsection{Reduction to the case $r = 1$}%
\label{sec:Reduction_to_the_case_r_is_1}

\begin{lemma}\label{lem:reduction-to_the_case_r_is_1}
  If Theorem~\ref{the:assembly_for_cald_upper_0(cals_ast)_with_Zr-action} holds for $r =1 $,
  it is true for all $r \ge 1$.
\end{lemma}
\begin{proof}
The Farrell-Jones Conjecture is known to be true for $\IZ^r$
and implies that the map induced by the projection $\EGF{\IZ^r}{\VCYC} \to \pt$
\[
H_n^{\IZ^r}(\EGF{\IZ^r}{\VCYC};\bfKinfty_{\call(\cala_*)}) \to H_n^{\IZ^r}(\pt;\bfKinfty_{\call(\cala_*)})
\]
is an isomorphism for all $n \in \IZ$, where $\EGF{\IZ^r}{\VCYC}$ is the classifying space
of the family $\VCYC$ of virtually cyclic subgroups of $G$, see for
instance~\cite{Lueck(2005s)}.  We also have the map induced by the up to $\IZ^r$-homotopy
unique $\IZ^r$-map $E\IZ^r \to \EGF{\IZ^r}{\VCYC}$.
\begin{equation}
H_n^{\IZ^r}(E\IZ^r;\bfKinfty_{\call(\cala_*)}) \to H_n^{\IZ^r}(\EGF{\IZ^r}{\VCYC};\bfKinfty_{\call(\cala_*)}).
\label{rela_FJC_assembly_for_Zr}
\end{equation}
Obviously it suffices to show that~\eqref{rela_FJC_assembly_for_Zr} is bijective
for all $n \in \IZ$.  By the Transitivity Principle, see for instance~\cite[Theorem~65 on page~742]{Lueck-Reich(2005)},
this boils down to show that for any non-trivial virtually cyclic subgroup $V$ of $\IZ^m$ the map
induced by the projection $EV \to \pt$
\[
H_n^{V}(EV;\bfKinfty_{\call(\cala_*)}|_{V}) \to H_n^{V}(\pt;\bfKinfty_{\call(\cala_*)}|_V) 
\]
is bijective for all $n \in \IZ$. Since any non-trivial virtually cyclic subgroup $V$ of
$\IZ^r$ is isomorphic $\IZ$, we have reduced the proof of
Theorem~\ref{the:assembly_for_cald_upper_0(cals_ast)_with_Zr-action} to the special case $r = 1$.
\end{proof}


\subsection{Strategy of proof of Theorem~\ref{the:assembly_for_cald_upper_0(cals_ast)_with_Zr-action}}%
\label{subsec:Strategy_of_proof_of_Theorem_ref(the:assembly_for_cald_upper_0(cals_ast)_with_Zr-action)}%

We first explain, why the proof of
Theorem~\ref{the:assembly_for_cald_upper_0(cals_ast)_with_Zr-action} is  more
difficult than we had expected  and the reader may anticipate. There are are the following
reasons.

\begin{itemize}
\item There is  no proof of the  Devissage Theorem for non-connective $K$-theory.\\[1mm]
  We have used the Devissage Theorem for connective $K$-theory in the proof of
  Lemma~\ref{lem:The_connective_K-theory_of_NIL-categories_for_additive_categories}.  If
  Devissage would hold also in the non-connective setting, the proof of
  Lemma~\ref{lem:The_connective_K-theory_of_NIL-categories_for_additive_categories} could be extended to
  the non-connective $K$-theory spectrum and hence
  Lemma~\ref{lem:The_non-connective_K-theory_of_NIL-categories_for_additive_categories}
  and Theorem~\ref{the:The_non_connective_K-theory_of_additive_categories} would still
  be true, if we replace the assumption that $\cala[\IZ^m]$ is regular coherent for every
  $m \ge 0$ by the weaker assumption that that $\cala$ is regular coherent.

  Then the stronger version of
  Theorem~\ref{the:assembly_for_cald_upper_0(cals_ast)_with_Zr-action}, where one demands
  the conditions only for $d = 0$, would follow from the version of
  Theorem~\ref{the:The_non_connective_K-theory_of_additive_categories} mentioned above,
  and Lemma~\ref{lem:inverse_systems_and_regularity}, and
  Lemma~\ref{lem:reduction-to_the_case_r_is_1}.

  Since the known proofs of the Devissage Theorem for non-connective $K$-theory make
  strong assumptions on the underlying categories, which are not at all true in our case,
  we cannot argue like this. The consequence is that we need to make regularity
  assumptions about $\cala_m[\IZ^d]$ for all $d \ge 0$ and not only for $d = 0$.

\item If the additive category $\cala$ is regular coherent, it is unknown whether $\cala[\IZ]$
is regular coherent.\\[1mm]
  It is an open well-known problem, whether for a regular coherent ring $R$ the ring
  $R[\IZ^d]$ is again regular coherent. Hence we do not know, whether for a regular
  coherent additive category $\cala$, the additive category $\cala[\IZ^d]$ is regular
  coherent again.

\item The additive  category $\call(\cala_*)$ is not Noetherian and hence not regular.\\[1mm]
  We have shown that for a regular additive category $\cala$, the additive category
  $\cala[\IZ^d]$ is regular again, see Theorem~\ref{the:Regularity_for_cala_and_cala[t]}.
  However, the additive category $\call(\cala_*)$ is not Noetherian and hence not regular,
  see Example~\ref{exa:The_property_Noetherian_does_not_pass_to_the_sequence_category}.

\item The canonical inclusion $\call(\cala_*)[\IZ^d] \to \call(\cala_*[\IZ^d])$ is not
  an equivalence of additive categories for $d \ge 1$\footnote{The inclusion is bijective on objects, but not on morphisms.}. \\[1mm]
  The assumptions on the $\cala_m$ imply that $\call(\cala_*[\IZ^d])$ is regular coherent, but give us no information $\call(\cala_*)[\IZ^d]$.
  For this reason we cannot use the fact (coming from the Bass-Heller-Swan decomposition) that $K_{n} (\call(\cala_*))$ is a direct summand of $K_{n+d} (\call(\cala_*)[\IZ^d])$ to deduce Theorem~\ref{the:assembly_for_cald_upper_0(cals_ast)_with_Zr-action} for all $n$ from the case $n >> 0$.
  On the other hand for $\call(\cala_*[\IZ^d])$ we do not have a Bass-Heller-Swan decomposition\footnote{It seems not obvious how a Bass-Heller-Swan decomposition for $\call(\cala_*[\IZ^d])$ should look like and what the Nil terms should be.}.
  But we will exhibit $K_{n} (\call(\cala_*))$ also as a direct summand of $K_{n+d} (\call(\cala_*[\IZ^d]))$ and this will be the main work in the remainder of this paper.
  To this end we will check that enough of the arguments used in~\cite{Lueck-Steimle(2016BHS)} to construct the Bass-Heller-Swan decomposition can also be applied to $\call(\cala_*[\IZ^d])$ and yield the desired direct summand.\end{itemize}


\subsection{Twisted Laurent categories}\label{subsec:Twisted_Laurent_categories}

Recall the notion of a (twisted) finite Laurent category
from Definition~\ref{def:A_phi[t,t(-1)]}.  Given a sequences of additive subcategories $\cala_*$,
define the new sequences of additive subcategories $\cala_*[t^{\pm}]$ and $\cala_*[t,t^{\pm}]$
by replacing $\cala_m$  by the associated  untwisted finite
Laurent categories $\cala_m[t^{\pm}]$ and $\cala_m[t,t^{-1}]$.  A pro-automorphism
$\Phi \colon \cala_*\to \cala_*$ defines pro-automorphisms
$\Phi[t^{\pm}]  \colon \cala_*[t^{\pm}] \to \cala_*[t^{\pm}]$ and 
$\Phi[t,t^{{-1}}] \colon \cala_*[t,t^{{-1}}]\to \cala_*[t,t^{{-1}}]$. Define

  \begin{eqnarray*}
    \calc(\Phi)
    & = &
   \cals(\cala_*)_{\cals(\Phi)}[\IZ];
    \\
    \calc[t^{\pm}](\Phi)
    & = &
    \cals(\cala_*[t^{\pm}])_{\cals(\Phi[t^{\pm}])} [\IZ];
    \\
    \calc[t,t^{-1}](\Phi)
    & = &
    \cals(\cala_*[t,t^{-1}])_{\cals(\Phi[t,t^{-1}])} [\IZ].
  \end{eqnarray*}

  The passage from $\Phi$ to $\calc(\Phi)$, $\calc[t^{\pm}](\Phi)$, and
  $\calc[t,t^{-1}](\Phi)$ is natural.  Often we omit $\Phi$ from the
  notation. The set of objects of the categories $\calc(\Phi)$, $\calc[t^{\pm}](\Phi)$,
  and $\calc[t,t^{-1}](\Phi)$ can and will be identified with the set of
  objects of $\cals(\cala_*)$ and hence are independent of $\Phi$.


  \subsection{Induction functors}\label{subsec:Induction_functors}

  Next we define a commutative square of additive categories

    \begin{equation}
    \xymatrix@!C=8em{\calc \ar[r]^{i_+} \ar[d]_{i_-} \ar[rd]^{i_0}
        &
        \calc[t] \ar[d]^{j_+}
        \\
        \calc[t^{-1}] \ar[r]_{j_-} 
        &
        \calc[t,t^{-1}]}
      \label{square_of_induction_functors}
    \end{equation}
    where the functors $i_0$, $i_+$, $i_-$, $j_-$, and $j_+$ induce the identity on the set of
    objects. For each additive category $\cala_m$.we have a  diagram of functors of untwisted Laurent
    categories, where all functors are the canoncial ones, see~\cite[Section~1.4]{Lueck-Steimle(2016BHS)},
    \[
    \xymatrix@!C=8em{\cala_m \ar[r]^{i_+(\cala_m)} \ar[d]_{i_-(\cala_m)} \ar[rd]^{i_0(\cala_m)}
        &
        \cala_m[t] \ar[d]^{j_+(\cala_m)}
        \\
        \cala_m[t^{-1}] \ar[r]_{j_-(\cala_m)} 
        &
        \cala_m[t,t^{-1}].}
    \]
    This construction is natural with respect to  the inclusions $\cala_{m+1} \to \cala_{m}$.
    Hence we obtain a commutative diagram of additive categories
    \[
    \xymatrix@!C=11em{\cals(\cala_*) \ar[r]^{\cals(i_+(\cala_*))} \ar[d]_{\cals(i_-(\cala_*))} \ar[rd]^{\cals(i_0(\cala_*))}
        &
        \cals(\cala_*[t])\ar[d]^{\cals(j_+(\cala_*))}
        \\
        \cals(\cala_*)[t^{-1}]\ar[r]_{\cals(j_-(\cala_*))} 
        &
        \cals(\cala_*[t,t^{-1}]).}
    \]
    Since it is compatible  with the automorphisms of additive categories
    $\cals(\Phi)$ of $\cals(\cala_*)$,  $\cals(\Phi[t^{\pm}])$ of $\cals(\cala_*[t^{\pm}])$,
    and  $\cals(\Phi[t,t^{-1}])$ of $\cals(\cala_*[t,t^{-1}])$,
    it yield the desired diagram~\eqref{square_of_induction_functors} in the obvious way.

    Note that the diagram diagram~\eqref{square_of_induction_functors} is natural in $\Phi \colon \cala_*\to \cala_*$.


    \subsection{Formally adjoining infinite direct sums}\label{subsec:formally_adjoining_infinite_formal_direct-sums}

    In~\cite[Section~1.3]{Lueck-Steimle(2016BHS)} a functorial extension
    $\cala \subseteq \cala^{\kappa}$ is constructed for every additive category $\cala$
    such that $\cala \subseteq \cala^{\kappa}$ is an inclusion of additive categories and
    in $\cala^{\kappa}$ the direct sums over a collection of objects over a countable set
    is defined. Now define
    for a nested sequence of additive categories a functor of additive categories
    $\cala_*$ given by $\cala_0 \supseteq \cala_1 \supseteq \cala_2 \supseteq \cdots$
    the  nested sequence of additive categories $\cala_*^{\kappa}$ by
    $\cala_0^{\kappa} \supseteq \cala_1^{\kappa}  \supseteq \cala_2^{\kappa}  \supseteq \cdots$.
    The given pro-automorphism $\phi \colon \cala_* \to \cala_*$ yields
    a pro-automorphism $\phi^{\kappa} \colon \cala_*^{\kappa} \to \cala_*^{\kappa}$
    by passing to $\phi^{\kappa} \colon \cala_0^{\kappa} \to \cala_0^{\kappa}$. Define
    \begin{eqnarray*}
    \calc^{\kappa}
    & = &
   \cals(\cala_*^{\kappa})_{\cals(\Phi^{\kappa})}[\IZ];
    \\
    \calc[t^{\pm}]^{\kappa}
    & = &
    \cals(\cala_*[t^{\pm}]^{\kappa})_{\cals(\Phi[t^{\pm}]^{\kappa})} [\IZ];
    \\
    \calc[t,t^{-1}]^{\kappa}
    & = &
    \cals(\cala_*[t,t^{-1}]^{\kappa})_{\cals(\Phi^{\kappa}[t,t^{-1}]^{\kappa})} [\IZ].
    \end{eqnarray*}

    Note that we are working with $\cala_m[t^{\pm}]^{\kappa} $, which is different from
        $(\cala_m^{\kappa})[t^{\pm}]$. The same comment applies to
       $\cala_m[t,t^{-1}]^{\kappa}$, which is different from  $(\cala_m^{\kappa})[t,t^{-1}]$.

  The diagram~\eqref{square_of_induction_functors} extends in the obvious way to
  the diagram, natural in $\Phi$.

    \begin{equation}
    \xymatrix@!C=8em{\calc^{\kappa} \ar[r]^{i_+^{\kappa}} \ar[d]_{i_-^{\kappa}} \ar[rd]^{i_0^{\kappa}}
        &
        \calc[t]^{\kappa} \ar[d]^{j_+^{\kappa}}
        \\
        \calc[t^{-1}]^{\kappa} \ar[r]_{j_-^{\kappa}} 
        &
        \calc[t,t^{-1}]^{\kappa}.}
      \label{square_of_induction_functors_with_kappa}
    \end{equation}


    \subsection{Restriction functors}\label{subsec:restriction_functors}

    In~\cite[Section~1.5]{Lueck-Steimle(2016BHS)} restriction functors are defined for
    each additive category $\cala_m$

    \begin{eqnarray*}
     i^0(\cala_m) \colon \cala_m[t,t^{-1}]^{\kappa} & \to & \cala_m^{\kappa};
      \\
      i^{\pm}(\cala_m) \colon \cala_m[t^{\pm 1}]^{\kappa} & \to & \cala_m^{\kappa},
    \end{eqnarray*}
    such that they come with  adjunctions $(i_0(\cala_m)^{\kappa},i^0(\cala_m))$,
    $(i_+(\cala_m)^{\kappa},i^+(\cala_m))$, and $(i_-(\cala_m)^{\kappa},i^-(\cala_m))$.
    Moreover, everything is natural and in particular compatible with the automorphisms
    $\Phi[t,t^{-1}]^{\kappa} \colon \cala_0[t,t^{-1}]^{\kappa} \to
    \cala_0[t,t^{-1}]^{\kappa}$ and
    $\Phi[t^{\pm}]^{\kappa} \colon \cala_0[t^{\pm}]^{\kappa} \to
    \cala_0[t^{\pm}]^{\kappa}$. Hence these data define analogously to the construction of
    Subsection~\ref{subsec:Induction_functors} restriction functors
    \begin{eqnarray}
      i^0 \colon \calc[t,t^{-1}]^{\kappa} & \to & \calc^{\kappa};
      \label{I_upper_0}
      \\
      i^{\pm}  \colon \calc[t^{ \pm 1}]^{\kappa} & \to & \calc^{\kappa},
      \label{I_upper_pm}
    \end{eqnarray}
    such that we have adjunctions $((i_0)^{\kappa},i^0)$, $((i_+)^{\kappa},i^+)$, and $((i_-)^{\kappa},i^-)$.
    Everything is natural in $\Phi$.


\subsection{Truncation functors}\label{subsec:truncation_functors}

Put
\[
\overline{\IZ} := \IZ \amalg \{-\infty,\infty\}.
\]

\begin{notation}[Truncation for objects]\label{not:truncation_for_objects}
  Let $\underline{A} = (A_m)_{m \in \IN}$ and $\underline{A'} = (A_m')_{m \in \IN}$ be
  objects in $\cals(\cala_*)$.  Consider elements $\underline{a} = (a_m)_{m \in \IN}$ and
  $\underline{b} = (b_m)_{m \in \IN}$ in $\prod_{m \in \IN} \overline{\IZ}$.  Define an
  object in $\calc^{\kappa}$ by
  \begin{equation*}
    \underline{A}[\underline{a}, \underline{b}] = \left\{\left.\bigoplus_{k_i = a_i}^{b_i} A_m \; \right|\; m \in \IN \right\},
  \end{equation*}
  where $\bigoplus_{k_i = a_i}^{b_i} A_m$ is to be defined to be zero if $a_i > b_i$ or if
  we have $a_i = b_i$ and $b_i \in \{\pm \infty\}$.  Note that the direct sum
  $\bigoplus_{k_i = a_i}^{b_i} A_m$ has as entry for $k_i$ always the same object, namely
  $A_m$.  Since we are working in $\calc^{\kappa}$, this definition makes sense also in
  the case where $a_i = -\infty$ or $b_i = \infty$.

Given an element $c \in \overline{\IZ}$, denote by $\underline{c}$ the
element in $\prod_{m \in \IN} \overline{\IZ}$,   whose value at every $m \in \IN$
is $c$. Then we get for any object $\underline{A}$ in $\calc[t,t^{-1}]$, which is the same
as an object in $\cals(\cala_*)$,
\[
  i^0\underline{A} = \underline{A}[\underline{-\infty},\underline{\infty}].
\]
Given a morphism $\underline{f} \colon \underline{A} \to \underline{A'}$ in
$\calc[t,t^{-1}]$ and $\underline{a}, \underline{b}$, $\underline{a'}$, and
$\underline{b'}$ in $\prod_{m \in \IN}  \overline{\IZ}$, define the $\calc^{\kappa}$-morphism
$f\trun \colon \underline{A}[\underline{a}, \underline{b}] \to \underline{A'}[\underline{a'}, \underline{b'}]$
to be the composite
\[
\underline{f}\trun \colon \underline{A}[\underline{a}, \underline{b}]
  \xrightarrow{\underline{i}}  \underline{A}[\underline{-\infty}, \underline{\infty}] = i^0\underline{A}
  \xrightarrow{i^0\underline{f}} \underline{A'}[\underline{-\infty}, \underline{\infty}] = i^0\underline{A'}
  \xrightarrow{\underline{p}}  \underline{A'}[\underline{a'}, \underline{b'}],
\]
 where $\underline{i}$ is the obvious inclusion and $\underline{p}$ is the obvious projection in $\calc^{\kappa}$.
\end{notation}

The morphism
$f\trun \colon i^0\underline{A} = A[\underline{-\infty},\underline{\infty}] \to
i^0\underline{A'} = A'[\underline{-\infty},\underline{\infty}]$ agrees with $i^0f$ for a
morphism $\underline{f} \colon \underline{A} \to \underline{A'}$ in $\calc[t,t^{-1}]$. If
$f$ belongs to $\calc[t^{\pm}]$, we abbreviate $(i_{\pm} f)\trun$ by $f \trun$ again.

Note that $(\underline{g} \circ \underline{f})\trun$ is in general \emph{not} equal to
$\underline{g}\trun \circ \underline{f}\trun$ and $\id\trun$ is in general \emph {not} the
identity. As a typical example, let $\underline{f^+}\colon \underline{A}\to \underline{A}$
be the morphism $\id_{\underline{A}}\cdot t$ and
$\underline{f^-}\colon \underline{A}\to \underline{A}$ be the morphism
$\id_{\underline{A}}\cdot t^{-1}$.Then
\[
(f^+ \circ f^-)\trun\colon A[0,\underline{\infty}]\to A[0,\underline{\infty}]
\]
is the identity. The map
\[
f^-\trun \colon \underline{A}[\underline{0},\underline{\infty}] \to \underline{A}[\underline{0},\underline{\infty}] 
\]
is given at each $m \in \IN $ by the map
$\bigoplus_{k = 0}^{\infty} A_m \to \bigoplus_{k =0}^{\infty} A_m$ sending
$\{u_k \mid k = 0, 1, 2, \ldots\}$ to $\{u_{k+1} \mid k = 0, 1, 2, \ldots\}$.
Since $f^-\trun$ is not injective, we do \emph{not} have
$f^+\trun \circ f^-\trun = (f^+ \circ f^-)\trun$.

As another example,
\[
  \id_{\underline{A} }\trun\colon \underline{A}[\underline{-\infty},\underline{\infty}]
  \to A[\underline{0},\underline{0}]
\]
is given at every $m \in \IN $ by the projection onto the $0$th summand
$\bigoplus_{k = -\infty}^{\infty} A_m \to A_m$.

\begin{notation}[Truncation for chain complexes]\label{not:truncation_for_chain_complexes}
  If $C^+$ is a $\calc[t]$-chain complex and
  $\underline{a}, \underline{b} \in \prod_{m \in \IN} \overline{\IZ}$, then
  we obtain a $\calc^\kappa$-chain complex $C^+[\underline{a},\underline{b}]$ by defining
  the $n$th chain object to be $C^+_n[\underline{a},\underline{b}]$ and the $n$-th
  differential to be
  $c_n\trun \colon C^+_n[\underline{a},\underline{b}]\to
  C^+_{n-1}[\underline{a},\underline{b}]$, if $c_n$ is the differential of $C^+$.  (One
  has to check that $c_n\trun \circ c_{n+1} \trun = 0$.)  A chain map
  $\underline{f} \colon C^+ \to D^+$ of $\calc[t]$-chain complexes induces a
  $\calc^\kappa$-chain map denoted by
  $f\trun \colon C^+[\underline{a},\underline{b}]\to D^+[\underline{a'},\underline{b'}]$,
  provided that $\underline{a'} \leq \underline{a}$, i.e., $a'_m \le a_m$ for all
  $m \in \IN$, and $\underline{b'}\leq \underline{b}$ hold.

  If $C^-$ is an $\calc[t^{-1}]$-chain complex and
  $\underline{a}, \underline{b} \in \prod_{n \in \IN} \overline{\IZ}$,
  define the $\calc^\kappa$-chain complex $C^-[\underline{a},\underline{b}]$ analogously.
  A chain map $f \colon C^- \to D^-$ of $\cala[t^{-1}]$-chain complexes induces a
  $\calc^\kappa$-chain map denoted by
  $f\trun \colon C^-[\underline{a},\underline{b}] \to D^-[\underline{a'},\underline{b'}]$,
  provided that $\underline{a'} \geq \underline{a}$, i.e., $a'_m \ge a_m$ for all
  $m \in \IN$, and $\underline{b'}\geq \underline{b}$ hold.
\end{notation}

Note that Notation~\ref{not:truncation_for_chain_complexes}  
(in contrast to Notation~\ref{not:truncation_for_objects})
does in this generality \emph{not} make sense 
for chain complexes in $\calc[t,t^{-1}]$, e.g.,
$c_n\trun \circ c_{n+1} \trun = 0$ does not hold anymore.


\subsection{Some basic tools for non-connective $K$-theory}%
\label{subsec:Some_basic_tools_for_non-connective_K-theory}

Recall that we have defined the negative $K$-theory of an additive category using the 
delooping construction based on the Bass-Heller-Swan decomposition
of~\cite{Lueck-Steimle(2014delooping)}. In this section we present another definition
based on the non-connective $K$-theory spectrum associated to appropriate Waldhausen
categories due to Bunke-Kasprowski-Winges~\cite{Bunke-Kasprowski-Winges(2021(split)}.

 The next definition is taken from~\cite[Definition~2.1]{Bunke-Kasprowski-Winges(2021(split)}.

 \begin{definition}\label{def:homotopical_and_the_factorization_property}
   \

   \begin{enumerate}

   \item~\label{def:homotopical_and_the_factorization_property:factorization} The
     Waldhausen category $\calw$ admits \emph{factorizations}, if every morphism in $\calw$
     can be factorized into a cofibration followed by a weak equivalence; no functoriality
     of this factorization is assumed;

   \item~\label{def:homotopical_and_the_factorization_property:factorization:homotopical}
     The Waldhausen category $\calw$ is \emph{homotopical}, if it admits factorizations
     and the weak equivalences satisfy the \emph{two-out-of-six property}, i.e., if for
     composable morphisms
     $C_0 \xrightarrow{f_1} C_1 \xrightarrow{f_2} C_2 \xrightarrow{f_3} C_3$ in $\calw$
     both $f_2 \circ f_1$ and $f_3 \circ f_2$ are weak equivalences, then also 
     $f_1$, $f_2$, $f_3$, and $f_3 \circ f_2 \circ f_1$ are weak equivalences.
   \end{enumerate}
 \end{definition}

 Let $\Waldho$ be the category of homotopical Waldhausen categories.
 In the sequel we denote by
 \begin{equation}
 \bfKinftyW \colon \Waldho  \to  \Spectra
 \label{bfKinftyW_colon_Waldho_to_Spectra}
\end{equation}
the non-connective $K$-theory functor constructed in~\cite[Definition~2.37]{Bunke-Kasprowski-Winges(2021(split)}.

\begin{remark}\label{rem:cala_and_Ch(cala)}
  Let $\cala$ be an additive category. Then $\cala$ becomes a Waldhausen category, if we
  define the weak equivalences to be the isomorphisms and the cofibration to be the
  morphisms $f \colon A \to B$, for which there exists an object $A^{\perp}$ and an
  isomorphism $u \colon A \oplus A^{\perp} \xrightarrow{\cong} {B}$ such that the
  composite of $u$ with the canonical inclusion $A \to A \oplus A^{\perp}$ is $f$.  Note that
  this Waldhausen category is \emph{not} homotopical, as it does not satisfy
  factorization. So we cannot apply~\eqref{bfKinftyW_colon_Waldho_to_Spectra} to the
  Waldhausen category $\cala$.
 
  Let $\Chcat(\cala)$ be the Waldhausen category of bounded chain complexes over $\cala$,
  where a cofibration $f_* \colon C_* \to D_*$ is a chain map such that
  $f_n \colon C_n \to D_n$ is a cofibration in $\cala$ and the weak equivalences are the
  chain homotopy equivalences.  Then $\Chcat(\cala)$ is homotopical thanks to the mapping
  cylinder construction. Hence we can apply~\eqref{bfKinftyW_colon_Waldho_to_Spectra}
  to the Waldhausen category $\Chcat(\cala)$ and can consider its non-connective $K$-theory
  spectrum $\bfKinftyW(\Chcat(\cala))$.

  More generally, if $\cala$ is an exact category, then the Waldhausen category $\Chcat(\cala)$ can
  be defined analogously and is homotopical.
\end{remark}

Suppose that $\calw$ is a category with cofibrations
and that $\calw$  is equipped with two categories of weak equivalences, one finer than
the other, $v\calw \subseteq w \calw$.
Thus $\calw$ becomes a  Waldhausen category in two ways.
Suppose that in both cases $\calw$ is a homotopical Walhausen category.
Let $\calw^w$ denote the  full subcategory  of $\calw$ given
by the objects $C$  in $\calw$ having the property that the map  $C \to \pt$ belongs to  $w\calw$. 
Then  $\calw^w$ inherits two Waldhausen structures,
if we put $v\calw^w = \calw^w \cap v\calw $  and $w\calw^w = \calw^w \cap w\calw$.
Both yield homotopical Waldhausen categories.

\begin{theorem}[Fibration Theorem]\label{the:Fibration_Theorem}
  Under the assumptions above we get a weak   homotopy fibration of spectra
\[
\bfKinftyW(\calw^w,v\calw^w) \to \bfKinftyW(\calw,v\calw) \to \bfKinftyW(\calw,w\calw).
\]
\end{theorem}
\begin{proof} 
  This follows from~\cite[Theorem~2.35]{Bunke-Kasprowski-Winges(2021(split)}.
  \end{proof}

 \begin{theorem}[Cisinski's Approximation Theorem]\label{the:Cisinki_Approximation_Theorem} 
  Let $F \colon \calw_0 \to \calw_1$ be
  an exact functor of homotopical Waldhausen categories. Assume:

   \begin{enumerate}

   \item\label{the:Cisinki_Approximation_Theorem:A1} An arrow in $\calw_0$ is a weak equivalence
     in $\calw_0$, if and only if its image in $\calw_1$ is a weak equivalence in
     $\calw_1$;

   \item\label{the:Cisinki_Approximation_Theorem:A2} Given any object $C_0$ in $\calw_0$ and any
     map $f\colon F(C_0) \to C_1$ in $\calw_1$, there exists a commutative diagram in $\calw_1$
    \[
    \xymatrix{F(C_0) \ar[r]^-{f} \ar[d]_{F(u)}
    &
    C_1 \ar[d]^{v}_{\simeq}
    \\
    F(D_0) \ar[r]_-{w}^{\simeq}
    & 
    D_1
    }
    \]
    for a morphism $u \colon C_0 \to D_0$ in $\calw_0$ and weak equivalences $v \colon C_1 \to D_1$ and
   $w \colon F(D_0) \to D_1$ in $\calw_1$.
   \end{enumerate}
   
   Then  the map of spectra $\bfKinftyW(F) \colon \bfKinftyW(\calw_0)    \xrightarrow{\simeq} \bfKinftyW(\calw_1)$ 
    is a weak homotopy equivalence.
 \end{theorem}
 \begin{proof}  
   This follows from~\cite[Theorem~2.16]{Bunke-Kasprowski-Winges(2021(split)}.
 \end{proof}

 \begin{theorem}[Cofinality Theorem]\label{the:Cofinality_Theorem}
   Let $I \colon \calw_0 \to \calw_1$ be the inclusion of a full homotopical Waldhausen
   subcategory $\calw_0$ into a homotopical Waldhausen category $\calw_1$. Assume:

   \begin{enumerate}
    \item\label{the:Cofinality_Theorem:mapping_cylinder_argument} The functor $F$ 
      admits a mapping cylinder argument, i.e., for every morphism $f \colon C_0 \to C_1$
      in $\calw_1$ such that $C_0$ belongs to $\calw_0$ and $C_1$ is the target of a weak equivalence
       with some object in $\calw_0$ as source, there is a factorization in $\calw_1$
      \[C_0 \xrightarrow{f'} C' \xrightarrow{f''} C_1
      \]
      such that $C'$ belongs to $\calw_0$ and $f''$ is a weak equivalence;

    \item\label{the:Cofinality_Theorem:domination} The category $\calw_1$ is dominated by $\calw_0$, i.e.,
      for any object $C_1$ in $\calw_1$ there exists an object $C_0$ in $\calw_0$ and an object $C_1'$ in $\calw_1$
      and morphisms
      $r \colon C_0 \to C_1$ and $i \colon C_1' \to C_0$ such that $r \circ i$ is a weak equivalence.
    \end{enumerate}
    
   Then $\bfKinftyW(I) \colon \bfKinftyW(\calw_0) \to \bfKinftyW(\calw_1)$ is a weak homotopy
   equivalence.
 \end{theorem}
 \begin{proof}
   This follows from~\cite[Theorem~2.30]{Bunke-Kasprowski-Winges(2021(split)} and the fact that
   on the level of stable $\infty$-categories non-connective $K$-theory is inverting the
   passage to the idempotent completion.
 \end{proof}

 For the  proof of the following result we refer to~\cite[Theorem~18.17]{Lueck(2022book)}.
 
\begin{theorem}[The weak homotopy fibration sequence of a stable Karoubi
  filtration for $K$-theory in the setting of Waldhausen categories]%
\label{the:The_weak_homotopy_fibration_sequence_of_a_stable_Karoubi_filtration_for_K-theory_Waldhausen}%
Let $\cala$ be a  additive category and $i \colon \calu \to \cala$ be the inclusion
of a full additive subcategory. If the  additive category $\cala$ is stably
$\calu$-filtered, then
  \[
    \bfKinftyW(\Chcat(\calu)) \to \bfKinftyW(\Chcat(\cala)) \to \bfKinftyW(\Chcat(\cala/\calu))
  \]
  is a weak homotopy fibration of non-connective spectra,
  where $\bfKinftyW$ has been defined in~\eqref{bfKinftyW_colon_Waldho_to_Spectra},
\end{theorem}

The proof of the next result can be found in~\cite[Theorem~18.33]{Lueck(2022book)}.

\begin{theorem}[Gillet-Waldhausen zigzag for non-connective $K$-theory]%
  \label{the:Gillet-Waldhausen_zigzag_for_non-connective_K-theory}%
    There is a zigzag of weak homotopy equivalences,
  natural in $\cala$, from the non-connective $K$-theory spectrum $\bfKinftyW(\Chcat(\cala))$ in
  the sense of Bunke-Kasprowski-Winges~\cite{Bunke-Kasprowski-Winges(2021(split)} to the
  non-connective $K$-theory spectrum $\bfKinfty(\cala)$ in the sense of
  L\"uck-Steimle~\cite{Lueck-Steimle(2014delooping)}.
\end{theorem}


\subsection{The projective line}\label{subsec:The_projective_line}

\begin{definition}[Projective line]\label{def:projective_line}
 We define \emph{the projective line} $\calx$ to be the following  additive category. 
 Objects are
triples $(C^+,f,C^-)$ consisting of objects $C^+$ in $\calc[t^+]$ and $C^-$ in $\calc[t^-]$ and an
isomorphism $f \colon j_+(C^+) \to j_-(C^-)$ in $\calc[t,t^{-1}]$. A morphism 
$(\varphi^+,\varphi^-) \colon (C^+,f,C^-) \to (D^+,g,D^-)$ in $\calx$ consists of morphisms 
$\varphi^+ \colon C^+ \to D^+$ in $\calc[t]$ and  $\varphi^- \colon C^- \to D^-$ in
$\calc[t^{-1}]$ such that the following diagram commutes in $\calc[t,t^{-1}]$
\[
\xymatrix{j_+(C^+) \ar[r]^{f} \ar[d]_{j_+(\varphi^+)} 
& j_-(C^-) \ar[d]^{j_-(\varphi^-)}
\\
j_+(D^+) \ar[r]_{g} 
& j_-(D^-).
}
\]
\end{definition}

Let 
\begin{eqnarray}
k^{\pm} \colon \calx & \to & \calc[t^{\pm 1}]
\label{def_f_kpm}
\end{eqnarray}
be the functor sending $(C^+,f,C^-)$ to $C^{\pm}$.

The category $\calx$ is naturally an exact category by declaring a sequence to be exact, if
and only if becomes (split) exact both after applying $k^+$ and $k^-$. With this exact
structure we obtain the structure of a homotopical  Waldhausen category on $\Chcat(\calx)$.
Hence $\bfKinftyW(\Chcat(\calx))$ is defined, in contrast to $\bfKinfty(\calx)$
which does not make sense, since $\calx$ is neither an additive category
nor a homotopical  Waldhausen category.

The next theorem will be a main ingredient in the proof of
Theorem~\ref{the:Splitting_the_Bass-Heller-Swan_map}.

\begin{theorem}[The algebraic $K$-theory of the projective line]\label{the:homotopy_pull_back_of_calx}
Consider the following (not necessarily commutative) diagram of spectra
\[
\xymatrix@!C=16em{\bfKinftyW(\Chcat(\calx)) \ar[r]^-{\bfKinftyW(\Chcat(k^-))} 
\ar[d]_{\bfKinftyW(\Chcat(k^+))}
& \bfKinftyW(\Chcat(\calc[t^{-1}]))\ar[d]^{\bfKinftyW\Chcat((j_-))}
\\
\bfKinftyW(\Chcat(\calc[t])) \ar[r]_-{\bfKinftyW(\Chcat(j_+))}
&
\bfKinftyW(\Chcat(\calc[t,t^{-1}])).
}
\]
There is a natural equivalence of functors
$T \colon j_+ \circ k^+ \xrightarrow{\cong} j_- \circ k^-$, which is given on an object $(A^+,f,A^-)$ by $f$.
It induces a preferred homotopy 
$\bfKinftyW(\Chcat(j_+)) \circ \bfKinftyW(\Chcat(k^+)) \simeq \bfKinftyW(\Chcat(j_-))  \circ \bfKinftyW(\Chcat(k^-))$.

Then the diagram above is a weak homotopy pullback, i.e.,
the canonical map from $\bfKinftyW(\calx)$ to the homotopy pullback of
\[
\xymatrix@!C=16em{
& \bfKinftyW(\Chcat(\calc[t^{-1}]))\ar[d]^{\bfKinftyW\Chcat((j_-))}
\\
\bfKinftyW(\Chcat(\calc[t])) \ar[r]_-{\bfKinftyW(\Chcat(j_+))}
&
\bfKinftyW(\Chcat(\calc[t,t^{-1}])).
}
\]
is a weak homotopy equivalence.
\end{theorem}

For the proof of Theorem~\ref{the:homotopy_pull_back_of_calx}, we can use the ideas
of~\cite[Section~5]{Lueck-Steimle(2016BHS)} taking into account that meanwhile the basic
tools appearing in~\cite[Section~4]{Lueck-Steimle(2016BHS)} have been proved also for
non-connective $K$-theory.

In the first step of the proof of Theorem~\ref{the:homotopy_pull_back_of_calx} we replace
the additive category $\calc[t]$ by a larger exact category $\caly$ with equivalent
$K$-theory. It is defined as follows: An object of $\caly$ is a triple $(A^+, f, A)$
consisting of objects $A^+$ in $\calc[t]$ and $A^-$ of $\calc[t,t^{-1}]$ and an
isomorphism $f\colon j_+A^+\to A$ in $\calc[t,t^{-1}]$. A morphism from $(A^+, f, A)$ to
$(B^+, g, B)$ is a morphism $\varphi^+\colon A^+\to B^+$ in $\calc[t]$ and a morphism
$\varphi \colon A \to B$ in $\calc[t,t^{-1}]$ such that diagram in $\calc[t,t^{-1}]$
\[
  \xymatrix{j_+(A^+)\ar[r]^-f \ar[d]_{j_+(\varphi^+)} & A \ar[d]^{\varphi}\\
    j_+(B^+) \ar[r]_-g & B }
\]
commutes. Note that $\varphi$ is determined already by $\varphi^+$.  The
category $\caly$ is exact in the same way as $\calx$ is. Note that $\calx$ and $\caly$
have the same set of objects but $\caly$ contains more morphisms, since in contrast to
$\calx$ we do not require that $\varphi$ belongs to $\calc[t^{-1}]$.

\begin{lemma}\label{lem:caly_and_cala}
The functors
\[
a\colon \calc[t]\to \caly, \quad A\mapsto (A, \id, j_-A)
\]
and
\[
b\colon \caly\to\calc[t], \quad (A^+, f, A^-)\mapsto A^+
\]
are exact. The composite $b\circ a$ is the identity and the composite $a\circ b$ is
naturally isomorphic to the identity functor.  In particular, they induce homotopy
equivalences on non-connective $K$-theory, homotopy inverse to each other,
\begin{eqnarray*}
  \bfKinftyW(\Chcat(a)) \colon \bfKinftyW(\Chcat(\calc[t]))
  & \xrightarrow{\simeq}  &
  \bfKinftyW(\Chcat(\caly));
  \\
  \bfKinftyW(\Chcat(b))\colon \bfKinftyW(\Chcat(\caly))
  & \xrightarrow{\simeq}  &
                            \bfKinftyW(\Chcat(\calc[t])).
\end{eqnarray*}
\end{lemma}

\begin{proof}
  It is clear that the functors are exact. Obviously $b\circ a$ is the identity. The
  composite $a\circ b$ is naturally isomorphic to the identity functor: the isomorphism in
  $\caly$ at the object $(A^+, f, A)$ is given by
  $(\id,f) \colon (A^+,\id,j_+A^+)\xrightarrow{\cong}(A^+,f,A) $. This implies
  $\bfK(a)\circ \bfK(b)\simeq \id$.
\end{proof}

Denote by 
\[
k'\colon \calx\to\caly
\]
the inclusion functor, and define
\[
j'\colon \Chcat(\caly)\to \Chcat(\calc[t,t^{-1}]), \quad (A^+,f,A) \mapsto A.
\]
 
Then  the square
\[
\xymatrix{{\Chcat(\calx)} \ar[rr]^(.4){\Chcat(k^-)} \ar[d]_{\Chcat(k')} &&
\Chcat(\calc[t^{-1}]) \ar[d]^-{\Chcat(j^-)}\\
{\Chcat(\caly)} \ar[rr]_-{\Chcat(j')} &&   
\Chcat(\calc[t,t^{-1}])
}
\]
is strictly commutative, and we are going to show that it induces a weak 
homotopy pullback after applying $\bfKinftyW$. To show that the square is a weak
homotopy pullback on non-connective $K$-theory, we are going to show that the
horizontal homotopy fibers of $\bfKinftyW(\Chcat (k^-))$ and $\bfKinftyW(\Chcat(j'))$
agree up to weak homotopy equivalence.

Let $w\Chcat( \calx)$ be the subcategory of
$\Chcat(\calx)$ consisting of all chain maps, which become after applying $\Chcat(k^-)$ weak equivalences
in  $\Chcat(\calc[t^{-1}])$. Let $\Chcat(\calx)^w$ be the
full subcategory of $\Chcat(\calx)$ of all objects, which are $w$-acyclic. In other words, an
object $(C^+, f, C^-)$ belongs to $\Chcat(\calx)^w$, if and only if $C^-$ is
contractible as an $\calc[t^{-1}]$-chain complex.  Similarly, denote
by $w\Chcat(\caly)$ the subcategory of all morphisms $f$ such that 
$\Chcat(j')(f)$ is a chain homotopy equivalence in $\Chcat(\calc[t,t^{-1}])$, and adopt the
notation $\Chcat(\caly)^w$ for the $w$-acyclic objects.

\begin{lemma}\label{lem:certain_chain_homotopy_equivalences}
The maps
\begin{align*}
   \bfKinftyW(\Chcat(k^-))\colon & \bfKinftyW(\Chcat(\calx), w)\to \bfKinftyW(\Chcat(\calc[t^{-1}]));\\
   \bfKinftyW(\Chcat (j')) \colon & \bfKinftyW(\Chcat (\caly), w)\to \bfKinftyW(\Chcat (\calc[t,t^{-1}])),
\end{align*}
are homotopy equivalences.
\end{lemma}
\begin{proof}
  We want to apply the Cisinki's Approximation Theorem~\ref{the:Cisinki_Approximation_Theorem}.
  We give the details only for $\bfKinftyW(\Chcat(k^-))$, the analogous proof for $\bfKinftyW(\Chcat (j'))$ is
  left to the reader. We have to verify the
  conditions~\ref{the:Cisinki_Approximation_Theorem:A1} and~\ref{the:Cisinki_Approximation_Theorem:A2}
  appearing in Cisinki's Approximation
  Theorem~\ref{the:Cisinki_Approximation_Theorem}.

  A morphism $f$ in $\Chcat(\calx)$ is by definition in $w\Chcat(\calx)$, if and only if
  $\Chcat (k^-)(f)$ is a chain homotopy equivalence in $\Chcat(\calc[t^{-1}])$. This takes care of 
  condition~\ref{the:Cisinki_Approximation_Theorem:A1} for $\bfKinftyW(\Chcat(k^-))$.

  Next we deal with condition~\ref{the:Cisinki_Approximation_Theorem:A2}.  Consider an object
  $(C^+, f, C^-)$ in $\Chcat(\calx)$ and a morphism $\varphi^-\colon C^-\to D^-$ in 
  $\Chcat(\calc[t^{-1}])$. We will extend $\varphi^-$ to a morphism
  \[
   \varphi=(\varphi^+, \varphi^-)\colon (C^+, f, C^-)\to (D^+, g, D^-)
  \]
  in $\Chcat(\calx)$.  If we have achieved this, we are done by the following argument.
  Note that $\varphi=(\varphi^+, \varphi^-)$ is a morphism in $\Chcat(\calx)$ projecting
  to $\varphi^-$ under $\Chcat(k^-)$. Then, factorizing $\varphi=\mu \circ\psi$ into a
  cofibration $\psi$ followed by a weak equivalence $\mu$ (using the mapping cylinder), we
  can write $\varphi^-=\mu^-\circ \Chcat (k^-)(\psi)$, where $\psi$ is a cofibration and
  $\mu^-$ is a weak equivalence, as required in
  condition~\ref{the:Cisinki_Approximation_Theorem:A2}.

  The construction of $D^+$ and $\varphi^+$ require the following preparation.

  Consider  an object $\underline {A} \in \cals(\cala_*)$ and an element
  $\underline{k} = \{k_m \mid m \in \IN\} \in \prod_{m \in \IN} \IZ$. Define a morphism
  $t^{\underline{k}} \colon \underline{A} \to \underline{A}$ in
  $\cals(\cala_*[t,t^{-1}])$ by  $(t^{k_m})_{m \in \IN}$, where
  $t^{k_m} \colon A_m \to A_m$ is the obvious morphism in $\cala_m[t,t^{-1}]$ determined
  by $t^{k_m}$.  We can and will regard
  $t^{\underline{k}} \colon \underline{A} \to \underline{A}$ also as a morphism in
  $\calc[t,t^{-1}]$. One easily checks for any morphism $f \colon A \to B$ in
  $\calc[t,t^{-1}]$ that $t^{\underline{k}}\circ f = f \circ t^{\underline{k}}$ holds.
  Moreover we have
  $t^{\underline{k} + \underline{k'}} = t^{\underline{k}} \circ t^{\underline{k'}}$ for
  $\underline{k} + \underline{k'}$ defined by the componentwise addition in
  $\prod_{m \in \IN} \IZ$ and $t^{\underline{0}} = \id$ for
  $\underline{0} \in \prod_{m \in \IN} \IZ$ given by the element which has zero as entry for
  the component for $m \in \IN$. The following property is important for us.  Given a
  morphism $f \colon A \to B$ in $\calc[t,t^{-1}]$, there exists
  $\underline{k}_f \in \prod_{m \in \IN} \IZ$ such that for every
  $\underline{k} \in \prod_{m \in \IN} \IZ$ with $\underline{k} \ge \underline{k}_f$ we have
  $ t^{\underline{k}} \circ f$ in $\calc[t]$, where $\underline{k} \ge \underline{k}_f$ is
  to be understood componentwise.

  Choose a natural  number $N$ such that
  $C^+_m =  D^-_m = 0$ for $|m| > N$. Consider $\underline{k} \in \prod_{m \in \IN} \IZ$.
  For any integer $n$ define $n \cdot \underline{k}$ to be the element, whose $i$-th entry is $n \cdot k_m$.
  Then we obtain the following commutative diagram in $\calc[t,t^{-1}]$

  \[
  \xymatrix@!C=10em{\vdots \ar[d]
    & \vdots \ar[d]
    & \vdots \ar[d]
    \\
    0 \ar[r]^{\id} \ar[d]
    &
    0 \ar[r]^{\id} \ar[d]
    &
    0  \ar[d]
    \\
    C^+_N \ar[d]^{c^+_N}
    \ar[r]^{t^{\underline{k}} \circ \varphi^-_N \circ f_N}
      &
     D^-_N \ar[d]^{t^{\underline{k}} \circ  d^-_N}
     \ar[r]^{t^{-\underline{k}}}
     &
       D^-_N \ar[d]^{d^-_N}
      \\
      C^+_{N-1} \ar[d]^{c^+_{N-1}}
    \ar[r]^{t^{2 \cdot \underline{k}}\circ \varphi^-_{N-1} \circ f_{N-1}}
      &
     D^-_{N-1} \ar[d]^{t^{\underline{k}} \circ  d^-_{N-1}}
     \ar[r]^{t^{-2 \cdot \underline{k}}}
     &
       D^-_{N-1} \ar[d]^{d^-_{N-1}}
         \\
     {\vdots} \ar[d]^{c^{+}_{-N+1}}
    & {\vdots}  \ar[d]^{t^{\underline{k}} \circ d^{-}_{-N+1}}
    & {\vdots} \ar[d]^{d^{-}_{-N+1}}
    \\
    C^+_{-N+1} \ar[d]^{c^+_{-N+1}}
    \ar[r]^{t^{2N \cdot \underline{k}} \circ \varphi^-_{-N+1} \circ f_{-N+1}}
      &
     D^{-}_{-N+1} \ar[d]^{t^{\underline{k}} \circ  d^{-}_{-N+1}}
     \ar[r]^{t^{-2N \cdot \underline{k}}}
     &
       D^-_{-N+1} \ar[d]^{d^-_{-N+1}}
      \\
      C^+_{-N} \ar[d]^{0}
    \ar[r]^{t^{(2N+1) \cdot \underline{k}}\circ \varphi^-_{-N} \circ f_{-N}}
      &
     D^-_{-N} \ar[d]^{0}
     \ar[r]^{t^{-(2N+1) \cdot \underline{k}}}
     &
       D^-_{-N} \ar[d]^{0}  
       \\
       0 \ar[d]^{0} \ar[r]^{\id}
       &
       0 \ar[d]^{0}  \ar[r]^{\id}
       &
       0 \ar[d]^{0}
       \\
       \vdots
       &
       \vdots
       &
       \vdots
        }
      \]
      where the left column is the chain complex $C^+$, the right column is the chain
      complex $D^-$ and the middle column is obtained from $D^-$ by composing each
      differential with $t^{\underline{k}}$.  Since there are only finitely many objects
        and morphisms in the diagram above, which are different from zero, we can choose
        $\underline{k}$ such that each horizontal arrow from the first column to the
        second column and each vertical arrow in the middle column belong to $\calc[t]$.
        Hence the middle column defines a chain complex $D^+$ over $\calc[t]$, the
        horizontal arrows from the left column to the middle column define a chain map
        $\phi^+ \colon C^+ \to D^+$ over $\calc[t]$ and the collection of the morphisms  from
        the middle column to the right column define a chain homotopy equivalence
        $g \colon D^+ \to D^-$ over $\calc[t,t^{-1}]$. By construction we have
        $ j_-(\varphi^-) \circ f = g \circ j_+(\varphi^+) $.
        This finishes the construction of
        the desired morphism $\phi$ and hence the proof of
        Lemma~\ref{lem:certain_chain_homotopy_equivalences}.
\end{proof}

\begin{theorem}\label{the:relative_fibration_sequences}
  There are weak homotopy fibration sequences 
\begin{align*}
   \bfKinftyW(\Chcat(\calx)^w) & \to \bfKinftyW(\Chcat(\calx))\to \bfKinftyW(\calc[t^{-1}]);
   \\
   \bfKinftyW(\Chcat(\caly)^w) &\to \bfKinftyW(\Chcat(\caly))\to \bfKinftyW(\calc[t,t^{-1}]).
\end{align*}
\end{theorem}
\begin{proof}
We give the
  details only for the first sequence, the analogous proof for the second one is
  left to the reader.

  We apply the Fibration Theorem~\ref{the:Fibration_Theorem} in the case
  $\calw = \Chcat(\calx)$, $w$ as described above and $v$ the structure of weak equivalences
  coming from chain homotopy equivalences. The necessary conditions appearing in the
  Fibration Theorem~\ref{the:Fibration_Theorem} are satisfied by~\cite[Lemma~3.1 and
  Lemma~4.12]{Lueck-Steimle(2016BHS)}.  Thus we obtain a weak homotopy
  fibration of spectra
\[
\bfKinftyW(\Chcat(\calx)^w) \to \bfKinftyW(\Chcat(\calx)) \to \bfKinftyW(\Chcat(\calx),w).
\]
Because of Lemma~\ref{lem:certain_chain_homotopy_equivalences}
we obtain a weak homotopy fibration
\[
\bfKinftyW(\Chcat(\calx)^w) \to \bfKinftyW(\Chcat(\calx)) \to \bfKinftyW(\Chcat(\calc[t^{-1}])).
\]
\end{proof}

\begin{lemma}\label{k'_is_homotopy_equivalence}
The functor $k'$ induces a weak homotopy equivalence
\[
\bfKinftyW(\Chcat(\calx)^w) \xrightarrow{\simeq} \bfKinftyW(\Chcat(\caly)^w).
\]
\end{lemma}

\begin{proof}
  Again we will use the Cisinki's Approximation Theorem~\ref{the:Cisinki_Approximation_Theorem}.
  We have to verify the
  conditions~\ref{the:Cisinki_Approximation_Theorem:A1} and~\ref{the:Cisinki_Approximation_Theorem:A2}
  appearing in Cisinki's Approximation
  Theorem~\ref{the:Cisinki_Approximation_Theorem}.

  Let
\begin{equation}\label{diagram_defining_a_morphism_in_calx}
  \xymatrix{j_+C^+ \ar[r]^f \ar[d]_{j_+\varphi^+} & j_-C^- \ar[d]^{j_-\varphi^-}\\
 j_+D^+ \ar[r]^g & j_-D^- 
}
\end{equation}
represent a morphism in $\Chcat(\calx)^w$, which maps to a weak equivalence in
$\Chcat(\caly)^w$. Then $\varphi^+$ is a chain homotopy equivalence in
$\calc[t]$ and $\varphi^-$ is a chain homotopy equivalence in
$\calc[t,t^{-1}]$. By assumption, $C^-$ and $D^-$ are contractible in $\calc[t^{-1}]$, so
$\varphi^-$ has to be an equivalence in $\calc[t^{-1}]$. It follows that the
morphism given by~\eqref{diagram_defining_a_morphism_in_calx} is a weak equivalence
in~$\Chcat(\calx)^w$ already. This takes care of 
condition~\ref{the:Cisinki_Approximation_Theorem:A1}.

It remains to check condition~\ref{the:Cisinki_Approximation_Theorem:A2}.
Suppose now that
\begin{equation}\label{diagram_defining_a_morphism_in_caly}
  \xymatrix{j_+C^+ \ar[r]^f \ar[d]_{j_+\varphi^+} & C^- \ar[d]^{\varphi^-}\\
 j_+D^+ \ar[r]^g & D^- 
}
\end{equation}
represents a morphism in
$\Chcat(\caly)^w$ satisfying $(C^+, f, C^-)$ in $\Chcat(\calx)^w$. We have to
factor this morphism through a map in $\Chcat(\calx)^w$ (which we may then
replace by a cofibration using the mapping cylinder) and a weak
equivalence in $\Chcat(\caly)^w$.

Note that the morphism $\varphi^-$ is a chain homotopy equivalence in
$\calc[t,t^{-1}]$, as both $C^-$ and $D^-$ are contractible in that
category by assumption. We conclude from~\cite[Lemma~3.1~(ix)]{Lueck-Steimle(2016BHS)}
that there is a chain isomorphism of the shape
\[
\begin{pmatrix}
     \varphi^- & y \\ x & z
\end{pmatrix}
\colon
C^- \oplus E \xrightarrow{\cong} D^- \oplus E',
\]
where $E$ and $E'$ are elementary chain complexes in
$\calc[t,t^{-1}]$, or even in $\calc$, since both categories have the
same objects.

For appropriate $\underline{k} \in \prod_{m \in \IN} \IZ$, the commutative diagram
\[
\xymatrix@!C=4em{j_+C^+ \ar[rrr]^f
\ar[d]_{\footnotesize\begin{pmatrix}j_+\varphi^+\\ t^{\underline{k}}\circ  x\circ f
\end{pmatrix}}
&&& C^-
\ar[d]^{\footnotesize\begin{pmatrix}1 \\ 0 \end{pmatrix}}
\\
j_+D^+ \oplus i_0 E'
\ar[rrr]^{{\footnotesize\begin{pmatrix}g^{-1}\circ \varphi^- & g^{-1}\circ
y\\ t^{\underline{k}}\circ x & t^{\underline{k}}\circ z \end{pmatrix}}^{-1} }
\ar[d]_{\footnotesize\begin{pmatrix}1 & 0 \end{pmatrix}}
&&& C^-\oplus i_0 E
\ar[d]^{\footnotesize\begin{pmatrix}\varphi^- & y \end{pmatrix}}
\\
j_+D^+ \ar[rrr]^g
&&& D^-
}
\]
provides the desired  factorization of~\eqref{diagram_defining_a_morphism_in_caly}.
\end{proof}

\begin{proof}[Proof of Theorem~\ref{the:homotopy_pull_back_of_calx}]
  Theorem~\ref{the:homotopy_pull_back_of_calx} follows from 
Lemma~\ref{lem:caly_and_cala}, Lemma~\ref{the:relative_fibration_sequences}, and Lemma~\ref{k'_is_homotopy_equivalence}.
\end{proof}


\subsection{The global section functor}\label{subsec:The_global_section_functor}

Recall that we have defined truncation functors in Subsection~\ref{subsec:truncation_functors}.

\begin{definition}[Global section functor]\label{def:global_section_functor}
  The \emph{global section functor} 
  \[
   \Gamma \colon \Chcat(\calx)\to\Chcat(\calc^\kappa)
   \] 
   sends an object $(C^+, f, C^-)$ to the $\calc^\kappa$-chain complex
  \[
   \Sigma^{-1}\cone\bigl(f\trun\colon C^+[0,\infty]\to C^-[1,\infty]\bigr).
   \] 
   A morphism
  $(\varphi^+,\varphi^-) \colon (C^+, f, C^-)  \to (D^+, g, D^-)$ of $\Chcat(\calx)$ 
  is sent to the  morphism in $\Chcat(\cala^\kappa)$ obtained from
  the commutative diagram (using the trivial homotopy)
  \[
    \xymatrix@!C= 11em{C^+[0,\infty]  \ar[r]^-{f\trun}
      \ar[d]_{\varphi^+\trun}
   &
   C^-[-1,\infty] \ar[d]^{\varphi^-\trun}
   \\
   D^+[0,\infty] \ar[r]_-{g\trun} 
   &
   D^-[-1,\infty].
   }
   \]

 \end{definition}

 Note that $f\trun$ is indeed a chain map, as it is the composite of the three chain maps
 $C^+[0,\infty]  \xrightarrow{\id\trun} C^+[-\infty,\infty]  \xrightarrow{i^0f = f\trun} C^-[-\infty,\infty]  \xrightarrow{\id\trun}  C^-[-1,\infty]$
 Let $\Chcathf(\calc)\subset \Chcat(\Idem(\calc^\kappa))$  be the full
 subcategory of homotopy finite chain complexes over $\Idem(\calc^{\kappa})$ , i.e.,
 chain complexes over $\Idem(\calc^{\kappa})$,
 which are homotopy equivalent to a bounded chain complex over $\Idem(\calc)$.

 It follows
 from~\cite[Lemma~3.5]{Lueck-Steimle(2016BHS)} that this category is closed under pushouts
 along a cofibration, so it is a Waldhausen subcategory of $\Chcat(\Idem(\calc^\kappa))$. The
 Approximation Theorem~\ref{the:Cisinki_Approximation_Theorem} of Cisinski shows that the inclusion
 $I(\calc) \colon \Chcat(\Idem(\calc)) \to\Chcathf(\calc)$
 induces an equivalence
\begin{equation}
  \bfKinftyW(I(\calc)) \colon  \bfKinftyW(\Chcat(\Idem(\calc)))\xrightarrow{\sim} \bfKinftyW(\Chcathf(\calc))
  \label{bfKinftyW(Ch(calc))_simeq_bfKinftyW(Chhf(calc))}
\end{equation}
on non-connective $K$-theory.

\begin{lemma}\label{lem:Gamma_values_in_CHhf}

\begin{enumerate}
   \item\label{lem:Gamma_values_in_CHhf:exact}
The functor $\Gamma$ is Waldhausen exact (for the canonical Waldhausen structures);

\item\label{lem:Gamma_values_in_CHhf:hf} For any object $(C^+, f, C^-)$ of
  $\Chcat(\calx)$, the chain complex given by the projective line
  $\Gamma(C^+,f,C^-)\in \Chcat(\calc^\kappa) \subseteq \Chcat(\Idem(\calc^\kappa))$ is
  chain homotopy equivalent to an object in $\Chcat(\Idem(\calc))$. 
  
  Thus, $\Gamma$ defines a Waldhausen
  exact functor
\[
\Gamma(\calc) \colon \Chcat(\calx)\to \Chcathf(\calc).
\]
\end{enumerate}
\end{lemma}
\begin{proof}~\ref{lem:Gamma_values_in_CHhf:exact}
  It is not hard to check using~\cite[Section~4.1]{Lueck-Steimle(2016BHS)} that the two functors 
  $k^\pm\colon  \Chcat(\calx)\to\Chcat(\calc[t^{\pm1}])$ are Waldhausen exact.    The restriction functors
  from $\Chcat(\calc[t])$, $\Chcat(\calc[t^{-1}])$ and
  $\Chcat(\calc[t,t^{-1}])$ to $\Chcat(\calc^\kappa)$ are defined on the level of
  additive categories and hence are Waldhausen exact. Taking cones and suspensions is also
  Waldhausen exact.  
  \\[1mm]~\ref{lem:Gamma_values_in_CHhf:hf} 
 Write the morphism $f_n^{-1} \colon C^+_n \to C^-_n$ in $\calc[t,t^{-1}]$ as a sum
 $\sum_{l_n = a_n}^{b_n} f_n[l_n] \cdot s^{l_n}$ for appropriate morphisms
 $f_n[l_n] \colon \cals(\Phi[t,t^{-1}])^{l_n}(C^-_n) \to C^+_n$ in $\cals(\cala_*[t,t^{-1}])$.  Note
 that $f_n[l_n] =(f_n[l_n]_m)_{m \in \IN} $ for appropriate morphisms
 $f_n[l_n]_m \colon \Phi^{l_n}((C^-_n)_m) \to (C^+_n)_m$ in $\cala_0[t,t^{-1}]$.  Now choose for
 $n \in \IZ$, $l_n \in \IZ$, and $m \in \IN$ a natural number $k_n[l]_m$ such that we can
 write
 $f_n[l_n]_m = \sum_{j_{n,l_n,m} = -k_n[l_n]_m}^{k_n[l_n]_m} f_n[l_n]_m[j_{n,l_n,m}] \cdot  t^{j_{n,l_n,m}}$
for appropriate morphisms
 $f_n[l_n]_m[j_{n,l_n,m}] \colon  \Phi^{l_n}((C^-_n)_m) \to (C^+_n)_m$ in $\cala_0$. Since
   $C^+$ and $C^-$ are bounded, there exists a natural number $N$ such that $C^-_n = 0$
   and $C^+_n =0$ holds for $|n| >  N$. Hence we get for every $n \in \IZ$ with $|n| >  N$ and
   every $m \in \IN$ that $(C^-_n)_m = 0$ and $(C^+_n)_m=0$ hold.  Now define for $m \in \IN$
   a natural number $k_i$ by
   \[k_m = 1 + \max\{k_n[l_n]_m \mid -N \le n \le N, a_n \le l_n \le b_n\}.
   \]
   This definition makes sense since  the set $\bigcup_{n = -N}^N \{l_n \in
   \IZ \mid a_n \le l_n \le b_n\}$ is finite. Note that 
$k_m$ depends only on $m$, in contrast to $k_n[l_n]_m$. We conclude that 
   $f_n[l_n]_m = \sum_{j_{n,l_n,m} = -k_m+1}^{k_m-1} f_n[l_n]_m[j_{n,l_n,m}] \cdot
   t^{j_{n,l_n,m}}$ holds for every $m \in \IN$.

     Define $\underline{k} \in \prod_{m \in \IN} \IZ$ by the collection $\{k_m \mid m \in \IN\}$.
     Then ${f^{-1}}\trun \colon C^-[\underline{-\infty},\underline{\infty}] \to C^+[\underline{k},\underline{\infty}]$ 
     factorizes as 
\[
\xymatrix{C^-[\underline{-\infty},\underline{\infty}] \ar[r]^{f^{-1}\trun} \ar[d]^{\id\trun} & C^+[\underline{k},\underline{\infty}]\\
 C^-[\underline{1},\underline{\infty}] \ar[ru]_{\overline{f^{-1}}}
}
\] 
 and the composite
 \[
   C^+[\underline{k},\underline{\infty}] \xrightarrow{f\trun} C^-[\underline{1},\underline{\infty}]
   \xrightarrow{\overline{f^{-1}}} C^+[\underline{k},\infty]
 \]
 is the identity map.
 
  Hence over  $\Idem(\calc^{\kappa})$, the chain complex $C^-[1,\infty]$ splits as
 \[
 C^-[1,\infty] \cong C^+[\underline{k},\underline{\infty}] \oplus R,
 \]
 where $R_n$ is given by $(C^-_n[\underline{1},\underline{\infty}], \id  - p_n)$ for the projection
 \[
 p_n \colon  C^-[\underline{1},\underline{\infty}]
   \xrightarrow{\overline{f^{-1}}} C^+[\underline{k},\infty] \xrightarrow{f\trun}    C^-[\underline{1},\underline{\infty}].
 \]
 Since the composite
 \[
   C^-[2\cdot \underline{k},\underline{\infty}] \xrightarrow{f^{-1}\trun}
   C^+[\underline{k},\underline{\infty}] \xrightarrow{f\trun} C^-[2\cdot \underline{k},\underline{\infty}]
 \]
 is the identity, $\id -p_n$ restricted to $C^-[2\cdot \underline{k},\underline{\infty}]$
 is trivial. Hence we can find a projection
 $q_n\colon C^-_n[\underline{1},2 \cdot \underline{k} -\underline{1}] \to
 C^-_n[\underline{1},2 \cdot \underline{k} -\underline{1}]$ such that $R_n$ and
 $(C^-_n[\underline{1},2 \cdot \underline{k} -\underline{1}],q_n)$ are isomorphic.  Since
 $C^-_n[\underline{1},2 \cdot \underline{k} -\underline{1}]$ belongs to $\calc$,
 $R_n$ is isomorphic to an object in $\Idem(\calc)$
 for every $n \in \IZ$. This implies that the bounded $\Idem(\calc^{\kappa})$-chain complex $R$ is
 isomorphic to a bounded $\Idem(\calc)$-chain complex.
 We obtain an exact sequence of $\Idem(\calc^{\kappa})$-chain complexes
 \[
  \xymatrix{0 \ar[r] 
 & 
 C^+[\underline{k},\infty] \ar[r] \ar[d]^{\id}
 &  C^+[0,\infty] \ar[d]^{-f\trun} \ar[r] 
 & 
 C^+[0,\underline{k}-\underline{1}]\ar[d]^{g} \ar[r] 
 & 0
 \\
 0 \ar[r] 
 & C^+[\underline{k},\infty] \ar[r]^{-f\trun} 
 & C^-[1,\infty] \ar[r]^r
 & R  \ar[r] 
 & 0
 }\]
where $r_n \colon C^-_n[1,\infty]  \to R_n$ is  the canonical projection  given by $\id - p_n$
and $g$ is the induced map on the quotients. We conclude  that $\Sigma^{-1}\cone(-f\trun) \simeq
 \Sigma^{-1}\cone(g)$, which is isomorphic to a chain complex in $\Idem(\calc)$. Hence $\Gamma(C^+,f,C^-)$ belongs to
 $\Chcathf(\calc)$.
 \end{proof}


\subsection{Embedding lower-K-theory in higher $K$-theory}\label{subsec:Embedding_lower-K-theory_in_higher_K-theory}

For $k \in \IZ$ and a spectrum $\bfE$, let
$\Sigma^k \bfE$ be the spectrum obtained by shifting, i.e.,
$(\Sigma^k \bfE)_n = \bfE_{n-k}$. Let
\begin{equation}
  \bfd(\bfE) \colon \Sigma^{-1} S^1_+ \wedge \bfE \to \bfE
\label{bfd}
\end{equation}
be the weak homotopy equivalence, natural in $\bfE$, which is given in dimension $n$ by the $n$-th
structure map of $\bfE$.  Let $i \colon S^1 \to S^1 \vee S^1$ be the pinching map. For
a spectrum $\bfE$, define a map of spectra
\[
  \nabla \colon \Sigma^{–1} S^1_+ \wedge \bfE
  \to (\Sigma^{–1} S^1_+ \wedge  \bfE) \vee (\Sigma^{–1} S^1_+ \wedge \bfE),
\]
natural in $\bfE$, by
\begin{multline*}
(\Sigma^{–1} S^1_+ \wedge\bfE)_n
= S^1_+ \wedge E_{n+1} \xrightarrow{i_+ \wedge E_{n+1}} (S^1 \vee S^1)_+ \wedge E_{n+1}
\\
\xrightarrow{\cong}
(S^1_+ \wedge E_{n+1})\vee (S^1_+ \wedge E_{n+1})
=
(\Sigma^{-1} S^1_+ \wedge \bfE )_n \vee (\Sigma^{-1} S^1_+ \wedge \bfE )_n.
\end{multline*}

The next theorem essentially says  that 
$S^1_+ \wedge \bfKinftyW(\Chcat(\calc))$ is up to weak homotopy equivalence
a retract of $\bfKinftyW(\Chcat(\calc[t,t^{-1}]))$.

\begin{theorem}\label{the:Splitting_the_Bass-Heller-Swan_map}
  There is a diagram of spectra
  \[
    \xymatrix{S^1_+ \wedge \bfKinftyW(\Chcat(\calc))
      &
     \\
    S^1_+ \wedge (\Sigma^{-1} S^1_+ \wedge \bfKinftyW(\Chcat(\calc)))
    \ar[u]^{\simeq}_{\id_{S^1_+} \wedge \bfd(\bfKinftyW(\Chcat(\calc)))}
    \ar[d]^{\bfs} \ar[dr]_{\simeq}^-{\bfi}
    &
    \\
    \bfH(\calc)
    \ar[d]_{\simeq}^{\bff} \ar[r]^-{\bfr}
      &
      S^1_+ \wedge \bigl(\Sigma^{-1} S^1_+ \wedge \bfKinftyW(\Chcathf(\calc))\bigr),
      \\
      \bfKinftyW(\Chcat(\calc[t,t^{-1}]))
      &
    }
    \]
    such that the triangle commutes up to a preferred homotopy $\bfh$ and the maps marked
    with $\simeq$ are weak homotopy equivalences.  Moreover, everything including the
    preferred homotopy $\bfh$ is natural in $\Phi$.

\end{theorem}
For its proof we need the next Lemma~\ref{lem:global_section_composed_with_L_j},

We define for $j =0,1$ a functor of additive categories
\begin{equation}
  L_j\colon \calc \to \calx
  \label{L_j_colon_calc_to_calx}
\end{equation}
as follows. The functor $L_j(\calc)$ sends an object $C$ in $\calc$ to the object
$(C, t^{\underline{j}}, C)$ in $\calx$, where $\underline{j} \in \prod_{m \in \IN} \IZ$ is
given by the constant function $I \to \IZ$ with value $j$.  A morphism $u \colon C \to C'$
in $\calc$ is sent to the morphism from $(C, t^{\underline{j}}, C)$ to
$(C', t^{\underline{j}}, C')$ in $\calx$ given by the morphism $j_+(u) \colon C \to C$ in
$\calc[t]$ and the morphism $j_-(u) \colon C \to C$ in $\calc[t^{-1}]$.  Let $J(C) \colon \calc \to \Idem(\calc)$
and  $I(\calc) \colon \Chcat(\Idem(\calc)) \to \Chcathf(\calc)$ be the obvious inclusions and
$0 \colon \Chcat(\calc) \to \Chcathf(\calc)$ be the constant functor with value the chain
complex, all of whose chain objects are $0$.

\begin{lemma}\label{lem:global_section_composed_with_L_j}\
  \begin{enumerate}
  \item\label{lem:global_section_composed_with_L_j:j_is_1} There is a natural equivalence
    of functors $\Chcat(\calc)  \to\Chcathf(\calc)$
    \[
      T_0 \colon I(\calc) \circ \Chcat(J(C))  \xrightarrow{\simeq}  \Gamma \circ \Chcat(L_0),
   \] 
   which is natural in $\Phi$;
\item\label{lem:global_section_composed_with_L_j:j_is_0}
  There is a natural equivalence   of functors $\Chcat(\calc)  \to \Chcathf(\calc)$
    \[
      T_1 \colon 0 \xrightarrow{\simeq} \Gamma \circ \Chcat(L_1),
   \]   
which is natural in $\Phi$;
\end{enumerate}
\end{lemma}
\begin{proof}~\ref{lem:global_section_composed_with_L_j:j_is_1} Fix an object $C$ in
  $\Chcat(\calc)$.  Then $\Gamma(L_0(C))$ is by definition
\[
   \Sigma^{-1}\cone\bigl(\id\trun\colon C[0,\infty]\to C[1,\infty]\bigr).
   \] 
 We have the obvious split exact sequence in
  $\Chcat(\calc^{\kappa})$
  \[
    0 \to   C[\underline{0},\underline{0}] \xrightarrow{\id \trun} C[\underline{0},\underline{\infty}]
    \xrightarrow{\id \trun} C[\underline{1},\underline{\infty}]
     \to 0.
  \]
  It induces a chain homotopy equivalence in $\Chcathf(\calc)$
  \[
   T_0(C) \colon C\xrightarrow{\simeq} \Gamma(L_0(C)).
  \]
  \\[1mm]~\ref{lem:global_section_composed_with_L_j:j_is_0}
  Fix an object $C$ in $\Chcat(\calc)$. Then $\Gamma(L_1(C))$ is by definition
\[
   \Sigma^{-1}\cone\bigl(t^{\underline{1}}  \trun\colon C[0,\infty]\to C[1,\infty]\bigr).
   \]  
   The chain map $t^{\underline{1}} \trun\colon C[0,\infty]\to C[1,\infty]$
   is a chain  isomorphism, its inverse is
  $t^{\underline{-1}}\trun \colon C[\underline{1},\underline{\infty}] \to  C[\underline{0},\underline{\infty}]$.
  Hence $\Gamma(L_1(C))$ is contractible and we obtain a chain homotopy equivalence in $\Chcathf(\calc)$
  \[
   T_1(C) \colon 0 \xrightarrow{\simeq} \Gamma(L_0(C)).
  \]
\end{proof}

\begin{proof}[Proof of Theorem~\ref{the:Splitting_the_Bass-Heller-Swan_map}]
  Denote by $\ast$ the trivial spectrum. 
  Consider the following diagram of non-connective
  spectra
\[
\xymatrix@!C=10em{\ast \ar[dd]
  &
  S \wedge   \bfKinftyW(\Chcat(\calc))  \ar[l] \ar[r]  \ar[d]^{\nabla}
  &
  \ast \ar[dd]
  \\
  &
  (S \wedge    \bfKinftyW(\Chcat(\calc)))
  \vee (S\wedge   \bfKinftyW(\Chcat(\calc)))
  \ar[d]^{\bfa}
  \ar[d]
  &
  \\
  S \wedge  \bfKinftyW(\Chcat(\calc[t])) \ar[d]
  &
  S\wedge \bfKinftyW(\Chcat(\calx)) \ar[d]^{S\wedge \bfKinftyW(\Gamma)}
  \ar[l]_-{\bfb^+} \ar[r]^-{\bfb^-}
  &
  S \wedge  \bfKinftyW(\Chcat(\calc[t^{-1}])) \ar[d]
  \\
  \ast
  &
  S\wedge \bfKinftyW(\Chcathf(\calc)) \ar[l] \ar[r]
    &
    \ast}
\]
where we abbreviate $S = \Sigma^{-1} S^1_+$, $u \colon S^1 \to S^1$ sends $z$ to
$z^{-1}$, and the maps $\bfa$ and $\bfb^{\pm}$ are given by
\begin{eqnarray*}
 \bfa
  & := &
(\Sigma^{-1} \id_{S^1_+} \wedge  \bfK(\Chcat(L_0))) \vee (\Sigma^{-1} u \wedge  \bfKinftyW(\Chcat(L_1)));
  \\
  \bfb^{\pm}
  & := &
        \Sigma^{-1} \id_{S^1_+} \wedge \bfKinftyW(\Chcat(k^{\pm})).
\end{eqnarray*}
Note that $u$ is a base point preserving selfmap of $S^1$ of degree $-1$. Fix a
pointed nullhomotopy for the composite
$S^1 \xrightarrow{i} S^1 \vee S^1 \xrightarrow{\id_{S^1} \vee u} S^1$. Since
$k^{\pm} \circ L_0$ and $k^{\pm} \circ L_1$ agree, both are equal to with $i^{\pm}$, the
upper left square and the upper right square commute up to a preferred homotopy,
natural in $\Phi$.  The lower left and the lower right square commute. The homotopy
pushout of the upper row is
$S^1_+ \wedge (\Sigma^{-1} S^1_+ \wedge \bfKinftyW(\Chcat(\calc)))$
and of the lower row is $S^1_+ \wedge (\Sigma^{-1} S^1_+ \wedge \bfKinftyW(\Chcathf\calc))$. Let
$\bfHP(\calc)$ be the homotopy pushout of the middle row. Then we get from the data above maps of
spectra 
\begin{eqnarray*}
\bfs \colon S^1_+ \wedge (\Sigma^{-1} S^1_+ \wedge \bfKinftyW(\Chcat(\calc)))
& \to &
\bfHP(\calc);
\\
\bfr \colon \bfHP(\calc)
  & \to &
 S^1_+ \wedge \bigl(\Sigma^{-1} S^1_+ \wedge \bfKinftyW(\Chcathf(\calc))\bigr).
\end{eqnarray*}         
The canonical inclusion $J(C) \colon \calc \to \Idem(\calc)$ induces a weak homotopy
equivalence
$\bfKinftyW(\Chcat(J(\calc)))\colon \bfKinftyW(\Chcat(\calc)) \xrightarrow{\simeq}
\bfKinftyW(\Chcat(\Idem(\calc)))$.  This follows from Theorem~\ref{the:Cofinality_Theorem},
where the elementary proof that the conditions appearing in
Theorem~\ref{the:Cofinality_Theorem} are satisfied is left to the reader.  Define the weak
homotopy equivalence
\begin{multline*}
\bfi = 
\colon S^1_+ \wedge \bigl(\Sigma^{-1} S^1_+ \wedge \bfKinftyW(\Chcat(\calc))\bigr)
\xrightarrow{S^1_+ \wedge (\Sigma^{-1} S^1_+ \wedge \bfKinftyW(\Chcat(J(C))))}
\\
S^1_+ \wedge \bigl(\Sigma^{-1} S^1_+ \wedge \bfKinftyW(\Chcat(\Idem(\calc)))\bigr) 
\\
\xrightarrow{\id_{S^1_+} \wedge \Sigma^{-1} \id_{S^1_+} \wedge \bfKinftyW(I(\calc))}
  S^1_+ \wedge \bigl(\Sigma^{-1} S^1_+ \wedge \bfKinftyW(\Chcathf(\calc))\bigr),
\end{multline*}
where $\bfKinftyW(I(\calc))~$ is the weak homotopy equivalence
of~\eqref{bfKinftyW(Ch(calc))_simeq_bfKinftyW(Chhf(calc))}.
  
The composite
\[
\bfr \circ \bfs \colon S^1_+ \wedge (\Sigma^{-1} S^1_+ \wedge \bfKinftyW(\Chcat(\calc)))
\to S^1_+ \wedge \bigl(\Sigma^{-1} S^1_+ \wedge \bfKinftyW(\Chcathf(\calc))\bigr)
\]
is homotopic to $\bfi$  by Lemma~\ref{lem:global_section_composed_with_L_j}.
From the commutative diagram
\[
\xymatrix@!C=15em{\Sigma^{-1} S^1_+ \wedge  \bfKinftyW(\Chcat(\calc[t])) \ar[r]^-{\bfd}
  &
  \bfKinftyW(\Chcat(\calc[t]))
  \\
  \Sigma^{-1} S^1_+ \wedge \bfKinftyW(\Chcat(\calx))\ar[u]^-{\Sigma^{-1} \id_{S^1_+}
    \wedge \bfKinftyW(\Chcat(k^+))} \ar[d]_-{\Sigma^{-1} \id_{S^1_+} \wedge \bfKinftyW(\Chcat(k^-))} \ar[r]^-{\bfd}
  &
  \bfKinftyW(\Chcat(\calx)) \ar[u]_-{\bfKinftyW(\Chcat(k^+))} \ar[d]^-{\bfKinftyW(\Chcat(k^-))} 
  \\
  \Sigma^{-1} S^1_+ \wedge  \bfKinftyW(\Chcat(\calc[t^{-1}]) ) \ar[r]^-{\bfd}
   &
  \bfKinftyW(\Chcat(\calc[t^{-1}]))}
\]
and Theorem~\ref{the:homotopy_pull_back_of_calx}, we obtain a weak homotopy equivalence of
spectra, natural in $\Phi$,
\[
\bff\colon \bfHP(\calc) \xrightarrow{\simeq}  \bfKinftyW(\Chcat(\calc[t,t^{-1}])).
\]
We get from the weak homotopy equivalence $\bfd(\bfKinftyW(\Chcat(\calc)))$ of~\eqref{bfd} a weak
homotopy equivalence
\[
\id_{S^1_+} \wedge \bfd(\bfKinftyW(\Chcat(\calc))) \colon S^1_+ \wedge \bigl(\Sigma^{-1} S^1_+ \wedge \bfKinftyW(\Chcat(\calc))\bigr)
\xrightarrow{\simeq} S^1_+ \wedge \bfKinftyW(\Chcat(\calc)).
\]
This finishes the proof of Theorem~\ref{the:Splitting_the_Bass-Heller-Swan_map}.
\end{proof}

Next we repeat the same construction in the easy case, where we do not take the
automorphisms $\Phi$ into account.  So, given $\cala_*$, we define additive categories
\begin{eqnarray*}
    \widehat{\calc}
    & = &
   \cals(\cala_*);
    \\
    \widehat{\calc}[t^{\pm}]
    & = &
   \cals(\cala_*[t^{\pm}]);
    \\
    \widehat{\calc}[t,t^{-1}]
    & = &
    \cals(\cala_*[t,t^{-1}]),
\end{eqnarray*}
induction functors fitting in a commutative dagram

\begin{equation*}
    \xymatrix@!C=8em{\widehat{\calc} \ar[r]^{\widehat{i}_+} \ar[d]_{\widehat{i}_-} \ar[rd]^{\widehat{i}_0}
        &
        \widehat{\calc}[t] \ar[d]^{\widehat{j}_+}
        \\
        \widehat{\calc}[t^{-1}] \ar[r]_{\widehat{j}_-} 
        &
        \widehat{\calc}[t,t^{-1}]}
      \end{equation*}
      additive categories
      \begin{eqnarray*}
    \widehat{\calc}^{\kappa}
    & = &
   \cals(\cala_*^{\kappa});
    \\
    \widehat{\calc}[t^{\pm}]^{\kappa}
    & = &
   \cals(\cala_*[t^{\pm}]^{\kappa});
    \\
    \widehat{\calc}[t,t^{-1}]^{\kappa}
    & = &
    \cals(\cala_*[t,t^{-1}]^{\kappa}),
\end{eqnarray*}
  and restriction functors

  \begin{eqnarray*}
      \widehat{i}^0 \colon \widehat{\calc}[t,t^{-1}]^{\kappa} & \to & \widehat{\calc}^{\kappa};
      \\
      \widehat{i}^{\pm}  \colon \widehat{\calc}[t^{ \pm 1}]^{\kappa} & \to & \widehat{\calc}^{\kappa},
    \end{eqnarray*}
    such that we have adjunctions $((\widehat{i}_0)^{\kappa},\widehat{i}^0)$,
    $((\widehat{i}_+)^{\kappa},\widehat{i}^+)$, and
    $((\widehat{i}_-)^{\kappa},\widehat{i}^-)$. There is an obvious definition of the
    projective line $\widehat{\calx}$ and of the global section functor
    $\widehat{\Gamma} \colon \widehat{\calx}\to \Chcathf(\widehat{\calc}^{\kappa})$.

   Now analogously to the proof of
   Theorem~\ref{the:Splitting_the_Bass-Heller-Swan_map} one can show

 \begin{theorem}\label{the:Splitting_the_Bass-Heller-Swan_map_widehat}
  There is a diagram of spectra
  \[
    \xymatrix{S^1_+ \wedge \bfKinftyW(\Chcat(\widehat{\calc}))
      &
     \\
    S^1_+ \wedge (\Sigma^{-1} S^1_+ \wedge \bfKinftyW(\Chcat(\widehat{\calc})))
    \ar[u]^{\simeq}_{\id_{S^1_+}\wedge \bfd(\bfKinftyW(\widehat{\calc}))}
    \ar[d]^{\widehat{\bfs}} \ar[dr]_{\simeq}^-{\widehat{\bfi}}
    &
    \\
    \bfHP(\widehat{\calc})
    \ar[d]_{\simeq}^{\widehat{\bff}} \ar[r]^-{\widehat{\bfr}}
      &
      S^1_+ \wedge \bigl(\Sigma^{-1} S^1_+ \wedge \bfKinftyW(\Chcathf(\widehat{\calc}))\bigr),
      \\
      \bfKinftyW(\Chcat(\widehat{\calc}[t,t^{-1}]))
      &
    }
    \]
    such that the triangle commutes up to a preferred homotopy $\widehat{\bfh}$ and the
    maps marked with $\simeq$ are weak homotopy equivalence.  Moreover, everything
    including the preferred homotopy $\widehat{\bfh}$ is natural in $\Phi$.
\end{theorem}

There are obvious inclusions $l \colon \widehat{\calc} \to \calc$,
$l[t^{\pm}] \colon \widehat{\calc}[t^{\pm}] \to \calc[t^{\pm}]$, and
$l[t,t^{-1}] \colon \widehat{\calc}[t,t^{-1}] \to \calc[t,t^{-1}]$.
 They induce a map denoted
  by $\bfl$ from the diagram appearing in
  Theorem~\ref{the:Splitting_the_Bass-Heller-Swan_map_widehat} to the one appearing in
  Theorem~\ref{the:Splitting_the_Bass-Heller-Swan_map}.

  The pro-automorphism $\Phi \colon \cala_* \to \cala_*$ induces in the obvious way
  an automorphism of the diagram appearing in
  Theorem~\ref{the:Splitting_the_Bass-Heller-Swan_map_widehat}, denoted by $\bft$. Taking
  the mapping torus in each entry of the diagram appearing in
  Theorem~\ref{the:Splitting_the_Bass-Heller-Swan_map_widehat} yields a diagram of the
  form
  \begin{equation}
    \xymatrix{S^1_+ \wedge T_{\bfKinftyW(\Chcat(\cals(\Phi)))}
      &
     \\
    S^1_+ \wedge (\Sigma^{-1} S^1_+ \wedge T_{\bfKinftyW(\Chcat(\cals(\Phi)))})
    \ar[u]^{\simeq}_{\id_{S^1_+} \wedge \bfd(T_{\bfKinftyW(\cals(\Phi))})}
    \ar[d]^{\widehat{\bfs}} \ar[dr]_{\simeq}^-{T_{\widehat{\bfi}}}
    &
    \\
    T_{\bfHP(\cals(\Phi))}
    \ar[d]_{\simeq}^{T_{\widehat{\bff}}} \ar[r]^-{T_{\widehat{\bfr}}}
      &
      S^1_+ \wedge \bigl(\Sigma^{-1} S^1_+ \wedge T_{\bfKinftyW(\Chcathf(\cals(\Phi)))}\bigr).
      \\
       T_{\bfKinftyW(\Chcat(\cals(\Phi[t,t^{-1}])))}
      &
    }
    \label{diagram_of_mapping_tori}
  \end{equation}
  
  It is not true that $\bfl \circ \bft$ agrees with $\bfl$ again. However, this is true
  up to preferred homotopy, and therefore  we get
  a map from the diagram~\eqref{diagram_of_mapping_tori} to the diagram appearing in 
  Theorem~\ref{the:Splitting_the_Bass-Heller-Swan_map}.
  These homotopies are all induced by obvious natural
  transformations of functors. For instance, for the automorphism
  $\widehat{\Phi} = \cals(\Phi) \colon \widehat{\calc} = \cals(\cala_*)
  \xrightarrow{\cong} \widehat{\calc} = \cals(\cala_*)$
  and the inclusion $l \colon \widehat{\calc} = \cals(\cala_*) \to \calc = \cals(\cala_*)_{\cals(\Phi)}[\IZ]$
  we get a natural transformation $l   \to l \circ \widehat{\Phi} $, if we assign an object $\underline{A}$
  in $\widehat{\calc} = \cals(\cala_*)$ the
  morphism $\underline{A} \to \underline{A}$ in $\calc = \cals(\cala_*)_{\cals(\Phi)}[\IZ]$ given by
  $\id_{\widehat{\Phi}(\underline{A})} \cdot t$.  From these data we get
  a map from the diagram~\eqref{diagram_of_mapping_tori} to the diagram appearing in 
  Theorem~\ref{the:Splitting_the_Bass-Heller-Swan_map}.

  Now we apply the functor $\pi_n$ to the 
  diagram~\eqref{diagram_of_mapping_tori}, the diagram appearing in
  Theorem~\ref{the:Splitting_the_Bass-Heller-Swan_map} and the map between them constructed
  above. Taking into account that some of the arrows appearing in the
  diagram~\eqref{diagram_of_mapping_tori} and the diagram
  Theorem~\ref{the:Splitting_the_Bass-Heller-Swan_map} are weak equivalences, we get for
  every $n \in \IZ$ a commutative diagram
  \begin{equation}
  \xymatrix{\pi_{n-1}\bigl(T_{\bfKinftyW(\Chcat(\cals(\phi)))}\bigr) \ar[r] \ar[d]
    &
    \pi_{n-1}\bigl(\bfKinftyW(\Chcat(\cals(\cala_*)_{\cals(\Phi)} [\IZ]))\bigr) \ar[d]
    \\
    \pi_n\bigl(T_{\bfKinftyW(\Chcat(\cals(\Phi[t,t^{-1}])))}\bigr) \ar[d] \ar[r]
    &
    \pi_{n}\bigl(\bfKinftyW(\Chcat(\cals(\cala_*[t,t^{-1}]))_{\cals(\Phi[t,t^{-1}])} [\IZ])\bigr) \ar[d]
    \\
    \pi_{n-1}\bigl( T_{\bfKinftyW(\Chcat(\cals(\phi)))  }\bigr) \ar[r] 
    &
     \pi_{n-1}\bigl(\bfKinftyW(\Chcat(\cals(\cala_*)_{\cals(\Phi)} [\IZ]))\bigr)
  }
\label{splitting_for_cals_with_dimension_shift_by_one}
\end{equation}
such that the composite of the left two vertical arrows and the composite of the right two
vertical arrows are isomorphisms.  

Next we explain, how we get the corresponding diagram for $\call$ instead of $\cals$
\begin{equation}
  \xymatrix{\pi_{n-1}\bigl(T_{\bfKinftyW(\Chcat(\call(\phi)))}\bigr) \ar[r] \ar[d]
    &
    \pi_{n-1}\bigl(\bfKinftyW(\Chcat(\call(\cala_*)_{\call(\Phi)} [\IZ]))\bigr) \ar[d]
    \\
    \pi_n\bigl(T_{\bfKinftyW(\Chcat(\call(\Phi[t,t^{-1}])))}\bigr) \ar[d] \ar[r]
    &
    \pi_{n}\bigl(\bfKinftyW(\Chcat(\call(\cala_*[t,t^{-1}])_{\call(\Phi[t,t^{-1}])} [\IZ]))\bigr) \ar[d]
    \\
    \pi_{n-1}\bigl( T_{\bfKinftyW(\Chcat(\call(\phi)))  }\bigr) \ar[r] 
    &
     \pi_{n-1}\bigl(\bfKinftyW(\Chcat(\call(\cala_*)_{\call(\Phi)} [\IZ]))\bigr)
  }
  \label{splitting_for_call_with_dimension_shift_by_one}
\end{equation}
such that the composite of the left two vertical arrows and the composite of the right two
vertical arrows are isomorphisms. Note that  the upper horizontal arrow is an
isomorphism, if the middle arrow is an isomorphism and that the upper arrow deals with
the homotopy groups in a dimension, which is the dimension of the middle arrow minus $1$.

The point is that the  construction of map from the diagram~\eqref{diagram_of_mapping_tori} to
the diagram Theorem~\ref{the:Splitting_the_Bass-Heller-Swan_map} including the statement
 that some maps are weak homotopy equivalences, carries over word by word, if we replace
$\calt$ by $\cals$ everywhere. These two constructions are compatible with the various
inclusions and therefore yields also a version of a map from the
diagram~\eqref{diagram_of_mapping_tori} to the diagram
Theorem~\ref{the:Splitting_the_Bass-Heller-Swan_map} including the statement that some 
maps are weak homotopy equivalences, where we replace $\cals$ by $\call$ everywhere
and use Lemma~\ref{lem:Karoubi_filtration_sequence_categories}.
Now diagram~\eqref{splitting_for_call_with_dimension_shift_by_one} is derived from this map
analogously to~\eqref{splitting_for_cals_with_dimension_shift_by_one}.

Recall that $\bfKinftyW(\Chcat(\call(\cala_*)))$ is a spectrum with a $\IZ$-action, which comes
from $\bfKinftyW(\Chcat(\call(\Phi)))$.  We obtain a covariant functor, see for
instance~\cite[Section~9]{Bartels-Lueck(2009coeff)},
\begin{equation}
  \bfKinftyW_{\Chcat(\call(\cala_*))} \colon \OrG{\IZ} \to \Spectra.
  \label{bfKinftyW_(Ch(call(cala_ast)))}
\end{equation}
It determines a $\IZ$-homology theory $H_n^{\IZ}(-,\bfKinftyW_{\Chcat(\call(\cala_*))} )$
with the property that for every subgroup $H \subseteq \IZ$ and $n \in \IZ$ we have the 
natural isomorphism
\[
  H_n^{\IZ}(\IZ/H,\bfKinftyW_{\Chcat(\call(\cala_*))}) \xrightarrow{\cong}
  K_n(\Chcat(\call(\cala_*)) \rtimes_{\Chcat(\call(\Phi))|_{H}} H)
\]
as explained for instance in~\cite[Section~9]{Bartels-Lueck(2009coeff)}. Analogously we
get $\IZ$-homology theories $H_n^{\IZ}(-,\bfKinftyW_{\Chcat(\call(\cala_*[t,t^{-1}]))} )$,
$H_n^{\IZ}(-,\bfKinfty_{\call(\cala_*)} )$, and
$H_n^{\IZ}(-,\bfKinfty_{\call(\cala[t,t^{-1}]_*)} )$ with the property that for every
subgroup $H \subseteq \IZ$ and $n \in \IZ$ we have the natural isomorphisms
\begin{eqnarray*}
H_n^{\IZ^r}(\IZ/H,\bfKinftyW_{\Chcat(\call(\cala_*[t,t^{-1}]))})
&\xrightarrow{\cong} &
K_n(\Chcat(\call(\cala_*[t,t^{-1}])) \rtimes_{\Chcat(\call(\Phi[t,t^{-1}]))|_H} H);
\\
H_n^{\IZ}(\IZ/H,\bfKinfty_{\call(\cala_*)} )
 &\xrightarrow{\cong} &
K_n(\call(\cala_*) \rtimes_{\call(\Phi)|_{H}} H);
\\
H_n^{\IZ}(\IZ/H,\bfKinfty_{\call(\cala_*[t,t^{-1}])} )
 &\xrightarrow{\cong} &
K_n(\call(\cala_*[t,t^{-1}]) \rtimes_{\call(\Phi[t,t^{-1}])|_{H}} H).
\end{eqnarray*}

\begin{lemma}\label{lem:isomorphisms_of_Z-homology_theories}
The are natural equivalences of $\IZ$-homology theories
\begin{eqnarray*}
H_n^{\IZ}(-,\bfKinftyW_{\Chcat(\call(\cala_*))} )
& \xrightarrow{\cong} &  
H_n^{\IZ}(-,\bfKinfty_{\call(\cala_*)} );
\\
H_n^{\IZ}(-,\bfKinftyW_{\Chcat(\call(\cala_*[t,t^{-1}]))} )
& \xrightarrow{\cong} &   
                        H_n^{\IZ}(-,\bfKinfty_{\call(\cala_*[t,t^{-1}])} ).
\end{eqnarray*}
\end{lemma}
\begin{proof}
  They are induced by the zigzag of natural weak homotopy equivalences appearing
  in Theorem~\ref{the:Gillet-Waldhausen_zigzag_for_non-connective_K-theory}
  using~\cite[Lemma~4.6]{Davis-Lueck(1998)}
\end{proof}

\begin{lemma}\label{lem:splitting_in_terms_of_homology_groups}
For every $n \in \IZ$ there exists a commutative diagram
\[
\xymatrix{H_{n-1}^{\IZ}(E\IZ,\bfKinfty_{\call(\cala_*)} )
\ar[r] \ar[d]
&
H_{n-1}^{\IZ}(\pt,\bfKinfty_{\call(\cala_*)}) \ar[d]
\\
H_{n}^{\IZ}(E\IZ,\bfKinfty_{\call(\cala_*[t,t^{-1}])} ) \ar[d]
\ar[r]
&
H_{n}^{\IZ}(\pt,\bfKinfty_{\call(\cala_*[t,t^{-1}])}) \ar[d]
\\
H_{n-1}^{\IZ}(E\IZ,\bfKinfty_{\call(\cala_*)} )
\ar[r]
&
H_{n-1}^{\IZ}(\pt,\bfKinfty_{\call(\cala_*)})}
\]
such that the composite of the two vertical arrows appearing in the left
and in the right column are isomorphisms and the horizontal  arrows are induced by the projection  $E\IZ \to \pt$.

In particular the  upper horizontal arrow is bijective, if the middle horizontal arrow is bijective.
\end{lemma}
\begin{proof}
  Because of Lemma~\ref{lem:isomorphisms_of_Z-homology_theories} it suffices to proof
  Lemma~\ref{lem:splitting_in_terms_of_homology_groups} in the case, where we replace
  $\bfKinfty_{\call(\cala_*)}$ by $\bfKinftyW_{\Chcat(\call(\cala_*))}$and
  $\bfKinfty_{\call(\cala_*[t,t^{-1}])}$ by $\bfKinfty_{\Chcat(\call(\cala_*[t,t^{-1}]))}$.
  
  Now one easily checks unravelling the definitions that
  the rows of the diagram above after this replacement
  agree with the rows of the diagram~\eqref{splitting_for_call_with_dimension_shift_by_one}.
  Now the claim follows from the diagram~\eqref{splitting_for_call_with_dimension_shift_by_one}.
\end{proof}


\subsection{Reduction to the connective case}\label{subsec:Reduction_to_the_connective_case}

\begin{theorem}[Reduction to the connective case]\label{the:reduction_to_the_connective_case}
 Fix a natural number $n_0$.  Suppose that for every natural number $d$ and
  every natural number $n$ satisfying  $n \ge n_0$ the map
  \[H_{n}^{\IZ}(E\IZ,\bfKinfty_{\call(\cala_*[\IZ^d])})  \to
    H_{n}^{\IZ}(\pt,\bfKinfty_{\call(\cala_*[\IZ^d])}) = K_n(\call(\cala_*[\IZ^d]) \times_{\call(\Phi[\IZ^d])} \IZ)
  \]
  is an isomorphism.

  Then  for every $n \in \IZ$ the map
  \[H_{n}^{\IZ}(E\IZ,\bfKinfty_{\call(\cala_*)})  \to
    H_{n}^{\IZ}(\pt,\bfKinfty_{\call(\cala_*)}) = K_n(\call(\cala_*) \times_{\call(\Phi)} \IZ)
  \]
  is an isomorphism.
\end{theorem}

\begin{proof}
 One can iterate the passage from $\cala_*$ to $\cala_*[t,t^{-1}] = \cala_*[\IZ]$ and obtain a passage from
$\cala_*$  to $\cala_*[\IZ^d] = (\cala_*[\IZ^{d-1}])[t,t^{-1}]$ for every natural  number $d$.
Now Theorem~\ref{the:reduction_to_the_connective_case} follows from 
Lemma~\ref{lem:splitting_in_terms_of_homology_groups}.
\end{proof}


\subsection{Proof of Theorem~\ref{the:assembly_for_cald_upper_0(cals_ast)_with_Zr-action}}%
\label{subsec:Proof_of_Theorem_ref(the:assembly_for_cald_upper_0(cals_ast)_with_Zr-action)}

Now we are ready to finalize the proof of Theorem~\ref{the:assembly_for_cald_upper_0(cals_ast)_with_Zr-action}.

Thanks to Lemma~\ref{lem:reduction-to_the_case_r_is_1}, we can assume without loss of generality that $r = 1$.
Because of Theorem~\ref{the:reduction_to_the_connective_case} it suffices to show
that for every natural number $d$ and every natural number $n$ satisfying  $n \ge 2 $ the map
  \[H_{n}^{\IZ}(E\IZ,\bfKinfty_{\call(\cala_*[\IZ^d])})  \to
    H_{n}^{\IZ}(\pt,\bfKinfty_{\call(\cala_*[\IZ^d])}) = K_n(\call(\cala_*[\IZ^d]) \times_{\call(\Phi[\IZ^d])} \IZ)
  \]
  is an isomorphism.

Let $\bfK_{\Idem(\call(\cala_*[\IZ^d]))}$ be the connective
version of $\bfKinfty_{\Idem(\call(\cala_*[\IZ^d]))}$. Since we have $\dim(E\IZ) \le 1$, we conclude from
Lemma~\ref{K_and_cofinal} that  the vertical arrows appearing in the commutative diagram

\[
  \xymatrix{H_n^{\IZ}(E\IZ;\bfK_{\Idem(\call(\cala_*[\IZ^d]))}) \ar[d]_{\cong} \ar[r]
    &
    H_n^{\IZ}(\pt;\bfK_{\Idem(\call(\cala_*[\IZ^d]))}) \ar[d]^{\cong}
    \\
    H_n^{\IZ}(E\IZ;\bfKinfty_{\Idem(\call(\cala_*[\IZ^d]))}) \ar[r]
    &
    H_n^{\IZ}(\pt;\bfKinfty_{\Idem(\call(\cala_*[\IZ^d]))})
    \\
    H_n^{\IZ}(E\IZ;\bfKinfty_{\call(\cala_*[\IZ^d])}) \ar[r] \ar[u]^{\cong}
    &
    H_n^{\IZ}(\pt;\bfKinfty_{\call(\cala_*[\IZ^d])}) \ar[u]_{\cong}
  }
\]
are bijective for $n \ge 2$. Hence it suffices show that for every natural number $d$ the map
\begin{equation}
H_n^{\IZ}(E\IZ;\bfK_{\Idem(\call(\cala_*[\IZ^d]))}) \to H_n^{\IZ}(\pt;\bfK_{\Idem(\call(\cala_*[\IZ^d]))}) 
\label{gleich_sind_wir_fertig}
\end{equation}
is bijective for $n\in \IZ, n \ge 2$.

By assumption the category $\cala_m[\IZ^d]$ is uniformly $l(d)$-regular coherent and the
inclusion $\cala_m[\IZ^d] \to \cala_{m+1}[\IZ^d]$ is flat for every $m \ge 0$. We conclude
from Lemma~\ref{lem:inverse_systems_and_regularity} that $\call(\cala_*[\IZ^d])$ is
uniformly $l(d)$-regular coherent. We conclude from
Lemma~\ref{lem:inclusion_of_full_cofinal_subcategory}~%
\ref{lem:inclusion_of_full_cofinal_subcategory:regular_coherent}
that $\Idem(\call(\cala_*[\IZ^d]))$ is uniformly $l(d)$-regular coherent. Hence
$\Idem(\call(\cala_*[\IZ^d]))$ is idempotent complete and regular coherent.
Now the bijectivity of~\eqref{gleich_sind_wir_fertig} for
$n\in \IZ, n \ge 2$ follows from
Theorem~\ref{the:The_connective_K-theory_of_additive_categories}  applied to $\Idem(\call(\cala_*[\IZ^d]))$,
since for the map $\bfa$ appearing there the homomorphism  $\pi_n(\bfa)$ can be identified with the
map~\eqref{gleich_sind_wir_fertig} for $n \ge 2$. This finishes the proof of
Theorem~\ref{the:assembly_for_cald_upper_0(cals_ast)_with_Zr-action}.


\typeout{----------------------------- References ------------------------------}

\addcontentsline{toc<<}{section}{References}



\end{document}